\theoremstyle{plain}
\newtheorem{thm}{Theorem}[section]
\newtheorem{cor}[thm]{Corollary}
\newtheorem{lemma}[thm]{Lemma}
\newtheorem{prop}[thm]{Proposition}
\theoremstyle{definition}
\newtheorem{defi}[thm]{Definition}
\theoremstyle{remark}
\newtheorem{rem}[thm]{Remark}
\numberwithin{equation}{section}
\newcommand{\numberset}{\mathbb}
\newcommand{\BB}{\mathbb{B}}
\newcommand{\CC}{\numberset{C}}
\newcommand{\DD}{\numberset{D}}
\newcommand{\FF}{\mathbb{F}}
\newcommand{\HH}{\numberset{H}}
\newcommand{\KK}{\mathbb{K}}
\newcommand{\NN}{\numberset{N}}
\newcommand{\RR}{\numberset{R}}
\newcommand{\ZZ}{\numberset{Z}}
\newcommand{\RRbar}{\overline{\RR}}
\DeclareMathOperator{\SL}{SL}
\DeclareMathOperator{\PSL}{PSL}
\DeclareMathOperator{\height}{ht}
\DeclareMathOperator{\Id}{Id}
\DeclareMathOperator{\Realpart}{Re}
\renewcommand{\Re}{\Realpart}
\DeclareMathOperator{\Imaginarypart}{Im}
\renewcommand{\Im}{\Imaginarypart}
\DeclareMathOperator{\diameter}{diam}
\DeclareMathOperator{\lipschitz}{Lip}
\DeclareMathOperator{\Aut}{Aut}
\renewcommand{\epsilon}{\varepsilon}
\renewcommand{\phi}{\varphi}
\newcommand{\cA}{\mathscr{A}}
\newcommand{\cD}{\mathcal{D}}
\newcommand{\cF}{\mathcal{F}}
\newcommand{\cG}{\mathcal{G}}
\newcommand{\cK}{\mathcal{K}}
\newcommand{\cL}{\mathcal{L}}
\newcommand{\cM}{\mathcal{M}}
\newcommand{\cT}{\mathcal{T}}
\newcommand{\cU}{\mathcal{U}}
\newcommand{\cV}{\mathcal{V}}
\newcommand{\cW}{\mathcal{W}}
\newcommand{\Arc}{\mathcal{A}}
\renewcommand{\emptyset}{\varnothing}
\newcommand{\ab}[1]{{a_{#1}}} 			
\newcommand{\bb}[1]{{b_{#1}}}
\newcommand{\cc}[1]{{w_{#1}}}
\newcommand{\arclength}[1]{|#1|_{\partial\mathbb{D}}}
\newcommand{\wordlength}[1]{\ell(#1)}
\DeclarePairedDelimiter{\abs}{\lvert}{\rvert}		
\begin{document}

\title[Hall rays for Lagrange spectra of Riemann surfaces]{Persistent Hall rays for Lagrange spectra at cusps of Riemann surfaces}

\author[M. Artigiani]{Mauro Artigiani}
\address{Centro De Giorgi \\ Collegio Puteano \\ Scuola Normale Superiore \\ Piazza dei Cavalieri, 3 \\ I-56100 Pisa \\ Italy }
\email{mauro.artigiani@sns.it}

\author[L. Marchese]{Luca Marchese}
\address{Universit\'e Paris 13, Sorbonne Paris Cit\'e,
LAGA, UMR 7539, 99 Avenue Jean-Baptiste Cl\'ement, 93430 Villetaneuse, France.}
\email{marchese@math.univ-paris13.fr}

\author[C. Ulcigrai]{Corinna Ulcigrai}
\address{School of Mathematics \\ University of Bristol \\ University Walk \\ Bristol \\ BS8~1TW \\ United Kingdom}
\email{corinna.ulcigrai@bristol.ac.uk}

\subjclass[2010]{Primary 11J06, 37D40; Secondary 30F35} 

\keywords{Lagrange spectrum, Hall ray, Diophantine Approximation, Fuchsian groups, Boundary Expansions, Penetration spectra, Cantor sets.}

\date{\today}


\begin{abstract}
We study Lagrange spectra at cusps  of finite area Riemann surfaces. 
These spectra 
are penetration spectra that describe the asymptotic depths of penetration of geodesics in the cusps.
Their study is in particular motivated by Diophantine approximation on Fuchsian groups.
In the classical case of the modular surface and classical Diophantine approximation, Hall proved in 1947 that the classical Lagrange spectrum contains a half-line, known as a Hall ray.
We generalize this result to the context of Riemann surfaces with cusps and Diophantine approximation on Fuchsian groups. 
One can measure excursion into a cusp both with respect to a natural height function or, more generally, with respect to any proper function. 
We prove the existence of a Hall ray for the Lagrange spectrum of any non co-compact, finite covolume Fuchsian group with respect to any given cusp, both when the penetration is measured by a height function induced by the imaginary part as well as by any proper function close to it with respect to the Lipschitz norm.
This shows that Hall rays are stable under (Lipschitz) perturbations. 
As a main tool, we use the boundary expansion developed by Bowen and Series to code geodesics and produce a geometric continued fraction-like expansion and mimic the key ideas in Hall's original argument. A key element in the proof of the results for proper functions is a generalization of Hall's theorem on the sum of Cantor sets, where we consider functions which are small  perturbations in the Lipschitz norm of the sum.  
\end{abstract}

\maketitle

\section{Introduction}
The classical Lagrange spectrum is a well studied subset of the real line, which can be described either in terms of Diophantine approximation or dynamics, as penetration spectrum of geodesics on the modular surface (see \cref{sec:classical}).  Hall proved in 1947 that the classical Lagrange spectrum contains a semi-infinite interval, known as a Hall ray. We generalize this result to the context of Riemann surfaces with cusps and Diophantine approximation on Fuchsian groups, answering a question which was left open despite many results in the literature for various geometric generalizations of these spectra (see \cref{sec:History}). Definitions of the Lagrange spectra we study and main results are presented in \cref{sec:Lagrange_defs} and \cref{sec:mainresult_Fuchsian}.


\subsection{The classical Lagrange spectrum}\label{sec:classical}
Classical Diophantine approximation is the study of how well one can approximate a real number $\alpha$ by rational ones.
The well-known results of Dirichlet and Hurwitz imply that for all irrational real numbers $\alpha$ there are infinitely many $p/q$, $p\in \ZZ, q\in \NN$, such that
\[
	\abs*{\alpha - \frac{p}{q}}< \frac{1}{\sqrt{5}q^2}
\]
and that this is the best possible result for \emph{every} real number $\alpha$, as one can see by considering the golden mean $\alpha=\frac{1+\sqrt{5}}{2}$.
A natural question is hence if \emph{fixing} $\alpha$ one can improve the constant appearing in the denominator.
This leads to the introduction of the (classical) \emph{Lagrange spectrum} $\cL \subset \RRbar:=\RR \cup \{+\infty\}$ as follows.
For a given $\alpha\in\RR$, let  $L(\alpha)\in \RRbar $ be such that 
\[
L(\alpha):= \sup \{ k : |\alpha - p/q| < 1/k q^2 \ \text{for infinitely many } q\in \mathbb{N},\  p \in \mathbb{Z} \} .
\]
Then  $\cL$  is the collection of values $\{ L(\alpha), \alpha \in \RR\}$. 
Equivalently, one can also write (see for example~\cite{MatheusCIRM})
\begin{equation}\label{eq:classicdefLagrange}
	\cL = \left\{ L(\alpha)= \limsup_{q,p\to +\infty} \frac{1}{q|q\alpha -p|},\quad \alpha \in\RR \right\} \subset \RRbar=\RR \cup \{+\infty\}.
\end{equation}
One can see that for almost every $\alpha$ one has $L(\alpha)=\infty$, but $L(\alpha)<\infty$ for a set of full Hausdorff dimension, which consists exactly of so called \emph{badly approximable} (or bounded type) numbers.  

A close relative of the Lagrange spectrum is the \emph{Markoff spectrum} $\cM$ obtained by replacing the $\limsup$ in~\eqref{eq:classicdefLagrange}  with a $\sup$.
Both Markoff and Lagrange spectra have been intensively studied by many authors, and much of their beautiful and rich structures is known.
Both $\cL$ and $\cM$ are closed subsets of $\RRbar$, with the strict inclusion $\cL\subset\cM$.
It is for instance known that:
\begin{itemize}
\item The minimum of $\cL$ and $\cM$ is $\sqrt{5}$, which is known as \emph{Hurwitz constant} \cite{Hurwitz,Markoff};
\item $\cL\cap (0,3)=\cM\cap(0,3)$ is an explicit discrete set that accumulates to $3$ (see for example \cite{CusickFlahive});
\item $\cL$ contains a semi-infinite interval $[R,\infty)$~\cite{Hall}. This part of the Lagrange spectrum is called the \emph{Hall ray}. The exact value of $R$ where the Hall ray begins (i.e.\ the smallest $r$ such that $[r,\infty)\subset \cL$) is known after the work of Freiman~\cite{Freiman} and hence known as \emph{Freiman constant}. 
\end{itemize}

The (classical) Lagrange spectrum admits also a geometro-dynamical interpretation as the spectrum of asymptotic depths of penetration for the geodesic flow into the cusp of the modular surface $X=\PSL(2,\ZZ)\backslash\HH$.
More precisely, the Lagrange spectrum is the set of values $L \in \RRbar$ which can be realized as 
\begin{equation}\label{classical_limsup}
L=	\limsup_{t\to +\infty}{\operatorname{height}(\gamma(t))},
\end{equation}
for a parametrized geodesic $\gamma(t)$ on $X=\PSL(2,\ZZ)\backslash\HH$, where $\operatorname{height}(x)$ denotes the hyperbolic \emph{height} in the cusp, given by the imaginary part of the unique lift of $x$ from the modular surface to its classical fundamental domain $\cF=\{z\in\HH : |z|>1, |\Re (z)|<\frac{1}{2}\}$.

\begin{rem}\label{how_to_go_to_infinity}
If one replaces the $\limsup$ for $t \to +\infty$ in~\eqref{classical_limsup}  with a $\limsup$ for $|t| \to +\infty$, the value on a given $\gamma$ might change (since it then also depends on the backward endpoint of $\gamma$), but one can show, choosing symmetric geodesics that the set of such values taken on all geodesics still produces exactly $\cL$ (see the Appendix of \cite{CusickFlahive} for a proof). 
 \end{rem}


Over the course of time, both the Markoff and the Lagrange spectrum have been generalized to many different contexts, either from the Diophantine approximation or from the geometric point of view, exploiting their dynamical definition as penetration spectra.  

\subsection{A brief history of  generalizations of Markoff and Lagrange spectra.}\label{sec:History}
We do not attempt here to summarize all the developments in this area, which started more than a century ago and has seen a surge of recent developments, but we will only briefly survey some of the results,  in particular those which are closer to the main topic of this paper, namely the presence of Hall rays. 
The interested reader can find further information in the monograph~\cite{CusickFlahive} by Cusick and Flahive, the introduction of~\cite{HMU} and the recent survey by Matheus~\cite{MatheusCIRM}, and refer to the references therein. 

The first  natural generalization  of the classical Lagrange and Markoff spectra is obtained by replacing the modular group $\PSL(2,\ZZ)$ with a more general Fuchsian group. Both the dynamical and Diophantine approximation definition extend naturally to this context (see \cref{sec:Lagrange_defs}).  
In particular, important classes of examples are given by the cases of Hecke groups and more generally triangle groups.
The minimum value in these spectra, which is also called, as in the classical case, the \emph{Hurwitz constant}, is computed for Hecke and triangle groups respectively by Haas and Series in~\cite{HaasSeries} and Vulakh in~\cite{Vulakh97a}.  
 Markoff spectra of Fuchsian groups were studied in detail by Vulakh in~\cite{Vulakh97b, Vulakh2000}; 
in particular, in~\cite{Vulakh97b} the author gives the complete description of the discrete part of the Markoff spectrum (and hence of the Lagrange spectrum) of any Hecke group.

Another natural generalization leads to study Markoff and Lagrange spectra for quotient of higher dimensional hyperbolic spaces by discrete subgroups~\cite{Vulakh95a}.
In particular, the case of Bianchi groups has connections with the approximation of a complex number with numbers from a given imaginary quadratic number field, see~\cite{Vulakh95b, Maucourant}.

Penetration spectra and more general objects, such as spiraling spectra, can be studied more generally in the context of (variable) negative curvature, see in particular the works by Paulin in collaboration with Hersonsky~\cite{HersonskyPaulin} and Parkkonen~\cite{ParkkonenPaulin}.  
In~\cite{ParkkonenPaulin} it is shown that both the Lagrange and Markoff spectrum of a finite volume Riemannian manifold with sectional curvature less than $-1$ and \emph{dimension at least} $3$ contain a Hall ray.
Remarkably, Parkkonen and Pauline also managed to obtain a \emph{universal} estimate on the beginning of both spectra.

In the case of surfaces, Schmidt and Sheingorn proved in~\cite{SchmidtSheingorn} that the Markoff spectrum of a hyperbolic surface of constant negative curvature $-1$ contains a Hall ray.  
Recently Moreira and Roma\~na proved that, for generic small perturbations of dynamically defined analogues of the Lagrange and Markoff spectra on negatively curved surfaces, these spectra contain intervals arbitrarily close to infinity. We stress that  neither of these two results  does however imply  the existence of Hall rays for Lagrange spectra. 
In a similar spirit, in~\cite{MoreiraHausdorff} continuity of the Hausdorff dimension of the Spectra, when intersected with the open interval $(-\infty,t)$ for $t\in\RR$, is proved.
Very recently, in~\cite{CMMHausdorff}, the same result is proved for generic perturbations of dynamically defined spectra on negatively curved surfaces.
An introduction of Moreira's work as well as the classical theory of these spectra can be found in~\cite{MatheusCIRM}.  

Another generalization is introduced by Hubert and two of the authors in~\cite{HMU}, where  Lagrange spectra are defined in the context of translation surfaces and interval exchange transformations.
Also in this case one has a geometric definition as penetration spectra for the Teichm\"uller geodesic flow, as well as an interpretation motivated by Diophantine approximation for interval exchange maps, see \cite{HMU}.
A version of the latter already appears in the work by Boshernitzan, see~\cite{Boshernitzan}; different types of Lagrange spectra for interval exchange transformations, in particular in the case of 3 interval exchanges, are also studied by Ferenczi in~\cite{Ferenczi}.
For Lagrange spectra of \emph{strata} of translation surfaces, the existence of Hall rays was in~\cite{HMU} and the Hurwitz constant was recently found by Boshernitzan and Delecroix~\cite{BD}.
The authors proved in~\cite{AMU} that also for the particularly symmetric class of translation surfaces made of Veech surfaces, the Lagrange spectrum contained a Hall ray and the first values of the Lagrange spectrum a particular example of a square-tiled Veech surface are studied in detail in~\cite{HLMU}.


 The generalizations of Lagrange spectra we study in this paper are in the context of Diophantine approximation on Fuchsian groups (see \cref{sec:Lagrange_defs}) and penetration spectra for Riemann surfaces with cusps with respect to proper functions (see \cref{sec:mainresult_Fuchsian}). 
	
\subsection{Lagrange spectra and Diophantine approximation in Fuchsian groups}\label{sec:Lagrange_defs}
The definition of  Lagrange spectrum for a Fuchsian group $G$ in terms of Diophantine approximation on $G$ is due to  to Lehner~\cite{Lehner}, inspired by Ford's geometric proof of Hurwitz theorem~\cite{Ford} and was studied among others by Haas, Series, Vulakh~\cite{Haas,HaasSeries,Vulakh97a,Vulakh97b,Vulakh2000}.
In analogy with the classical Lagrange spectrum, we now define these Lagrange spectra first from the Diophantine approximation point of view, then interpret them  geometrically in terms of essential heights of geodesics and, finally, in a more dynamical way. 

We denote with $\HH=\{z=x+iy\in\CC : y>0\}$ the upper half-plane with the hyperbolic metric.
The group of isometries of $\HH$ can be identified with $\PSL(2,\RR)$ (see the beginning of \cref{sec:background}).
Discrete subgroups of $\PSL(2,\RR)$ are called \emph{Fuchsian groups}.
Since Fuchsian groups act by isometries on $\HH$ the quotient of the hyperbolic plane by any such group inherits a natural metric from the hyperbolic metric on $\HH$. Thus, $X:=G\backslash\HH$  is a hyperbolic surface (possibly with orbifold singularities coming from fixed points of elliptic elements in $G$). 
A Fuchsian group $G$ is a \emph{lattice} if the quotient $X=G\backslash\HH$ has finite volume, with respect to the natural volume form induced by the metric.
We consider only Fuchsian groups that are so-called \emph{non uniform} lattices, meaning that the quotient has finite volume but is not compact. 

The action of $\PSL(2,\RR)$ 
extends by continuity to an action on the boundary $\RRbar=\RR \cup\{\infty\}$ of $\HH$.
We recall that an element of $\PSL(2,\RR)$ is parabolic if it has trace equal to $2$.
The set of \emph{cusps} of $G$ is the set of points of $\RRbar$ fixed by a non trivial parabolic element of $G$. If $X=G\backslash\HH$ has finite volume, then the set of cusps of $X$ coincides with the set of \emph{ends} of the surface itself. 
We are going to assume for now that $\infty$ is a parabolic fixed point for $G$ (we later remove this assumption, see \cref{rem:conjugacy_rays} and \cref{othercusps}). 
We call an element of $\RRbar$ \emph{$G$-rational} if it is the fixed point under some non trivial parabolic transformation in $G$.
The complement of $G$-rational numbers in $\RRbar$ is the set of \emph{$G$-irrational numbers}. 

\smallskip
Diophantine approximation on a Fuchsian group $G$ consists in approximating $G$-irrational numbers by $G$-rational ones, or $G$-rational ones in the $G$-orbit of a fixed cusp. The definition of  Lagrange  spectrum $\cL(G,\infty)$  in terms of Diophantine approximation on $G$ introduced by Lehner~\cite{Lehner} is the following.   Given $g\in G$, we denote by $a(g)$ and by $c(g)$ the first entry on the first and second row of $g$ respectively.
For $\alpha\in\RR$ define $L_G (\alpha)$ to be:
\begin{multline*}
L_G(\alpha):=  \sup \{ k : \left|\alpha-g\cdot\infty\right|=  \left|\alpha-\frac{a(g)}{c(g)} \right| <\frac{1}{kc(g)^2}  \text{ for infinitely many } g\in G \\ \text{ s.\ t.\ } g \cdot \infty \text{ are all distinct} \}.
\end{multline*}
Then, we define $\cL(G,\infty):=\{ L_G(\alpha), \alpha \in \RR\}$. 
We remark that if we take $G=\PSL(2,\ZZ)$ in the previous definitions, $L_G(\alpha)$ coincides with the one given in~\eqref{eq:classicdefLagrange} for $L(\alpha)$, so  that $\cL(G,\infty)$ is indeed a generalization of the classical Lagrange spectrum $\cL$. 

\smallskip
We can interpret this definition geometrically as follows. We recall that a geodesic in the hyperbolic plane is uniquely determined by its two extremal points in $\RRbar$.
Given two points $x$ and $y$ in $\RRbar$, 
throughout the paper  we  denote by $\gamma(x,y)$ the hyperbolic geodesic connecting $x$ to $y$ which has $x$ (resp.\ $y$) as a backward (resp.\ forward) end point, i.e.\ if $\gamma(t)$ is the geodesic parametrization of $\gamma$, 
\[ 
\gamma(-\infty):=\lim_{t \to -\infty} \gamma(t)=x, \qquad  \gamma(+\infty):=\lim_{t \to +\infty} \gamma(t)=y.
\]
We define the \emph{naive height} of the geodesic $\gamma=\gamma(x,y)$ by
\begin{equation}\label{eq:height}
	\height(\gamma) = \begin{cases}
							\frac{1}{2} \abs{x-y}, & \text{if $x,y\in\RR$,}\\
							\infty,	& \text{otherwise.}
				\end{cases}
\end{equation}
Thus, the naive height $\height(\gamma)$ is the Euclidean radius of the semi-circle which represents the geodesic $\gamma$ in the upper half plane $\mathbb{H}$. 

We say that two elements $g$ and $h$ in $G$ are \emph{equivalent modulo infinity} if there is an element $k \in G$ that fixes infinity and such that $g=k h$. We remark that if $g$ and $h$ are equivalent modulo infinity they differ by a horizontal translation and hence, for every geodesic $\gamma$ in $\HH$, $\height(g(\gamma))=\height(h(\gamma))$. 
Choose a set $G_\infty$ of representatives of the equivalence classes of $G$ modulo infinity. 
The \emph{essential height} of a geodesic $\gamma$ on $X$ is defined by
\begin{equation}\label{eq:essentialheight}
	\height_{G}(\gamma)=\sup{\{k : \height(g(\tilde\gamma)) \geq k \quad \text{for\ infinitely\  many $g\in G_\infty$}\}},
\end{equation}
where $\tilde{\gamma}$ is any lift of the geodesic $\gamma$ from $X$ to the universal cover $\HH$. 

The following Lemma provides a geometric interpretation of the constant $L_G(\alpha)$ in terms of essential height.
We include below also its short proof, which can be found e.g.\ in~\cite{HaasSeries}, since it provides an educational example, for the non familiar reader, of the interplay between Diophantine approximation and penetration in the cusps. 
\begin{lemma}[\cite{HaasSeries}]\label{thm:Lagrangeasheight}
Let $G$ be a non uniform lattice in $\PSL(2,\RR)$.
For every real number $\alpha$ we have
\[
	L_G(\alpha)=2\height_G (\gamma(\infty,\alpha)),
\]
where $\gamma(\infty,\alpha)$ is the vertical geodesic from $\infty$ to $\alpha$.
\end{lemma}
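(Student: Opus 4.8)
The plan is to prove the identity by a direct computation, reducing it to an elementary algebraic manipulation together with a careful matching of the two equivalence relations (and of the two ``infinitely many'' conditions) that enter the definitions of $L_G(\alpha)$ and of the essential height. First I would compute the naive height of the images of the vertical geodesic $\gamma(\infty,\alpha)$ under elements of $G$. Writing $g=\begin{pmatrix} a & b \\ c & d \end{pmatrix}\in G$ with $ad-bc=1$, one has $g\cdot\infty=a/c$ and $g\cdot\alpha=(a\alpha+b)/(c\alpha+d)$, hence $g\cdot\infty-g\cdot\alpha=(ad-bc)/\big(c(c\alpha+d)\big)=1/\big(c(c\alpha+d)\big)$. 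Since $g$ is an orientation preserving isometry, $g(\gamma(\infty,\alpha))=\gamma(g\cdot\infty,g\cdot\alpha)$, so by \eqref{eq:height}
\[
\height\big(g(\gamma(\infty,\alpha))\big)=\tfrac12\,\abs{g\cdot\infty-g\cdot\alpha}=\frac{1}{2\,\abs{c}\,\abs{c\alpha+d}}=\frac{1}{2\,c(g)^2\,\abs{\alpha-g^{-1}\cdot\infty}},
\]
using $g^{-1}\cdot\infty=-d/c$ and $c(g^{-1})=-c(g)$. Replacing $g$ by $g^{-1}$ this reads $1/\big(c(g)^2\abs{\alpha-g\cdot\infty}\big)=2\,\height\big(g^{-1}(\gamma(\infty,\alpha))\big)$, which is exactly the quantity whose largeness is tested in the definition of $L_G(\alpha)$.

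The second step is to match the indexing sets of the two suprema. As $g$ runs over $G$, so does $h:=g^{-1}$, and $g\cdot\infty=g'\cdot\infty$ holds if and only if $(g')^{-1}g$ fixes $\infty$, i.e.\ if and only if $h=g^{-1}$ and $h'=(g')^{-1}$ differ by left multiplication by an element of $G$ fixing $\infty$ --- that is, if and only if $h\sim h'$ modulo infinity. So the condition ``the points $g\cdot\infty$ are pairwise distinct'' for a family of $g$'s is equivalent to ``the elements $h=g^{-1}$ lie in pairwise distinct classes modulo infinity''. Furthermore, because $\infty$ is a cusp of the non-uniform lattice $G$, its stabilizer in $G$ consists of horizontal translations $z\mapsto z+t$; these preserve the Euclidean radius of every semicircle, so $\height\big(h(\gamma(\infty,\alpha))\big)$ depends only on the class of $h$ modulo infinity. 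Therefore testing the inequality $\height\big(h(\gamma(\infty,\alpha))\big)>k/2$ along infinitely many $h$ in distinct classes modulo infinity is the same as testing it along infinitely many $h\in G_\infty$; up to the (harmless, inside a supremum) replacement of $>$ by $\geq$, the latter is precisely the condition defining $\height_G(\gamma(\infty,\alpha))$. Chaining these equivalences with the computation of the first step gives $L_G(\alpha)=2\,\height_G(\gamma(\infty,\alpha))$.

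The computations are routine; the points that require genuine care --- and where I expect the argument to be most delicate --- are the identification of the two ``infinitely many'' conditions under $g\mapsto g^{-1}$, and the structure of the cusp stabilizer: one must check that every element of $G$ fixing $\infty$ is a horizontal translation, so that ``fixing $\infty$'' really forces invariance of naive heights. One should also keep an eye on the degenerate contributions --- the class of the elements $g$ fixing $\infty$, and, when $\alpha$ is $G$-rational, the class with $g^{-1}\cdot\infty=\alpha$ --- for which the naive height is $+\infty$; these concern at most two classes modulo infinity and hence affect neither supremum. Once these are dealt with, the identity follows at once.
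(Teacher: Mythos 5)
Your proof is correct and takes essentially the same route as the paper's: both transport everything by $g^{-1}$, identify the Diophantine inequality $|\alpha-g\cdot\infty|<1/(k\,c(g)^2)$ with the condition that the naive height of $g^{-1}(\gamma(\infty,\alpha))$ exceeds $k/2$, and then match the "infinitely many distinct $g\cdot\infty$" condition with "infinitely many classes modulo infinity". The only cosmetic difference is that you compute the endpoint difference explicitly, while the paper phrases the same computation geometrically (the disk of radius $1/(kc(g)^2)$ tangent at $g\cdot\infty$ is sent by $g^{-1}$ to the horoball $\{\Im z\geq k/2\}$); your added care with strict versus non-strict inequalities, the structure of the stabilizer of $\infty$, and the finitely many degenerate classes is a sound tightening of details the paper leaves implicit.
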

\begin{proof} Assume that $k>0$ is such that there exists a sequence $g_i$ of infinitely many elements of $G$ such that
\[
	|\alpha-g_i\cdot\infty|<\frac{1}{kc(g_i)^2},
\]
and the points $g_i\cdot\infty$ are all distinct. 
The vertical hyperbolic geodesic $\gamma(\infty,\alpha)$ intersects each of the Euclidean disks $D_i$ of radius $1/kc(g_i)^2$ tangent to $\RR$ at the points $g_i\cdot\infty$.
Equivalently $g_i^{-1}(\gamma(\infty,\alpha))\cap g_i^{-1}(D_i)\neq\emptyset$.
Since the $g_i\cdot\infty$ are all distinct, we have that the elements $g_i^{-1}$ are not equivalent modulo infinity.
A simple computation shows that $g_i^{-1}(D_i)=\{z\in\HH : \Im z\geq k/2\}$.
Thus, the assumption that $\gamma(\infty,\alpha)$ crosses $D_i$ implies that $k$ is such that $\height(\gamma)\geq k/2$ for infinitely many elements of $G_\infty$. Thus, $k$ belongs to the set of which $L_G(\alpha)$ is supremum if and only if $k/2$ belongs to the set of which $\height_{G_\infty}(\gamma)$ is the supremum. This gives the desired equality. 
\end{proof}

One  can define the Lagrange spectrum 	$\cL(X,\infty)$ of the hyperbolic  surface $X= G\backslash\HH$ with respect to the cusp at $\infty$ to be
\[
	\cL(X,\infty) :=\{2\height_G (\gamma), \gamma \text{ geodesic on $X=G\backslash\HH$}\}.
\]
The reason for the constant $2$ appearing in the definition is 
 apparent from \cref{thm:Lagrangeasheight},  since  
one can use it to show that these definitions coincide if  $G$ is the uniformizing Fuchsian group of $X$.

\begin{cor}
If $X=G\backslash\HH$ then we have that $\cL(X,\infty)=\cL(G,\infty)$.
\end{cor}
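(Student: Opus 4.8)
The plan is to deduce the equality from \cref{thm:Lagrangeasheight} together with a description of the essential height of an arbitrary geodesic in terms of vertical geodesics. One inclusion is immediate: by \cref{thm:Lagrangeasheight} every element of $\cL(G,\infty)$ has the form $L_G(\alpha)=2\height_G(\gamma(\infty,\alpha))$ for some $\alpha\in\RR$, and since $\gamma(\infty,\alpha)$ projects to a geodesic on $X=G\backslash\HH$ and $\height_G$ is computed from an arbitrary lift, $2\height_G(\gamma(\infty,\alpha))\in\cL(X,\infty)$; thus $\cL(G,\infty)\subseteq\cL(X,\infty)$. For the reverse inclusion, take any geodesic $\gamma$ on $X$ and a lift $\tilde\gamma=\gamma(x,y)$ with $x\neq y$ in $\RRbar$. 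If one endpoint equals $\infty$, say $x=\infty$, then $y\in\RR$ and $2\height_G(\gamma)=2\height_G(\gamma(\infty,y))=L_G(y)\in\cL(G,\infty)$ by \cref{thm:Lagrangeasheight}. So we may assume $x,y\in\RR$, in which case it suffices to establish
\[
	\height_G(\gamma(x,y))=\max\bigl(\height_G(\gamma(\infty,x)),\,\height_G(\gamma(\infty,y))\bigr),
\]
since the right-hand side equals $\tfrac12\max\bigl(L_G(x),L_G(y)\bigr)$ by \cref{thm:Lagrangeasheight}, and $\max\bigl(L_G(x),L_G(y)\bigr)$ is one of $L_G(x),L_G(y)$, hence lies in $\cL(G,\infty)$.

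To prove this equality I would unwind the definition~\eqref{eq:essentialheight} exactly as in the proof of \cref{thm:Lagrangeasheight}: for $g\in G_\infty$ one has $\height(g\tilde\gamma)\geq k$ if and only if $\tilde\gamma$ meets the horoball $g^{-1}\{\Im z\geq k\}$, which is a horoball based at the point $g^{-1}\!\cdot\infty$ of the $G$-orbit of $\infty$, and as $g$ ranges over $G_\infty$ these base points range over the whole $G$-orbit of $\infty$. Hence $\height_G(\gamma)$ is the $\limsup$, over base points $p$ in this (dense) orbit, of the depth to which $\tilde\gamma$ penetrates the normalised horoball at $p$ --- a single horoball contributing only one value and therefore being invisible to the $\limsup$. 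For large $k$ the horoball detecting depth $\geq k$ at $p$ is a Euclidean disc of diameter $O(1/k)$, so the semicircle $\gamma(x,y)$ can enter it only when $p$ lies within $O(1/k)$ of one of its endpoints $x$ or $y$. Along the end approaching $x$, the geodesic $\gamma(x,y)$ shares the boundary point $x$ with the vertical geodesic $\gamma(\infty,x)$, so the hyperbolic distance between them tends to $0$; consequently the two penetrate the horoballs based at orbit points $p\to x$ to depths differing by $o(1)$, and likewise near $y$ with $\gamma(\infty,y)$. Partitioning the orbit base points into those converging to $x$ and those converging to $y$ and passing to the $\limsup$ on each part then yields the displayed equality, which completes the proof.

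The step I expect to require the most care is this last one: converting the statement that $\gamma(x,y)$ is asymptotic, at each of its two ends, to the corresponding vertical geodesic into a clean comparison of essential heights --- in particular the bookkeeping of which horoballs in the orbit of $\infty$ are actually entered, and the uniform control of the $o(1)$ error in penetration depth as the base point tends to an endpoint. This is standard within the Haas--Series circle of ideas (compare~\cite{HaasSeries}); the remaining steps are direct applications of \cref{thm:Lagrangeasheight} and of the fact, already built into the definition~\eqref{eq:essentialheight}, that $\height_G$ may be computed from any lift.
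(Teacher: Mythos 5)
Your proof is correct and follows essentially the same route as the paper: the inclusion $\cL(G,\infty)\subset\cL(X,\infty)$ is read off from \cref{thm:Lagrangeasheight}, and the reverse inclusion is obtained by showing that the deep excursions of a general geodesic $\gamma(x,y)$ localize near its endpoints, so that $\height_G(\gamma(x,y))$ equals the essential height of the vertical geodesic to one of them, which is again handled by \cref{thm:Lagrangeasheight}. The only cosmetic difference is that you implement this localization through forward/backward asymptoticity to the vertical geodesics and a symmetric max formula, whereas the paper assumes (without loss of generality) $L_G(\alpha^+)>L_G(\alpha^-)$ and excludes deep excursions near the backward endpoint via the Diophantine inequality $|\alpha^- - g_i^{-1}\cdot\infty|<\tfrac{1}{2(h-\epsilon)c(g_i^{-1})^2}$, which is the same underlying idea.
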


\begin{proof}
\cref{thm:Lagrangeasheight} directly gives the inclusion $\cL(G,\infty)\subset \cL(X,\infty)$. 
Conversely, if $\gamma=\gamma(\alpha^-, \alpha^+)$, consider the two vertical geodesics $\gamma^-:= \gamma(\infty,\alpha^-)$ and $\gamma^+:= \gamma(\infty,\alpha^+)$.
Suppose, with loss of generality, that $L_G(\alpha^+)>L_G(\alpha^-)$.
Let us show that this implies that $\height_G(\gamma)=\height_G(\gamma^+)$. 
Let $\height_G(\gamma)=h$.
This implies, by~\eqref{eq:essentialheight}, that, for every $\epsilon>0$, there exists a sequence $g_i$ of infinitely many elements of $G$ such that $g_i(\gamma)\cap\cU_{h-\epsilon}\neq\emptyset$, where $\cU_l=\{z\in\HH : z=x+iy, y>r\}$.
Equivalently, $\gamma\cap g_i^{-1}(\cU_{h-\epsilon})\neq\emptyset$.
It is enough now to observe that this can only happen to a portion of the geodesic $\gamma$ bounded away from the past endpoint $\alpha^-$ for, otherwise, we would have that
\[
	|\alpha^- - g_i^{-1}\cdot\infty|<\frac{1}{2(h-\epsilon)c(g_i^{-1})^2},
\]
that is $L_G(\alpha^-)\geq 2h = L_G(\alpha^+)$, which is a contradiction.
\end{proof}


\smallskip
Finally, one can also interpret $\cL(G,\infty)$ in a more \emph{dynamical} way, analogously to what happens in the classical case (see~\eqref{classical_limsup} and \cref{how_to_go_to_infinity}). Given any parametrization $t\mapsto \gamma(t)$ of $\gamma$ (in particular the one given by the geodesic flow) one immediately sees that $\height(\gamma)=\sup_{t\in\RR} \Im{\gamma(t)}$. If $\height_G (\gamma)$ is sufficiently large (greater than the starting point of the maximal Margulis neighborhood, see~\eqref{eq:fundamentalhorodisk}), one can see that  one equivalently has 
\begin{equation}\label{heightlimsup}
	\height_G (\gamma)=\limsup_{|t|\to \infty}{\operatorname{height}(\gamma(t))},
\end{equation}
where, as before,  $\operatorname{height}(x)$ is the imaginary part of the unique lift of a point from $x$ to a chosen fundamental domain of $X$ which has two vertical lines.
This equivalence can be seen as a byproduct of the proof of Perron's formula for the essential height, see \cref{lemma:Perron} for details.

Thus, the Lagrange spectrum $\cL(X, \infty)$ describes  asymptotic depths of penetration of the geodesics of $X$ into the cusp $e=\infty$. This is the point of view that  we will  generalize in \cref{sec:mainresult_Fuchsian}, where we consider more general ways of measuring the penetration into a cusp.

\subsection{Hall rays for Diophantine approximation in Fuchsian groups}\label{subsec:thmHall}
The first  result we prove in this paper is the  following generalization to Fuchsian groups of Hall's theorem on the existence of a Hall ray for the classical Lagrange spectrum $\cL$, proved 1947 for the classical spectrum.

\begin{thm}[Hall ray for Fuchsian groups]\label{thm:Hall}
Let $G\subset \PSL(2,\mathbb{R})$ be a non uniform lattice. Assume that $\infty$ is a cusp of $G$. The Lagrange spectrum $\cL(G,\infty)$ of $G$ with respect to $\infty$ contains a \emph{Hall ray}, i.e.\  there exists an $L_0=L_0(G,\infty)\in\RR$ such that
\[
	[L_0,+\infty]\subset\cL(G,\infty).
\]
\end{thm}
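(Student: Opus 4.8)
The plan is to transplant Hall's classical argument to the Fuchsian setting by means of the Bowen--Series boundary expansion. By \cref{thm:Lagrangeasheight} and the identity $\cL(G,\infty)=\cL(X,\infty)$, it suffices to exhibit, for every sufficiently large $L$, a geodesic $\gamma$ on $X=G\backslash\HH$ with essential height $\height_G(\gamma)=L/2$, and to check that $\height_G=+\infty$ is also attained; by the dynamical reformulation \eqref{heightlimsup} this amounts to realizing $L/2$ as a $\limsup$ of heights along a symbolically coded geodesic. The first step is therefore to set up the Bowen--Series coding of $\partial\HH=\RRbar$ associated to $G$ -- a finite-to-one symbolic description of boundary points via a continued-fraction-like map -- together with a subshift $\Sigma$ coding bi-infinite geodesics on $X$ by sequences $\underline{x}=(x_i)_{i\in\ZZ}$. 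The parabolic generator $p$ fixing $\infty$ appears in the alphabet, and an excursion of a geodesic into the cusp at $\infty$ corresponds to a long block $p^m$ in the itinerary, the depth of penetration growing with $m$. One then invokes \cref{lemma:Perron}: the essential height of the geodesic coded by $\underline{x}$ equals $\limsup_n \mathcal{H}(\sigma^n\underline{x})$ for an explicit $\mathcal{H}$ which, up to bounded multiplicative distortion, splits as a sum $\mathcal{H}(\underline{x})\asymp \varphi^{+}(x_0,x_1,\dots)+\varphi^{-}(x_{-1},x_{-2},\dots)$ of a forward and a backward contribution, exactly mirroring the classical identity $L(\alpha)=\limsup_n\big([a_n;a_{n+1},\dots]+[0;a_{n-1},a_{n-2},\dots]\big)$. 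Since the function being measured here is the imaginary part, $\mathcal{H}$ is this unperturbed sum; the Lipschitz-perturbed Hall theorem announced in the introduction will be needed only later, for general proper functions.

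Next, I would isolate a \emph{thick Cantor core}. Fix a large integer $M$ and let $\Sigma_M\subset\Sigma$ be the compact subsystem of itineraries in which every $p$-block (and, if needed, every other generator block) has length at most $M$ -- the analogue of restricting partial quotients to $\{1,\dots,M\}$. The associated forward and backward value sets $K^{+}=\varphi^{+}(\Sigma_M^{+})$ and $K^{-}=\varphi^{-}(\Sigma_M^{-})$ are Cantor sets in $\RR$, and the crucial quantitative fact is that their thickness tends to infinity as $M\to\infty$, since allowing longer $p$-blocks fills in more and more room around each cusp value. Choosing $M$ large enough that $K^{+}$ and $K^{-}$ meet the hypotheses of Hall's lemma on sums of Cantor sets, the sumset $K^{+}+K^{-}$ contains an interval $[a,b]$, of length as large as we wish.

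Mimicking Hall's construction inside the core, for every $v\in[a,b]$ one builds a bi-infinite itinerary in $\Sigma_M$ along which $\mathcal{H}\circ\sigma^n\le v$ for all $n$ while $\mathcal{H}\circ\sigma^{n_k}\to v$ along a subsequence $n_k\to+\infty$; this uses the concatenability of admissible words in $\Sigma_M$ together with the monotonicity and continuity of the continued-fraction-like maps, and gives $[a,b]\subset\cL(X,\infty)$. To reach a target $w$ above that interval, take an itinerary lying in $\Sigma_M$ except along a sparse sequence of times, where one inserts a single block $p^{m_k}$ whose excursion reaches height contribution exactly $w$: since, for each fixed $m$, the heights attained by a single $p^{m}$-excursion sweep an interval $[c_m,C_m]$ as the entering and exiting data vary, with $C_m\ge c_{m+1}$ (just as $[a_n;\dots]+[0;\dots]$ ranges over an interval of length greater than $1$ for each fixed value of $a_n$), the union $\bigcup_{m\ge m_0}[c_m,C_m]$ is a half-line; between the sparse times the height stays below the core bound $<w$, so the $\limsup$ is exactly $w$. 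Arranging the constants so that $[a,b]$ overlaps this half-line yields $[L_0,+\infty)\subset\cL(G,\infty)$ for a suitable $L_0=L_0(G,\infty)$, while $+\infty$ is realized, for instance, by a geodesic with unbounded $p$-blocks (equivalently, converging to the cusp).

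The main obstacle is the combination of the additivity claim for $\mathcal{H}$ in the first step with the thickness estimate in the second: one must show that the Bowen--Series coding produces Cantor sets thick enough for Hall's sum theorem and that $\mathcal{H}$ is, up to bounded distortion, additive in the forward and backward coordinates, so that Hall's interpolation argument survives the loss of the explicit Gauss-map structure. Unlike the Gauss map, the Bowen--Series map involves finitely many generators and the elliptic relators of $G$, and one has to control the geometry of a fundamental domain near the cusp precisely enough that long $p$-blocks genuinely play the role of large partial quotients; this is exactly where \cref{lemma:Perron} and the careful choice of the subsystem $\Sigma_M$ carry the argument.
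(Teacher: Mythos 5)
Your proposal is essentially the paper's own argument: Bowen--Series boundary expansions together with the Perron-type formula of \cref{lemma:Perron}, Cantor sets of boundary points with bounded cuspidal (parabolic) blocks satisfying a gap/thickness condition, Hall's theorem on sums of Cantor sets showing that for each fixed length $s$ of the inserted parabolic block the attainable values $s\mu+\KK_N-\KK_N$ form an interval of length $>\mu$ so that consecutive intervals overlap and their union is a half-line (\cref{thm:sumHallFuchsian}, \cref{cor:decomposition}), and finally a bi-infinite itinerary with sparse $\eta^{s}$-blocks flanked by longer and longer Cantor-set tails, whose intermediate excursions are bounded by roughly $(N+1)\mu$ so the $\limsup$ equals the target value -- exactly the construction and case analysis in the paper's proof of \cref{thm:Hall}. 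Two minor remarks: the paper disposes of the elliptic elements you flag by coding with respect to a torsion-free finite-index subgroup $\Gamma<G$ (\cref{costruzioneF}), and your side claims that the core sumset $K^{+}+K^{-}$ has ``length as large as we wish'' and that every $v$ in that interval is realized by core-only itineraries are neither correct as stated nor needed, since the overlapping translated intervals coming from the inserted parabolic blocks already produce the Hall ray.
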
 

The result extends also to other cusps of $G$ as follows. Let us first remark that the presence of Hall rays does not depend on the choice of normalizations for the width of the cusp at $\infty$.

\begin{rem}\label{rem:conjugacy_rays} If $G' = \bar{g}\, G \, \bar{g}^{-1}$ is obtained by conjugating $G$ by an element of $\bar{g}\in \PSL(2,\RR)$  which fixes infinity,  $\cL(G,\infty)$ and $\cL(G',\infty)$ are obtained by each other by a smooth change of coordinates. 
In particular, $\cL(G,\infty)$ contains a Hall ray if and only if $\cL(G',\infty)$ does.
More precisely, if $\bar{g}=\left(\begin{smallmatrix} \lambda & \nu \\ 0 & 1/\lambda \end{smallmatrix} \right)$, then one has that the entry $c(\bar{g}g \bar{g}^{-1})=c(g)/\lambda^2$ for every $g\in G$.
Thus, using the explicit form of $\bar{g}$, we have
\[
\begin{split}
L_{G'}(\bar{g}\cdot\alpha)
	&=\sup \left\{ k : \left|\bar{g}\cdot\alpha-\bar{g}g \bar{g}^{-1} \cdot\infty\right|<\frac{1}{kc(\bar{g}g \bar{g}^{-1})^2}  \text{ for infinitely many } g\in G \right\}\\
	&=\sup \left\{ k : \lambda^2\left|\alpha-g\cdot\infty\right|<\frac{\lambda^4}{kc(g)^2}  \text{ for infinitely many } g\in G \right\}\\
	&= \frac{1}{\lambda^2} L_G(\alpha).
\end{split}
\]
\end{rem}

The Lagrange spectrum $\cL(G,e)$ with respect to a different cusp $e$ can be obtained by conjugating by an appropriate element of $\PSL(2,\RR)$ sending $e$ to $\infty$, once a normalization has been chosen (see for example \cref{sec:mainresult_Fuchsian} or \cref{sec:setup} for a natural one).
By \cref{rem:conjugacy_rays} the presence of a Hall ray does not depend on the actual choice of the normalization. We  have the following immediate corollary of \cref{thm:Hall}.


\begin{cor}\label{othercusps}
Let $G\subset \PSL(2,\mathbb{R})$ be a non uniform lattice. For any  $e$ be a cusp of $G$, the Lagrange spectrum $\cL(G,e)$ of $G$ with respect to $e$ contains a \emph{Hall ray}.
\end{cor}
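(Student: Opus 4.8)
\emph{Proof sketch.} The plan is to deduce the statement from \cref{thm:Hall} by conjugating $G$ so that the prescribed cusp $e$ is moved to $\infty$. Since $\PSL(2,\RR)$ acts transitively on the boundary $\RRbar=\partial\HH$, we may pick $\bar g\in\PSL(2,\RR)$ with $\bar g\cdot e=\infty$, and set $G':=\bar g\,G\,\bar g^{-1}$. Conjugation by an element of $\PSL(2,\RR)$ is a hyperbolic isometry of $\HH$, so $G'$ is again a discrete subgroup and $G'\backslash\HH$ is isometric to $X=G\backslash\HH$; in particular $G'$ is a non uniform lattice. Moreover, if $p\in G$ is a non trivial parabolic element fixing $e$, then $\bar g\,p\,\bar g^{-1}\in G'$ has trace $2$ as well and fixes $\bar g\cdot e=\infty$, so $\infty$ is a cusp of $G'$.

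Applying \cref{thm:Hall} to the pair $(G',\infty)$ yields an $L_0'=L_0'(G',\infty)\in\RR$ with $[L_0',+\infty]\subset\cL(G',\infty)$. By the definition of the Lagrange spectrum at a cusp recalled just before the statement of \cref{othercusps}, once a normalization of the width of the cusp $e$ has been chosen one has $\cL(G,e)=\cL(G',\infty)$; and by \cref{rem:conjugacy_rays} two different such normalizations differ precisely by conjugation by an element fixing $\infty$, hence the corresponding spectra differ by a homothety $t\mapsto t/\lambda^2$, so one contains a Hall ray if and only if the other does. Therefore $\cL(G,e)$ contains a Hall ray $[L_0,+\infty]$ for a suitable $L_0\in\RR$ (concretely $L_0=L_0'/\lambda^2$, with $\lambda$ the homothety factor attached to the chosen normalization).

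There is essentially no genuine obstacle here: the entire content of the corollary is already carried by \cref{thm:Hall}, and the only point that requires a little care is the book-keeping of the normalization at the cusp $e$, which is exactly what \cref{rem:conjugacy_rays} settles. Alternatively, one could argue intrinsically: the Diophantine condition defining $\cL(G,e)$ — approximation of $G$-irrational points by points in the $G$-orbit of $e$, weighted by the appropriate ``denominator'' at $e$ — is transported verbatim by $\bar g$ to the condition defining $\cL(G',\infty)$, giving the equality $\cL(G,e)=\cL(G',\infty)$ directly once the normalization is fixed, and then one concludes as above.
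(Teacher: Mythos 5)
Your proposal is correct and follows exactly the route the paper takes: conjugate $G$ by an element of $\PSL(2,\RR)$ sending $e$ to $\infty$, apply \cref{thm:Hall} to the conjugate, and invoke \cref{rem:conjugacy_rays} to see that the presence of a Hall ray is independent of the chosen normalization of the cusp width. The paper treats this as an immediate corollary with no further argument, so your careful book-keeping (discreteness, lattice property, and parabolicity being preserved under conjugation) is simply a more explicit write-up of the same proof.
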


This results should be compared with the existence of Hall rays proved by Schmidt and Sheingorn in~\cite{SchmidtSheingorn} for the Markoff spectrum in an analogous setup. 
In general, it is easier to construct values in the Markoff spectrum than in the Lagrange spectrum, essentially because of the presence of a supremum instead than a $\limsup$ in \cref{eq:classicdefLagrange}. In order to show that a certain value is in the Markoff spectrum is achieved, Schmidt and Sheingorn construct a geodesic which starts achieving a (sufficiently high) desired value of the height function. Then, to guarantee that this value is indeed the supremum, they  use a  symbolic coding (which essentially  counts winding numbers in the cusp at $\infty$)  and \emph{slide} the endpoints of the geodesic to guarantee that further excursions in the cusp are of lower height. On the other hand, for the Lagrange spectrum,  one needs to construct a \emph{sequence} of increasing times for which the height tends to the desired  value. To achieve this much more delicate form of control of the geodesic behavior, we also use symbolic coding (in the form of the boundary expansions first described by Bowen and Series) but then need to adapt to the Fuchsian setting Hall's original ideas in particular by reducing the result to the study of a sums of Cantor sets on the boundary. See \cref{sec:outline} for more details on the strategy of proof. 



\subsection{Hall rays for dynamical Lagrange spectra of Riemann surfaces} \label{sec:mainresult_Fuchsian}
The Lagrange spectra $\cL(X, \infty)$,  defined in terms of Diophantine approximation in Fuchsian groups, can be interpreted, as  we saw in \cref{sec:Lagrange_defs}, as  penetration spectra for geodesics at the cusp at $\infty$ with respect to the height function.  From this point of view, it is natural to consider simultaneous penetration in other cusps and, more  generally, different notions of \emph{penetration}. Simultaneous penetration in the cusps can be defined with respect to any \emph{proper} function from the surface to $\mathbb{R}^+$ (see below). The main results stated in this section (\cref{thm:simultaneous} and \cref{thm:Hallperturbations}) concern these more general Lagrange penetration spectra and shows that Hall rays defined with respect to height functions are \emph{stable}, i.e.\ persistent  under (Lipschitz) perturbation, in a sense which is made precise in \cref{sec:mainresult_stable}.

\smallskip
We consider in this section any Riemann surface $X$ with genus $g$ and $n$ punctures, such that $\chi(X):= 2-2g -n<0$.  We adopt in this paper the convention (used for example by Beardon \cite{Beardon}) to call Riemann surfaces also two dimensional hyperbolic orbifolds (the modular \emph{surface} is such  an example since it has two orbifold singularities). These, also called \emph{marked} or singular Riemann surfaces, are all finite quotients of smooth Riemann surfaces. The Uniformization theorem gives that  $X = G\backslash\HH $ is the quotient of $\HH$ under some Fuchsian group $G$ acting by the left action given by M\"obius transformations. Orbifold singularities of $X$ correspond to elliptic elements if $G$, so $X$ is a smooth Riemann surface iff $G$ contains no elliptic elements.

Let $h\colon\HH\to\RR_+$ be a $G$-invariant continuous function. Equivalently, $h$ induces a function on the quotient $X=G\backslash\HH$, which we will still denote by $h$. We assume that $h$ is \emph{proper}, meaning a function such that the preimage of a compact set is a compact sets. In particular, $h$ diverges in the cusps of $X$. 

One can define a generalization of the Lagrange spectrum by measuring the asymptotic excursion into the cusps with respect to the function $h\colon X=G\backslash\HH\to\RR_+$ as follows.  
Let $t\mapsto\gamma(t)$ be a geodesic on $X$ and let 
\begin{equation}\label{def:limsupdef}
	L_G(h,\gamma) = L(h,\gamma) :=\limsup_{t\to + \infty}{h(\gamma(t))}. 
\end{equation}
We will often drop the explicit dependence on $G$ since it is implit in the symmetries of the function $h$. Then we call $\cL(X,h)$ the corresponding Lagrange spectrum, given by the set of values $L(h,\gamma)$ for $\gamma$ geodesics on $X$. These type of Lagrange spectra are also called \emph{dynamical Lagrange spectra} in the literature. Dynamical spectra were in particular studied in the seminal works by  \cite{Maucourant,ParkkonenPaulin,HersonskyPaulinDiophantine} and have seen a recent surge of interest, see for example \cite{AMU,BD,CMMHausdorff,Ferenczi,HLMU,HMU,MatheusCIRM}.

If the surface $X$ has only one cusp at infinity, $\operatorname{height}(\cdot)$ is an example of a \emph{proper} function on $X$. Proper functions when there are more cusps can be build for example by measuring penetration in each cusp with respect to a height function in that cusp and either adding them up or taking the maximum of these functions.    

A natural example of proper function to consider is given  by Paulin and Parkkonen in \cite{PP}. For each cusp $e$, let $\beta_{X,e}$ be the Busemann function on $X$ for the end $e$, normalized to converge to $+\infty$ towards $e$ and to vanish on the boundary of the maximal open Margulis neighborhood (definitions can be found in \cite{PP}). We remark that $\beta_{X,\infty}= 2 \log \height_G$ (see~\cite{HersonskyPaulinDiophantine}), so in particular these two penetrations have the same Hall rays.  
The \emph{Busemann height function} $\beta_X$ is defined to be the maximum of the functions $\beta_{X,e}$ over the cusps $e$ of $X$. 

The Lagrange spectrum $\cL(X)$ of  the Riemann surface $X$ is then defined by Paulin and Parkkonen to be the dynamical spectrum $\cL(X,\beta_X)$ with respect to the Busemann height function $\beta_X$. Let us first highlight, for its intrinsic interest, a result which will follow as a special case of the more general \cref{thm:Hallperturbations} that we will state in \cref{sec:mainresult_stable}.

\begin{thm}[Hall ray for Riemann surfaces]\label{thm:simultaneous}
For any $X$ non compact, finite volume Riemann surface with $\chi(X)<0$, the Lagrange spectrum $\cL(X):=\cL(X,\beta_X)$  contains a \emph{Hall ray}.
\end{thm}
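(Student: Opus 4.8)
\textbf{Proof strategy for \cref{thm:simultaneous}.} The plan is to deduce this theorem as a special case of the more general perturbation result \cref{thm:Hallperturbations}, so the main task is to verify that the Busemann height function $\beta_X$ fits the framework of that theorem. First I would fix a cusp $e$ of $X$, conjugate by an element of $\PSL(2,\RR)$ so that $e=\infty$ (legitimate by \cref{rem:conjugacy_rays} and \cref{othercusps}, since Hall rays are conjugation invariant), and normalize the width of the cusp. Near this cusp, the remark that $\beta_{X,\infty}=2\log\height_G$ identifies the $\infty$-component of the Busemann function with a smooth, strictly increasing (hence bi-Lipschitz on the relevant range) reparametrization of the essential-height penetration already studied in \cref{thm:Hall}; in particular the two functions have the same Hall rays. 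The point is then that outside a compact core, $\beta_X$ agrees with $\beta_{X,\infty}$ in a neighborhood of the cusp $\infty$ (the other Busemann functions $\beta_{X,e'}$ stay bounded there, so the maximum is attained by the $\infty$-term once one is deep enough in that cusp), so deep excursions into the cusp $\infty$ are measured by $\beta_X$ exactly as by $2\log\height_G$ up to a fixed smooth change of variable.

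Next I would invoke \cref{thm:Hall}: the Lagrange spectrum $\cL(G,\infty)=\cL(X,\infty)$ with respect to the height function contains a Hall ray $[L_0,+\infty]$. Pushing this forward through the monotone reparametrization $r\mapsto 2\log r$ (which sends half-lines to half-lines), one obtains that the set of values $\limsup_{t\to+\infty}\beta_{X,\infty}(\gamma(t))$, taken over geodesics $\gamma$ on $X$, contains a half-line $[\beta_0,+\infty]$. Since the geodesics realizing large values of the essential height in the cusp at $\infty$ spend, by construction, all their sufficiently-deep excursions in that same cusp, for such geodesics the Busemann height $\beta_X(\gamma(t))$ coincides with $\beta_{X,\infty}(\gamma(t))$ whenever the latter exceeds a fixed threshold; hence $\limsup_{t\to+\infty}\beta_X(\gamma(t))=\limsup_{t\to+\infty}\beta_{X,\infty}(\gamma(t))$ for these geodesics, and every value in $[\beta_0,+\infty]$ with $\beta_0$ enlarged past that threshold is attained. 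This gives $[\beta_0,+\infty]\subset\cL(X,\beta_X)=\cL(X)$, which is the claim.

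\textbf{Main obstacle.} The delicate point is the interaction between the different cusps: one must ensure that the geodesics constructed to realize values in the Hall ray at the cusp $\infty$ do not accidentally make even deeper excursions into some \emph{other} cusp $e'$, which would make $\beta_X(\gamma(t))$ jump above the intended $\limsup$. This is exactly the kind of control that \cref{thm:Hallperturbations} is designed to provide — it handles proper functions that are Lipschitz-close to (a height function plus) contributions from other cusps — so in the final write-up I would not prove \cref{thm:simultaneous} directly but rather state it here and defer, explaining that $\beta_X$ is covered by the hypotheses of \cref{thm:Hallperturbations}: the Busemann height function restricted to a neighborhood of the working cusp is, after the logarithmic change of variable, a bounded Lipschitz perturbation of $\height_G$, and the contributions from the other cusps are uniformly bounded on the region swept by the relevant geodesics. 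The remaining steps — checking that the logarithmic reparametrization preserves the half-line structure, and that enlarging the left endpoint of the ray absorbs all the additive constants coming from normalization of cusp widths and from the compact core — are routine and I would only sketch them.
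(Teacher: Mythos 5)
Your final plan coincides with the paper's: the paper offers no separate argument for \cref{thm:simultaneous} beyond observing that $\beta_{X,\infty}=2\log\height_G$ and deriving the theorem as a special case of \cref{thm:Hallperturbations}, which is exactly where your proposal lands. Two precisions are worth making in the write-up. First, \cref{thm:Hallperturbations} cannot be applied to $\beta_X$ itself, since $2\log y - y$ is unbounded and so $\beta_X$ is not Lipschitz-close to $\Im(\cdot)$ on any horodisk; the correct move (which your ``change of variable'' remark gestures at, though stated in the wrong direction) is to apply it to $h:=e^{\beta_X/2}$, which on a sufficiently deep horodisk $\cU_{l_0}$ equals $\Im(\cdot)$ exactly — because there $\beta_X=\beta_{X,\infty}$, the other Busemann functions being nonpositive — so the hypothesis \eqref{Lip_assumptionG} holds trivially, and the resulting Hall ray in $\cL(X,h)$ is then transported to $\cL(X,\beta_X)$ by the increasing bijection $2\log$, which sends half-lines to half-lines. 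Second, you are right to discard the route of your middle paragraph through \cref{thm:Hall}: from the statement of that theorem alone one gets neither control on the excursions of the realizing geodesics into the \emph{other} cusps nor a forward-time $\limsup$ — the value $L_G$ is expressed through the essential height, i.e.\ a $\limsup$ as $|t|\to\infty$ — and the paper explicitly flags precisely these two points as the obstruction to passing formally between \cref{thm:Hall} and the proper-function setting, so that shortcut cannot be made rigorous without reopening the proof of \cref{thm:Hall}.
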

As evidenced by the brief history in the previous section, the existence of a Hall ray 
 was known for dynamical Markoff and Lagrange spectra dimension greater than 3 \cite{ParkkonenPaulin} and  for Markoff spectra of Riemann surfaces~\cite{SchmidtSheingorn}, as well as for Lagrange spectra in other dynamical contexts, such as \cite{HMU,AMU}. Thus, our work deals with the only case that was surprisingly still open in the constant curvature case, namely Lagrange spectra in dimension $2$.  
It is worth to remark that the existence of Hall rays is actually not expected to hold in general in dimension $2$ with variable negative curvature, see~\cite{ParkkonenPaulin}. 

\subsection{Persistence of Hall rays for Lagrange spectra of Riemann surfaces}
\label{sec:mainresult_stable}
We state now the most general result we prove in this paper (of which \cref{thm:simultaneous} is a corollary), which shows in particular that the Hall ray for $\cL(X,\beta_X)$ is \emph{stable} under Lipschitz perturbations.  We consider proper  functions $h$ which behave in at least one  cusp as a Lipschitz perturbation of the essential height function in that cusp, in the following sense.  



Recall that a horodisk at infinity is a set of the form $\cU_l = \{ z\in\HH : z=x+iy, y>l \}$ for some $l>0$, such that its image on the surface $X=G\backslash\HH$ is a \emph{Margulis neighborhood}, i.e.\ is homeomorphic to a punctured disk. 
The \emph{fundamental horodisk} at infinity is
\begin{equation}\label{eq:fundamentalhorodisk}
	\cU_m,  \quad  \text{where $m$ is the \emph{minimal} $l>0$ s.t.\ $\cU_l$ is a Margulis neighbourhood}.
\end{equation}
The projection of $\cU_m$ on $X$ is called the \emph{maximal Margulis neighborhood} of the cusp at $\infty$. We will call $m$ the \emph{height} of the maximal Margulis neighborhood.

Let $U$ be an open subset  in $\HH$ 
 and let $g\colon U\to\RR$ be a continuous function bounded on $U$.
Recall that the uniform norm of $g$ is 
\[
	\|g\|_\infty:=\sup_{z\in U}g(z).
\]
Recall also that when $g$ is a \emph{Lipschitz function}, the \emph{Lipschitz constant} of $g$ is
\begin{equation} \label{def:Lip_const}
	\lipschitz(g):=\sup_{z,w\in U}\frac{|g(z)-g(w)|}{|z-w|}.
\end{equation}
Finally, if $g\colon U\to\RR$ is a bounded Lipschitz function we define its \emph{Lipschitz norm} as 
\[
	\|g\|_{\text{Lip}}:=\|g\|_\infty+\lipschitz(g).
\]

The following result shows that the presence of a Hall ray is open  under perturbations in the Lipschitz norm.  
\begin{thm}[Hall ray for perturbations]\label{thm:Hallperturbations}
Let $G\subset \PSL(2,\mathbb{R})$ be a non uniform lattice. Assume that  $\infty$ is a cusp of $G$. 
Let  $h\colon\HH\to\RR_+$ be a $G$-invariant continuous function such that the induced function on $X=G\backslash\HH$ is proper.
	
There exists a constant $\delta_G>0$ such that, if there exists an $l_0>0$ so that 
\begin{equation}\label{Lip_assumptionG}
	\left\|\bigl( h-\Im(\cdot)\bigr)|_{\cU_{l_0}}\right\|_{\text{Lip}}<\delta_G, 
\end{equation}
then the Lagrange spectrum $\cL(X,h)$ contains a \emph{Hall ray}.  
\end{thm}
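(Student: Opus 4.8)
\emph{Strategy.} The plan is to realise every value $L\in[L_0,+\infty]$ as $\limsup_{t\to+\infty}h(\gamma(t))$ for a suitable geodesic $\gamma$ on $X$, chosen so that all of its deep excursions take place in the cusp at $\infty$. Since $h$ is proper, it is bounded, say by $M_0$, on the complement of $\cU_{l_0}$ (which we may assume is a Margulis neighbourhood, enlarging $l_0$ to $\max(l_0,m)$ if necessary, since this only decreases the norms in \eqref{Lip_assumptionG}) together with small fixed Margulis neighbourhoods of the remaining cusps; we take $L_0>M_0+1$ and arrange that $\gamma$ makes only uniformly bounded excursions into the other cusps, so that only its excursions into $\cU_{l_0}$ — on which, by \eqref{Lip_assumptionG}, $|h-\Im|<\delta_G$ pointwise — contribute to the $\limsup$. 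The first ingredient is a symbolic model of geodesics: fixing a suitable fundamental domain for $G$ and using the associated Bowen--Series boundary expansion, a geodesic $\gamma=\gamma(x^-,x^+)$ is coded by a bi-infinite word in the generators, an excursion of $\gamma$ into the cusp at $\infty$ corresponds to a maximal run of a distinguished parabolic generator, and — by a Perron-type formula for the essential height, in the spirit of Perron's classical formula for $\cL$ — the depth of penetration of such an excursion (that is, the local maximum of $\Im$ along $\gamma$, hence, up to an additive error $<\delta_G$, the local maximum of $h$) is a continuous function $S(\mathbf w_-,n,\mathbf w_+)$ of the length $n$ of the run and of the two one-sided tails $\mathbf w_\pm$ of the word adjacent to the run, whose dominant part is $n$ plus the sum of two boundary-expansion analogues of continued fractions. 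This mirrors the classical identity $L(\alpha)=\limsup_k\bigl(a_{k+1}+[0;a_k,a_{k-1},\dots]+[0;a_{k+2},a_{k+3},\dots]\bigr)$ underlying Hall's argument.

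\emph{The perturbed Cantor sum theorem.} The core of the proof is a Lipschitz-stable version of Hall's theorem on sums of Cantor sets. Restrict the tails $\mathbf w_\pm$ to a Cantor set $\cC$ of expansions with uniformly bounded digits that contain no long parabolic runs, so that the associated geodesics stay inside a fixed compact core of $X$ disjoint from every cusp; then, for a fixed run length $n$, the attainable values $S(\mathbf w_-,n,\mathbf w_+)$ are controlled by the sum set $\cC+\cC$, which Hall's chaining argument shows contains a closed interval of length bounded below independently of $n$, with the intervals obtained for consecutive values of $n$ overlapping by a definite amount. Choosing the allowed digit set so that $\cC$ has large Newhouse thickness makes this covering \emph{robust}, in the sense that any function $F$ on $\cC\times\cC$ which is within $\delta_G$ of the model sum in the Lipschitz norm still has image containing a slightly shrunk interval, the shrinkage being less than half of the overlap, so that the perturbed intervals still cover a half-line $[L_0,+\infty)$ as $n$ varies. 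The role of the hypothesis \eqref{Lip_assumptionG} is exactly this: replacing $\Im$ by $h$ moves the point of the geodesic at which the excursion height is attained by an amount that is Lipschitz in the coding parameters, and composing the resulting Lipschitz map into $\cU_{l_0}$ with $h-\Im$ shows that the true $h$-height of an excursion, as a function of $(\mathbf w_-,\mathbf w_+)\in\cC\times\cC$, is a perturbation of $S$ whose Lipschitz size is bounded by $\delta_G$ times constants depending only on $G$; the Lipschitz (rather than merely uniform) smallness is essential here because $\cC\times\cC$ is totally disconnected and one must control the images of the ``bridges'' linking the gaps. One therefore fixes $\cC$, and hence the admissible $\delta_G=\delta_G(G)$, before $h$ enters the picture.

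\emph{Assembling the geodesic.} Given $L\in[L_0,+\infty)$, the previous step yields a run length and tails realising $h$-height exactly $L$, and, by continuity of $S$ in the tails, for every $k$ a run length $n_k$ and finite truncations of the tails for which the corresponding excursion has $h$-height within $1/k$ of $L$ and never exceeding $L$. One now builds a bi-infinite word made of infinitely many such blocks (the $k$-th block using the $k$-th truncation), separated by long stretches of a fixed admissible pattern containing no long parabolic run; the separating stretches keep $\gamma$ in the compact core, where $h\le M_0<L_0$, and between consecutive blocks every excursion into the cusp at $\infty$ has $h$-height $\le L$, while the block heights increase to $L$. The geodesic $\gamma=\gamma(x^-,x^+)$ whose forward (and backward, symmetrising as in \cref{how_to_go_to_infinity}) coding is this word then satisfies $\limsup_{t\to+\infty}h(\gamma(t))=L$. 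The value $L=+\infty$ is obtained by letting the run lengths of the blocks diverge. Finally, \cref{thm:Hall} is the special case $h=\Im(\cdot)$ of the statement (so that $\delta_G$ plays no role), and \cref{thm:simultaneous} follows from it because, on the geodesics just constructed, $\beta_X$ coincides with $\beta_{X,\infty}$ near the cusp at $\infty$ and is bounded elsewhere, and $\beta_{X,\infty}=2\log\height_G$ is an increasing reparametrisation of the height, under which Hall rays are preserved.

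\emph{Main obstacle.} The crux is the second step: proving the Lipschitz-stable version of Hall's Cantor sum theorem with a perturbation threshold $\delta_G$ that is uniform over all run lengths $n$, and, in tandem, verifying that passing from $\Im$ to the proper function $h$ genuinely amounts to a perturbation small in the relevant Lipschitz sense of the model ``sum of two continued fractions'' function. This is where hypothesis \eqref{Lip_assumptionG} is consumed and where essentially all of the analytic work resides; the symbolic coding and the final concatenation, by contrast, follow the template of Hall's classical construction.
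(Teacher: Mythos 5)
Your proposal is correct in outline and follows essentially the same route as the paper: Bowen–Series coding with excursions read off as runs of the parabolic generator, a Cantor set of expansions with bounded cuspidal words whose geodesics stay in a compact core (the paper's \cref{LemmaEstimateExcursionCuspidalWords}), a Perron-like formula expressing excursion depth through a function of the run length and the two tails (\cref{PropositionFormulaContinuedFraction}), a Lipschitz-stable Hall theorem on Cantor sums (\cref{thm:stableHall}) giving overlapping intervals that cover a half-line, and a block-concatenation of truncated tails to realize each value. Two small caveats that do not affect the argument for \cref{thm:Hallperturbations}: the Lipschitz closeness of the excursion-height function to the model sum is obtained in the paper (\cref{PropositionLipschitzNorm}) by a direct comparison of suprema along the sliding and radial directions rather than via Lipschitz dependence of the maximizing point, which need not hold; and your closing remark that \cref{thm:Hall} is the special case $h=\Im$ is not literally correct, since $\Im$ is neither $G$-invariant nor proper when $X$ has several cusps and the essential height is a $\limsup$ over $|t|\to\infty$, which is why the paper gives that theorem a separate (simpler) proof.
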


\begin{rem}\label{m1}
More precisely, we show that if $G$ is normalized so that the height of the maximal Margulis neighborhood is equal to $1$, then $\cL(X,h)$ contains a \emph{Hall ray} as long as, for some $l_0>0$, we have
\begin{equation}\label{Lip_assumption}
	\left\|\bigl( h-\Im(\cdot)\bigr)|_{\cU_{l_0}}\right\|_{\text{Lip}}<\frac{1}{4\sqrt{2}}.
\end{equation}
\end{rem}

\begin{rem} As in the case of \cref{thm:Hall} (see \cref{othercusps})), the assumption that $\infty$ is a cusp is not a real  restriction. Given a Riemann surface $X$ and a proper function $\overline{h}\colon X \to \mathbb{R}_+$, the spectrum $\cL(X,\overline{h})$ contains a Hall ray as as long as there exists a cusp  $e$ of $X$ and a uniformization $X=G\backslash\HH$ such that $e$ lifts to $\infty$ and $m=1$, and a lift $h\colon \HH \to \mathbb{R}^+$ of $\overline{h}$ for which~\eqref{Lip_assumption} is satisfied. 
\end{rem}

\subsection{Some ideas in the proofs}\label{sec:outline}

Let us now give some details on the way in which we prove the main results that we stated in the two previous sections.
Our approach follows, and adapts to our context, the classical approach of Hall in~\cite{Hall}, that we now summarize. The starting point of Hall's approach is 
a classical formula, due to Perron~\cite{Perron}, that allows to compute the Lagrange value of a real number $\alpha$ given its continued fraction expansion.
If $\alpha=[\ab{0};\ab{1},a_2\dots]$, then Perron's formula for its Lagrange value is the following expression:
\begin{equation}\label{Perron_formula_CF}
	L(\alpha) = \limsup_{n\to \infty}{ [0;a_{n-1},a_{n-2},\dots,\ab{0}] + \ab{n} + [0;a_{n+1},a_{n+2},\dots]}.
\end{equation}
The expression inside the $\limsup$ consist of  \emph{central digit}, $\ab{n}$, and two \emph{tails} given by continued fraction expansions. 
It is well known that the set $\KK_N$ of numbers in $[0,1]$ whose continued fractions expansion  digits are all bounded by an integer $N$ form a Cantor set. At the heart of Hall's work, there  is a statement about these Cantor sets: he proves that if $N\geq 4$ the Cantor set $\KK_N$ is  sufficiently \emph{thick} so that the sum set $\KK_N+ \KK_N$ contains an interval. Hall's idea is then to construct  real numbers $\alpha$ that realize any sufficiently large value $L$ in the Lagrange spectrum by their continued fraction expansion, using larger and larger blocks formed by a large central digit $\ab{n}$ set to be the integer part of $L$, and two carefully selected finite tails (with bounded digits), which converge to two elements in the Cantor sets which add up to the fractional part of $L$. 
Evaluating Perron's formula on this special sequence then yields the desired Lagrange value.

\smallskip
The starting point for our work is that, since the naive height of a geodesic is (half) the difference of its end points on the real line, one can carry a strategy similar to Hall's one, with the endpoints playing the role of the \emph{tails}. The first key tool used to carry our this strategy is a nice symbolic coding: we use Bowen-Series boundary expansions (with respect to a finite index subgroup $\Gamma< G$ without elliptic points) to code geodesics and to provide a geometric substitute for classical continued fractions expansions. 

To study Lagrange spectra with respect to a proper function in presence of several cusps, it is key that through this coding one can \emph{see} (large) excursions into \emph{each} cusp. To control these excursions, we use  \emph{decomposition into cuspidal words}, a notion which was introduced in our previous work \cite{AMU}. A delicate point we show is that by (locally) bounding the lengths of cuspidal words one is able to estimate the penetration into \emph{all} cusps (see \cref{LemmaEstimateExcursionCuspidalWords}), in order to then be able to achieve Lagrange values by prescribing larger excursions  in a given chosen cusp. 


For the general setting considered in \cref{thm:Hallperturbations}, 
the final key tool 
is a generalization of Hall classical result on sums of Cantor sets, which gives a sufficient condition for such a sum to contain an interval.
We prove what we call a \emph{stable} version of Hall's result, where stability is meant here under (bounded size) perturbations, with respect to the Lipschitz norm, of the sum
function.  
We give  more details and 
a precise formulation of this result in the following \cref{SectionStableHallTheorem}. 

Finally, let us point out that \cref{thm:Hall} is \emph{morally} a special case of \cref{thm:Hallperturbations}. We say morally since  one cannot formally  deduce \cref{thm:Hall} from \cref{thm:Hallperturbations}. In fact the function $\operatorname{height} (\cdot) $ is not proper if $X$ has more than one cusp and, moreover, $L_G(\cdot)$ can be expressed in terms of the essential height of a geodesic as a $\limsup$ as $|t| \to \infty$ (see~\eqref{heightlimsup}) and not as $t \to \infty$. 
However both these points can be though easily taken into account via simple technical tricks (i.e.\ artificially creating a proper function with the same spectrum and using symmetric geodesics, as in \cref{{heightlimsup}} for the modular surface). 
We chose to present in \cref{sec:Hallpenetration} an independent proof of \cref{thm:Hall} for two reasons: first of all, the arguments required to prove \cref{thm:Hall} are much more direct and essentially exploit coding and geometric arguments, combined with a generalization to the Fuchsian context of Hall's original strategy.
Secondly, we believe that the proof of  \cref{thm:Hall} might serve as a gentle guide for the reader to the ideas exploited in the rather more technical \cref{thm:Hallperturbations}.
Indeed, while the main strategy is the same, additional layers of technical difficulty in the proof of \cref{thm:Hallperturbations} come from the need to simultaneously control excursions in all cusps and to generalize Hall's result on the sum of Cantor sets in order to be able to deal with Lipschitz perturbations of the height function.
We also remark that the full strength of the symbolic coding we use, in particular of the decomposition into cuspidal words (see \cref{sec:cuspidalwords}), is only used for   \cref{thm:Hallperturbations}.
For \cref{thm:Hall} it would  in principle be enough to control, through the coding, only the excursions into the cusp at $\infty$.
We instead use the same Cantor set ($\BB_N \subset \partial \DD$ and its image $\KK_N \subset \mathbb{R}=\partial \HH$, see \cref{sec:Cantor}) for both the proof of \cref{thm:Hall} and the one of \cref{thm:Hallperturbations}, in order to prove only once the distortion and gaps estimates needed to apply results on the sum or the perturbation of the sum of Cantor sets.   


\subsection{A stable version of Hall's theorem on the sum of Cantor sets}\label{SectionStableHallTheorem}
We conclude this section by formulating the generalization of Hall's theorem on the sum of Cantor sets on which our result on perturbed Lagrange spectra, namely \cref{thm:Hallperturbations}, is based. This statement is an abstract result on Cantor sets, which might be of independent interest, given the large literature on these type of questions (for example~\cite{Newhouse,Astels,MoreiraYoccoz}). In order to formulate the result, We need first to give a series of definitions on the way a Cantor set is constructed and the properties of its \emph{holes}. 

 \smallskip

Let $\KK$ be any Cantor set in $\RR$. One  can present $\KK$ as intersection $\bigcap_{n\in\NN}\KK(n)$ of unions $\KK(n)$ of closed disjoint intervals. We now define the notion of \emph{slow subdivision}: intuitively, the reader should keep in mind that this definition convey that the Cantor set is build step by step by removing exactly one \emph{hole} at each stage.  We adopt the following notation. For any  $K$ be compact interval  and any  $B$ open interval with $B\subset K$ (where the inclusion is obviously strict), we denote by $K^L$ and $K^R$ the two closed subintervals  of $K$ such that 
\[
	K=K^L\sqcup B\sqcup K^R.
\]  
As suggested by the notation, we assume that $K^L$ is on the left side of $B$ and $K^R$ is on the right side of the \emph{hole} $B$.

A \emph{slow subdivision} of $\KK$ is a family of closed sets $\bigl(\KK(n)\bigr)_{n\in\NN}$ with $\KK(n+1)\subset\KK(n)$ for any $n\in\NN$ which satisfies the following properties.
\begin{enumerate}
\item	Any set $\KK(n)$ is the union of $n+1$ disjoint closed intervals, where in particular we have
			\[
				\KK(0)=[\min\KK,\max\KK].
			\]
\item	For any $n$ there is exactly one compact interval $K$ in $\KK(n)$ and a non-empty open subinterval $B_K$ of $K$ such that
			\[	
				K\cap\KK(n+1)=K\setminus B_K=K^L \sqcup K^R,
			\]
where $K^L$ and $K^R$ are two disjoint, non-empty, closed subintervals in $\KK(n+1)$.
\item 	We have 
			\[
				\bigcap_{n\in\NN}\KK(n)=\KK.
			\]
\end{enumerate}

The \emph{holes} of a Cantor set $\KK$ are the connected components of its complement which are contained in the interval $[\min \KK, \max \KK]$. We remark that holes are maximal open intervals in the complement.

\begin{rem}\label{rem:orderedholes}
In a slow subdivision the holes are naturally ordered: for any $n\in\NN$, the  \emph{$n^{\text{th}}$ hole} is the unique $B_K\subset K$, given by condition (2), which is removed from the connected component $K$ of $\KK(n)$ at stage $n+1$, i.e.\  such that  $K \cap \KK(n+1) = K \setminus B_K$. 
\end{rem}

 We say that a slow subdivision $\big(\KK(n)\big)_{n\in\NN}$ of the Cantor set $\KK$ satisfies the $\epsilon$-\emph{stable gap condition} for some $\epsilon>0$  if for any $n$, the $n^{\text{th}}$ hole $B_K$ (according to the terminology introduced in \cref{rem:orderedholes}) and its right and left closed intervals $K^L$ and $K^R$ satisfy
\begin{equation}\label{EqStableGapCondition}
	\frac{|B_K|}{|K^L| }<1-\epsilon
	\quad
	\text{ and }
	\quad 
	\frac{|B_K|}{|K^R| }<1-\epsilon.
\end{equation}
We say that the Cantor set $\KK$ satisfies the $\epsilon$-\emph{stable gap condition} if it admits a slow subdivision which satisfies the $\epsilon$-stable gap condition. 

Given two Cantor sets $\KK$ and $\FF$, we say that the pair of Cantor sets $(\KK,\FF)$ satisfies the $\epsilon$-\emph{size condition} if we have
\begin{equation}\label{EqStableSizeCondition}
	|B|\leq (1-\epsilon)|\KK|
	\quad
	\text{ and }
	\quad
	|C|\leq (1-\epsilon)|\FF|,
\end{equation}
where $|\KK|:=\max\KK-\min\KK$ is the length of $\KK$,  $|\FF|:=\max\FF-\min\FF$ is the length of $\FF$, and $B$ and $C$ are any pair of holes in $\KK$ and in $\FF$ respectively.

\smallskip
We can now state our stable version of Hall's Theorem. Consider the \emph{sum} function $S_0\colon\RR^2\to\RR$ defined by $S_0(x_1,x_2):=x_1+x_2$. Fix an open subset $U\subset\RR^2$; abusing the notation we still denote $S_0$ the restriction of $S_0$ to $U$. 
\begin{thm}[Stable Hall Theorem]\label{thm:stableHall}
Fix $\epsilon>0$ and let $\KK$ and $\FF$ be two Cantor sets in $\RR$, each one satisfying the $\epsilon$-stable gap condition and such that $\KK\times\FF\subset U$.
Assume that the pair $(\KK,\FF)$ satisfies the $\epsilon$-size condition. Then for any function $S\colon U\to\RR$ such that 
\begin{equation}\label{EqConditionStableHallTheorem}
\frac{1-\lipschitz(S-S_0)}{1+\lipschitz(S-S_0)}
>
1-\epsilon
\end{equation}
we have
\[
S(\KK \times \FF) = S\left( \left[\min \KK, \max \KK \right]\times \left[\min \FF, \max \FF \right]\right).
\]
\end{thm}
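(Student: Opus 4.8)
The proof will follow Hall's original geometric argument for the sum of thick Cantor sets, but carried out in a way robust to Lipschitz perturbations of the sum function. First I would reduce the statement to a geometric claim about the curve family $\{S = c\}$ meeting the product of intervals $[\min\KK,\max\KK]\times[\min\FF,\max\FF]$. Since $S(\KK\times\FF)\subset S([\min\KK,\max\KK]\times[\min\FF,\max\FF])$ is trivial, the content is the reverse inclusion: for every $c$ in the image of the big rectangle under $S$, the level set $\{S=c\}$ meets $\KK\times\FF$. I would present $\KK$ and $\FF$ through their slow subdivisions $(\KK(n))_n$ and $(\FF(n))_n$ satisfying the $\epsilon$-stable gap condition, so that $\KK(n)\times\FF(n)$ is a finite union of closed rectangles shrinking to $\KK\times\FF$. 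The core of the argument is then an induction showing that at every stage $n$ there is a rectangle $Q_n = K_n\times F_n$ in $\KK(n)\times\FF(n)$ whose image $S(Q_n)$ still contains $c$; passing to the limit and using properness/compactness gives a point of $\KK\times\FF$ in $\{S=c\}$.

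**The inductive step.** This is the heart, and the place where the hypotheses get used. Given a rectangle $Q_n = K\times F$ with $c\in S(Q_n)$, the next subdivision removes exactly one hole — either a hole $B_K\subset K$ on the $\KK$-side, splitting $K = K^L\sqcup B_K\sqcup K^R$, or symmetrically a hole $C_F\subset F$. In the first case the candidate successors are $K^L\times F$ and $K^R\times F$, and I must show that $c\in S(K^L\times F)$ or $c\in S(K^R\times F)$; equivalently, that the "forbidden" set $S(B_K\times F)$, while possibly containing $c$, cannot disconnect $S(K\times F)$ in a way that leaves $c$ only over the hole. For the unperturbed sum $S_0$ this is Hall's observation that the interval $S_0(B_K\times F) = B_K + F$ has length $|B_K| + |F|$, whereas $S_0(K^L\times F) = K^L+F$ and $S_0(K^R\times F) = K^R+F$ have lengths $|K^L|+|F|$ and $|K^R|+|F|$; the gap condition $|B_K| < (1-\epsilon)\min(|K^L|,|K^R|)$ together with the size condition $|B_K|\le(1-\epsilon)|\FF|\le (1-\epsilon)|F|$ — wait, one needs $|F|$ to stay comparable to $|\FF|$, which is where I should be careful: actually the size condition is about holes versus the full Cantor sets, and in the induction the relevant comparison is between the hole currently being removed and the current factor on the other side. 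I would track the invariant that each surviving factor $K_n$ (resp.\ $F_n$) has length at least that of the largest hole that could subsequently appear inside it, which the $\epsilon$-stable gap condition guarantees by construction of the slow subdivision. The conclusion in the $S_0$ case is that $S_0(B_K\times F)$ is strictly shorter than each of $S_0(K^L\times F)$, $S_0(K^R\times F)$, and overlaps both, so their union is an interval and the middle piece cannot contain a point not in at least one of the two outer pieces.

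**Handling the Lipschitz perturbation.** To upgrade this to general $S$ with $\mathrm{Lip}(S-S_0)$ small, I would use that $S$ distorts lengths of images of "monotone" curves by a factor between $1-\mathrm{Lip}(S-S_0)$ and $1+\mathrm{Lip}(S-S_0)$. Precisely: for a rectangle $R$, the sum $S_0$ maps it onto an interval of length $|R|_1 := (\text{width}) + (\text{height})$, traced out along the anti-diagonal direction, and $\|S - S_0\|$ being Lipschitz-small means $S(R)$ is an interval (it is still connected, being a continuous image of a connected set, and $S$ restricted to any anti-diagonal segment is strictly monotone once $\mathrm{Lip}(S-S_0)<1$) whose length lies in $[(1-\mathrm{Lip}(S-S_0))\,\ell,\ (1+\mathrm{Lip}(S-S_0))\,\ell]$ where $\ell$ is the $S_0$-length of the relevant anti-diagonal extent. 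Then the ratio appearing in condition~\eqref{EqConditionStableHallTheorem}, namely $\tfrac{1-\mathrm{Lip}(S-S_0)}{1+\mathrm{Lip}(S-S_0)} > 1-\epsilon$, is exactly what is needed so that the worst-case shrinkage of the two outer images, divided by the worst-case expansion of the hole's image, still exceeds $1$; so $S(B_K\times F)$ remains strictly shorter than both $S(K^L\times F)$ and $S(K^R\times F)$, and by connectedness these two still overlap the middle and each other, forcing $c$ into one of them. The symmetric case (removing a hole on the $\FF$-side) is identical by swapping roles. I would also need to check the base case $n=0$: $c\in S([\min\KK,\max\KK]\times[\min\FF,\max\FF]) = S(Q_0)$ by assumption on $c$, and that the intersection $\bigcap_n Q_n$ is a single point of $\KK\times\FF$ (the diameters go to zero since both slow subdivisions exhaust their Cantor sets), at which $S$ takes the value $c$ by continuity since $c\in S(Q_n)$ and $\mathrm{diam}(S(Q_n))\to 0$.

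**Main obstacle.** The delicate point I expect to spend the most care on is the bookkeeping in the inductive step: making sure the "length of the current factor dominates every future hole inside it" invariant is genuinely maintained by an arbitrary slow subdivision satisfying only the $\epsilon$-stable gap condition, and that the $\epsilon$-size condition is invoked correctly at the one place it is actually needed — namely to start the induction, comparing the very first holes to the full lengths $|\KK|$ and $|\FF|$ — rather than being silently assumed to propagate. The other subtlety is justifying that $S$ restricted to the rectangles behaves like a monotone perturbation of $S_0$ uniformly, i.e.\ extracting from $\mathrm{Lip}(S-S_0)<1$ the two-sided length distortion bound for images of anti-diagonal segments; this is elementary but must be stated cleanly so that the algebra of condition~\eqref{EqConditionStableHallTheorem} lands exactly where Hall's original inequality $|B_K| < \min(|K^L|,|K^R|)$ used to sit.
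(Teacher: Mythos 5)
Your overall skeleton matches the paper's proof — nested rectangles $K_i\times F_i$ containing a preimage of the target value, a splitting lemma saying that when a hole is removed the two outer sub-rectangles still cover the image, and a passage to the limit by continuity. The splitting lemma itself (the paper's \cref{LemSumOfIntervals}) is essentially what you sketch, although your phrasing of \emph{why} it holds is off: the relevant inequality is not ``the image of the hole slab is shorter than the images of the outer slabs'' (that would be $|B_K|<|K^L|,|K^R|$, i.e.\ the gap condition), but rather that the images of $K^L\times F$ and $K^R\times F$ overlap \emph{each other}, which in the unperturbed case is exactly $|B_K|\le |F|$, i.e.\ the hole compared with the length of the \emph{opposite} factor; the Lipschitz hypothesis~\eqref{EqConditionStableHallTheorem} is spent on the single estimate $S(e,d)>S(f,c)$ (with $B_K=(e,f)$, $F=[c,d]$), which needs $|B_K|<\frac{1-\lipschitz(S-S_0)}{1+\lipschitz(S-S_0)}|F|$. (Also, $S$ is \emph{not} monotone along anti-diagonal segments — $S_0$ is constant there; you mean the diagonal direction — but this monotonicity is not actually needed.)

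The genuine gap is in the bookkeeping that makes the cross-comparison $|B_{K_i}|<(1-\epsilon)|F_i|$ and $|C_{F_i}|<(1-\epsilon)|K_i|$ persist through the induction. The invariant you propose to track — each factor dominates the holes inside \emph{itself} — is automatic from the $\epsilon$-stable gap condition but is not the one the splitting lemma consumes; what must be controlled is a hole in one factor against the length of the \emph{other} factor, and this is not preserved under an arbitrary interleaving of the two given slow subdivisions (after many splits on the $\KK$ side, $K_i$ can be far smaller than the next hole waiting in $F_i$, and the splitting lemma then fails precisely when you are eventually forced to subdivide $F_i$). The paper's mechanism, which is absent from your proposal, is twofold: first replace each slow subdivision by a \emph{monotone} one (holes removed in decreasing order of length; \cref{LemGapConditionForGeometricSubdivision} shows the $\epsilon$-stable gap condition survives this reordering), and second, at each step subdivide the factor whose pending hole is \emph{larger}. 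With these two choices the balanced condition~\eqref{EqBalanceCondition} propagates: monotonicity gives $|B_{i+1}|\le|B_i|<(1-\epsilon)|F_{i+1}|$, and the gap condition gives $|C_{i+1}|=|C_i|\le|B_i|<(1-\epsilon)|K_{i+1}|$, while the $\epsilon$-size condition is used only at the base case, as you correctly guessed; it also guarantees that both factors are split infinitely often, so the rectangles shrink to a point of $\KK\times\FF$. Without this adaptive choice and reordering, your induction does not close.
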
 
\begin{rem}\label{rk:stable_gives_Cantorsum}
Let us show that this is indeed a generalization of the classical Hall's theorem. Observe that $S$ as in the statement is automatically continuous, indeed it is the sum of $S_0$ and a Lipschitz function. Thus,  if $K$ and $F$ are closed intervals, then $S(K\times F)$ is a closed interval too, by continuity of $S$. It hence follows that, in the special case when $S=S_0$ (and $U=\mathbb{R}^2$) is the sum function, the theorem shows that $\KK+\FF$ contains an interval, which is the conclusion of Hall's theorem. The assumptions of Hall, on the other hand, correspond to the $\epsilon$-stable and $\epsilon$-size condition for the limit case $\epsilon=0$. 
\end{rem}

\subsection*{Structure of the paper}
The rest of the paper is organized as follows.  In \cref{sec:background} we describe the symbolic coding of geodesics that we use. We first recall the simplest case of Bowen-Series coding and the notion of boundary expansions (see \cref{sec:background}). 
We also introduce the notions of \emph{cuspidal words} and decomposition into cuspidal words. 
In \cref{sec:Hallpenetration} we present the proof of  \cref{thm:Hall}, in particular introducing Hall's arguments. 
The starting point is a generalization of Perron's formula for the classical spectrum through symbolic coding, see \cref{lemma:Perron} in \cref{sec:Perron}. 
The only result needed whose proof is given later is \cref{thm:sumHallFuchsian} on the difference of Cantor sets. 

In \cref{sec:Cantor} we describe the Cantor sets $\BB_N\subset \DD $ consisting of endpoints of geodesics such that the lengths of cuspidal words is bounded by $N$. 
We then prove some distortions estimates on M\"obius transformations (see \cref{sec:distortionestimates}) which are then applied to show that the image $\KK_N \subset \RR$ of the Cantor sets $\BB_N$ satisfy (and their rigid images) satisfy the assumptions of the Stable Hall theorem (see \cref{conditionsStableHall}). 
In \cref{sec:sum_in_R} we can then prove \cref{thm:sumHallFuchsian} thus completing the proof of \cref{thm:Hall}.  

In the following \cref{sec:penetration} we show that by locally bounding the lengths of cuspidal words one can control the distance from a compact core of $X$ (see in particular \cref{LemmaEstimateExcursionCuspidalWords}).  \cref{thm:Hallperturbations} is then proved using these preliminary results  and the Stable Hall \cref{thm:stableHall} in \cref{sec:Hallperturbations}. In particular, the key step to implement Hall's strategy is this context is provided by \cref{PropositionFormulaContinuedFraction}, proved in \cref{sec:Perronperturbations}.  

In \cref{sec:proof_Stable_Hall} we then give the proof of the Stable Hall \cref{thm:stableHall}. 
Finally, two Appendices contain respectively the proofs of some Lemmas on parabolic words (\cref{app1}) and some estimates which relate the Lipschitz norm of $h$ to the Lipschitz norm of the auxiliary function $H$ introduced in \cref{sec:heightendpoints}, proved in \cref{app2}.

\section{Symbolic coding}\label{sec:background}
Let $\NN=\{0,1, \dots \}$ denote the natural numbers.
We will use $\HH$ and the unit disk $\DD = \{ z \in \CC , |z|<1 \}$ interchangeably, by using the identification $\mathscr{C} \colon \HH \to \DD$ (here $\mathscr{C}$ stays for \emph{Cayley} map) given by 
\begin{equation}\label{eq:Cayley}
	\mathscr{C}(z)= \frac{z-i}{z+i}, \qquad z \in \HH.
\end{equation}

Let $\SL(2, \RR)$ be the set of $2\times 2$ matrices with real entries and determinant one, and similarly for $\SL(2,\CC)$.
The group $\SL(2, \RR)$ acts on $\HH$ by \emph{M\"obius transformations} (or \emph{homographies}).
Given $g \in \SL(2, \RR)$ we will denote by $g\cdot z$ the action of $g$ on $z \in \HH$ given by
\[
z \mapsto g\cdot z = \frac{a z+b}{c z+d}, \qquad \text{if} \quad g= \begin{pmatrix}a&b\\ c&d \end{pmatrix}.
\] 
This action of $\SL(2, \RR)$ on $\HH$ induces an action on the unit tangent bundle $T^1 \HH$, by mapping a unit tangent vector at $z$ to its image under the derivative of $g$ in $z$, which is a unit tangent vector at $g\cdot z$.
This action is transitive but not faithful and its kernel is exactly $\{ \pm \Id \}$, where $\Id$ is the identity matrix.
Thus, it induces an isomorphism between  $T^1 \HH$ and $\PSL(2, \RR) =\SL(2, \RR) / \{ \pm \Id \}$. Throughout the paper, we will often write $A \in \PSL(2, \RR)$ and denote by $A \in \SL(2,\RR)$ the equivalence class of the matrix $A$ in $\PSL(2, \RR)$. Equality between matrices in $\PSL(2,\RR)$ must be intended as equality as equivalence classes.
The group $\SL(2,\CC)$ also acts by M\"obius transformations on the Riemann sphere $\overline{\CC}:=\CC\cup\{\infty\}$ and we will denote this action with $g\cdot z$ too. 

\subsection{Cutting sequences}\label{subsection:cuttsequences}
For a special class of Fuchsian groups, Bowen and Series developed in~\cite{BowenSeries} a geometric method of symbolic coding of points on $\partial \DD$, known as \emph{boundary expansion}, that allows to represent the action of a set of suitably chosen generators of the group as a subshift of finite type. 
Boundary expansions  can be thought of as a geometric generalization of the continued fraction expansion, which is related to the boundary expansion of the geodesic flow on the modular surface (see~\cite{Series:Modular} for this connection).  
We will now recall two  equivalent definitions of the simplest case of boundary expansions, either as cutting sequences of geodesics on $X=\Gamma\backslash\HH$ or as itineraries of expanding maps on $\partial \DD$.
For more details and a more general treatment we refer to the expository introduction to boundary expansions given by Series in~\cite{Series:TS}.
  
Let  $\Gamma \subset \PSL(2,\RR)$ be a Fuchsian group and assume in this section that $\Gamma$ be a co-finite, non cocompact and does not contain elliptic elements\footnote{The choice of a different name, $\Gamma$, for the Fuchsian group in this section is deliberate. To study the Lagrange spectrum of of general co-finite, non cocompact Fuchsian group $G$, that can a priori contain elliptic elements, we will exploit a subgroup $\Gamma< G$ without elliptic elements, as in this section.}.  
One can see that $\Gamma$  admits a fundamental domain which is an \emph{ideal polygon} $\cF$ in $\DD$, that is a hyperbolic polygon having finitely many vertices $\xi$ all lying on $\partial\DD$ (see for example Tukia~\cite{Tukia}).
We will denote by $s$ the sides of $\cF$, which are geodesic arcs with endpoints in $\partial\DD$.
Geodesic sides appear in pairs, i.e.\ for each $s$ there exists a side $\overline{s}$ and an element $g$ of $\Gamma$ such that the image $g(s)$ of $s$ by $g$ is $\overline{s}$.
Let  $2d$ ($d\geq 2$) be the number of sides of of $\cF$. 
Let $\cA_0$  be a finite alphabet of cardinality $d$ and label the $2d$-sides ($d\geq 2$) of $\cF$ by letters in 
\[
	\cA = \cA_0 \cup \overline{\cA_0} = \{ \alpha \in \cA_0\} \cup \{ \overline{\alpha}, \alpha \in \cA_0\} 
\]
in the following way.
Assign to a side $s$ an internal label $\alpha$ and an external one $\overline{\alpha}$.
The side $\overline{s}$ paired with $s$ has $\overline{\alpha}$ as internal label and $\alpha$ as the external one.
We then  see that the pairing given by $g(s) =\overline{s}$ transports coherently the couple of labels of the side $s$ onto the couple of labels of the side $\overline{s}$.
Let us denote by $s_\alpha$  the side of $\cF$ whose \emph{external} label is $\alpha$. 
A convenient set of generators for $\Gamma$ is given by the family of isometries $g_\alpha \in \PSL(2,\RR)$ for $\alpha\in\cA_0$, where $g_\alpha$ is the isometry which sends the side $s_{\overline{\alpha}}$ onto the side $s_{\alpha}$, and their inverses $g_{\overline{\alpha}}:= g_\alpha^{-1}$ for $\alpha\in\cA_0$, such that $g_\alpha^{-1}(s_{\alpha})=s_{\overline{\alpha}}$, see Theorem 3.5.4 in~\cite{KatokFuchsian}.  
Thus, $\cA$ can be thought as the set of labels of generators, see \cref{fig:fundomain}.
It is convenient to define an involution on $\cA$ which maps $\alpha \mapsto \overline{\alpha} $ and $\overline{\alpha}  \mapsto  \overline{\overline{\alpha}} = \alpha$. 

\begin{figure}
\centering
\import{pictures/}{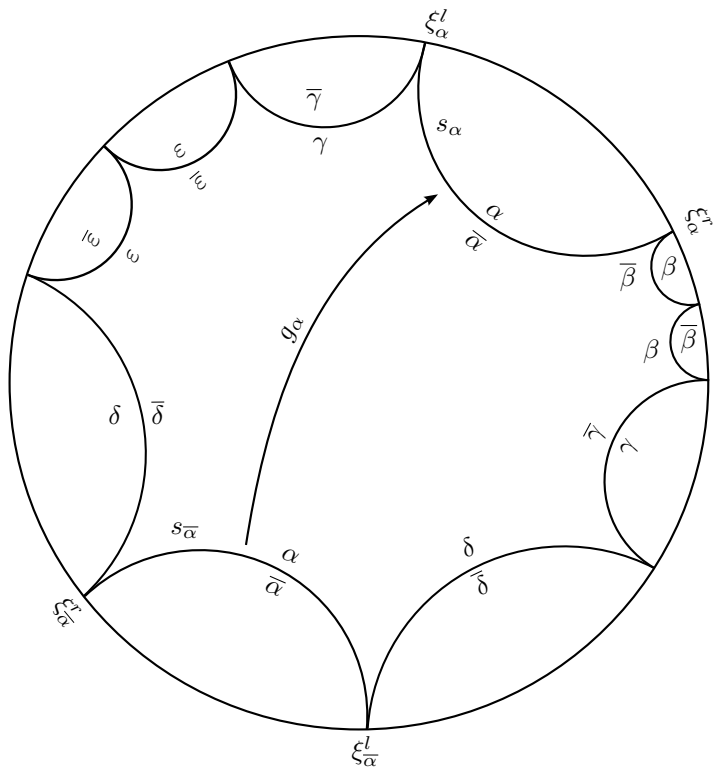}
\caption{A hyperbolic fundamental domain, with sides labeling and the action of the generator $g_\alpha$.}
\label{fig:fundomain}
\end{figure}

Since $\cF$ is an ideal polygon, $\Gamma$ is a free group.
Hence every element of $\Gamma$ as a unique representation as a \emph{reduced word} in the generators, i.e.\ a word in which an element is never followed by its inverse.
We transport the internal and external labeling of the sides of $\cF$ to all its copies in the tessellation by ideal polygons given by all the images $g(\cF)$ of $\cF$ under  $g\in \Gamma$.
We label a side s of a copy $g(\cF)$ of $\cF$ with the labels of the side $g^{-1}(s) \in \partial \cF$.
We remark that this is well defined since we have assigned an internal and an external label to each side of $\cF$, and this takes into account the fact that every side of a copy $g(\cF)$ belongs also to another adjacent copy $g'(\cF)$.

Let  $\gamma$  be a hyperbolic geodesic ray, starting from the center $0$ of the disk and ending at a point $\xi \in \partial \DD$.
The \emph{cutting sequence} of $\gamma$ is the infinite reduced word obtained by concatenating the \emph{exterior} labels of the sides of the tessellation crossed by $\gamma$, in the order in which they are crossed.
In particular, if the cutting sequence of $\gamma$ is $\ab{0}, \ab{1}, \dots$, the $i^{\text{th}}$ crossing along $\gamma$ is from the region $g_{\ab{0}} \dots g_{\ab{i-1}} (\cF)$ to $g_{\ab{0}} \dots g_{\ab{i}}(\cF)$ and the sequence of sides crossed is
 \begin{equation}\label{crossing}
 	g_{\ab{0}} g_{\ab{1}} \cdots g_{\ab{n-1}} (s_{\ab{n}}), \qquad n \in \NN.
 \end{equation}
We remark that, since two distinct hyperbolic geodesics meet at most in one point, a word arising from a cutting sequence is reduced.
In other words, hyperbolic geodesics \emph{do not backtrack}. 

\smallskip

We complete this section explaining how to code \emph{complete} geodesics passing through $\cF$ at time zero. 
If $\gamma$ is a complete geodesic, parametrized in such a way that $\gamma(0)\in\cF$, let $\gamma_\pm (t)\colon\RR_+\to\DD$ defined by $\gamma_\pm\colon t\mapsto\gamma(\pm t)$.
In other words, $\gamma_+$ is the ray obtained moving along $\gamma$ forward in time and $\gamma_-$ is the one obtained moving along $\gamma$ backwards in time.
Code the first one by $(\bb{n})_{n\in\NN}$ and the second one by $(c_n)_{n\in \NN}$.
Then the cutting sequence of $\gamma$ is the infinite word $(\ab{n})_{n\in\ZZ}$ defined by
\[
	a_n =\begin{cases}
			\bb{n}, & \text{if $n\geq 0$},\\
			\overline{c_{n-1}}, & \text{if $n<0$}.
			\end{cases}.
\]
The bar for negative $n$'s is due to the fact that we were moving along $\gamma$ in the reverse orientation when defining the sequence $(c_n)_{n\in \NN}$.
 
\subsection{Boundary expansions}\label{subsection:boundary}
Let us now explain how to recover cutting sequences of geodesic rays by itineraries of an expanding map on $\partial \DD$.
The action of each $g \in \Gamma$ extends by continuity to an action on $\partial \DD$ which will be denoted by $\xi \mapsto g(\xi)$.   
Let $\Arc [\alpha]$ be the closed arc on $\partial\DD$ such that $\Arc[\alpha]\cup s_{\alpha}$ is the boundary of the connected component which is disjoint from the interior of $\cF$. 
Then it is easy to see from the geometry that the action $g_\alpha \colon \partial \DD \to \partial \DD$ associated to the generator $g_\alpha$ of $\Gamma$ sends the complement of $\Arc [\overline{\alpha}]$ to $\Arc [\alpha]$.
Moreover, if for each $\alpha \in \cA$ we denote by $\xi_\alpha^l$ and $\xi_\alpha^r$ the endpoints of the side $s_\alpha$, with the  convention that the right follows the left moving in clockwise sense on $\partial \DD$, we have 
\begin{equation}\label{action_g_on_labels}
	g_\alpha(\xi_{\overline{\alpha}}^r)=\xi_\alpha^l
	\qquad \text{and} \qquad
	g_\alpha(\xi_{\overline{\alpha}}^l)=\xi_\alpha^r.
\end{equation}
Some times it will be useful to write $\xi_\alpha^l=\inf\Arc[\alpha]$ and $\xi_\alpha^r=\sup\Arc[\alpha]$.
Let $\Arc = \bigcup_{\alpha} \stackrel{\circ}{ \Arc[\alpha]}\subseteq \partial \DD$, where $\stackrel{\circ}{ \Arc[\alpha] }$ denotes the arc $\Arc[\alpha]$ without endpoints. 
Define $F \colon \Arc \to \partial\DD$ by 
\[
	F(\xi) = g_{\alpha}^{-1}(\xi), \qquad \text{if } \xi \in \stackrel{\circ}{ \Arc[\alpha]}. 
\]
Let us call a point $\xi \in \partial \DD$ \emph{cuspidal} if it is a vertex of the ideal tessellation with fundamental domain $\cF$ and \emph{non-cuspidal} otherwise.
One can see that $\xi$ is non-cuspidal point if and only if $F^n(\xi)$ is defined for any $n \in \NN$.  
One can \emph{code} a trajectory $\{ F^n(\xi), n \in \NN\}$ of a non-cuspidal point $\xi \in \partial \DD$ with its \emph{itinerary} with respect to the partition into arcs $\{ \Arc [\alpha], \alpha \in \cA \}$, that is by the sequence $(\ab{n})_{n \in \NN}$, where $\ab{n} \in \cA$ are such that  $F^n (\xi) \in \Arc [\ab{n}]$ for any $n \in \NN$.
We will call such sequence the \emph{boundary expansion} of $\xi$.

Moreover, in analogy with the continued fraction notation, we will write 
\[
	\xi =[\ab{0}, \ab{1}, \dots]_{\partial\DD}. 
\]
When we write the above equality or say that $\xi$ has boundary expansion $(\ab{n})_{n \in \NN}$ we implicitly assume that $\xi$ is non-cuspidal.
 
One can show that the only restrictions on letters which can appear in a boundary expansion $(\ab{n})_{n \in \NN}$  is that $\alpha$ cannot be followed by $\overline{\alpha}$, that is
\begin{equation}\label{eqnobacktrack}
\ab{n+1}\not=\overline{\ab{n}}	 \qquad \text{ for any  $n\in\NN$}.
\end{equation}
We will call this property the \emph{no-backtracking condition}.
Boundary expansions can be defined also for cuspidal points (see Remark 4.3 in \cite{AMU}) but are unique exactly for non-cuspidal points.
Every  sequence in $\cA^{\NN}$ which satisfies the no-backtracking condition can be realized as a boundary expansion (of a cuspidal or non-cuspidal point).  

We will adopt the following notation. 
Given a sequence of letters $\ab{0}, \ab{1},  \dots, \ab{n}$, let us denote by
\[
	\Arc [\ab{0}, \ab{1},  \dots, \ab{n}] =\overline{ \Arc [\ab{0}] \cap F^{-1}( \Arc [ \ab{1}] )\cap  \dots \cap F^{-n}(\Arc [\ab{n}])}
\]
the closure of  set of points on $\partial\DD$ whose boundary expansion starts with  $\ab{0}, \ab{1},  \dots, \ab{n}$.
One can see that $\Arc[\ab{0},   \dots, \ab{n}]$ is a connected arc on $\partial \DD$ which is non-empty exactly when the sequence satisfies the no-backtracking condition~\eqref{eqnobacktrack}.
From the definition of $F$, one can work out that
\begin{equation}\label{arcexpression}
	\Arc [\ab{0}, \ab{1},  \dots, \ab{n}] = g_{\ab{0}} \dots g_{\ab{n-1}} \Arc [\ab{n}].
\end{equation}
Thus  two such arcs are \emph{nested} if one word contains the other as a beginning.
For any fixed $n\in \NN$, the arcs of the form $\Arc[\ab{0}, \ab{1},  \dots, \ab{n}]$, where $\ab{0}, \ab{1},  \dots, \ab{n}$ vary over all possible sequences of $n$ letters in $\cA$ which satisfy the no-backtracking condition, will be called an \emph{arc of level} $n$.  
To produce the arcs of level $n+1$, each arc of level $n$ of the form $\Arc[\ab{0}, \ab{1},  \dots, \ab{n}]$ is partitioned into $2d-1$ arcs, each of which has the form $\Arc[\ab{0}, \ab{1},  \dots, \ab{n+1}]$  for $\ab{n+1} \in \cA\setminus \{\overline{\ab{n}}\}$.
Each one of these arcs corresponds to one of the arcs cut out by the sides of the ideal polygon $\ab{0}\ab{1}  \dots \ab{n} \cF$ and contained in the previous arc $\Arc[\ab{0}, \ab{1},  \dots, \ab{n}]$.  

We summarize the previous discussion in the next result.

\begin{prop}[Bowen-Series]\label{thm:BowenSeries}
If $(\ab{n})_{n\in\NN}$ is the boundary expansion of $\xi\in\partial\DD$ we have
\[
	\xi=\bigcap_{n\in\NN} g_{\ab{0}}\dots g_{\ab{n}} \Arc[\ab{n+1}].
\]
Moreover, if $t\mapsto\gamma(t)$ is a hyperbolic geodesic ray with $\gamma(0)\in \cF$ and ending at $\xi=\gamma(+\infty)\in\partial\DD$, then the cutting sequence $(\ab{n})_{n\in\NN}$ of $\gamma$ coincides with the boundary expansion of $\xi$. 

The Bowen-Series map $F\colon\partial\DD\to\partial\DD$ acts as the right shift on the space $\Sigma\subset\cA^\NN$ 
of those infinite words $(\ab{n})_{n\in\NN}$ satisfying the no-backtracking condition~\eqref{eqnobacktrack}.
In other words we have
\[
	F\bigl([\ab{0},\ab{1},\ab{2},\dots]_{\partial\DD}\bigr)=
[\ab{1},\ab{2},\dots]_{\partial\DD}.
\] 
\end{prop}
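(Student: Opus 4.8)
The plan is to establish the three assertions by unwinding the definitions of the map $F$ and of the boundary expansion, using throughout the equivariance identity~\eqref{arcexpression}, namely $\Arc[\ab{0},\dots,\ab{n}]=g_{\ab{0}}\cdots g_{\ab{n-1}}\Arc[\ab{n}]$, together with proper discontinuity of $\Gamma$; the genuinely geometric inputs are classical, and where convenient I would simply cite~\cite{BowenSeries,Series:TS}.

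For the \emph{first assertion} I would start from the observation that if $(\ab{n})_{n\in\NN}$ is the boundary expansion of a non-cuspidal $\xi$, then $F^n(\xi)\in\stackrel{\circ}{\Arc[\ab{n}]}$ and $F$ acts as $g_{\ab{n}}^{-1}$ on $\stackrel{\circ}{\Arc[\ab{n}]}$, so by an immediate induction $F^{n+1}(\xi)=(g_{\ab{0}}\cdots g_{\ab{n}})^{-1}(\xi)$; combined with~\eqref{arcexpression} this yields $\xi=g_{\ab{0}}\cdots g_{\ab{n}}\bigl(F^{n+1}(\xi)\bigr)\in g_{\ab{0}}\cdots g_{\ab{n}}\Arc[\ab{n+1}]$ for every $n$, so $\xi$ lies in the claimed nested intersection. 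The remaining point — which I expect to be the only real obstacle in the whole statement — is that these arcs shrink to a single point. Here I would argue that the tiles $g_{\ab{0}}\cdots g_{\ab{n-1}}\cF$ are pairwise distinct (reduced words in a free group), hence by proper discontinuity they leave every compact subset of $\DD$; since $\Arc[\ab{0},\dots,\ab{n}]$ is exactly the arc cut off from the center $0$ by the geodesic side $g_{\ab{0}}\cdots g_{\ab{n-1}}(s_{\ab{n}})$ of such a tile, that side, and therefore the arc it subtends, collapses to a boundary point, forcing $\bigcap_{n}\Arc[\ab{0},\dots,\ab{n+1}]=\{\xi\}$. Alternatively one can invoke the corresponding statement directly from Bowen--Series.

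For the \emph{second assertion} I would induct on the number of crossings, using equivariance. The base case is the elementary geometric lemma that a geodesic ray from $\cF$ ending at $\xi\in\stackrel{\circ}{\Arc[\ab{0}]}$ must exit $\cF$ through the side $s_{\ab{0}}$ — this uses convexity of the ideal polygon $\cF$ and the fact that $\Arc[\ab{0}]\cup s_{\ab{0}}$ bounds the component of $\DD\setminus s_{\ab{0}}$ disjoint from the interior of $\cF$ — so the first letter of the cutting sequence of $\gamma$ coincides with the first letter $\ab{0}$ of the boundary expansion of $\xi$. Then I would apply $g_{\ab{0}}^{-1}$: it preserves the tessellation, sends $g_{\ab{0}}\cF$ to $\cF$, maps $\gamma$ to a geodesic ray meeting $\cF$ with forward endpoint $g_{\ab{0}}^{-1}(\xi)=F(\xi)$, and by~\eqref{crossing} shifts the cutting sequence by one position. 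Applying the induction hypothesis to a reparametrization of $g_{\ab{0}}^{-1}(\gamma)$ starting in $\cF$ identifies its cutting sequence with the boundary expansion $(\ab{1},\ab{2},\dots)$ of $F(\xi)$, and concatenation with $\ab{0}$ closes the induction.

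The \emph{third assertion} I would then read off almost immediately. If $\xi=[\ab{0},\ab{1},\dots]_{\partial\DD}$ then $\xi$ is non-cuspidal, so $F^n(\xi)$ is defined for all $n$; hence $F^n(F(\xi))=F^{n+1}(\xi)$ is defined for all $n$, so $F(\xi)$ is non-cuspidal, and $F^n(F(\xi))=F^{n+1}(\xi)\in\Arc[\ab{n+1}]$ for every $n$ shows that the boundary expansion of $F(\xi)$ is $(\ab{1},\ab{2},\dots)$, i.e.\ $F\bigl([\ab{0},\ab{1},\dots]_{\partial\DD}\bigr)=[\ab{1},\ab{2},\dots]_{\partial\DD}$. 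Since, by~\eqref{eqnobacktrack}, the admissible itineraries are precisely the no-backtracking sequences in $\Sigma$, this says that $F$ is conjugated to the shift on $\Sigma$. In summary, once~\eqref{arcexpression} is in hand the proof is mostly bookkeeping, with the arc-shrinking step and the base-case side lemma being the two (standard) geometric facts that carry all the weight.
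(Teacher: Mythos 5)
Your proposal is correct. The paper itself does not prove this proposition: it is stated as a summary of the discussion in \cref{subsection:cuttsequences} and \cref{subsection:boundary} and attributed to Bowen--Series, with details deferred to \cite{BowenSeries,Series:TS}. Your write-up supplies exactly the standard details that discussion presupposes: the equivariance identity~\eqref{arcexpression} and the shift relation for the first and third assertions, and the convexity/no-backtracking argument identifying the first crossed side of $\cF$ for the second. The one step carrying real weight, the shrinking of the nested arcs $\Arc[\ab{0},\dots,\ab{n}]$, is handled soundly by your appeal to local finiteness of the tessellation (distinct reduced words give distinct tiles, which eventually leave every compact subset of $\DD$, so the sides $g_{\ab{0}}\cdots g_{\ab{n-1}}(s_{\ab{n}})$ subtend boundary arcs of Euclidean diameter tending to zero), which is consistent with the paper's remark that the boundary expansion converges.
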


Notice that the combinatorial no-backtracking condition~\eqref{eqnobacktrack} corresponds to the no-backtracking geometric phenomenon between hyperbolic geodesics we mentioned earlier.

\subsection{Cuspidal words and cuspidal sequences}\label{sec:cuspidalwords}
We now define an acceleration of the boundary expansion.
The acceleration is obtained by grouping together all steps which correspond to excursions in the same cusp,  in a similar way to how the Gauss map is obtained from the Farey map in the theory of classical continued fractions expansions.


\begin{defi}\label{defnestedarcs}
A \emph{left cuspidal word} (respectively a \emph{right cuspidal word}) is a word $\ab{0} \dots \ab{k}$ in the alphabet $\cA$ which satisfies the no-backtracking condition~\eqref{eqnobacktrack} and such that 
the $k+1$ arcs
\[
	\Arc[\ab{0}], \quad \Arc[\ab{0},\ab{1}], \quad \dots \quad  \Arc[\ab{0}, \dots, \ab{k-1}], \quad \Arc[\ab{0}, \dots, \ab{k}]
\]
all share as a  common left endpoint the left  endpoint $\xi_{\ab{0}}^l$ of $\Arc [\ab{0}]$ (respectively as right endpoint the  right endpoint $\xi_{\ab{0}}^r$ of $\Arc [\ab{0}]$), see \cref{fig:cuspidalword}. 
We simply write that $\ab{0} \dots \ab{k}$ is a \emph{cuspidal word} when left or right is not specified. 
We say that a sequence $(\ab{n})_{n \in \NN}$ is a \emph{cuspidal sequence} if any word of the form $\ab{0}\dots \ab{n}$ for $n \in \NN$ is a cuspidal word and that it is \emph{eventually cuspidal} if there exists $k \in \NN$ such that $(\ab{n+k})_{n \in \NN}$ is a {cuspidal sequence}. 
\end{defi}

\begin{figure}
\centering
\def\svgscale{0.8}
\import{pictures/}{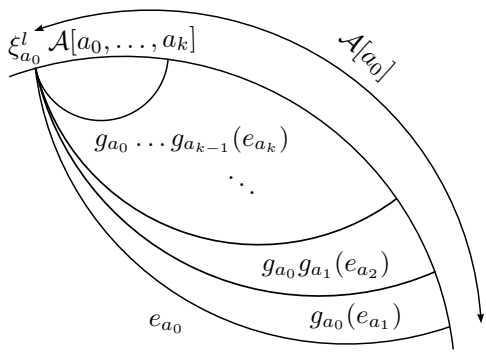}
\caption{A left cuspidal word $\ab{0}\dots\ab{k}$.}
\label{fig:cuspidalword}
\end{figure}

Equivalently, $\ab{0} \dots \ab{k}$ is a left (respectively right) cuspidal word exactly when the arc $\Arc [\ab{0}, \dots, \ab{k}] \subset \partial \DD$ has a vertex of $\cF$ as its left (respectively right) endpoint.
We remark that given an ideal vertex $\xi$, there is a \emph{unique} left (right) cuspidal word of length $k+1$ such that the arc $\Arc[\ab{0}, \dots, \ab{k}]$ has $\xi$ as left (right) endpoint.
Indeed, such word can be obtained as follows.
Let $\ab{0}$ be such that $\Arc[\ab{0}]$ has $\xi$ as its left (right) endpoint. 
For any $0\leq i < k$,  the arc  $\Arc [\ab{0},\dots, \ab{i}]$ of level $i$ is subdivided at level $i+1$ into $2d-1$ arcs of level $i+1$  and $\Arc[\ab{0},\dots, \ab{i+1}]$ is  the unique one which contains the left (respectively right) endpoint of $\Arc[\ab{0},\dots, \alpha_{i}]$.


\smallskip
We will use cuspidal words to decompose an infinite word into blocks.
Let us begin with a geometric description of this process.
Let $\gamma\colon[0,+\infty)\to\DD$ be a geodesic such that $\gamma(0)\in\cF$ and that does not converge to a cuspidal point.
Call $(\ab{n})_{n\in\NN}$ its cutting sequence.
Let $(t_n)_{n\in\NN}$ be the sequence of times $t_n$ when $\gamma$ crosses a side of the tessellation of $\DD$ given by $\cF$.
More precisely let $t_0$ such that $\gamma(t_0)\in s_{\ab{0}}$ and
\begin{equation}\label{eq:timescuspidalacceleration}
	\gamma(t_n)\in g_\ab{0}\circ\dots\circ g_\ab{n-1} (s_\ab{n}),
\end{equation}
for any $n\geq 1$.
Define $n(0)=0$ and, inductively for $r\in\NN$, define $n(r+1)$ such that the segment $\gamma[t_{n(r)},t_{n(r+1)})$ only intersects copies $g(s)$ of sides $s$ of $\cF$ all sharing one common endpoint.

Having this picture in mind, we define the cuspidal decomposition of an infinite word as follows.
Consider an infinite word $(\ab{n})_{n\in\NN}$ satisfying the no-backtracking condition~\eqref{eqnobacktrack} and that is not eventually cuspidal.
Let $n(0)=0$ and define $n(1)\geq 1$ to be the minimum time such that the arcs
\[
		\Arc[\ab{n(0)}]
			\qquad
			\text{and}
			\qquad
		\Arc[\ab{n(0)},\dots,\ab{n(1)}]
\]
do not have a common endpoint.
In other words, $n(1)$ is the minimum time such that the word $\ab{n(0)}\dots\ab{n(1)}$ is not cuspidal.
Then set $C_0=\ab{0}\dots\ab{n(1)-1}$ to be the first maximal cuspidal word in the infinite word $(\ab{n})_{n\in\NN}$.
Similarly, for $r\geq 0$ define 
\[
	n(r+1) = \min\{n>n(r): \ab{n(r)}\dots\ab{n(r+1)} \text{ is not a cuspidal word}\},
\]
and $C_r=\ab{n(r)}\dots\ab{n(r+1)-1}$ to be the $r$-th cuspidal maximal word in $(\ab{n})_{n\in\NN}$.
It is clear then that concatenating the cuspidal words $(C_r)_{r\in\NN}$ we get the same infinite word as $(\ab{n})_{n\in\NN}$ that is $\ab{0}\ab{1}\dots\ab{n}\dots =C_0C_1\dots C_r\dots$.

In the sequel, we will also decompose bi-infinite words into cuspidal subwords.
This is done as before, the only difference is that $C_0$ is the maximal cuspidal word containing $\ab{0}$, and hence $n(0)\leq 0$.

\subsection{Parabolic words}\label{subsec:parabolicwords}
We end this section with two Lemmas that give a combinatorial description of cuspidal words, by showing that cuspidal words are obtained by repeating \emph{parabolic words} (defined below), which are in one to one correspondence with cusps (see \cref{repeatedparabolic}). The Lemmas  were essentially proved in \cite{AMU} (see Lemmas 4.8 and 4.9 in \cite{AMU}).
For completeness, we include their easy proofs in \cref{app1}. 

\begin{lemma}\label{LemmaCombinatorialPropertiesCuspidal}
Consider a word $\ab{0}\dots\ab{n}$ in the alphabet $\cA$ which satisfies the no-backtracking condition~\eqref{eqnobacktrack}.
Then
\begin{enumerate}
	\item The word $\ab{0}\dots\ab{n}$ is left cuspidal if and only if 
		\[
			g_{\ab{k}}(\xi^l_{\ab{k+1}})=\xi^l_{\ab{k}}
			\qquad
			\text{for any}
			\qquad
			k=0,\dots,n-1.
		\]
\item The word $\ab{0}\dots\ab{n}$ is right cuspidal if and only if 
		\[
			g_{\ab{k}}(\xi^r_{\ab{k+1}})=\xi^r_{\ab{k}}
			\qquad
			\text{for any}
			\qquad
			k=0,\dots,n-1.
		\]
\item The word $\ab{0}\dots\ab{n}$ is left (resp.\ right) cuspidal if and only if $\overline{\ab{n}}\dots\overline{\ab{0}}$ is right (resp.\ left) cuspidal.
\end{enumerate}
\end{lemma}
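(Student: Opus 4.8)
The plan is to prove \cref{LemmaCombinatorialPropertiesCuspidal} by unwinding the definition of cuspidal word (\cref{defnestedarcs}) through the formula \eqref{arcexpression} for the arcs, and then using the action \eqref{action_g_on_labels} of the generators on the endpoints of the arcs. Recall that the word $\ab{0}\dots\ab{n}$ is left cuspidal precisely when the nested arcs $\Arc[\ab{0}], \Arc[\ab{0},\ab{1}], \dots, \Arc[\ab{0},\dots,\ab{n}]$ all share the left endpoint $\xi^l_{\ab{0}}$. By \eqref{arcexpression} we have $\Arc[\ab{0},\dots,\ab{k}] = g_{\ab{0}}\dots g_{\ab{k-1}}\Arc[\ab{k}]$, so the left endpoint of this arc is $g_{\ab{0}}\dots g_{\ab{k-1}}(\zeta_k)$ where $\zeta_k$ is whichever endpoint of $\Arc[\ab{k}]$ (namely $\xi^l_{\ab{k}}$ or $\xi^r_{\ab{k}}$) maps to the left under the (orientation-reversing or preserving, but in any case fixed) homeomorphism $g_{\ab{0}}\dots g_{\ab{k-1}}$ of $\partial\DD$; since the $g_\alpha$ are M\"obius transformations with a consistent orientation behaviour on the boundary, one checks this endpoint is always $\xi^l_{\ab{k}}$ when the preceding word is left cuspidal. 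So I would argue by induction on $n$.

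\textbf{Part (1).} For the base case $n=1$: the arcs $\Arc[\ab{0}]$ and $\Arc[\ab{0},\ab{1}]=g_{\ab{0}}\Arc[\ab{1}]$ share their left endpoint iff $g_{\ab{0}}$ maps the left endpoint of $\Arc[\ab{1}]$ to the left endpoint of $\Arc[\ab{0}]$, i.e.\ $g_{\ab{0}}(\xi^l_{\ab{1}})=\xi^l_{\ab{0}}$ — this is exactly the $k=0$ equation, and it follows by inspecting \eqref{action_g_on_labels} together with the way $g_{\ab{0}}$ maps the complement of $\Arc[\overline{\ab{0}}]$ onto $\Arc[\ab{0}]$. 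For the inductive step, suppose $\ab{0}\dots\ab{n-1}$ is left cuspidal, so by the inductive hypothesis the equations $g_{\ab{k}}(\xi^l_{\ab{k+1}})=\xi^l_{\ab{k}}$ hold for $k=0,\dots,n-2$, and all arcs up to level $n-1$ share the left endpoint $\xi^l_{\ab{0}}$. The extra condition that $\Arc[\ab{0},\dots,\ab{n}]$ also has left endpoint $\xi^l_{\ab{0}}$ amounts, after applying $(g_{\ab{0}}\dots g_{\ab{n-2}})^{-1}$ and using the inductive equations to rewrite the target, to the single equation $g_{\ab{n-1}}(\xi^l_{\ab{n}})=\xi^l_{\ab{n-1}}$. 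Conversely, if all $n$ equations hold one reads off inductively that each nested arc has left endpoint $\xi^l_{\ab{0}}$.

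\textbf{Part (2)} is the mirror image of Part (1), replacing $l$ by $r$ throughout and ``left'' by ``right''; the same induction applies verbatim using the other half of \eqref{action_g_on_labels}, namely $g_\alpha(\xi^l_{\overline{\alpha}})=\xi^r_\alpha$. \textbf{Part (3)} follows from Parts (1) and (2) together with the relations $g_{\overline{\alpha}}=g_\alpha^{-1}$ and \eqref{action_g_on_labels}: applying $g_{\ab{k}}^{-1}=g_{\overline{\ab{k}}}$ to the equation $g_{\ab{k}}(\xi^l_{\ab{k+1}})=\xi^l_{\ab{k}}$ and using $g_\alpha(\xi^l_{\overline\alpha})=\xi^r_\alpha$ (so $g_\alpha^{-1}(\xi^r_\alpha)=\xi^l_{\overline\alpha}$, equivalently $g_{\overline\alpha}(\xi^r_\alpha)=\xi^l_{\overline\alpha}$, and similarly with $l\leftrightarrow r$), one transforms the left-cuspidal system for $\ab{0}\dots\ab{n}$ into the right-cuspidal system for $\overline{\ab{n}}\dots\overline{\ab{0}}$ after reversing the order of indices. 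I expect the main subtlety to be bookkeeping the orientation conventions (which endpoint of which arc is ``left'' after applying a product of generators, since individual $g_\alpha$ can swap $l$ and $r$ as in \eqref{action_g_on_labels}); once the base case orientation is pinned down correctly this propagates cleanly through the induction, and indeed this is why the statement is phrased in terms of the single-step equations $g_{\ab{k}}(\xi^{l/r}_{\ab{k+1}})=\xi^{l/r}_{\ab{k}}$ rather than in terms of the global endpoints. Since the paper defers the proof to \cref{app1} and calls it ``easy'', I would keep the write-up to essentially the induction above.
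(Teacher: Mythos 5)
Your proposal is correct and follows essentially the same route as the paper's proof in \cref{app1}: both reduce the cuspidal condition, via \eqref{arcexpression}, to the statement that consecutive arcs share a left (resp.\ right) endpoint, pull back by $g_{\ab{0}}\circ\dots\circ g_{\ab{k-1}}$ to get the single-step equations, and obtain Part (3) from $\xi^l_\alpha=g_\alpha(\xi^r_{\overline{\alpha}})$ together with $g_{\overline{\alpha}}=g_\alpha^{-1}$; your induction is just a repackaging of the paper's per-$k$ chain of equivalences. The only cosmetic improvement would be to drop the hedge about ``orientation-reversing or preserving'': every $g\in\Gamma$ preserves the orientation of $\partial\DD$, so the left endpoint of $g(\Arc[\ab{k}])$ is unconditionally $g(\xi^l_{\ab{k}})$, which is exactly the one-line justification the paper uses.
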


The next Lemma connects cuspidal words and parabolic elements in $\Gamma$.
\begin{lemma}\label{Parabolicwordslemma}
Let $\ab{0}\dots\ab{n}$ be a left cuspidal word such that $\ab{0}\dots\ab{n}\ab{0}$ is a left cuspidal word too.
Then $g=g_{\ab{0}}\circ\dots\circ g_{\ab{n}}$ is a parabolic element of $\Gamma$ whose unique fixed point is 
\[
\xi^l_{\ab{0}}=\xi^r_{\overline{\ab{n}}}\in\partial\DD.
\]
\end{lemma}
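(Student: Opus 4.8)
The plan is to verify directly that $g = g_{\ab 0}\circ\cdots\circ g_{\ab n}$ is parabolic by exhibiting a fixed point on $\partial\DD$ and showing it is the unique one. First I would use the combinatorial characterization of left cuspidal words from \cref{LemmaCombinatorialPropertiesCuspidal}(1). The hypothesis that $\ab 0\dots\ab n$ is left cuspidal gives the chain of identities $g_{\ab k}(\xi^l_{\ab{k+1}}) = \xi^l_{\ab k}$ for $k = 0,\dots,n-1$. The extra hypothesis that $\ab 0\dots\ab n\ab 0$ is also left cuspidal supplies the missing link: applying \cref{LemmaCombinatorialPropertiesCuspidal}(1) to this longer word and reading off the identity at index $k = n$, we get $g_{\ab n}(\xi^l_{\ab 0}) = \xi^l_{\ab n}$. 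Chaining these together from the inside out, $g(\xi^l_{\ab 0}) = g_{\ab 0}\circ\cdots\circ g_{\ab{n-1}}(\xi^l_{\ab n}) = g_{\ab 0}\circ\cdots\circ g_{\ab{n-2}}(\xi^l_{\ab{n-1}}) = \cdots = \xi^l_{\ab 0}$, so $\xi^l_{\ab 0}$ is a fixed point of $g$.

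Next I would identify $\xi^l_{\ab 0}$ with $\xi^r_{\overline{\ab n}}$. This should follow from the geometry of how the alphabet labels sides: the side $s_{\ab n}$ has endpoints $\xi^l_{\ab n}$ and $\xi^r_{\ab n}$, and the generator relations~\eqref{action_g_on_labels}, namely $g_\alpha(\xi^r_{\overline\alpha}) = \xi^l_\alpha$ and $g_\alpha(\xi^l_{\overline\alpha}) = \xi^r_\alpha$, relate the endpoints of $\Arc[\alpha]$ and $\Arc[\overline\alpha]$. Applying the relation for $g_{\ab n}$: from $g_{\ab n}(\xi^l_{\ab 0}) = \xi^l_{\ab n}$ and the relation $g_{\ab n}(\xi^r_{\overline{\ab n}}) = \xi^l_{\ab n}$, injectivity of the M\"obius action on $\partial\DD$ forces $\xi^l_{\ab 0} = \xi^r_{\overline{\ab n}}$, which is exactly the claimed identification. (Alternatively, one can invoke \cref{LemmaCombinatorialPropertiesCuspidal}(3): since $\ab 0\dots\ab n$ is left cuspidal, $\overline{\ab n}\dots\overline{\ab 0}$ is right cuspidal, and the right endpoint of its first arc $\Arc[\overline{\ab n}]$, which the whole word keeps, must be the shared ideal vertex.)

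To show $g$ is parabolic and $\xi^l_{\ab 0}$ is its \emph{unique} fixed point, I would argue that $g$ is a nontrivial element of $\Gamma$ with at least one fixed point on $\partial\DD$ that cannot be hyperbolic. Nontriviality: $\ab 0\dots\ab n$ satisfies the no-backtracking condition, so as a word in the free group $\Gamma$ it is reduced and nonempty, hence $g\neq\Id$. The element $g$ is not elliptic because $\Gamma$ has no elliptic elements by our standing assumption in \cref{subsection:cuttsequences}. It remains to rule out $g$ being hyperbolic. The cleanest route is to show that $g$ fixes a \emph{cuspidal} point (i.e.\ an ideal vertex of the tessellation $\{h(\cF):h\in\Gamma\}$): $\xi^l_{\ab 0}$, being the left endpoint shared by the nested family of arcs $\Arc[\ab 0],\Arc[\ab 0,\ab 1],\dots$, is an ideal vertex of $\cF$ by the very definition of cuspidal word. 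Since cusps of $\Gamma$ are fixed points of parabolic elements and no cusp of a finite-covolume Fuchsian group can be an endpoint of the axis of a hyperbolic element (a hyperbolic element's fixed points are not ideal vertices — equivalently, both fixed points of a hyperbolic element project to the convex core, not to a cusp), $g$ must be parabolic, and a parabolic element has exactly one fixed point on $\partial\DD$.

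The main obstacle I anticipate is the very last step: cleanly justifying that a nontrivial, non-elliptic element of $\Gamma$ fixing an ideal vertex must be parabolic rather than hyperbolic. This is where one genuinely uses that $\cF$ is an ideal polygon and $\Gamma$ is geometrically finite — an ideal vertex is a parabolic fixed point, and the stabilizer in $\Gamma$ of a parabolic fixed point is (virtually, here exactly, since $\Gamma$ is free and torsion-free) an infinite cyclic group generated by a primitive parabolic. Everything else — the chain of endpoint identities, the use of~\eqref{action_g_on_labels}, nontriviality from reducedness — is routine bookkeeping with the definitions already set up. I would likely phrase the parabolicity step by noting that the cyclic group generated by the primitive parabolic fixing $\xi^l_{\ab 0}$ acts on the sides of the tessellation emanating from that vertex, and the hypothesis that $\ab 0\dots\ab n\ab 0$ is again left cuspidal says precisely that $g$ advances the side-labeling by one full period, so $g$ is a power (in fact $\pm 1$ power, by minimality considerations one could add but which are not needed for the statement) of that primitive parabolic.
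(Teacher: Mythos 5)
Your proposal is correct, and its first two steps (the chain of identities giving $g(\xi^l_{\ab{0}})=\xi^l_{\ab{0}}$, and the identification $\xi^l_{\ab{0}}=\xi^r_{\overline{\ab{n}}}$ via \cref{LemmaCombinatorialPropertiesCuspidal}(1) applied to the extended word together with~\eqref{action_g_on_labels} and injectivity of $g_{\ab{n}}$ on $\partial\DD$) coincide, up to a trivially different bookkeeping, with the paper's argument. Where you genuinely diverge is the parabolicity step. You rule out the hyperbolic case by appealing to two external standard facts: that every ideal vertex of the finite-covolume ideal polygon $\cF$ is a parabolic fixed point of $\Gamma$, and that in a discrete group no hyperbolic element can share a fixed point with a parabolic one (a Shimizu/J{\o}rgensen-type statement about stabilizers of cusps). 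This is logically sound provided you actually cite those facts, and it makes the conclusion almost immediate; but note that the usual proof that ideal vertex cycles of a finite-area polygon are parabolic is essentially the same statement the lemma is making concretely, so leaning on it trades self-containedness for imported machinery. The paper instead stays elementary and internal to the coding: using~\eqref{arcexpression} it observes that $g\,\Arc[\ab{0}]=\Arc[\ab{0},\dots,\ab{n},\ab{0}]\subset\Arc[\ab{0}]$ while $g^{-1}\Arc[\overline{\ab{n}}]=\Arc[\overline{\ab{n}},\dots,\overline{\ab{0}},\overline{\ab{n}}]\subset\Arc[\overline{\ab{n}}]$ (the latter via parts (2)–(3) of \cref{LemmaCombinatorialPropertiesCuspidal}), so the common endpoint $\xi^l_{\ab{0}}=\xi^r_{\overline{\ab{n}}}$ is contracting on one side and expanding on the other; a hyperbolic fixed point has derivative $\neq 1$ and hence cannot exhibit this mixed one-sided behavior, so $g$ (nontrivial, and elliptic is excluded since it fixes a boundary point and $\Gamma$ has no elliptics) must be parabolic. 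This strict inclusion also gives $g\neq\Id$ without invoking freeness, whereas your nontriviality argument via reduced words in the free group $\Gamma$ is equally fine. In short: same skeleton, but your final step buys brevity at the cost of citing finite-covolume cusp theory, while the paper's dynamical argument is self-contained and uses only what the coding already provides.
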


\smallskip

A word $\ab{0}\dots\ab{n}$ as in the Lemma before is called a \emph{left parabolic word} if it has minimal length.
In the same way one defines a \emph{right parabolic word}.
Let us remark that $\ab{0}\dots\ab{n}$ is left parabolic if and only if its \emph{inverse word} $\overline{\ab{n}}\dots\overline{\ab{0}}$ is right parabolic, and the corresponding fixed point is $\xi^l_{\ab{0}}=\xi^r_{\overline{\ab{n}}}$. 
We write simply parabolic word when left or right is not specified.

\smallskip
From the Lemma,  the following combinatorial description  of cuspidal sequences follows (see point (2) of Lemma 4.9 in \cite{AMU}). 
\begin{cor}\label{repeatedparabolic}
For any right (left) cuspidal sequence $(\ab{n})_{n\in\NN}$ there exists an integer $k\geq 1$ and a right (left) parabolic word $a_0 a_1 \cdots a_{k-1}$ such that $(\ab{n})_{n\in\NN}$ is obtained repeating the word periodically, i.e.\ $a_n=a_{n \mod k}$ for every $n \in \NN$.
\end{cor}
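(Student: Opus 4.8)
The statement to prove is \cref{repeatedparabolic}: every right (or left) cuspidal sequence is periodic, obtained by repeating a parabolic word.

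\medskip

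\textbf{Approach.} The plan is to exploit the rigidity already captured by \cref{LemmaCombinatorialPropertiesCuspidal} together with the finiteness of the alphabet $\cA$. The key observation is that being a cuspidal sequence is a purely \emph{local} condition on consecutive pairs of letters: by part (1) of \cref{LemmaCombinatorialPropertiesCuspidal}, a word $\ab{0}\dots\ab{n}$ is left cuspidal if and only if $g_{\ab{k}}(\xi^l_{\ab{k+1}})=\xi^l_{\ab{k}}$ for each $k$. Hence a sequence $(\ab{n})_{n\in\NN}$ is left cuspidal precisely when this holds for every $k\in\NN$. I claim this forces the sequence to be determined by its first letter: given $\ab{k}$, the letter $\ab{k+1}$ must satisfy $g_{\ab{k}}(\xi^l_{\ab{k+1}})=\xi^l_{\ab{k}}$, equivalently $\xi^l_{\ab{k+1}}=g_{\ab{k}}^{-1}(\xi^l_{\ab{k}})=g_{\overline{\ab{k}}}(\xi^l_{\ab{k}})$. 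Since distinct arcs $\Arc[\alpha]$ have distinct left endpoints (the endpoints $\xi^l_\alpha,\xi^r_\alpha$ are the vertices of the ideal polygon $\cF$ listed without repetition in the labelling), the value $\xi^l_{\ab{k+1}}$ determines $\ab{k+1}$ uniquely from $\ab{k}$. Therefore there is a well-defined successor map $\sigma\colon\cA\to\cA$, $\sigma(\alpha)=\beta$ where $\xi^l_\beta=g_{\overline{\alpha}}(\xi^l_\alpha)$, and any left cuspidal sequence is exactly an orbit $(\sigma^n(\ab{0}))_{n\in\NN}$ of $\sigma$.

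\medskip

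\textbf{Key steps in order.} First, set up the successor map $\sigma$ as above and verify it is well-defined, using \cref{action_g_on_labels} and the fact that the $2d$ vertices of $\cF$ are distinct so the left-endpoint assignment $\alpha\mapsto\xi^l_\alpha$ is injective. (One should check $\sigma$ is actually \emph{defined} on all of $\cA$: for each $\alpha$, the point $g_{\overline\alpha}(\xi^l_\alpha)$ is again an ideal vertex, hence equals $\xi^l_\beta$ for some $\beta$; and $\beta\neq\overline\alpha$ by the no-backtracking property, which is automatic here.) Second, note $\sigma$ is a self-map of the finite set $\cA$, hence a map like $\sigma$ iterated from any starting point is eventually periodic; but moreover I want genuine periodicity from the start. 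For this, observe that by part (3) of \cref{LemmaCombinatorialPropertiesCuspidal} left cuspidality of $\ab{0}\dots\ab{n}$ is equivalent to right cuspidality of the reversed inverse word, which translates into a similarly injective \emph{predecessor} relation; so $\sigma$ is in fact a bijection of $\cA$. A bijection of a finite set is a permutation, so every orbit is purely periodic: $\ab{n}=\ab{n\bmod k}$ where $k$ is the period of $\ab{0}$ under $\sigma$. Third, identify the repeating block $a_0\dots a_{k-1}$ with a parabolic word: since $\ab{k}=\ab{0}$, the finite word $\ab{0}\dots\ab{k-1}\ab{0}=\ab{0}\dots\ab{k}$ is left cuspidal, hence by \cref{Parabolicwordslemma} applied (with $n=k-1$) the element $g_{\ab{0}}\circ\dots\circ g_{\ab{k-1}}$ is parabolic with fixed point $\xi^l_{\ab{0}}$. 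Taking $k$ minimal with $\ab{k}=\ab{0}$ makes $\ab{0}\dots\ab{k-1}$ a left parabolic word by definition of minimal length; this gives the claim. The right cuspidal case is identical, or follows by the reversal in \cref{LemmaCombinatorialPropertiesCuspidal}(3).

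\medskip

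\textbf{Main obstacle.} The only subtle point is justifying that the successor relation is single-valued in \emph{both} directions — i.e.\ that $\sigma$ is a bijection rather than merely a function — since it is this that upgrades ``eventually periodic'' to ``purely periodic from $n=0$''. This rests on the injectivity of $\alpha\mapsto\xi^l_\alpha$ and $\alpha\mapsto\xi^r_\alpha$ over $\cA$, which in turn uses that the ideal polygon $\cF$ has exactly $2d$ distinct vertices each carrying a unique pair of labels, and on the side-pairing relations \cref{action_g_on_labels}; once this is in hand, everything else is a short finite-combinatorics argument plus one invocation of \cref{Parabolicwordslemma}. I expect this to be genuinely routine given the Lemmas already established, which is presumably why the authors relegate the full proof to \cref{app1}.
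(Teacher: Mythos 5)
Your argument is correct: the injectivity of $\alpha\mapsto\xi^l_\alpha$ on the $2d$ vertices makes the successor letter forced by \cref{LemmaCombinatorialPropertiesCuspidal}, bijectivity of the resulting map on the finite alphabet gives pure periodicity, and \cref{Parabolicwordslemma} with the minimal return time identifies the repeating block as a parabolic word. This is essentially the deduction the paper intends — it offers no separate proof, presenting the corollary as a direct consequence of \cref{LemmaCombinatorialPropertiesCuspidal} and \cref{Parabolicwordslemma} (citing Lemma 4.9 of \cite{AMU}) — so there is nothing to add.
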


\begin{rem}
If $g_\alpha=g_{\overline{\alpha}}^{-1}$ is parabolic generator of $\Gamma$, the two sides $s_\alpha$, $s_{\overline{\alpha}}$ share a common vertex $\xi$ and  $\xi = g_\alpha (\xi) = g_{\overline{\alpha}}^{-1}(\xi)$ is a cusp described by the (length one) parabolic words $\alpha$ and $\overline{\alpha}$.
More in general, one can see that cusps of $\Gamma\backslash\DD$ are in bijection with parabolic words, modulo inversion operation and cyclical permutation of the entries. 
\end{rem}

\section{Hall ray for the hyperbolic height}\label{sec:Hallpenetration}
In this section we prove \cref{thm:Hall}, namely the existence of Hall rays for Diophantine approximation on Fuchsian groups.
This section is also meant as a guiding line for the following ones (and in particular \cref{sec:Hallperturbations}), where the more difficult and technical proof of \cref{thm:Hallperturbations} will be presented. 
In particular, \cref{subsec:Hall_argument} introduces (the adaptation of) Hall's original argument using boundary expansions as a replacement of continued fraction which is used in both proofs (and referred to in \cref{sec:outline}). 
In \cref{sec:setup} we choose a convenient fundamental domain for $G$ and define the tessellation with respect to which to code geodesics. 
We then describe the Cantor set on the boundary $\partial \DD$ of the disk corresponding to endpoints of geodesics which have bounded penetration in the cusps (see \cref{sec:cantorpreliminaries}). 
This Cantor set will be proven to satisfy the assumptions of the Stable Hall Theorem later on, in \cref{sec:sum_in_R}.
We will prove a Perron-like formula for sufficiently high geodesics in \cref{sec:Perron} and then prove \cref{thm:Hall}.

\subsection{Preliminaries to the proofs of the main results}\label{sec:setup}
In this section we are going to prepare the ground for the proofs of our two main results, \cref{thm:Hall} and \cref{thm:Hallperturbations}. 
Let  $G$  be a fixed Fuchsian group. We assume that $G$ is a non-uniform lattice and denote by $X=G\backslash\HH$ the corresponding finite volume, not compact (orbifold) surface.  We also assume that it is zonal, namely that $\infty$ is fixed by a parabolic element of $G$ and hence projects to a cusp of $X$. 

Conjugating $G$ with an appropriate element of $\PSL(2,\RR)$ which fixes $\infty$, we normalize $G$ so that $m=1$, where $m$ is the height of the fundamental horodisk at infinity (see~\eqref{eq:fundamentalhorodisk}). We remark that, for  \cref{thm:Hall}, we are not losing any generality,  since, by  \cref{rem:conjugacy_rays}, the presence of Hall rays in $\cL(G, \infty)$ is preserved by this conjugation. For \cref{thm:Hallperturbations}, we will first treat the case when $G$ has $m=1$, then   we will show how to deduce a result for $m\neq 1$ from the result for $m=1$ (see the proof of \cref{thm:Hallperturbations}). 

Let $\mu>0$ the \emph{width} of this cusp after this normalization, meaning that the matrix $p=\left( \begin{smallmatrix} 1 & \mu \\ 0 & 1 \end{smallmatrix}\right)$ is in $G$, and that $p$ is not the power of another element in $G$.

Since $G$ has finite covolume, it is finitely generated.
Any finitely generated Fuchsian group contains a finite index normal subgroup $\Gamma$ not containing any elliptic elements (see~\cite{Fox,EEK}).
It is well known that such $\Gamma$ admits a fundamental domain for the action on $\DD$ which is an \emph{ideal} polygon, that is a hyperbolic polygon having finitely many vertices all lying on $\partial\DD$ (see~\cite{Tukia}).
For technical reasons, we require some additional properties (in particular Condition (5) in the Lemma below) for the fundamental domain.
We will hence construct a suitable fundamental domain $\cF$ for the action of $\Gamma$ on $\DD$. The following Lemma summarizes the choice of $\cF$. 
The reader can refer to \cref{fig:dandh} for an example of a fundamental domain satisfying the requirements of the Lemma.

Here, and in the rest of the paper, we denote by $\arclength{\Arc[\alpha]}$ the length of the arc $\Arc[\alpha]$. Recall that $\varphi\colon \DD \to \HH$ is the inverse of the  Cayley map defined in \cref{eq:Cayley}. 

\begin{figure}
\centering
\def\svgscale{0.7}
\import{pictures/}{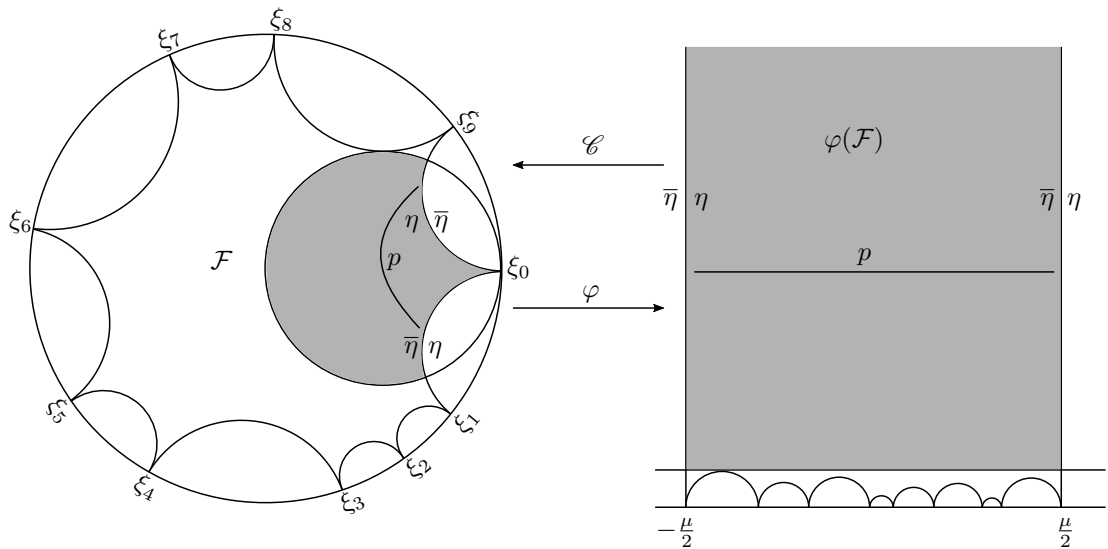}
\caption{The fundamental domain $\cF$ described in \cref{costruzioneF}, with the horodisk $\cU_1$ in grey in both figures.}
\label{fig:dandh}
\end{figure}

\begin{lemma}\label{costruzioneF}
There exists a fundamental domain $\cF\subset \DD$ for $\Gamma$ with the following properties:
\begin{enumerate}
\item $\cF $ is an ideal polygon with vertices $\xi_0, \dots, \xi_{2d-1} \in \partial \DD$, where  $\xi_0:=1$ and hence $\varphi(\xi_0)=\infty$; 
\item the parabolic element $p$ is a generator and identifies the side of $\cF$ which share $\xi_0$ as endpoint; more precisely $p=g_\eta$, where $\eta \in \cA$ is  such that $\xi_\eta^l=\xi_0 =\xi_{\overline{\eta}}^r$; 
\item The endpoints of $s_\eta$ and $s_{\overline{\eta}}$ different than $\xi_0$, that correspond to $\xi_1$ and $\xi_{2d-1}$, are such that 
\[
	\varphi (\xi_\eta^r) = \varphi (\xi_1) = \frac{\mu}{2}, \qquad 
	\varphi (\xi_{\overline{\eta}}^l)= \varphi(\xi_{2d-1}) =-\frac{\mu}{2};
\]
\item the origin of the disk $\DD$ belongs to $\cF$;
\item for every arc $\Arc[\alpha] $ underlying a side $s_\alpha$ of $\cF$, we have 
\begin{equation}\label{eq:UpperSetHoleOrderZero}
	\arclength{\Arc[\alpha]}<\pi, \qquad \forall \alpha \in \cA.
\end{equation}
\end{enumerate}
\end{lemma}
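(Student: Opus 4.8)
The construction of $\cF$ proceeds in three stages: first build a raw fundamental domain, then normalize the cusp at $\xi_0=1$, and finally subdivide/re-center so that Condition (5) holds. For the first stage, since $\Gamma<G$ is a finite-index normal subgroup without elliptic elements, I invoke the classical results cited in the excerpt (Fox~\cite{Fox}, EEK~\cite{EEK}, Tukia~\cite{Tukia}) to obtain \emph{some} ideal polygon fundamental domain $\cF_0$ for the $\Gamma$-action on $\DD$, with all vertices on $\partial\DD$ and with side-pairing generators. Because $\infty$ (equivalently $1\in\partial\DD$ after applying the Cayley map $\mathscr{C}$) is a parabolic fixed point of $G$ and $\Gamma\triangleleft G$ has finite index, the element $p=\left(\begin{smallmatrix}1&\mu\\0&1\end{smallmatrix}\right)$ or a suitable power of it lies in $\Gamma$; after replacing $p$ by the primitive parabolic in $\Gamma$ fixing $\infty$ (which only changes $\mu$ by an integer factor and does not affect the statement, since we are free to take $\mu$ to be the width \emph{for }$\Gamma$) I may assume $p\in\Gamma$ is primitive there. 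A standard fact about ideal polygons for Fuchsian groups with a cusp is that one can always choose the domain so that a given primitive parabolic $p$ is one of the side-pairing generators and the two sides it pairs meet at the cusp $\xi_0$; this gives Conditions (1) and (2), with $p=g_\eta$ and $\xi_\eta^l=\xi_0=\xi_{\overline\eta}^r$.

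For Condition (3), I transport the picture to $\HH$ via $\varphi=\mathscr{C}^{-1}$. In $\HH$ the cusp is at $\infty$, the generator $g_\eta$ is the translation $z\mapsto z+\mu$, and the two sides $s_\eta,s_{\overline\eta}$ emanating from $\infty$ are vertical half-lines $\{\Re z = c\}$ and $\{\Re z = c+\mu\}$ for some $c\in\RR$ (they must be parallel verticals differing by $\mu$, since $g_\eta$ identifies them and fixes $\infty$). Conjugating $\Gamma$ by the horizontal translation $z\mapsto z-c-\mu/2$ (which fixes $\infty$, so preserves all the normalizations already made, including $m=1$) moves these verticals to $\Re z = \pm\mu/2$, i.e.\ $\varphi(\xi_\eta^r)=\mu/2$ and $\varphi(\xi_{\overline\eta}^l)=-\mu/2$, and these are exactly the vertices I name $\xi_1$ and $\xi_{2d-1}$. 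For Condition (4), recall that a fundamental domain is determined only up to the choice of base point/center; since the $\Gamma$-orbit of $0$ is discrete and $0$ lies in the interior of \emph{some} translate $g\cdot\cF_1$ of the current domain $\cF_1$, I replace $\cF_1$ by $g^{-1}\cdot\cF_1$. This is again a conjugation-free move (just picking a different representative of the same tiling) — but I must check it does not destroy Conditions (1)--(3). The cleanest way is to note that the element $g$ can be chosen in the subgroup generated by the side-pairings \emph{other than} $g_\eta$ if $0$ is not already inside; more honestly, one observes that by replacing $\cF$ with a $\Gamma$-translate one only changes \emph{which} lift of the cusp sits at $\xi_0$, and since all lifts of the cusp $\infty$ of $X$ that are also fixed by the chosen $p$ form a single coset, one can arrange simultaneously that $0\in\cF$ and that the side-pairing $p=g_\eta$ still identifies the two sides at $\xi_0=1$; the details here are a routine bookkeeping argument about the Dirichlet-type domain centered at $0$.

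Condition (5) — that every boundary arc $\Arc[\alpha]$ underlying a side has length $<\pi$ — is the one requiring genuine work, and is the step I expect to be the main obstacle. A priori an ideal polygon can have a side whose "outside" arc on $\partial\DD$ is a half-circle or larger (this happens precisely when a side separates $0$ from "most" of the circle). The remedy is to \emph{refine} the tessellation: instead of using $\cF$ and the generators $g_\alpha$ directly, pass to the tessellation generated by $\cF$ together with finitely many of its neighbors, equivalently replace the free generating set by a different one (a "Nielsen-type" move) or, more simply, subdivide each offending side by pushing it out one step — replace $\Arc[\alpha]$ by the at most $2d-1$ sub-arcs cut out by the sides of the adjacent polygon $g_\alpha\cF$. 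Each such sub-arc is strictly shorter, and one iterates finitely many times until all arcs have length $<\pi$; because $0\in\cF$ (Condition (4)) and the vertices are dense enough, finitely many refinements suffice. One must check that this refinement can be done \emph{symmetrically with respect to the involution $\alpha\mapsto\overline\alpha$} so that side-pairing is preserved, and that it does not touch the two sides $s_\eta,s_{\overline\eta}$ at the cusp (whose arcs, after the normalization of Condition (3), already have length $<\pi$ provided $\mu$ is not too large — and if $\mu$ is large one first shrinks it harmlessly as noted above, or one simply verifies directly that the verticals at $\pm\mu/2$ in $\HH$ correspond under $\mathscr{C}$ to arcs avoiding the point $-1=\mathscr{C}(0)$ appropriately). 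The key quantitative point — that a finite number of subdivisions achieves~\eqref{eq:UpperSetHoleOrderZero} — follows from the fact that the arcs at each level shrink and that there are only finitely many $\Gamma$-orbits of sides, so the whole procedure terminates. Assembling stages one through three yields a domain $\cF$ satisfying all five conditions, completing the proof.
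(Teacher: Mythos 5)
Your outline diverges from the paper's proof at exactly the points where you hand-wave, and those are genuine gaps. The paper does not start from an abstract ideal polygon and then try to fix it: it builds the domain in $\HH$ as the intersection of the strip $\{|\Re z|<\mu/2\}$ (a fundamental domain for $\langle p\rangle$) with the exterior of all isometric circles of elements of $\Gamma\setminus\langle p\rangle$ (a Ford-type domain, following Lehner). This construction yields Conditions (1)--(3) automatically: absence of elliptic elements forces all vertices to lie on the boundary, and $p$ is by construction a side pairing identifying the two vertical sides at $\pm\mu/2$. Your Stage 1 instead asserts as a ``standard fact'' precisely the property that needs an argument. More seriously, your treatment of Condition (4) does not work as described: if $0\in g\cdot\cF_1$ and you pass to the translate containing $0$, the cusp vertex moves to $g(\xi_0)$ and the two sides through it are paired by $gpg^{-1}$, not by $p$, so Conditions (1)--(3) are destroyed; and your claimed rescue (``all lifts of the cusp fixed by the chosen $p$ form a single coset'') is vacuous, since the only fixed point of $p$ is $\infty$ itself. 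The ingredient you never use, and which is the actual engine of the paper's proof, is the normalization $m=1$: precise invariance of $\cU_1$ forces every isometric circle to have radius at most $1$, so $\cU_1$ lies inside the Ford domain, hence $i$ (equivalently $0\in\DD$) lies in $\overline{\cF}$ with no relocation needed, and consequently every arc $\Arc[\alpha]$ already has length at most $\pi$.

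For Condition (5) the remaining issue is then only the degenerate case where $0$ lies on a side (a diameter), and the paper resolves it by a single explicit surgery: remove an ideal triangle $\cT$ from the adjacent copy bounded by the side paired with the diameter and an adjacent side, and glue back $g(\cT)$, checking that the modified pairing does not involve $s_\eta$ or $s_{\overline\eta}$, with the thrice punctured sphere handled separately by an explicit domain. Your iterative ``refinement'' is not a valid construction as stated --- replacing $\Arc[\alpha]$ by the sub-arcs cut out by the sides of $g_\alpha\cF$ amounts to annexing the neighbouring tile without removing a corresponding region, so the result is no longer a fundamental domain --- and the termination claim is unsupported. Your fallback of ``shrinking $\mu$ harmlessly'' is also not available: $\mu$ is the cusp width determined by $G$ after the normalization $m=1$, and changing it changes the group (and in any case is unnecessary, since for every $\mu>0$ the arcs under $s_\eta$ and $s_{\overline\eta}$ have length strictly less than $\pi$, being proper subarcs of the image of $[0,+\infty]$ under the Cayley map). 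To repair the proof, replace Stages 1 and 2 by the Ford-domain construction inside the centred strip, and derive (4)--(5) from $m=1$ plus the single triangle surgery rather than from relocation and iterated subdivision.
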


As we said above, Condition (5) is needed  for technical reasons (more precisely for the distortion estimates in \cref{sec:distortionestimates}).

\begin{proof}
We will first construct a fundamental domain in $\HH$ so that it verifies Condition (1), (2) and (3), then lift it to $\DD$ and modify the choice so that also the other Conditions are verified. 

Let $H$ be the subgroup of $\Gamma$ generated by $p$. Since $p$ acts on the hyperbolic plane $\HH$ by $z\mapsto z+\mu$,  a fundamental domain for the action of $H$ on the hyperbolic plane $\HH$ is given by any vertical strip of width $\mu$ with the two vertical geodesics are identified by $p$.  We choose the one centered on the vertical axis $\{ z : -\frac{\mu}{2}<\Re(z)<\frac{\mu}{2}\}$.
Recall that, given a matrix $g\in\PSL(2,\RR)$, not fixing $\infty$, its \emph{isometric circle $I_g$} is the Euclidean semi-circle centered at $g^{-1}\cdot\infty=-d/c$ with radius $r_g=1/|c|$.
Then a fundamental domain  for $\Gamma$ is given by the intersection of a fundamental domain for $H$ with the points that lie outside \emph{every} isometric circle $I_g$ given by the elements $g\in \Gamma\setminus H$.
For more details we refer the interested reader to page 57 of~\cite{Lehner:automorphic}.
The transformations that identify a pair of boundary sides of this fundamental domain are a set of generators for $\Gamma$.

Let us remark that the fundamental domain such constructed cannot have vertices inside $\HH$, since each such point is necessarily fixed by some elliptic transformation.
We notice also that the construction implies that $p$ is one of the side pairings, and hence a generator for the group.

We call $\cF$ the fundamental domain in $\DD$ obtained 
transporting the fundamental domain we just built from $\HH$ to $\DD$ via $\varphi$ (the inverse of the Caley map).
By construction,  $\cF$ is an ideal polygons and has $1=\varphi(\infty)$ as a vertex, which we will denote $\xi_0$. The images of the vertical lines with real part $\pm \mu/2$ are the to  sides which share $\xi_0$ as a vertex. As in \cref{fig:dandh}, we will label by $\eta$ (resp.\ $\overline{\eta}$) the side such that  $\xi_\eta^l=\xi_0$ (resp.\ $\xi_{\overline{\eta}}^r=\xi_0$), so that $\varphi(\mu/2)=\xi_\eta^l$ (resp.\ $\varphi(-\mu/2)=\xi_{\overline{\eta}}^r$). This shows that Conditions (1)-(3) hold. 

\begin{figure}
\centering
\def\svgscale{0.6}
\import{pictures/}{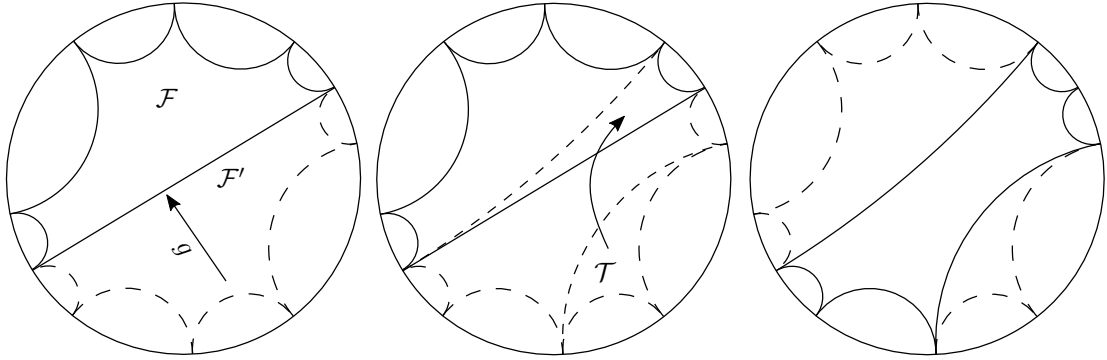}
\caption{The surgery of the domain described in \cref{sec:setup}.}
\label{fig:domainsurgery}
\end{figure}

Moreover, since we are assuming that $m=1$, we have that $i$ belongs to the closure of the maximal Margulis neighborhood, which belongs by construction to the fundamental domain. Hence, the origin $0$ of the disk $\DD$ belongs to the closure of $\cF$. In particular, this means that $\arclength{\Arc[\alpha]}\leq \pi$ for every $\alpha \in \mathcal{A}$. Thus, to ensure simultaneously Conditions (4) and (5),   
we just need to ensure that $0$ does not lie on the boundary of the fundamental domain.  This means that $s_\alpha$ is not a straight line in $\DD$ for all $\alpha$ or equivalently that all the inequalities  $\arclength{\Arc[\alpha]}\leq \pi$ inequalities are all strict.

We can always assume that this is the case up to performing the following surgery of the fundamental domain. 
If $0$ lies on the boundary of the fundamental domain, it belongs to a side formed by a diameter, shared by two copies of the fundamental domain that we will call $\cF$ and $\cF'$ (see the left part of \cref{fig:domainsurgery}). 
Consider an ideal triangle $\cT$, contained in $\cF'$, bounded by the side $s$ of $\cF'$ that is paired with the diameter and an adjacent side of $\cF'$, as shown in the middle picture in \cref{fig:domainsurgery}. 
If $g$ is the element of $G$ that pairs $s$ and the diameter, we choose as new fundamental domain $\bigl(\cF'\setminus\cT\bigr)\cup g(\cT)$, as in the right of \cref{fig:domainsurgery}.
By construction, the origin is an internal point of the new fundamental domain, which implies that~\eqref{eq:UpperSetHoleOrderZero} is satisfied.

Let us remark that, after the surgery we just explained, $g$ still identifies two sides of the new fundamental domain, namely the two sides coming from the internal side of $\cT$ and its image, that are dashed in the middle of \cref{fig:domainsurgery} and become solid in the right of the same picture.
However, the generator that was matching the third side of $\cT$ to some other side of $\cF$ is changed.
We need to take care that this side is \emph{not} $s_\eta$ or $s_{\overline{\eta}}$.
This is always possible unless $\cF$ has only $4$ sides, necessarily identified in pairs by parabolic transformations.
In this case, $X$ must be the thrice punctured sphere (see p.~275 of~\cite{Beardon}), which is unique in its isometry class (see, e.g.\ Theorem~9.8.8 of~\cite{Ratcliffe}).
A fundamental domain in $\DD$ for the thrice punctured sphere satisfying all the assumptions of the Lemma is given by the ideal quadrilateral with vertices $\{\pm 1, \pm i\}$, and the two sides that share the point $1$ (resp.\ $-1$) identified.
This completes the proof.
\end{proof}

From now on, $\cF$ will be a fundamental domain for the subgroup $\Gamma < G$ given by the Lemma.  
Let us remark that the fundamental domain $\cF$ is a finite cover of a fundamental domain for $G$ (obtained by \emph{unfolding} the elliptic points) and hence the induced tessellation of the hyperbolic disk  by $\cF$  has tiles which are finite union of copies of a fundamental domain for $G$. 

We will use the tessellation  on $\DD$ induced by the ideal polygon $\cF$ to code geodesics using Bowen-Series coding explained in the previous Section.
Let us stress that we do \emph{not} pass to a \emph{finite cover} of the surface $X$, which is fixed, but only code geodesics in $\DD$ according to a super-tessellation, which is better suited to our purposes than the one corresponding to $G$.
This is similar to what happens in the continued fractions case, where instead of coding the geodesics with respect to the tessellation given by the classical fundamental domain for $\PSL(2,\ZZ)$ one uses the Farey tessellation, made by ideal triangles.

\subsection{Cantor sets and their sums}\label{sec:cantorpreliminaries} 
Since the vertex $\xi_0$ of the fundamental domain $\cF$ chosen in the previous section is such that $\varphi(\xi_0)=\infty$
the partition of $\partial\DD$ given by the arcs $\Arc[\alpha]$ for $\alpha\in\cA$ induces a partition of $\RR$, and not only one of $\RRbar=\partial\HH$. 
Given an infinite word $(\ab{n})_{n\in\NN}$ that satisfies the no-backtracking condition~\eqref{eqnobacktrack}, it will be useful to write
\[
	[\ab{0},\dots,\ab{n},\dots]_{\partial\HH}:=\phi\bigl([\ab{0},\dots,\ab{n},\dots]_{\partial\DD}\bigr).
\]

Now, fix a positive integer $N\geq 2$ and let $\BB_N=\BB_N^\eta \subset\partial\DD$ be the set of points $\xi$ whose boundary expansion $(\ab{k})_{k\in\NN}$ does not contain any cuspidal word of length $N+1$ and whose first letter is different from $\eta$ and $\overline{\eta}$.
One can show that the set $\BB_N$ is a Cantor set: we are going to briefly describe its structure and its gaps in \cref{sec:distortionestimates}. 
Denote with $\KK_N=\phi(\BB_N)$ its image in $\partial\HH$.
We remark that this is a compact set as $\BB_N$ does not contain $\xi_\eta^r$ nor a neighborhood around it and $\phi(\xi_\eta^r)=\infty$.

Let $m_N:=\min\KK_N$, $M_N:=\max\KK_N$ and for  $s\in\NN$ let $\KK_N^s=\KK_N+s\mu$ denote the translates by $z \mapsto z+ \mu$ of the Cantor set $\KK_N$, so
\[
\KK_N^s : = \left[ m_N+s\mu , M_N+s \mu \right].
\]
The next Proposition is the analogue in our set up of Hall's theorem on the sum (difference) of Cantor sets given in terms of continued fractions.

\begin{prop}\label{thm:sumHallFuchsian}
There exists a natural number $N_0$ such that if $N\geq N_0$, for every  integer $s\geq 0$, both
  Cantor sets $\KK^s_N \pm \KK_N$  contain an interval of size at least $\mu$. More precisely,  we have
\begin{align*}
	 \KK_N^s+\KK_N  &= [ 2 m_N+ s\mu , 2M_N+s\mu ], \qquad & \left| \KK_N^s+	\KK_N\right|  &= 2(M_N-m_N) > \mu,   \\
\KK_N^s- \KK_N  &= [-(M_N-m_N)+s\mu , M_N-m_N +s\mu ], \qquad &   \left| \KK_N^s-	\KK_N\right| &= 2(M_N-m_N)> \mu.
\end{align*}
\end{prop}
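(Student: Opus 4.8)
The strategy is to verify that the Cantor sets $\KK_N$ (together with their rigid translates $\KK_N^s$, which differ from $\KK_N$ only by the isometry $z\mapsto z+s\mu$) satisfy the hypotheses of the Stable Hall Theorem (\cref{thm:stableHall}) with the sum function $S_0$ itself (so that $\lipschitz(S-S_0)=0$, hence \eqref{EqConditionStableHallTheorem} holds trivially for any $\epsilon>0$), and then read off the two displayed equalities. For the sum $\KK_N^s+\KK_N$ one applies the theorem directly; for the difference $\KK_N^s-\KK_N$ one observes that $\KK_N^s-\KK_N = \KK_N^s + (-\KK_N)$, and $-\KK_N$ is again a Cantor set satisfying the same gap condition (reflection is an isometry, so it preserves all the ratios in \eqref{EqStableGapCondition} and \eqref{EqStableSizeCondition}), with $\min(-\KK_N)=-M_N$, $\max(-\KK_N)=-m_N$ and length $M_N-m_N$. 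In both cases the theorem yields $S_0(\KK_N^s\times(\pm\KK_N)) = [\min\KK_N^s,\max\KK_N^s] + [\mp M_N\text{ or }m_N,\ldots]$, which is exactly the stated interval; the endpoints are just the sums of the extreme points, e.g.\ $[2m_N+s\mu, 2M_N+s\mu]$ for the sum and $[-(M_N-m_N)+s\mu, (M_N-m_N)+s\mu]$ for the difference.

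\textbf{Producing a slow subdivision with a uniform stable gap condition.} The real content is to exhibit, for $N$ large enough, a slow subdivision of $\BB_N$ (hence of its homeomorphic image $\KK_N$) satisfying the $\epsilon$-stable gap condition \eqref{EqStableGapCondition} and the $\epsilon$-size condition \eqref{EqStableSizeCondition}, for some fixed $\epsilon>0$ independent of $N$. The natural subdivision is the one given by the coding: at each stage one removes the hole corresponding to forbidding one more cuspidal word of length $N+1$, i.e.\ the holes of $\BB_N$ are the arcs $\Arc[\ab{0},\dots,\ab{k}]$ that are cuspidal of full length $N+1$ (together with the two arcs $\Arc[\eta],\Arc[\overline\eta]$ removed at stage $0$). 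One must order these holes so that conditions (1)--(3) of the definition of slow subdivision hold — this is a bookkeeping matter, ordering holes by the level at which the surrounding arc first appears. The key quantitative input, to be supplied by the distortion estimates of \cref{sec:distortionestimates}, is that for each hole $B_K$ the ratios $|B_K|/|K^L|$ and $|B_K|/|K^R|$ are bounded above by $1-\epsilon$ uniformly in the hole and in $N$ (for $N\ge N_0$), and likewise that every hole of $\BB_N$ has arc-length at most $(1-\epsilon)|\BB_N|$. These follow because Möbius transformations with uniformly bounded distortion (guaranteed by Condition (5) of \cref{costruzioneF}, i.e.\ $\arclength{\Arc[\alpha]}<\pi$) transport the ``level-$0$'' picture, where the ratios are manifestly bounded away from $1$, down to every level with controlled multiplicative error; the comparison between arc-length on $\partial\DD$ and Euclidean length on $\RR$ via $\phi$ introduces only a further bounded factor on the compact piece of $\partial\DD$ away from $\xi_\eta^r$ where $\BB_N$ lives.

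\textbf{From the Cantor-sum equality to the size inequalities.} Once the theorem applies, the sum and difference sets equal the full intervals $[2m_N+s\mu,2M_N+s\mu]$ and $[-(M_N-m_N)+s\mu,(M_N-m_N)+s\mu]$, each of length $2(M_N-m_N)$. It remains to check $2(M_N-m_N)>\mu$; this is where $N$ must possibly be enlarged. As $N\to\infty$, $\BB_N$ exhausts the set of non-cuspidal boundary points whose expansion avoids $\eta,\overline\eta$ as first letter, so $\KK_N=\phi(\BB_N)$ increases to (a dense subset of) an interval of $\partial\HH$ strictly larger than $[-\mu/2,\mu/2]$ — indeed $\KK_N$ already contains points near the endpoints $\phi(\xi_\eta^r)=\mu/2\cdot(\text{cusp width scaling})$ adjacent to the removed arcs $\Arc[\eta],\Arc[\overline\eta]$, and by the construction in \cref{costruzioneF}(3) the relevant boundary points sit at real parts approaching $\pm\mu/2$ from one side while other first-letter arcs reach further, so $M_N-m_N$ exceeds $\mu/2$ for $N$ large. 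Choosing $N_0$ so that simultaneously the uniform stable gap/size conditions hold and $M_{N_0}-m_{N_0}>\mu/2$ finishes the argument; monotonicity of $\KK_N$ in $N$ ensures the conclusion persists for all $N\ge N_0$.

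\textbf{Main obstacle.} The crux is the uniform (in both the hole and in $N$) verification of the stable gap and size conditions; this rests entirely on the distortion estimates for the generators $g_\alpha$ acting on $\partial\DD$, which is why Condition (5) of \cref{costruzioneF} was imposed, and it is exactly the point at which the careful choice of fundamental domain pays off. The ordering of holes into a genuine slow subdivision, and the change-of-variable comparison between $|\cdot|_{\partial\DD}$ and Euclidean length under $\phi$ on the compact part of the boundary carrying $\KK_N$, are routine once the distortion bounds are in hand.
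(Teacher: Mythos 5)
Your proposal is correct and follows essentially the same route as the paper: reduce to the Stable Hall Theorem with $S=S_0$ applied to $\KK_N^s$ and $\pm\KK_N$, verify the $\epsilon$-stable gap and $\epsilon$-size conditions through the distortion estimates of \cref{sec:distortionestimates} (this is the content of \cref{conditionsStableHall}), and enlarge $N_0$ so that $M_N-m_N>\mu/2$. One small correction: $\KK_N$ does not grow beyond $[-\mu/2,\mu/2]$ — it is contained in that interval since the first letter avoids $\eta,\overline\eta$ — but your needed inequality still holds because $m_N\to-\mu/2$ and $M_N\to\mu/2$, so $M_N-m_N\to\mu>\mu/2$.
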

We will prove the Proposition in \cref{sec:sum_in_R}. Let us remark that it can be proved as an application of the classical result by Hall on the sum of Cantor sets  (the proof is very similar to the one given in \cite{AMU} for similar Cantor sets).
Since we need in any case to verify that the Cantor sets $\KK_N$ and $\KK_N^s$  satisfy the assumptions of the Stable Hall \cref{thm:stableHall} for the proof of \cref{thm:Hallperturbations}, we prove \cref{thm:sumHallFuchsian} in \cref{sec:sum_in_R} as a special case of the Stable Hall Theorem.

As a Corollary, we have the following result, which is the starting point to build values in the Hall ray.

\begin{cor}\label{cor:decomposition}
For any 
$L\geq \mu/2$, there exist two real numbers $x_1, x_2 $ and an integer $s\geq 1$ such that $x_1,x_2\in\KK_N$ and
\[
	L=s\mu + x_2-x_1 .
\]
\end{cor}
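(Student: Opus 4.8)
The plan is to deduce this Corollary directly from \cref{thm:sumHallFuchsian}. First I would fix $N \geq N_0$ so that the conclusion of that Proposition applies, and record the real number $M_N - m_N = |\KK_N| > \mu/2$; actually, since the Proposition gives $2(M_N - m_N) > \mu$, we have $M_N - m_N > \mu/2$, so the half-open interval $[0, M_N - m_N)$ has length strictly greater than $\mu/2 \geq \mu/2$ — wait, I want a cleaner bookkeeping, so let me instead exploit that, by \cref{thm:sumHallFuchsian}, for every integer $s \geq 1$ the difference set satisfies
\[
	\KK_N^s - \KK_N = [-(M_N - m_N) + s\mu,\ M_N - m_N + s\mu],
\]
an interval of length $2(M_N - m_N) > \mu$ centered at $s\mu$.

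The key observation is then that these intervals, as $s$ ranges over $s \geq 1$, cover the half-line $[\mu/2, +\infty)$: consecutive intervals are centered at $s\mu$ and $(s+1)\mu$, at distance $\mu$ apart, while each has length strictly greater than $\mu$, so consecutive intervals overlap; and the first one (for $s=1$) has left endpoint $\mu - (M_N - m_N) < \mu - \mu/2 = \mu/2$, so it already contains $\mu/2$. Hence for any $L \geq \mu/2$ there is an integer $s \geq 1$ with $L \in \KK_N^s - \KK_N$. By definition of the difference set, this means there exist $y, x_1 \in \KK_N$ with $L = y - x_1$ where $y \in \KK_N^s = \KK_N + s\mu$, i.e.\ $y = x_2 + s\mu$ for some $x_2 \in \KK_N$. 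Therefore $L = s\mu + x_2 - x_1$ with $x_1, x_2 \in \KK_N$, which is exactly the claimed decomposition.

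I expect no serious obstacle here: the only thing to be careful about is the elementary interval-covering argument (checking that the overlap inequality $2(M_N - m_N) > \mu$ genuinely suffices to cover $[\mu/2, \infty)$ starting from $s = 1$, using that the first interval's left endpoint lies below $\mu/2$), and matching the sign conventions in $\KK_N^s - \KK_N$ with the ordering $x_2 - x_1$ in the statement. Both are routine once the Proposition is in hand.
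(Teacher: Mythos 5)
Your argument is correct and is essentially the same as the paper's: both deduce the statement from \cref{thm:sumHallFuchsian} by noting that the intervals $\KK_N^s-\KK_N$, each of length $2(M_N-m_N)>\mu$ and translated by $\mu$ as $s$ increases, overlap and cover $[\mu/2,+\infty)$, then unwind the definition of $\KK_N^s$ to write $L=s\mu+x_2-x_1$. The only (implicit) point to keep in mind is that $N\geq N_0$, which you state explicitly and the paper assumes tacitly.
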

\begin{proof}
Remark that $ \KK_N^{s+1}-	\KK_N =\left( \KK_N^s-	\KK_N \right) +\mu $. Thus, since by  \cref{thm:sumHallFuchsian}, the length of each $\KK_N^s-	\KK_N$ is greater than $\mu$, the intervals $\KK_N^s-	\KK_N$, $s \in \NN$, overlap and hence
\[  
	\bigcup_{s\geq 1} (\KK_N^s-	\KK_N ) = [ -(M_N-m_N)+\mu, + \infty)  \supset \Bigl[\frac{\mu}{2} , +\infty\Bigr), 
	\]
where the last inclusion follows since  $M_N -m_N \geq \mu/2$ (also by  \cref{thm:sumHallFuchsian}). In particular
 for any $L\geq \mu/2$ there exists an integer $s \geq 1$ such that $L \in  \KK_N^s-	\KK_N$. Since $\KK_N^s = \KK_N+s \mu$, this means that there exist $x_1,x_2 \in \KK_N$ such that $L= (x_2+s\mu)-x_1 $ as desired. 
\end{proof}

\subsection{A generalized Perron formula via boundary expansions}\label{sec:Perron}
The starting point for the proof of existence of a Hall ray is a generalization of Perron's formula~\eqref{Perron_formula_CF} for values of the Lagrange spectrum, in which classical continued fractions are replaced by the Bowen-Series boundary expansions with respect to the finite index subgroup $\Gamma < G $ defined in \cref{sec:setup}. 

Let us remark that given a geodesic $\gamma=\gamma(x,y)$ whose cutting sequence with respect to the tessellation defined in \cref{sec:setup} is $(\ab{n})_{n\in\ZZ}$, the two endpoints $x$ and $y$ of $\gamma$ are given by
\[
	y=[\ab{0},\dots,\ab{n},\dots]_{\partial\HH}
		\qquad
		\text{and}
		\qquad
	x=[\overline{\ab{-1}},\dots,\overline{\ab{-n}},\dots]_{\partial\HH}.
\]
The bars in the expression for $x$ are due to the fact that moving from $\cF$ to $\RR$ towards $x$ we are traveling backwards along $\gamma$, as explained at the very end of \cref{subsection:cuttsequences}.

Hence, introduce the following notation:
\begin{equation}\label{def:backwardCF}
	[\ab{0},\dots,\ab{n},\dots]_{\partial\HH}^- = [\overline{\ab{0}},\dots,\overline{\ab{n}},\dots]_{\partial\HH}.
\end{equation}
Let $\height_G(\gamma)$ denote the essential height of the geodesic $\gamma$, see \cref{sec:Lagrange_defs}.

\begin{lemma}[Perron's formula for the essential height]\label{lemma:Perron}
Let $\gamma$ be a complete geodesic with $\gamma(0)\in\cF$ and cutting sequence $(\ab{n})_{n\in\ZZ}$.
Suppose that $\height_G(\gamma)> 1$.
Then 
\begin{equation}\label{eq:heightlimsup}
	\height_G(\gamma) = \frac{1}{2}\limsup_{n\in\ZZ}{ |[\ab{n},\ab{n+1},\dots]_{\partial\HH} - [\ab{n-1},\ab{n-2},\dots]_{\partial\HH}^-|}.
\end{equation}
\end{lemma}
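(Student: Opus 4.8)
The plan is to translate the geometric definition of essential height into a statement about the boundary expansion, using the explicit tessellation built in \cref{sec:setup}. The key point is that, since $m=1$, the normalized fundamental horodisk is $\cU_1$, and an excursion of a lift $\tilde\gamma$ of $\gamma$ into (a copy of) the cusp neighborhood corresponds exactly to a block of the cutting sequence that forms a cuspidal word. First I would fix a parametrization with $\gamma(0)\in\cF$, take the cutting sequence $(\ab{n})_{n\in\ZZ}$, and recall from \cref{subsection:cuttsequences,subsection:boundary} that for each $n\in\ZZ$ the forward and backward endpoints of the geodesic $\gamma_n:=(g_{\ab{0}}\cdots g_{\ab{n-1}})^{-1}(\gamma)$ are $[\ab{n},\ab{n+1},\dots]_{\partial\HH}$ and $[\ab{n-1},\ab{n-2},\dots]_{\partial\HH}^-$ respectively; hence, by the formula \eqref{eq:height} for the naive height of a geodesic in $\HH$, the naive height of $\gamma_n$ is precisely half the absolute value of the difference appearing in \eqref{eq:heightlimsup}.

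Next I would relate the \emph{naive} height of the translated geodesics $\gamma_n$ to the \emph{essential} height $\height_G(\gamma)$, which by \eqref{eq:essentialheight} is the $\limsup$ of naive heights over a set of representatives $G_\infty$ modulo infinity. The subtlety is that the $\gamma_n$ come from the subgroup $\Gamma<G$ and from the super-tessellation by copies of $\cF$, not from $G_\infty$ directly. However, because the hypothesis $\height_G(\gamma)>1$ guarantees that the relevant deep excursions reach above the fundamental horodisk $\cU_1$, and because $\cF$ contains $\cU_1$ by the choice in \cref{costruzioneF}, each such deep excursion of $\tilde\gamma$ is detected by some crossing of a side adjacent to an $\infty$-vertex of the tessellation. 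Concretely: when $\gamma_n$ is high (naive height $>1$), the index $n$ must sit inside a cuspidal block whose parabolic word corresponds to the cusp $\infty$ (equivalently, the word is a power of the length-one parabolic word $\eta$ at $\xi_0$), and within such a block the transformations $g_{\ab{0}}\cdots g_{\ab{n-1}}$ differ from an element of $G_\infty$ (a horizontal translation $z\mapsto z+k\mu$) only by an element fixing $\infty$; by the remark in \cref{sec:Lagrange_defs} the naive height is invariant under this. Conversely, for the other direction, given any $g\in G_\infty$ with $\height(g(\tilde\gamma))\geq k$ one reads off the corresponding crossing in the cutting sequence and produces an index $n$ realizing (up to the factor $1/2$) at least $k$. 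This two-sided comparison gives
\[
\height_G(\gamma)=\limsup_{|n|\to\infty}\height(\gamma_n)=\frac12\limsup_{n\in\ZZ}\bigl|[\ab{n},\ab{n+1},\dots]_{\partial\HH}-[\ab{n-1},\ab{n-2},\dots]_{\partial\HH}^-\bigr|.
\]

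The main obstacle I anticipate is precisely this bookkeeping between the three groups/tessellations in play: the full group $G$ with its representatives $G_\infty$, the torsion-free finite-index subgroup $\Gamma$, and the super-tessellation by copies of $\cF$. One must check that passing to the $\Gamma$-tessellation does not create spurious high excursions nor miss genuine ones --- this is where the hypothesis $\height_G(\gamma)>1$ and the normalization $m=1$ are essential, since for geodesics whose essential height drops to $1$ or below the naive heights of the $\gamma_n$ need not be governed solely by the $\infty$-cusp, and the identification with the $G_\infty$-supremum could fail. I would handle this by showing that any crossing with naive height $>1$ forces the corresponding word to be (part of) a cuspidal word for the cusp at $\infty$ (using \cref{Parabolicwordslemma} and \cref{repeatedparabolic} to identify which cuspidal words correspond to $\xi_0$), and then invoking the fact that distinct cuspidal excursions at $\infty$ correspond to elements of $G_\infty$ that are genuinely inequivalent modulo infinity, so the count in \eqref{eq:essentialheight} matches the $\limsup$ over $n$. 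The remaining estimates --- that the difference of the two boundary-expansion values equals twice the naive height of $\gamma_n$ --- are a direct unwinding of \eqref{arcexpression} and \cref{thm:BowenSeries}, together with the elementary fact that a M\"obius transformation in $\PSL(2,\RR)$ maps the endpoints of $\gamma_n$ to those of $\gamma$.
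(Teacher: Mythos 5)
Your proposal is correct and follows essentially the same route as the paper: the naive height of each renormalized geodesic $\gamma_n$ is read off from the boundary expansions of its endpoints (\cref{gammaj}), and the hypothesis $\height_G(\gamma)>1$ together with adjustment by powers of the parabolic $p$ (which changes neither the naive height nor the class modulo infinity) reduces the essential height to a $\limsup$ over exactly those elements that bring $\gamma$ back to $\phi(\cF)$, i.e.\ those read from the cutting sequence. The only slip is the phrase ``$\cF$ contains $\cU_1$'': by \cref{costruzioneF}, $\phi(\cF)$ contains only the rectangle $\{z : |\Re(z)|<\mu/2,\ \Im(z)>1\}\subset\cU_1$, which is precisely why the $p^k$-translation step (implicit in the rest of your argument) is needed.
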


We recall that are here assuming that $m=1$, where $m$ is the height of the maximal Margulis neighborhood. More in general, the formula \cref{eq:heightlimsup} holds for $\height_G(\gamma)> m $. 
In the proof of \cref{lemma:Perron}, given below (and in the rest of the paper) we will use the following observation, which follows from the definition of the Bowen-Series coding and boundary expansions.
\begin{lemma}\label{gammaj}
 For any non-zero integer $j$, let $\gamma_j$ be the geodesic defined by
\[
\gamma_j (t):= \begin{cases}  
					g_{\ab{j-1}}^{-1} g_{\ab{j-2}}^{-1} 
					\cdots   g_{\ab{0}}^{-1} \cdot \gamma (t),
						 & \text{ if } j\geq 1; \\
					g_{\ab{j}}^{-1} g_{\ab{j-1}}^{-1} \cdots 
					g_{\ab{-1}}^{-1} \cdot \gamma (t), 
						& \text{ if } j< 0. 
				\end{cases}
\]
The cutting sequence of $\gamma_j$ is $(a_{n+j})_{n \in \ZZ}$ and the endpoints of $\gamma_j$ are
\[
y_j:=[\ab{j},\ab{j+1},\dots]_{\partial\HH}, \qquad x_j:=[\ab{j-1},\ab{j-2}, \dots ]^-_{\partial\HH}.
\]
\end{lemma}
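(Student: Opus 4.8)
The final statement to prove is \cref{gammaj}, which describes how the geodesic $\gamma$ transforms under the group elements arising in its cutting sequence.

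\textbf{Plan of proof.} The plan is to proceed by cases on the sign of $j$ and argue by induction on $|j|$, using the basic properties of the Bowen-Series coding recalled in \cref{subsection:boundary} and summarized in \cref{thm:BowenSeries}. First I would treat the case $j \geq 1$. Recall from \eqref{crossing} that if the cutting sequence of $\gamma$ is $(\ab{n})_{n\in\ZZ}$, then the $n^{\text{th}}$ side crossed by $\gamma$ is $g_{\ab{0}} g_{\ab{1}}\cdots g_{\ab{n-1}}(s_{\ab{n}})$, so after applying $g_{\ab{0}}^{-1}$ the geodesic $g_{\ab{0}}^{-1}\cdot\gamma$ has its ``time $0$'' crossing shifted: its cutting sequence is $(\ab{n+1})_{n\in\ZZ}$, because applying $g_{\ab{0}}^{-1}$ maps the region $g_{\ab{0}}\cdots g_{\ab{n-1}}(\cF)$ to $g_{\ab{1}}\cdots g_{\ab{n-1}}(\cF)$ and preserves the combinatorics of all subsequent crossings (this is the shift action of $F$ on cutting sequences, cf. \cref{thm:BowenSeries}). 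Iterating, $\gamma_j = g_{\ab{j-1}}^{-1}\cdots g_{\ab{0}}^{-1}\cdot\gamma$ has cutting sequence $(\ab{n+j})_{n\in\ZZ}$. For the case $j < 0$ I would argue symmetrically, using that $g_{\ab{-1}}$ is applied to move \emph{backwards} along the geodesic: since negative-index letters are recorded with a bar (the convention at the end of \cref{subsection:cuttsequences}), applying $g_{\ab{-1}}^{-1} = g_{\overline{\ab{-1}}}$ shifts the index down by one, and iterating $|j|$ times gives cutting sequence $(\ab{n+j})_{n\in\ZZ}$ again.

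\textbf{Identifying the endpoints.} Once the cutting sequence of $\gamma_j$ is established, the endpoints follow from the dictionary between cutting sequences and boundary expansions. By \cref{thm:BowenSeries}, the forward endpoint $\gamma_j(+\infty)$ has boundary expansion equal to the cutting sequence of the forward ray, namely $(\ab{n+j})_{n \geq 0}$, so $y_j = \gamma_j(+\infty) = [\ab{j},\ab{j+1},\dots]_{\partial\DD}$, which under $\phi$ becomes $[\ab{j},\ab{j+1},\dots]_{\partial\HH}$. For the backward endpoint, by the convention \eqref{def:backwardCF} and the discussion at the start of \cref{sec:Perron}, the backward ray of $\gamma_j$ (traversed away from $\cF$) has cutting sequence $\overline{\ab{j-1}}, \overline{\ab{j-2}}, \dots$, so $x_j = [\ab{j-1},\ab{j-2},\dots]^-_{\partial\HH}$ as claimed. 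One should just be mildly careful that $g_{\ab{j-1}}^{-1}\cdots g_{\ab{0}}^{-1}\cdot\gamma$ still passes through $\cF$ at time $0$, which is exactly what the cutting-sequence shift guarantees.

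\textbf{Main obstacle.} The substance is really bookkeeping rather than any deep estimate, so the main obstacle is purely notational: keeping the bars and index shifts consistent across the $j \geq 1$ and $j < 0$ cases, and being precise about which endpoint is ``forward'' and which is ``backward'' after the transformation (orientation is preserved by M\"obius transformations, so no flip occurs, but the indexing must be tracked carefully). A clean way to avoid confusion is to prove the single-step statement ($j = \pm 1$) carefully from \eqref{crossing} and the definition of $F$, and then invoke induction, observing that $\gamma_{j+1} = g_{\ab{j}}^{-1}\cdot\gamma_j$ for $j \geq 0$ (and the analogous relation for $j < 0$), so that each inductive step is literally the single-step statement applied to $\gamma_j$ in place of $\gamma$.
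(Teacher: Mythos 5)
Your treatment of the case $j\geq 1$ is correct and is essentially the paper's argument (the paper simply invokes the definitions, \eqref{crossing} and \cref{thm:BowenSeries}; your induction on the single-step shift spells this out). The problem is the case $j<0$: the single-step claim that applying $g_{\ab{-1}}^{-1}=g_{\overline{\ab{-1}}}$ shifts the index down by one is false under the paper's conventions. By \eqref{arcexpression} and \cref{thm:BowenSeries}, \emph{prepending} a letter $\alpha$ to a forward boundary expansion corresponds to applying $g_{\alpha}$, not $g_{\alpha}^{-1}$: if $\xi=[\beta_0,\beta_1,\dots]_{\partial\DD}$ with $\beta_0\neq\overline{\alpha}$, then $g_{\alpha}(\xi)=[\alpha,\beta_0,\beta_1,\dots]_{\partial\DD}$. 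Writing $y_0=[\ab{0},\ab{1},\dots]_{\partial\DD}$ and $x_0=[\overline{\ab{-1}},\overline{\ab{-2}},\dots]_{\partial\DD}$ for the endpoints of $\gamma$ in $\partial\DD$, the geodesic with cutting sequence $(\ab{n-1})_{n\in\ZZ}$ must have endpoints $[\ab{-1},\ab{0},\ab{1},\dots]_{\partial\DD}$ and $[\overline{\ab{-2}},\overline{\ab{-3}},\dots]_{\partial\DD}$, and this is achieved by $g_{\ab{-1}}=g_{\overline{\ab{-1}}}^{-1}$: indeed $g_{\ab{-1}}(y_0)=[\ab{-1},\ab{0},\dots]_{\partial\DD}$ (admissible since $\ab{0}\neq\overline{\ab{-1}}$ by \eqref{eqnobacktrack}), and $x_0\in\Arc[\overline{\ab{-1}}]$, where $g_{\ab{-1}}=g_{\overline{\ab{-1}}}^{-1}$ coincides with $F$, so $g_{\ab{-1}}(x_0)=[\overline{\ab{-2}},\overline{\ab{-3}},\dots]_{\partial\DD}$. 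By contrast, your element $g_{\overline{\ab{-1}}}$ sends $y_0$ to $[\overline{\ab{-1}},\ab{0},\dots]_{\partial\DD}$ when $\ab{0}\neq\ab{-1}$ (and even to $[\ab{1},\ab{2},\dots]_{\partial\DD}$ when $\ab{0}=\ab{-1}$), which is not the endpoint the lemma requires. Iterating the correct step gives, for $j<0$, $\gamma_j=g_{\ab{j}}g_{\ab{j+1}}\cdots g_{\ab{-1}}\cdot\gamma=g_{\overline{\ab{j}}}^{-1}g_{\overline{\ab{j+1}}}^{-1}\cdots g_{\overline{\ab{-1}}}^{-1}\cdot\gamma$; note that this barred form is exactly what the paper uses in the proof of \cref{lemma:Perron} (the elements $g_{\overline{\ab{k}}}^{-1}\cdots g_{\overline{\ab{-1}}}^{-1}$ for $k<0$). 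In other words, the displayed formula for $j<0$ in \cref{gammaj} contains a typo (missing bars, and the indices should run from $j$ up to $-1$), and your argument reproduces the typo instead of establishing the cutting-sequence and endpoint claims, which are the actual content of the lemma.

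The paper handles $j<0$ by a different and safer reduction: it applies the already-proved case $j\geq 0$ to the time-reversed geodesic $\gamma'(t):=\gamma(-t)$, whose cutting sequence is the reversed and barred one (this is where the convention at the end of \cref{subsection:cuttsequences} enters), and then translates the conclusion back to $\gamma$; this mechanically places the bars on the generators in the right spots. If you prefer your symmetric inductive route, it goes through once the single step for negative indices is done with the element $g_{\ab{j-1}}$ (no inverse): the recursion is $\gamma_{j-1}=g_{\ab{j-1}}\cdot\gamma_j$ for $j\leq 0$, replacing $\gamma_{j+1}=g_{\ab{j}}^{-1}\cdot\gamma_j$ for $j\geq 0$, and the endpoint identification then proceeds exactly as in your last paragraph.
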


The geodesic $\gamma_j$ has the property that $\gamma_j(t) \in \cF$ for $t_j< t <t_{j+1}$ where $t_j$ is the $j^{\text{th}}$-crossing with the coding tessellation (compare with \cref{crossing}). We will call it the $j^{\text{th}}$ normalized geodesic.

\begin{proof}
For $j\geq 0$, the statements follow from the definitions of coding and boundary expansion, see \cref{thm:BowenSeries} and also \cref{crossing}.
 When $j<0$, consider the geodesic $\gamma'(t):= \gamma(-t)$, whose cutting sequence $(a'_n)_{n\in\NN}$ is hence given by $a_n'=\overline{a_{n-1}}$, and apply the previous case.
\end{proof}

\begin{proof}[Proof of \cref{lemma:Perron}] 
We first claim  that, if the essential height~\eqref{eq:essentialheight} of a geodesic $\gamma\colon\RR\to X$ is larger than $1$, it is sufficient to consider the elements $g\in G$ such that $g\cdot\gamma\cap\phi(\cF)\neq\emptyset$.
In fact, consider an arbitrary element $g\in G$ and let $\cU_1$ be the fundamental horodisk defined in~\eqref{eq:fundamentalhorodisk}.
If $g\cdot\gamma\cap\cU_1 = \emptyset$, then we have that $\height(g\cdot\gamma)\leq 1$. 
Otherwise, since the fundamental domain $\phi(\cF) \subset \HH$, by construction (see \cref{costruzioneF} and recall that $m=1$), contains a Euclidean rectangle delimited by $\Im z = 1$ and two vertical lines at $-\frac{\mu}{2}$ and $\frac{\mu}{2}$,  there exists an integer $k$ such that $(p^k \cdot g \cdot\gamma)\cap\phi(\cF)\neq\emptyset$.
Clearly $p^k\cdot g$ and $g$ are equivalent modulo infinity. 
Moreover we have $\height(p^k g \cdot\gamma)=\height(g\cdot\gamma)\geq 1$. This proves the claim. 

Let us now remark that the elements $g\in G$ that satisfy $g\cdot\gamma\cap\phi(\cF)\neq\emptyset$, i.e., bring back a piece of the geodesic $\gamma$ to the fundamental domain, can be exactly obtained using the cutting sequence of $\gamma$, i.e.\ are exactly the elements of the form  $g_{\ab{k}}^{-1} \cdots g_{\ab{0}}^{-1}$ for $k\geq 0$ and $g_{\overline{\ab{k}}}^{-1} \cdots g_{\overline{\ab{-1}}}^{-1}$ for $k<0$. Thus, by \cref{gammaj} and the definition of hyperbolic naive height~\eqref{eq:height}, we get~\eqref{eq:heightlimsup}. 
\end{proof}

Finally, let us record as a Lemma some simple observations, which follows  from the choice and geometry of the fundamental domain (we refer the reader to \cref{fig:dandh,fig:estimatesHall}).

\begin{lemma}\label{endpointscontrol} 
Let $\gamma(x,y)$ be a geodesic with initial endpoint $x$ and final endpoint $y$, whose cutting sequence, assuming that $\gamma(0)\in\cF$, is $(a_n)_{n \in \ZZ}$. Then:
\begin{enumerate}
\item if $y>x$, then if $a_{0} \neq {\eta}$ we have $y<\mu/2$ and if  $a_{-1} \neq {\eta}$ then $x>-\mu/2$;
\item if $x>y$, then if $a_{0} \neq \overline{\eta}$ we have $y>-\mu/2$ and if  $a_{-1} \neq \overline{\eta}$ then $x<\mu/2$;
\item combining (1) and (2), if both $a_{0}$ and $a_{-1}$ do not belong to $\{\eta, \overline{\eta}\}$, we have that
\[
\height(\gamma)= \frac{|x-y|}{2} \leq \frac{\mu}{2}.
\]
\end{enumerate}
\end{lemma}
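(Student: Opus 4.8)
The plan is to read off all three statements directly from the explicit description of the fundamental domain $\cF$ provided by \cref{costruzioneF}, together with the nested-arc structure of boundary expansions from \cref{thm:BowenSeries} and the expression \eqref{arcexpression} for cylinder arcs. Recall the key geometric fact: by Conditions (2) and (3) of \cref{costruzioneF}, the vertex $\xi_0 = 1$ satisfies $\varphi(\xi_0) = \infty$, the side $s_\eta$ has endpoints $\xi_0$ and $\xi_\eta^r$ with $\varphi(\xi_\eta^r) = \mu/2$, and the side $s_{\overline\eta}$ has endpoints $\xi_0$ and $\xi_{\overline\eta}^l$ with $\varphi(\xi_{\overline\eta}^l) = -\mu/2$. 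Consequently, on the boundary $\partial\HH = \RRbar$, the arc $\varphi(\Arc[\eta])$ is the half-line $[\mu/2, +\infty]$ (containing $\infty$) and $\varphi(\Arc[\overline\eta])$ is the half-line $[-\infty, -\mu/2]$, while every other arc $\varphi(\Arc[\alpha])$, $\alpha \notin \{\eta,\overline\eta\}$, is a bounded subinterval of $(-\mu/2, \mu/2)$.

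For part (1), suppose $y > x$, so $\gamma(0)\in\cF$ travels from a point $x$ on the real line up and over to a larger point $y$; in particular both endpoints are finite reals (if either were $\infty$ the inequality $y>x$ with $x,y\in\RR$ would be vacuous, and one handles that case trivially since then $\height(\gamma)=\infty$ and the relevant hypothesis on $a_0$ or $a_{-1}$ fails). The forward endpoint is $y = [\ab 0, \ab 1, \dots]_{\partial\HH} \in \varphi(\Arc[\ab 0])$ by \cref{thm:BowenSeries}; if $\ab 0 \neq \eta$, then since $y$ is finite (hence $y \neq \infty$, ruling out $\ab 0 = \overline\eta$ would not be automatic, but note $y < \infty$ excludes $y$ from the unbounded arc $\varphi(\Arc[\eta])$ only — however $\varphi(\Arc[\overline\eta]) = [-\infty,-\mu/2]$ is also "unbounded" on one side but $y$ could lie in it). The cleaner argument: if $\ab 0 \neq \eta$ then $y \in \varphi(\Arc[\ab 0])$ which is contained in $[-\infty, \mu/2]$ (it is either $\varphi(\Arc[\overline\eta]) = [-\infty,-\mu/2]$ or one of the bounded arcs inside $(-\mu/2,\mu/2)$), so $y \le \mu/2$; and since $\ab 0 \neq \eta$ forces $\ab{-1} \ne \overline\eta$ is not what we need — rather, for $x$, by \cref{gammaj} the backward endpoint is $x = [\overline{\ab{-1}}, \overline{\ab{-2}}, \dots]_{\partial\HH} \in \varphi(\Arc[\overline{\ab{-1}}])$, so if $\ab{-1} \neq \eta$, i.e.\ $\overline{\ab{-1}} \neq \overline\eta$, then $x \in \varphi(\Arc[\overline{\ab{-1}}]) \subset [-\mu/2, +\infty]$, giving $x \ge -\mu/2$. (One must double-check the edge case where an endpoint equals a vertex $\xi$, i.e.\ is $G$-rational; such $\gamma$ are limits of the generic ones and the non-strict inequalities pass to the limit, or one simply notes the cutting sequence is then eventually cuspidal and the hypotheses are set up to exclude the relevant letter.)

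Part (2) is entirely symmetric: if $x > y$ then $\gamma$ runs from the larger point $x$ down to the smaller point $y$; the forward endpoint $y \in \varphi(\Arc[\ab 0])$, and $\ab 0 \neq \overline\eta$ puts $y$ in $[-\mu/2, +\infty]$, hence $y \ge -\mu/2$; the backward endpoint $x \in \varphi(\Arc[\overline{\ab{-1}}])$, and $\ab{-1}\ne\overline\eta$ means $\overline{\ab{-1}} \ne \eta$, putting $x$ in $[-\infty,\mu/2]$, hence $x \le \mu/2$. Part (3) then follows by combining: if $a_0, a_{-1} \notin \{\eta,\overline\eta\}$ then whichever of the two cases $y>x$ or $x>y$ holds, both endpoints lie in the closed interval $[-\mu/2,\mu/2]$ (by parts (1) and (2) respectively, using that the hypothesis excludes both $\eta$ and $\overline\eta$ so the needed clause applies regardless of the sign of $y-x$), whence $|x-y| \le \mu$ and $\height(\gamma) = |x-y|/2 \le \mu/2$ by the definition \eqref{eq:height}. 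I do not anticipate a genuine obstacle here; the only point requiring mild care is the boundary (pun intended) behavior at $G$-rational endpoints, i.e.\ when an endpoint coincides with a vertex $\xi_\alpha^l$ or $\xi_\alpha^r$ of the tessellation — but the inequalities are stated non-strictly precisely to absorb these, and in any case a geodesic with $\gamma(0) \in \cF$ and, say, $a_0 = \alpha \ne \eta$ has $y \in \overline{\varphi(\Arc[\alpha])}$ with $\sup \varphi(\Arc[\alpha]) \le \mu/2$ by Condition (3) and the clockwise ordering of vertices, so the closed-arc membership suffices.
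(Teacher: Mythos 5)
Your argument is correct and takes essentially the same route as the paper: both read the bounds off from the fact that $\varphi(\cF)$ lies between the vertical lines $\Re z=\pm\mu/2$, whose external labels are $\eta$ and $\overline{\eta}$, and identify the letters $a_0$ and $\overline{a_{-1}}$ with the arcs containing the forward and backward endpoints (the paper phrases this as the geodesic having to cross the corresponding vertical side, which is the same geometric fact as $y\in\varphi(\Arc[a_0])$, $x\in\varphi(\Arc[\overline{a_{-1}}])$). The only caveat, shared by the paper's own proof, is the strict versus non-strict inequality at the degenerate values $y=\pm\mu/2$ (cuspidal endpoints), which is immaterial for how the lemma is used.
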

\begin{proof}
For  Part (1) (resp.\ Part (2)), simply recall that the fundamental domain $\varphi(\cF) \subset \HH$ is bounded by the two vertical lines $\{ \Re z = - \mu/2\}$ and  $\{ \Re z = \mu/2\}$, whose \emph{external} labels are $\overline{\eta}$ and $\eta$ respectively (see \cref{fig:dandh}). Thus, for a geodesic with $\gamma(0) \in \varphi(\cF)$  to cross the side labeled by $\eta$ (resp.\ $\overline{\eta}$), so that $a_0=\eta$ (resp.\ $a_0=\overline{\eta}$) the final endpoint has to be greater than $\mu/2$ (resp.\ less than $-\mu/2$).  
The arguments for $a_{-1}$ are analogous, just reversing time and thus exchanging the role of the endpoints.
Finally, Part (3) follows simply by combining (1) and (2). 
\end{proof}

\subsection{Hall's argument for the height in any zonal Fuchsian group}\label{subsec:Hall_argument}
We now have all the elements to conclude the proof of  \cref{thm:Hall} following the scheme of Hall's original proof. 

\begin{proof}[Proof of \cref{thm:Hall}]
Let $N_0$ be given by \cref{thm:sumHallFuchsian}. We will show that
\begin{equation}\label{choicer}
[L_0, + \infty] \subset \cL(X,\infty), \qquad \text{ for any }  L_0> (N_0+1)\mu.
\end{equation}

\smallskip
\emph{Step one: construction of the bi-infinite word.}

By  \cref{cor:decomposition} (remark that in particular $L \geq \mu/2$ so we can apply it),  there exist
$x_1, x_2  \in \KK_N$ and $s\geq 1$ such that $L=s\mu + x_2-x_1$.  In particular, write $y=[\ab{0},\ab{1},\dots ,\ab{n},\dots]_{\partial\HH}$ and $x=[\bb{0},\bb{1},\dots,\bb{n}, \dots ]_{\partial\HH}$, with both sequences $(\ab{n})_{n\in\NN}$ and $(\bb{n})_{n\in\NN}$ in $\KK_N$. Thus, 
\begin{equation}\label{formL} L=s\mu +[\ab{0},\ab{1},\dots ,\ab{n},\dots]_{\partial\HH} -[\bb{0},\bb{1},\dots,\bb{n}, \dots ]_{\partial\HH}.\end{equation} 
Let us now construct an infinite word $(\cc{n})_{n \in \ZZ}$ that will give the cutting sequence of a geodesic $\gamma$ such that
\[
	L_G(\gamma)=2\height_G(\gamma)=L.
\]
We will define blocks of entries $W_j$, $j \in \ZZ$, which we will then concatenate to form the word $(\cc{n})_{n \in \ZZ}$.
Recall that $\eta$ is the letter such that $p=g_\eta$. 
Set
\[
	W_{j} = {\overline{\bb{|j|}}} \dots {\overline{\bb{0}}} \eta^s \ab{0} \dots \ab{|j|} , \qquad j \in \ZZ,
\]
where $\eta^s$ means that the letter $\eta$ is repeated $s$ times.
We remark that, by definition of the Cantor set $\KK_N$, we have that $\ab{0} \neq \overline{\eta}$ and $\overline{\bb{0}}\neq \overline{\eta}$ (since $\bb{0}\neq {\eta}$). Thus $W_j$  satisfies the no-backtracking condition~\eqref{eqnobacktrack}.
Let us choose letters to interpolate between $W_j$ (which ends in $a_j$) and $W_{j+1}$ (which starts with $\overline{b_{j+1}}$) as follows.
Since the alphabet $\cA$ has cardinality $2d>3$, we can pick  $\delta_j$ such that $\delta_j \neq \overline{\ab{j}}$ and $\ab{j} \delta_j$ is not a cuspidal word and then  $\delta'_j$ such that $\delta'_j \neq \overline{\delta_{j}}$, $\delta'_j \neq \bb{j+1}$ and $\delta'_j \overline{\bb{j+1}}$ is not a cuspidal word.
 Thus, the word
\begin{equation}\label{sequencenoN}
	\ab{0} \dots \ab{j} \delta_j \delta_j' \overline{\bb{j+1}} \dots \overline{\bb{0}}
\end{equation}
satisfies the no-backtracking condition~\eqref{eqnobacktrack}.
Moreover, as the two infinite words $(\ab{n})_{n\in\NN}$ and $({\bb{n}})_{n\in\NN}$ are in $\BB_N$, and thanks to our choice of $\delta_j, \delta_j'$, the word in~\eqref{sequencenoN} does not contain any parabolic word of length bigger than $N$.
It follows that the infinite word $(\cc{n})_{n \in \ZZ} $ obtained juxtaposing the blocks $W_j, \delta_j, \delta_j'$ in increasing order of $j \in \ZZ$ 
 satisfies the no-backtracking condition or, in other words, is actually the cutting sequence of some geodesic $\gamma$.

\smallskip
In the next two steps we will show that $\height_G(\gamma)=L/2$ and hence $L_G(\gamma)=2\height_G(\gamma)=L$.  
First, in \emph{Step two}, we will check that if we evaluate the $\limsup$ in~\eqref{eq:heightlimsup} along the subsequence of times where we see the parabolic word $\eta^s$ we obtain the desired value. Then, in \emph{Step three}, we will show that this subsequence actually realizes the $\limsup$.
\smallskip

\emph{Step two: the infinite word realizes the desired Lagrange value along a subsequence of times.}

We remark that $\eta^s$ is a cuspidal word and, since $\ab{0}$ and $\overline{\bb{0}}$ are different from $\eta$, it is actually a maximal cuspidal word. Let $r_k$ be the subsequence of times where an occurrence of the central word $\eta^s$ in $W_k$ begins. Thus,
\[ [ \cc{r_k},\cc{r_k+1}, \cdots ]_{\partial\HH} = [ \eta, \eta ,\dots ,\eta ,\cc{r_k+s}, \cc{r_k+s+1},  \cdots ]_{\partial\HH}= [\eta, \eta ,\dots ,\eta , \ab{0},\ab{1},\dots]_{\partial\HH} .\]
 By \cref{gammaj} (or directly by \cref{thm:BowenSeries}), this endpoint is obtained acting by $g_\eta^s$ on 
 \[ 
 	[\cc{r_k+s}\cc{r_k+s+1}\dots ]_{\partial\HH} = [\ab{0},\ab{1},\dots]_{\partial\HH} .\]
 Since $g_\eta^s=p^s$ (recall \cref{costruzioneF}), it acts on $ \HH$ as the translation $z\mapsto z+s \mu$. Thus,    
evaluating Perron's formula~\eqref{eq:heightlimsup} along the subsequence $r_k$, as $|k| \to \infty$, we get 
\begin{equation}\label{limsupseq}
\begin{split}
	\lim_{|k| \to \infty } \frac{\bigl| [\cc{r_k},\cc{r_k+1},\dots]_{\partial\HH} - [\cc{r_k-1},\cc{r_k-2},\dots]_{\partial\HH}^-\bigr|}{2} 
	&=\lim_{|k| \to \infty } \frac{\bigl| [\eta, \dots,\eta,\ab{0},\dots]_{\partial\HH} - [\overline{\bb{0}},\overline{\bb{1}},\dots]_{\partial\HH}^-\bigr|}{2} \\
	&=\lim_{|k| \to \infty } \frac{(s\mu +[\ab{0},\ab{1}, \dots]_{\partial\HH} - [\bb{0},\bb{1}, \dots ]_{\partial\HH})}{2} \\
	&= \frac{L}{2},
\end{split}
\end{equation}
where in the last line we used also the definition~\eqref{def:backwardCF} of $[\,\cdot\,]_{\partial\HH}^-$,the form of the words $W_k$ and~\eqref{formL}.

\begin{figure}
\centering
\def\svgscale{0.8}
\import{pictures/}{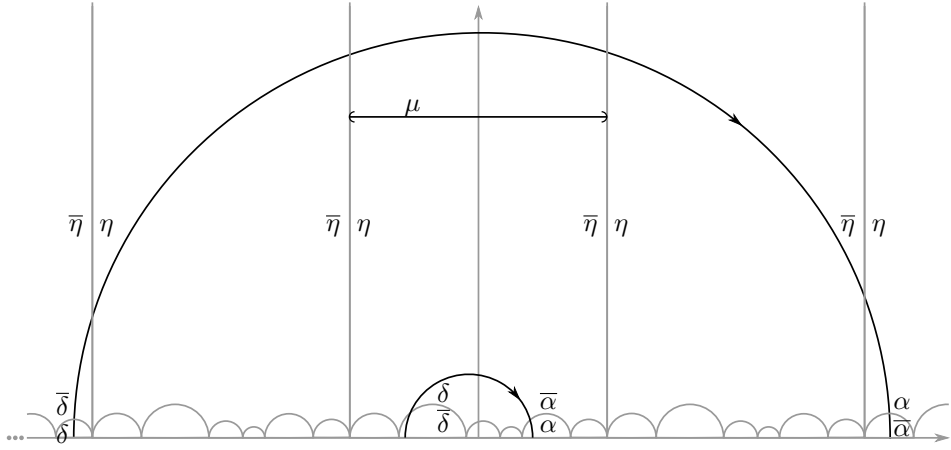}
\caption{In grey, the fundamental domain $\phi(\cF)$, and a portion of the tessellation induced by it, with some of the side labels; in black, a geodesic with larger height, coded by $\dots,\overline{\delta},\eta,\eta,\eta,\eta,\overline{\alpha},\dots$, and one with smaller height, coded by $\dots,\delta,\alpha,\dots$.}
\label{fig:estimatesHall}
\end{figure}

\smallskip
\emph{Step three: estimates on the remaining times.} 
We now estimate the value of the $\limsup$ in the formula~\eqref{eq:heightlimsup} for the other times. For any $j \in \mathbb{Z}$, let $\gamma_j$ be the $j^{\text{th}}$ renormalized geodesic defined in  \cref{gammaj}, 
which is a geodesic coded by $(\cc{n+j})_{n\in\ZZ}$  with endpoints $\tilde{x}_j:=[\cc{j},\cc{j+1},\dots]_{\partial\HH}$ and $\tilde{y}_j:=[\cc{j-1},\cc{j-2},\dots]_{\partial\HH}^-$ (see \cref{gammaj}). 

We begin with the simple remark that if we see a block of $k$ consecutive $\eta$'s, i.e.\ $\cc{n}=\cdots = \cc{n+k-1}=\eta$, then the naive height value $|\tilde{x}_j-\tilde{y}_j|/2$ of  $\gamma_j$ remains constant for $n\leq j \leq n+k$.  In fact, by \cref{gammaj}, $\gamma_{j}$ for $n< j \leq n+k$ is obtained from $\gamma_n$ applying a power of $g_\eta^{-1}=p^{-1}$, i.e.\  rigidly translating to the left by $\mu $ the two endpoints of $\gamma_n$.  In other words, for every occurrence $\cc{n}=\eta$ of $\eta$ (or similarly for $\overline{\eta}$), we have 
\[
	|[\eta,\cc{n+1},\dots]_{\partial\HH} - [\cc{n-1},\dots]_{\partial\HH}^-| = |[\cc{n+1},\dots]_{\partial\HH} - [\eta,\cc{n-1},\dots]_{\partial\HH}^-|.
\] 
The same remark also holds for a sequence of consecutive $\overline{\eta}$, in this case we act by  $g_{\overline{\eta}}^{-1}=g_\eta=p$  and hence we are translating the endpoints to the right by $\mu$.

In particular, by \emph{Step one}, this gives that, for any $k \in \ZZ$, and any $r_k\leq j \leq r_k+s$, $\height(\gamma_j)= \height(\gamma_{r_k})$ and the argument in Perron's formula~\eqref{eq:heightlimsup}  is constant and equal to $L/2$. 
We will now evaluate the argument of  Perron's formula~\eqref{eq:heightlimsup} for any $j$ which is not of this form.  
We will consider four sub-cases.

\smallskip
\noindent \emph{Case i}: If both $\cc{j}$ and $\cc{j-1}$ do not belong to $\{\eta,\overline{\eta}\}$, by \cref{endpointscontrol} 
 both the endpoints of $\gamma_j$ lie inside the interval $[-\frac{\mu}{2},\frac{\mu}{2}]$, thus $2\height(\gamma_j)\leq\mu$.

\smallskip
\noindent \emph{Case ii}: Suppose now that $\cc{j}\in \{\eta, \overline{\eta}\}$, but $\cc{j-1}\notin \{ \eta,\overline{\eta}\}$.
By our assumption on $j$, and the structure of the bi-infinite word ($\cc{n})_{n\in\ZZ}$, we can have at most $N$ consecutive $\eta$ or $\overline{\eta}$, beginning with $\cc{j}$. 
This implies that the geodesic $\gamma_j$ crosses at most $N$ vertical lines of the form $k\mu/2$ for $k\in\NN$, see \cref{fig:estimatesHall}. Let $N_j\leq N$ be the number of lines actually crossed and assume $\cc{j}=\eta$ (if $\cc{j}=\overline{\eta}$ the argument is analogous).  
So we have
\[
\begin{split}
2	\height(\gamma_j) &= |[\cc{j},\cc{j+1},\dots]_{\partial\HH} - [\cc{j-1},\cc{j-2},\dots]_{\partial\HH}^-| \\
	&= |[\eta,\dots,\eta,\cc{n+N_j},\dots]_{\partial\HH} - [\cc{j-1},\cc{j-2},\dots]_{\partial\HH}^-|\\
	&\leq  N_j \mu + |[\cc{n+N_j},\cc{n+N_j+1},\dots]_{\partial\HH} - [\cc{j-1},\cc{j-2},\dots]_{\partial\HH}^-|\\
	&< N_j \mu + \Bigl|\frac{\mu}{2}- \Bigl(-\frac{\mu}{2}\Bigr)\Bigr| \leq N \mu + \mu \leq  L.
\end{split}
\]
\smallskip
\noindent \emph{Case iii}: is symmetric to the previous one. If $\cc{j-1}\in \{\eta, \overline{\eta}\}$, but $\cc{j}\notin \{ \eta,\overline{\eta}\}$, we can repeat the previous argument in the past, i.e.\ we have that, if $\cc{j-1}=\eta$, 
\[
\begin{split}
2	\height(\gamma_j) &= |[\cc{j},\cc{j+1},\dots]_{\partial\HH} - [\eta, \dots, \eta, \cc{j-N_j-1},\cc{j-N_j-2},\dots]_{\partial\HH}^-| \\
	&\leq  N_j \mu + |[\cc{j},\cc{j+1},\dots]_{\partial\HH} - [\cc{j-N_j-1},\cc{j-N_j-2},\dots]_{\partial\HH}^-|\\
	&< N_j \mu + \Bigl|\frac{\mu}{2}- \Bigl(-\frac{\mu}{2}\Bigr)\Bigr| \leq N \mu + \mu \leq  L
\end{split}
\]
and an analogous estimate holds for $\cc{j-1}=\overline{\eta}$.
 
\smallskip
\noindent \emph{Case iv}:
Finally, if both $\cc{j}, \cc{j-1} \in \{ \eta, \overline{\eta}\}$, since by the no backtracking condition $\cc{j}\neq \overline{\cc{j-1}}$,  either $\cc{j}=\cc{j-1}=\eta$ or $\cc{j}=\cc{j-1}=\overline{\eta}$. We claim that in this case we can reduce this case to one of the previous Steps using the remark at the beginning of this step that the naive height of $\gamma_j$ does not change during a   block of consecutive $\eta$ or $\overline{\eta}$. More precisely, 
we look for the first $m\leq j$ such that $\cc{m-1}\notin \{\eta,\overline{\eta}\}$, that is we choose the time $m$ where the cuspidal word contain $\cc{j}$ begins.  If $m=r_k$ for some $k$, $\height(\gamma_j)= \height(\gamma_{r_k})=L/2$ by \emph{Step two}. 
Otherwise, the cuspidal word beginning at $\cc{m}$ must be at most of length $N$, by construction of the word $(\cc{n})_{n\in\ZZ}$.
In this case, we can use the above estimates for the time $j=m$ to see that $\height(\gamma_j)= \height(\gamma_{m})<L/2$.

\smallskip 
Thus, combining \emph{Step two} and \emph{Step three}, Perron's formula shows that $\height_G(\gamma) =  L/2$ and hence $L_G(\gamma) = 2 \height_G(\gamma)=L$. This concludes the proof.
\end{proof}

\section{Cantor sets in the boundary}\label{sec:Cantor}
In this section we describe the Cantor set $\mathbb{B}_N$ in the boundary of the disk which correspond to endpoints of geodesics whose excursions in the cusps are \emph{bounded}, in the sense that their boundary expansion contains only cuspidal words  of length bounded by $N$. We first describe their gaps combinatorially, through the symbolic sequences which correspond to them (see \cref{SectionCombinatorialDescriptionGaps}), then  prove some distortion estimates (see \cref{sec:distortionestimates}) which will be needed to apply the Stable Hall \cref{thm:stableHall}. 

\smallskip
Let us first recall the definition of the Cantor set we want to study through the cuspidal acceleration of the boundary expansion.  
Consider an infinite word $(\ab{n})_{n\in\NN}$ satisfying the no-backtracking condition~\eqref{eqnobacktrack} and that is not eventually cuspidal. The cuspidal acceleration described in \cref{sec:cuspidalwords} provides a sequence of integers $0=:n(0)<n(1)<n(2)<\dots$ and maximal cuspidal words $C_r:=\ab{n(r)},\dots,\ab{n(r+1)-1}$ with $r\in\NN$, such that for any $r\geq 1$, $n(r+1)$ is the minimal $n>n(r)$ such that $\ab{n(r)}\dots\ab{n(r+1)}$ is \emph{not} a cuspidal word. Let us write $\wordlength{\ab{0}\dots\ab{n-1}}=n$ for the length of a word, so that the length $\wordlength{C_r}$ of the $r^{\text{th}}$ cuspidal word is 
\[
	\wordlength{\ab{n(r)},\dots,\ab{n(r+1)-1}}=n(r+1)-n(r).
\] 

Fix a positive integer $N$ and a letter $\eta\in\cA$ and let $\mathbb{B}_N:=\mathbb{B}(N,\eta) \subset \partial \mathbb{D}$ be the  Cantor set defined in \cref{sec:cantorpreliminaries}, 
which consists of  the  set of points  whose boundary expansion $(\ab{n})_{n\in\NN}$ is such that $\ab{0}\not=\eta, \overline{\eta}$ and the sequence $(\ab{n})_{n\in\NN}$ does not contain any cuspidal word of length $N+1$, that is for any $r\in\NN$ we have 
\begin{equation}
\label{EqBoundedTypeCuspidal}
\wordlength{C_r}=n(r+1)-n(r)\leq N.
\end{equation}

One can prove that $\BB_N$ is indeed a Cantor set. The proof is given in \S~7.2 of~\cite{AMU} for some analogous Cantor sets, so we refer the interested reader to it. In the next section, though, we recall  a combinatorial description of the gaps of the Cantor set $\BB_N$. 

\subsection{Combinatorial description of the gaps in the Cantor set}
\label{SectionCombinatorialDescriptionGaps}
The union of the gaps of the Cantor set $\BB_N$ can be described using \emph{deleted arcs} and the corresponding \emph{forbidden words} as follows.
If $\xi \in \BB_N$, by definition no cuspidal word of length $N+1$ can appear in its boundary expansion. An $N$-\emph{forbidden word} of level $m$, which for short we will call a $(N,m)$-\emph{forbidden word}, is a finite word which contains a cuspidal word of length $N+1$ after the $m^{\text{th}}$ letter, i.e.\  
  a word $\ab{0}\dots\ab{m+N}$ of length $m+N+1$ whose cuspidal decomposition 
\[
	\ab{0}\dots\ab{m+N}=C_1\dots C_r
\] 
is such that the first $r-1$ terms $C_1,\dots,C_{r-1}$ are cuspidal words of length strictly smaller than $N+1$ (so that the condition~\eqref{EqBoundedTypeCuspidal} is satisfied), while for the last term we have $\wordlength{C_r}=N+1$, that is 
\[
C_r=\ab{m}\dots\ab{m+N}.
\]
Arcs $\Arc[\ab{0},\dots,\ab{m+N}]$ corresponding to $(N,m)$-forbidden words with $m\in\NN$ are called $(N,m)$-\emph{deleted arcs}.
Let $ \cD_N$ be the family whose elements are all the $(N,m)$-deleted arcs for $m\in\NN$. Elements $\Arc\in\cD_N$ have mutually disjoint interior and are exactly all the arcs which are removed from $\partial\DD$ to obtain $\BB_N$. 
Any gap $B$ of $\KK_N$ is the countable union $B=\cup_{k\in\ZZ}\Arc_k$ of a collection of \emph{adjacent} deleted arcs in $\cD_N$, where by adjacent we mean that $\max \Arc_k=\min \Arc_{k+1}$ for any $z\in\ZZ$; the length $\arclength{\Arc_k}$ shrinks exponentially as $|k|\to\infty$. 
An explicit description of all arcs $\Arc_k$ fitting together in the same gap is given in \S~7.2 in~\cite{AMU}. The explicit description is not needed in this paper: 
we just need to know that if we set $\cG_N\subset\partial\DD$ to be the union $\cG_N:=\bigcup_{\Arc\in\cD}\Arc$ of all deleted arcs $\Arc\in\cD_N$, the set $\cG_N$, described as a countable union of closed arcs, is an open set and it coincides precisely with the union of all gaps of $\BB_N$, in other words $$\BB_N=\big(\partial\DD\setminus(\Arc[\overline{\eta}]{\cup\Arc[\eta]})\big)\setminus \cG_N .$$ 

Let us now describe how to hierarchically produce all the gaps of the Cantor set $\BB_N$ through the generators of the boundary expansions. This also gives an ordering of the gaps and the corresponding intervals into \emph{levels}. 

The \emph{gaps of level zero} are in one to one correspondence with the $2d-1$ ideal vertices $\xi_1, \dots, \xi_{2d-1}$ of the fundamental domain $\cF$. Let $\xi\in \partial\DD$ be any such vertex of $\cF$. Let us denote by $\alpha_\xi^l$ and $\alpha_\xi^r$ the two letters in $\cA$ such that the arcs $\Arc[\alpha_\xi^l]$ and $\Arc[\alpha_\xi^r]$ share $\xi$ as an endpoint. As the notation suggest, we assume that $\Arc[\alpha_\xi^r]$ has $\xi$ as \emph{right} endpoint, while $\Arc[\alpha_\xi^l]$ has $\xi$ as \emph{left} endpoint. Thus, in the clockwise ordering,
$$
\max \Arc[\alpha_\xi^r]= \xi = \min \Arc[\alpha_\xi^l].
$$
As we observed after \cref{defnestedarcs}, there are exactly two $(N,0)$-forbidden words $\alpha_0^l\dots \alpha_N^l$ and $\beta_0^r\dots \beta_N^r$ with respectively $\alpha_0^l=\alpha_\xi^l$ and $\beta_0^r=\alpha_\xi^r$. 
Moreover $\alpha_0^l\dots \alpha_N^l$ is left cuspidal and $\beta_0^r\dots \beta_N^r$ is right cuspidal.
The two corresponding $(N,0)$-deleted arcs $\Arc[\alpha_0^l,\dots,\alpha_N^l]$ and $\Arc[\beta_0^r,\dots,\beta_N^r]$ 
share a common endpoint, indeed we have
$
	\max \Arc[\alpha_0^r,\dots,\alpha_N^r]=
	 \xi =
	\min \Arc[\beta_0^l,\dots,\beta_N^l].
$

The gap $B[\xi]$ of level zero corresponding to some $\xi$ is by definition the connected component of $\cG$ which contains $\Arc[\alpha_0^l,\dots,\alpha_N^l]\cup \Arc[\beta_0^r,\dots,\beta_N^r]$. Thus in particular we have 
\[	 \Arc[\alpha_0^l,\dots,\alpha_{N}^l]\cup \Arc[\beta_0^r,\dots,\beta_{N}^r] \subset B[\xi] .\]
Keeping in mind the geometry of cuspidal arcs, one can also show  that 
\begin{equation}\label{EquationUpperSetHoleOrderZero}
	B[\xi] \subset \Arc[\alpha_0^l,\dots,\alpha_{N-1}^l]\cup \Arc[\beta_0^r,\dots,\beta_{N-1}^r].
\end{equation}
The level zero gaps are $B[\xi_i]$ for $1\leq i\leq 2d-1$. For any $n\geq 1$, to define the \emph{gaps of level} $n$, we transport the gaps of level zero through the generators as follows.  Let $B[\xi]$ be a gap  of level zero and $\ab{0}\dots\ab{n-1}$ an admissible word   with $\ab{0}\neq\overline{\eta}, \eta$ and such that both the two words 
\[
	\ab{0}\dots\ab{n-1}\alpha_0^l\dots \alpha^l_N
		\qquad
		\text{and}
		\qquad
	\ab{0}\dots\ab{n-1}\beta^l_0\dots \beta^l_N
\]
are admissible and moreover form $(N,n)$-forbidden words, where  $\alpha^l_0=\alpha_\xi^l$ and $\beta^r_0=\alpha_\xi^r$. 
The corresponding gap of level $n$ is the open interval
\[
	B[\ab{0},\dots,\ab{n-1};\xi]=
g_{\ab{0}}\circ\dots\circ g_{\ab{n-1}}\left(B[\xi]\right).
\]

Correspondingly,  it is also convenient to introduce the notion of \emph{intervals of level} $n$. To do so, for any  $n\in\NN$ and any letter $\alpha\in\cA$,  recall that  the ideal vertices of the arc $\Arc[\alpha]$ are $\xi_\alpha^l$ and $\xi_\alpha^r$.   
Define the compact arc $K[\alpha]$ as the unique connected component of 
$
\partial\DD\setminus\bigcup_{0\leq i\leq 2d-1} B[\xi_i]
$ 
that shares an endpoint both with $B[\xi_\alpha^l]$ and $B[\xi_\alpha^r]$. This defines the $2d$ \emph{intervals of level zero} $K[\alpha]$, $\alpha\in\cA$.

To define the intervals of other levels, 
for any admissible word $\ab{0},\dots,\ab{n-1}$ of length $n$, define the intervals of level $n$ compatible with 
$
\ab{0},\dots,\ab{n-1}
$ 
as the intervals 
\[
	K[\ab{0},\dots,\ab{n-1};\alpha]:=
g_{\ab{0}}\circ\dots\circ g_{\ab{n-1}}\left( K[\alpha]\right),
\]
where $\alpha$ ranges among all the letters with $\alpha\not=\overline{\ab{n-1}}$.

\subsection{Distortion estimates}
\label{sec:distortionestimates}

In the following we consider the Poincar\'e disc $\DD$ as an open subset of the Riemann sphere $\overline{\CC}=\CC\cup\{\infty\}$; $| \cdot |$ will denote the usual absolute value in $\CC$ and by open disc we mean a set of the form $\{ z \in \CC : |z-z_0|<r\}$ for some $z_0\in \CC$ and radius $r>0$. 

For any letter $\alpha\in\cA$, recall that $s_\alpha$ is the side of $\cF$ which correspond to the arc $\Arc[\alpha]\subset \partial \DD$. Let $B_\alpha$ be the open disc in ${\CC}$ whose boundary contains the geodesic arc $s_\alpha\subset\DD$, see \cref{fig:cookie}. 
We remark that this is uniquely defined since by our choice of the fundamental domain $\cF$ no side is a diameter, see Condition~\eqref{eq:UpperSetHoleOrderZero}. 

More generally, for any admissible word $\ab{0}\dots\ab{n}$ let $B_{\ab{0}\dots\ab{n}}$ 
be the open disc in $\CC$ such that
\begin{gather*}
	g_{\ab{0}}\circ\dots\circ g_{\ab{n-1}}(s_{\ab{n}})\subset\partial B_{\ab{0}\dots\ab{n}}.
\end{gather*} 
In order to simplify the notation, for any admissible word $\ab{0}\dots\ab{n}$, set
\[
g_{\ab{0}\dots\ab{n}}:=g_{\ab{0}}\circ\dots\circ g_{\ab{n}}.
\]
Throughout this section, we consider $g_{\ab{0}\dots\ab{n}}$ as an automorphism of the Riemann sphere $\overline{\CC}$ (see the beginning of the proof of  \cref{LemmaDistortionHomography} for the explicit form of $g_{\ab{0}\dots\ab{n}}$ of $\Aut(\overline{\CC})$). 
Recalling that  $g_\alpha$ is the isometry which sends the side $s_{\overline{\alpha}}$ onto the side $s_{\alpha}$, one can see that the map $g_{\ab{0}\dots\ab{n}}$ sends $B_{\overline{\ab{0}}}$ on $\CC\setminus B_{\ab{0}\dots\ab{n}}$.  
Let $\zeta_{\ab{0}\dots\ab{n}}\in\overline{\CC}$ be its pole, that is the point such that $g_{\ab{0}\dots\ab{n}}(\zeta_{\ab{0}\dots\ab{n}})=\infty$. 
We observe that 
$
\zeta_{\ab{0}\dots\ab{n}}\in B_{\overline{\ab{0}}}
$ (since $B_{\ab{0}\dots\ab{n}}$ is a disc in the complex plane $\CC$, so that $\infty\in\CC\setminus B_{\ab{0}\dots\ab{n}}$ 
and the pole $\zeta_{\ab{0}\dots\ab{n}}$, which is the preimage of $\infty$, belongs to $B_{\overline{\ab{0}}}$). 

Thus, the restriction to $\CC\setminus B_{\overline{\ab{0}}}$ of  $g_{\ab{0}\dots\ab{n}}\in\Aut(\overline{\CC})$ realizes a bijection, that we still denote by 
\[
	g_{\ab{0}\dots\ab{n}}\colon\CC\setminus B_{\overline{\ab{0}}}\to B_{\ab{0}\dots\ab{n}}.
\]

According to the next \cref{LemmaDistortionHomography}, the restriction map obtained by any admissible word $\ab{0}\dots\ab{n}$ has bounded distortion on a subset $M_{\overline{\ab{0}}}\subset \CC\setminus B_{\overline{\ab{0}}}$. 
More precisely, for any $\alpha\in\cA$ let $M_\alpha\subset\overline{\DD}$ be the closed set (shaded in grey in the example in \cref{fig:cookie}) of those points 
$\xi\in\overline{\DD}$ 
with 
\[
	\inf\{|\xi-\xi'|: \xi'\in B_\alpha\} \geq \min_{\beta\in\cA}\frac{\arclength{\Arc[\beta]}}{2}.
\]
The set $M_\alpha$, as shown in \cref{fig:cookie}, has the shape of a \emph{moon} (thanks to its definition and Condition~\eqref{eq:UpperSetHoleOrderZero}).

\begin{figure}
\centering
\def\svgscale{0.6}
\import{pictures/}{cookie.pdf_tex}
\caption{The objects defined in \cref{sec:distortionestimates}.}
\label{fig:cookie}
\end{figure}

\begin{lemma}
\label{LemmaDistortionHomography}
There exists a constant $C>1$, depending only on $\Gamma$ and on the choice of the fundamental polygon $\cF$ for $\Gamma$ such that the following holds.
Given any admissible word $\ab{0}\dots\ab{n}$ and $\xi_1$, $\xi_2$, $\xi_3$ points in $\partial\DD\cap M_{\overline{\ab{0}}}$ we have
\[
	\frac{1}{C}\frac{|\xi_1-\xi_2|}{|\xi_1-\xi_3|}\leq
	\frac{|g_{\ab{0}\dots\ab{n}}(\xi_1)-g_{\ab{0}\dots\ab{n}}(\xi_2)|}{|g_{\ab{0}\dots\ab{n}}(\xi_1)-g_{\ab{0}\dots\ab{n}}(\xi_3)|}
	\leq C\frac{|\xi_1-\xi_2|}{|\xi_1-\xi_3|}.
\] 
\end{lemma}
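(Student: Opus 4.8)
The statement is a bounded-distortion estimate for the Möbius map $g_{\ab{0}\dots\ab{n}}$ on the region $\partial\DD\cap M_{\overline{\ab{0}}}$, and the natural strategy is to decompose $g_{\ab{0}\dots\ab{n}}$ as an affine map post-composed with an inversion centred at its pole $\zeta_{\ab{0}\dots\ab{n}}$. Concretely, every element of $\Aut(\overline{\CC})$ of the form $z\mapsto (az+b)/(cz+d)$ with $c\neq 0$ can be written as $z\mapsto \lambda/(z-\zeta) + \nu$, where $\zeta=\zeta_{\ab{0}\dots\ab{n}}$ is the pole, $\lambda = -1/c^2$ (using $\det =1$), and $\nu = a/c$. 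Since the additive constant $\nu$ cancels in every difference $g(\xi_i)-g(\xi_j)$, only the map $\iota_\zeta\colon z\mapsto \lambda/(z-\zeta)$ matters, and a direct computation gives
\[
\iota_\zeta(\xi_i)-\iota_\zeta(\xi_j) = \lambda\,\frac{\xi_j-\xi_i}{(\xi_i-\zeta)(\xi_j-\zeta)}.
\]
Hence the cross-ratio-type quotient in the statement equals
\[
\frac{|g_{\ab{0}\dots\ab{n}}(\xi_1)-g_{\ab{0}\dots\ab{n}}(\xi_2)|}{|g_{\ab{0}\dots\ab{n}}(\xi_1)-g_{\ab{0}\dots\ab{n}}(\xi_3)|}
= \frac{|\xi_1-\xi_2|}{|\xi_1-\xi_3|}\cdot\frac{|\xi_3-\zeta|}{|\xi_2-\zeta|},
\]
with $\zeta=\zeta_{\ab{0}\dots\ab{n}}$. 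So the lemma reduces to showing that the ratio $|\xi_3-\zeta|/|\xi_2-\zeta|$ is bounded above and below by a constant $C$ depending only on $\Gamma$ and $\cF$, uniformly over admissible words and over $\xi_2,\xi_3\in\partial\DD\cap M_{\overline{\ab{0}}}$.

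\textbf{The key geometric input.} The whole point is that the pole $\zeta_{\ab{0}\dots\ab{n}}$ lies in $B_{\overline{\ab{0}}}$, as was observed in the excerpt just before the lemma, while $\xi_2,\xi_3$ lie in $M_{\overline{\ab{0}}}$, which is by definition the set of points at distance at least $\delta_0:=\min_{\beta\in\cA}\arclength{\Arc[\beta]}/2$ from $B_{\overline{\ab{0}}}$. Therefore $|\xi_2-\zeta|\geq\delta_0$ and $|\xi_3-\zeta|\geq\delta_0$, giving lower bounds. For the upper bound, both $\xi_2,\xi_3$ and $\zeta$ lie in the closed disc $\overline{\DD}$ (the pole is in $B_{\overline{\ab{0}}}\subset\DD$, and the $\xi_i$ are on $\partial\DD$), so $|\xi_2-\zeta|\leq 2$ and $|\xi_3-\zeta|\leq 2$. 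Combining, $|\xi_3-\zeta|/|\xi_2-\zeta| \leq 2/\delta_0$ and likewise $\geq\delta_0/2$. Setting $C:=2/\delta_0 = 4/\min_{\beta\in\cA}\arclength{\Arc[\beta]}$ (which is $>1$ since each arc has length $<\pi<4$ by Condition~\eqref{eq:UpperSetHoleOrderZero}, though one should double-check the constant is genuinely $>1$ — if not, just enlarge it) finishes the estimate. Note that $C$ depends only on the fundamental domain $\cF$ and hence only on $\Gamma$, exactly as claimed, and crucially it is independent of $n$ and of the word, because the only facts used about $\zeta_{\ab{0}\dots\ab{n}}$ are that it lies in $B_{\overline{\ab{0}}}\subset\DD$.

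\textbf{Steps in order.} First I would record the general normal form $z\mapsto \lambda/(z-\zeta)+\nu$ for a Möbius transformation with a finite pole, identifying $\zeta$ with the pole $\zeta_{\ab{0}\dots\ab{n}}$ defined before the lemma, and verify $g_{\ab{0}\dots\ab{n}}$ indeed has $c\neq 0$ (it sends the disc $B_{\overline{\ab{0}}}$ to the complement of a Euclidean disc in $\CC$, so it does not fix $\infty$). Second, compute the difference formula and derive the exact identity for the distortion quotient, observing that $\nu$ cancels. Third, invoke the containment $\zeta_{\ab{0}\dots\ab{n}}\in B_{\overline{\ab{0}}}$ (from the excerpt) together with the definition of $M_{\overline{\ab{0}}}$ to get $\delta_0\leq |\xi_i-\zeta|\leq 2$ for $i=2,3$, and conclude. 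The main (minor) obstacle is purely bookkeeping: making sure the constant $C$ one extracts is genuinely $>1$ and depends only on $\Gamma$ and $\cF$ — this is immediate from $\arclength{\Arc[\beta]}<\pi$, but if one wanted a cleaner constant one could instead bound things in terms of $\operatorname{diam}$ of the $M_\alpha$'s; there is no real analytic difficulty, since the exact cancellation of the affine parts reduces everything to the trivial two-sided bound on $|\xi_i-\zeta|$.
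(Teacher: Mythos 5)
Your reduction is exactly the paper's: write $g_{\ab{0}\dots\ab{n}}$ in the normal form with pole $\zeta=\zeta_{\ab{0}\dots\ab{n}}$, observe the exact cancellation so that the distortion quotient equals $\frac{|\xi_1-\xi_2|}{|\xi_1-\xi_3|}\cdot\frac{|\xi_3-\zeta|}{|\xi_2-\zeta|}$, and bound $|\xi_i-\zeta|$ from below using $\zeta\in B_{\overline{\ab{0}}}$ together with the definition of $M_{\overline{\ab{0}}}$. Up to and including the lower bound $|\xi_i-\zeta|\geq \min_{\beta}\arclength{\Arc[\beta]}/2$, this coincides with the paper's proof.

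The upper bound, however, contains a genuine error: you assert that ``the pole is in $B_{\overline{\ab{0}}}\subset\DD$'' and conclude $|\xi_i-\zeta|\leq 2$. The inclusion $B_{\overline{\ab{0}}}\subset\DD$ is false: $B_{\overline{\ab{0}}}$ is the full Euclidean disc bounded by the circle containing the geodesic side $s_{\overline{\ab{0}}}$, and that disc protrudes outside the unit disc. In fact the pole can never lie in $\overline{\DD}$: since $g_{\ab{0}\dots\ab{n}}$ is an automorphism of $\DD$ preserving $\partial\DD$, its pole $g_{\ab{0}\dots\ab{n}}^{-1}(\infty)$ lies in $\overline{\CC}\setminus\overline{\DD}$, i.e.\ in the part of $B_{\overline{\ab{0}}}$ outside the unit disc. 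So the bound $|\xi_i-\zeta|\leq 2$ is unjustified (and can fail badly when $s_{\overline{\ab{0}}}$ is close to a diameter, since then $B_{\overline{\ab{0}}}$ is huge and the pole may be far away), and with it your explicit constant $C=2/\delta_0$. The repair is what the paper does: since $\zeta\in B_{\overline{\ab{0}}}$, $\xi_i\in\overline{\DD}$, and $\partial B_{\overline{\ab{0}}}$ meets $\overline{\DD}$, one has $|\xi_i-\zeta|\leq 2+\diameter(B_{\overline{\ab{0}}})\leq 2+\max_{\alpha\in\cA}\diameter(B_\alpha)$, and this is where Condition~\eqref{eq:UpperSetHoleOrderZero} is genuinely needed: it guarantees each $B_\alpha$ is a bounded disc rather than a half-plane (or the complement of a disc), so the maximum over the finitely many letters is finite. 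This yields $C=\bigl(2+\max_{\alpha}\diameter(B_{\alpha})\bigr)\cdot\bigl(\min_{\beta}\arclength{\Arc[\beta]}/2\bigr)^{-1}$, depending only on $\Gamma$ and $\cF$. In your write-up Condition~\eqref{eq:UpperSetHoleOrderZero} appears only in the cosmetic remark about $C>1$, which misses its actual role; once the upper bound is corrected as above, the rest of your argument stands and is the paper's proof.
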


\begin{proof}
The automorphism $g_{\ab{0}\dots\ab{n}}\in \Aut(\overline{\CC})$ has the form 
\[
g_{\ab{0}\dots\ab{n}}(\xi)=\frac{a\xi+b}{c\xi+d},\qquad \text{where}\ a,b,c,d \in \CC \ \text{and} \ ad-bc=1.
\]
Since $g_{\ab{0}\dots\ab{n}}$ sends the disk $\DD$ into itself (so in particular $g_{\ab{0}\dots\ab{n}}\in \Aut(\DD)$), we also have that $c=\overline{b}$ and $d=\overline{a}$, but we will not make use of this in what follows.

The pole of $g_{\ab{0}\dots\ab{n}}$ is $\zeta_{\ab{0}\dots\ab{n}}=-d/c$. 
Observe in particular that $c\not=0$, otherwise $g_{\ab{0}\dots\ab{n}}$ is complex linear, thus for $\xi\in\CC$ let us write
\[
g_{\ab{0}\dots\ab{n}}(\xi)=
\frac{1}{c}\left(a-\frac{1}{c\xi+d}\right).
\]
In particular, for any pair of points $\xi_1,\xi_2$ in $\CC$ we have
\[
g_{\ab{0}\dots\ab{n}}(\xi_1)-g_{\ab{0}\dots\ab{n}}(\xi_2)=
\frac{\xi_1-\xi_2}{(c\xi_2+d)(c\xi_1+d)}=
\frac{1}{c^2}
\frac{\xi_1-\xi_2}{(\xi_1-\zeta_{\ab{0}\dots\ab{n}})(\xi_2-\zeta_{\ab{0}\dots\ab{n}})}.
\]
Hence, for any three points $\xi_1,\xi_2,\xi_3$ as in the statement we have
\[
\frac
{|g_{\ab{0}\dots\ab{n}}(\xi_1)-g_{\ab{0}\dots\ab{n}}(\xi_2)|}
{|g_{\ab{0}\dots\ab{n}}(\xi_1)-g_{\ab{0}\dots\ab{n}}(\xi_3)|}
=
\frac{|\xi_3-\zeta_{\ab{0}\dots\ab{n}}|}{|\xi_2-\zeta_{\ab{0}\dots\ab{n}}|}	
\cdot
\frac{|\xi_1-\xi_2|}{|\xi_1-\xi_3|}.
\]
As we  observed before the statement of the Lemma, the pole 
$
\zeta_{\ab{0}\dots\ab{n}}\in B_{\overline{\ab{0}}}
$.  
Since on the other hand 
$
\xi_i\in M_{\overline{\ab{0}}}
$ 
for $i=1,2,3$, then it follows that 
\[
|\xi_3-\zeta_{\ab{0}\dots\ab{n}}|,|\xi_2-\zeta_{\ab{0}\dots\ab{n}}|
>
\min_{\beta\in\cA}\frac{\arclength{\Arc[\beta]}}{2}.
\]
Moreover any $B_\alpha$ is a disc in the complex plane, thus $\diameter(B_\alpha)<+\infty$ (remark that it is here that we crucially use Condition~\eqref{eq:UpperSetHoleOrderZero}, since it otherwise $B_\alpha$ could have been a semi-plane or the complement of a disk, hence unbounded). 
Since $\diameter(\DD)=2$ we have also 
\[
|\xi_3-\zeta_{\ab{0}\dots\ab{n}}|,|\xi_2-\zeta_{\ab{0}\dots\ab{n}}|
<
2+\diameter(B_{\overline{\ab{0}}}).
\]
The Lemma follows with $C>0$ defined by   
\[
C:=
\left(
2+\max_{\alpha\in\cA}\diameter(B_{\alpha})
\right)
\cdot
\left(
\min_{\beta\in\cA}\frac{\arclength{\Arc[\beta]}}{2}
\right)^{-1}.\qedhere
\]
\end{proof}

\subsection{Size of gaps for the Cantor set in the boundary} \label{SectionSizeGapsCantorBoundary}
The next Lemma gives the estimate for the size of gaps of level zero. We refer to the notation introduced in \cref{SectionCombinatorialDescriptionGaps} to give a hierarchical description of gaps. 

\begin{lemma}\label{LemmaLengthCuspidalArcs}
Fix $\delta>0$.
There exists $N_0$, depending only on $\delta$, on $\Gamma$ and on the choice of its fundamental domain $\cF\subset\DD$, such that any $N\geq N_0$ and for  gaps of level zero in $\BB_N$, we have
\[
	\arclength{B[\xi_i]}\leq\delta, \qquad 0\leq i \leq 2d-1.
\]
\end{lemma}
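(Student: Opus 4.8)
The statement asserts that by taking $N$ large, all the level-zero gaps $B[\xi_i]$ of $\mathbb{B}_N$ can be made shorter than any prescribed $\delta$. The plan is to estimate $\arclength{B[\xi_i]}$ by combining the combinatorial nesting description from \cref{SectionCombinatorialDescriptionGaps} with the distortion estimate of \cref{LemmaDistortionHomography}. Recall that by~\eqref{EquationUpperSetHoleOrderZero} we have the containment $B[\xi] \subset \Arc[\alpha_0^l,\dots,\alpha_{N-1}^l]\cup \Arc[\beta_0^r,\dots,\beta_{N-1}^r]$, where $\alpha_0^l\dots\alpha_N^l$ and $\beta_0^r\dots\beta_N^r$ are the two $(N,0)$-forbidden words attached to $\xi$, one left cuspidal and one right cuspidal. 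So it suffices to show that $\arclength{\Arc[\alpha_0^l,\dots,\alpha_{N-1}^l]}$ and $\arclength{\Arc[\beta_0^r,\dots,\beta_{N-1}^r]}$ both tend to $0$ uniformly as $N\to\infty$, over the finitely many choices of $\xi = \xi_i$.

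\textbf{Key steps.} First I would reduce to a single cuspidal arc, say the left cuspidal one $\Arc[\alpha_0^l,\dots,\alpha_{m}^l]$, and observe via \cref{arcexpression} that $\Arc[\alpha_0^l,\dots,\alpha_m^l] = g_{\alpha_0^l}\circ\cdots\circ g_{\alpha_{m-1}^l}(\Arc[\alpha_m^l])$. By \cref{Parabolicwordslemma} and \cref{repeatedparabolic}, a cuspidal word is a prefix of a periodic repetition of a fixed parabolic word $w = c_0\cdots c_{k-1}$ attached to the vertex $\xi$; let $g = g_{c_0}\circ\cdots\circ g_{c_{k-1}}$ be the corresponding parabolic element fixing $\xi_{c_0}^l = \xi$. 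Writing $m = qk + r$ with $0\le r < k$, we get $\Arc[\alpha_0^l,\dots,\alpha_m^l] = g^q(\text{arc of bounded level } \le k)$, so it is enough to show that for the parabolic map $g$ fixing $\xi$, the nested arcs $g^q(\Arc[c_0])$ shrink to the point $\xi$ as $q\to\infty$, with an explicit rate. For this I would pass to the half-plane picture: conjugating so that $\xi \mapsto \infty$, the parabolic $g$ becomes a translation $z \mapsto z + c$, and a fixed arc not containing $\infty$ gets pushed off to infinity; translating back, its image is an arc of Euclidean diameter $O(1/q)$. (Equivalently, directly on the disc one can use that $\bigcap_q g^q(\Arc[c_0]) = \{\xi\}$ together with the monotone nesting to extract a quantitative rate; the cleanest bound comes from the translation-length computation in the cusp.) This gives $\arclength{\Arc[\alpha_0^l,\dots,\alpha_{N-1}^l]} \le C'/\lfloor N/k \rfloor$ for a constant $C'$ depending only on $\Gamma$ and $\cF$ (and $k \le \max$ over the finitely many parabolic words), and symmetrically for the right cuspidal arc. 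Choosing $N_0$ so that $C'/\lfloor N_0/k_{\max}\rfloor < \delta/2$ finishes the proof.

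\textbf{Main obstacle.} The technical heart is making the shrinking rate \emph{uniform} — over the $2d-1$ vertices $\xi_i$, over left versus right cuspidal words, and with a constant depending only on $\Gamma$ and the fixed fundamental domain $\cF$ (not on the particular arc). Here is where \cref{LemmaDistortionHomography} does the work: the cuspidal word $\alpha_0^l\dots\alpha_{N-1}^l$ decomposes as an outer block $g_{\alpha_0^l\dots\alpha_{j-1}^l}$ (with $\alpha_0^l = \alpha_\xi^l$ ranging over a finite set) applied to a deeper cuspidal arc that sits inside a moon region $M_{\overline{\alpha_0^l}}$, so the outer map distorts lengths by at most the universal factor $C$ of that Lemma; meanwhile the inner part is controlled by the parabolic-translation estimate above. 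One should be slightly careful that after finitely many initial steps the relevant arcs indeed lie in the appropriate moon $M_{\overline{\alpha_0}}$ so that \cref{LemmaDistortionHomography} applies — this uses Condition~\eqref{eq:UpperSetHoleOrderZero} of \cref{costruzioneF} exactly as in the proof of that Lemma. Assembling these two ingredients and taking $N_0$ large enough to beat $\delta$ against the worst-case period $k_{\max}$ and distortion constant $C$ completes the argument.
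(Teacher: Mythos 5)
Your proposal is correct in substance, but it is considerably heavier than what the paper does, and one of your auxiliary claims is off. The paper's proof consists of exactly your first step — the containment $B[\xi_i]\subset \Arc[\alpha_0^l,\dots,\alpha_{N-1}^l]\cup\Arc[\beta_0^r,\dots,\beta_{N-1}^r]$ from~\eqref{EquationUpperSetHoleOrderZero} — followed by the purely qualitative observation that convergence of the Bowen--Series expansion forces $\arclength{\Arc[\ab{0},\dots,\ab{n-1}]}\to 0$ as $n\to\infty$; uniformity in $i$ is automatic because, by \cref{repeatedparabolic}, there are only finitely many cuspidal sequences (one left and one right per vertex). Your middle paragraph replaces this soft statement with an explicit rate: writing the cuspidal word as a prefix of the periodic repetition of the parabolic word for $\xi$, conjugating the parabolic to a translation in the half-plane, and getting diameter $O(1/q)$ after $q$ periods. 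That argument is valid and is already complete and uniform, since the arcs $\Arc[c_0,\dots,c_r]$ with $r<k$ and the parabolic words range over a finite set depending only on $\Gamma$ and $\cF$; what it buys over the paper is an effective bound on $N_0$ in terms of $\delta$, which the paper neither needs nor claims.

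The problematic point is your "main obstacle" paragraph. The deeper cuspidal arc does \emph{not} sit inside the moon $M_{\overline{\alpha_0^l}}$: for a left cuspidal word one has $\xi^l_{\alpha_1}=g_{\alpha_0}^{-1}(\xi^l_{\alpha_0})=\xi^r_{\overline{\alpha_0}}$, so $\Arc[\alpha_1,\dots,\alpha_{N-1}]$ has an endpoint lying on $\partial B_{\overline{\alpha_0}}$, at distance zero from $B_{\overline{\alpha_0}}$, and the hypothesis of \cref{LemmaDistortionHomography} fails precisely for the arcs you want to feed into it. Fortunately this step is redundant: uniformity over the $2d-1$ vertices and over left/right words already follows from the finiteness noted above, so your proof survives by simply deleting the appeal to the distortion lemma (which the paper reserves for \cref{CorollaryLengthCuspidalArcs}, where the points involved are kept away from the deleted neighbourhoods of the vertices).
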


\begin{proof}
Since each level zero gap $B[\xi]$  is contained in the union of two adjacent arcs of level $N$ by \cref{EquationUpperSetHoleOrderZero}, 
the Lemma follows directly from the convergence of the Bowen-Series expansion, which  implies that finite cuspidal words $\ab{0}\dots\ab{n-1}$ satisfy $\arclength{\Arc[\ab{0},\dots,\ab{n-1}]}\to 0$ as $n$ tends to infinity.
\end{proof}

Recall that for any $\alpha \in \cA$,  $B[\xi_\alpha^r]$ and $B[\xi_\alpha^l]$ (in the notation of \cref{SectionCombinatorialDescriptionGaps})  are the two gaps of level zero that share an endpoint with the the zero level interval $K[\alpha]$.

\begin{cor}
\label{CorollaryLengthCuspidalArcs}
Fix $\epsilon\in(0,1)$. There exists $N_0$, depending only on $\epsilon$, on $\Gamma$ and on the choice of its fundamental domain $\cF\subset\DD$, such that for any $N\geq N_0$, the following estimate holds for holes and intervals of level $n$ in the  Cantor set $\BB_N$. For any $n\in\NN$  and any admissible word $\ab{0},\dots,\ab{n-1}$ of length $n$ and any letter $\alpha\in\cA$ with 
$
\alpha\not=\overline{\ab{n-1}}
$, we have  
\[
\frac
{\arclength{B[\ab{0},\dots,\ab{n-1};\xi^r_\alpha]}}
{\arclength{K[\ab{0},\dots,\ab{n-1};\alpha]}}
\leq 1-\epsilon
\quad
\text{ and }
\quad
\frac
{\arclength{B[\ab{0},\dots,\ab{n-1};\xi^l_\alpha]}}
{\arclength{K[\ab{0},\dots,\ab{n-1};\alpha]}}
\leq 1-\epsilon.
\]
\end{cor}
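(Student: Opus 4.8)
The plan is to reduce the estimate for holes and intervals of level $n$ to the corresponding estimate at level zero (which is the content of \cref{LemmaLengthCuspidalArcs}) by transporting everything through the M\"obius map $g_{\ab{0}\dots\ab{n-1}}$ and invoking the bounded distortion \cref{LemmaDistortionHomography}. First I would recall the definitions from \cref{SectionCombinatorialDescriptionGaps}: by construction $B[\ab{0},\dots,\ab{n-1};\xi^r_\alpha] = g_{\ab{0}\dots\ab{n-1}}(B[\xi^r_\alpha])$ and $K[\ab{0},\dots,\ab{n-1};\alpha] = g_{\ab{0}\dots\ab{n-1}}(K[\alpha])$, so both the numerator and the denominator are images under the \emph{same} map of the corresponding level-zero objects. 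Since $K[\alpha]$ shares an endpoint with $B[\xi^r_\alpha]$, I can write the lengths as differences of the endpoints: if $\xi_1,\xi_2$ are the endpoints of $K[\alpha]$ and $\xi_3$ is the other endpoint of $B[\xi^r_\alpha]$ (so that $\xi_2$ is the common endpoint), then $\arclength{K[\alpha]}$ is comparable to $|\xi_1-\xi_2|$ and $\arclength{B[\xi^r_\alpha]}$ is comparable to $|\xi_2-\xi_3|$, and likewise for their images under $g_{\ab{0}\dots\ab{n-1}}$ — here one uses that on the arcs in question, Euclidean chord length and arc length on $\partial\DD$ are comparable up to a universal constant (by Condition~\eqref{eq:UpperSetHoleOrderZero}, these arcs are not too long, so this comparison is uniform).

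The key point is then that all three points $\xi_1,\xi_2,\xi_3$ lie in $\partial\DD \cap M_{\overline{\ab{0}}}$. This should follow because the level-zero gaps $B[\xi_i]$ and intervals $K[\alpha]$ are all contained in $\partial\DD$ away from the arcs $\Arc[\overline{\ab{0}}]$ and its neighbourhood — more precisely, by construction $K[\alpha]$ and the gaps adjacent to it avoid a fixed neighbourhood of every ideal vertex that is an endpoint of $\Arc[\overline{\ab{0}}]$, and hence avoid $B_{\overline{\ab{0}}}$ by the margin $\min_{\beta}\arclength{\Arc[\beta]}/2$ required in the definition of $M_{\overline{\ab{0}}}$. (This is exactly the reason the moon-shaped sets $M_\alpha$ were defined with that specific margin.) Granting this, \cref{LemmaDistortionHomography} gives
\[
\frac{\arclength{B[\ab{0},\dots,\ab{n-1};\xi^r_\alpha]}}{\arclength{K[\ab{0},\dots,\ab{n-1};\alpha]}}
\;\leq\; C'^2\,\frac{|\xi_2-\xi_3|}{|\xi_1-\xi_2|}
\;\leq\; C''\,\frac{\arclength{B[\xi^r_\alpha]}}{\arclength{K[\alpha]}},
\]
where $C'$ is the distortion constant from \cref{LemmaDistortionHomography} combined with the chord-vs-arc comparison constant, and $C''$ depends only on $\Gamma$ and $\cF$. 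The same argument applies verbatim with $\xi^l_\alpha$ in place of $\xi^r_\alpha$.

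Finally, it remains to make the right-hand side smaller than $1-\epsilon$. Here I would use \cref{LemmaLengthCuspidalArcs}: since $K[\alpha]$ has length bounded below by a positive constant depending only on $\Gamma,\cF$ (its complement in $\partial\DD$ is covered by the $2d-1$ level-zero gaps, and $K[\alpha]$ is a fixed non-degenerate arc), while $\arclength{B[\xi^r_\alpha]}\leq\delta$ can be made arbitrarily small by taking $N\geq N_0(\delta)$, the ratio $\arclength{B[\xi^r_\alpha]}/\arclength{K[\alpha]}$ can be made smaller than $(1-\epsilon)/C''$. Choosing $\delta$ accordingly (and hence $N_0$ depending only on $\epsilon,\Gamma,\cF$) yields the claim for all $N\geq N_0$ and all levels $n$ uniformly, since the distortion constant $C''$ does not depend on $n$ or on the word $\ab{0}\dots\ab{n-1}$. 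The main obstacle I anticipate is the bookkeeping in the second paragraph: verifying carefully that the relevant level-zero endpoints indeed lie in $M_{\overline{\ab{0}}}$ — i.e. that the level-zero gaps and intervals adjacent to $K[\alpha]$ stay the required distance away from $B_{\overline{\ab{0}}}$ — and that the chord-to-arclength comparison is genuinely uniform on these arcs; everything else is a routine application of the distortion lemma and the level-zero estimate.
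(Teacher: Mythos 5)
Your overall route coincides with the paper's: rewrite the level-$n$ hole and interval as images under $g_{\ab{0}}\circ\dots\circ g_{\ab{n-1}}$ of the level-zero hole and interval, pass between chord length and arc length (uniformly, since the arcs involved have length less than $\pi$ by~\eqref{eq:UpperSetHoleOrderZero}), and then combine the bounded distortion of \cref{LemmaDistortionHomography} with the level-zero smallness from \cref{LemmaLengthCuspidalArcs}, using that $\arclength{K[\alpha]}$ is bounded below independently of $N$. That is exactly how the paper concludes, and your final quantitative step is fine.

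The problem is the step you yourself flag, and the justification you sketch for it is false rather than routine bookkeeping. You claim the relevant level-zero endpoints lie in $M_{\overline{\ab{0}}}$ because $K[\alpha]$ and its adjacent gaps ``avoid a fixed neighbourhood of every ideal vertex that is an endpoint of $\Arc[\overline{\ab{0}}]$''. But the corollary only assumes $\alpha\neq\overline{\ab{n-1}}$: for $n\geq 2$ the letter $\alpha=\overline{\ab{0}}$ is allowed, and then $K[\alpha]\subset\Arc[\overline{\ab{0}}]$ lies \emph{inside} the disc $B_{\overline{\ab{0}}}$, so no fixed margin from $B_{\overline{\ab{0}}}$ can hold; moreover, for any $\alpha$ the hole $B[\xi^{r}_{\alpha}]$ straddles the ideal vertex $\xi^{r}_{\alpha}$ and the endpoints of $K[\alpha]$ approach the vertices as $N\to\infty$, so the points do not avoid fixed neighbourhoods of vertices either. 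What actually matters is the position of the pole of $g_{\ab{0}\dots\ab{n-1}}$: pushing $\infty$ through $g_{\overline{\ab{0}}}, g_{\overline{\ab{1}}},\dots$ and using admissibility at each step places the pole in $B_{\overline{\ab{n-1}}}$, the disc attached to the bar of the \emph{last} letter of the word; this is precisely what the hypothesis $\alpha\neq\overline{\ab{n-1}}$ is for, and the statement of \cref{LemmaDistortionHomography} has to be read with this indexing in mind. Even then, when $\Arc[\alpha]$ abuts $\Arc[\overline{\ab{n-1}}]$ the hole at the shared vertex enters $\overline{B_{\overline{\ab{n-1}}}}$ and the nearby endpoint of $K[\alpha]$ gets arbitrarily close to it as $N$ grows, so the three comparison points are not all at distance $\min_{\beta}\arclength{\Arc[\beta]}/2$ from that disc; making the distortion argument work near such a vertex (e.g.\ by localising the pole more precisely, or by splitting the hole at the vertex and applying the estimate to each half with the word extended by one letter) is the real content of this step. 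So the paragraph you dismiss as bookkeeping is the genuine gap: its key claim fails as stated, and repairing it is where the work lies.
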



\begin{proof}
By definition of holes and intervals of level $n$ (we refer to \cref{SectionCombinatorialDescriptionGaps}),  
\begin{equation}\label{rewriting_ration}
\frac
{\arclength{B[\ab{0},\dots,\ab{n-1};\xi^r_\alpha]}}
{\arclength{K[\ab{0},\dots,\ab{n-1};\alpha]}} = 
\frac
{\arclength{g_\ab{0}\circ \dots \circ g_\ab{n-1} \left( B[\xi^r_\alpha]\right)}}
{\arclength{g_\ab{0}\circ \dots \circ g_\ab{n-1} \left( K[\alpha]\right)}}
\end{equation}
and the same expression holds with $\xi^r_\alpha$ replaced by $\xi^l_\alpha$. 

Let us remark now  that if $\xi,\xi'\in \partial \DD$ bound an arc $\Arc[\xi,\xi']\subset \partial \DD$ of length strictly less than $\pi$, then  
$|\cdot|$ and  $\arclength{\cdot}$ are comparable, i.e.\ 
$$
\left|\xi-\xi'\right|\leq \arclength{\Arc[\xi,\xi']} \leq \pi \left|\xi-\xi'\right|.
$$
Thus, since each hole or gap is contained in $\Arc[\alpha]$ for some $\alpha \in \cA$  and  $\arclength{\Arc[\alpha]}<\pi$ (see  assumption~\eqref{EquationUpperSetHoleOrderZero}), we can apply this remark to~\eqref{rewriting_ration}. The proof hence follows  from  \cref{LemmaLengthCuspidalArcs} and \cref{LemmaDistortionHomography}.
\end{proof}

\subsection{Sum of Cantor sets on the real line}\label{sec:sum_in_R}
In order to apply results on the sum of Cantor sets,  we now consider the image in  $\RR$ of the Cantor set  $\BB_N=\BB_N^\eta$ (defined in \cref{sec:cantorpreliminaries} and described  in \cref{SectionCombinatorialDescriptionGaps}).  Following \cref{sec:cantorpreliminaries}, let 
$
\KK_N=\KK_N^\eta:=\varphi(\BB_N)
$ 
be its image in $\RR$ under the map 
$
\phi\colon\DD\to\HH
$, 
where $\varphi$ is the inverse of the Caley map $\mathscr{C}$ defined in~\eqref{eq:Cayley}. Explicitly, $\KK_N$ is the set of points 
$ 
x=[\ab{0},\dots,\ab{n},\dots]_{\partial\HH}
$ 
corresponding to no-backtracking cutting sequences $(\ab{n})_{n\in\NN}$ not containing any cuspidal word of length $N+1$ and whose first letter satisfies $\ab{0}\notin \{ {\eta}, \overline{\eta}\}$. Remark that this implies in particular that $\KK_N$ is contained in $[-\mu/2,\mu/2]$ (see \cref{endpointscontrol}). 

Let us write  for, respectively, minimum, maximum and translates of  $\KK_N$ by $z \mapsto z+s\mu$, where $s \in \ZZ$: 
\[
	m_N=\min\KK_N,  \qquad M_N=\max\KK_N, \qquad \KK_N^s:= \KK_N+s\mu, s \in \ZZ.
\]
We claim that, for any integer $s$, the Cantor sets $\KK_N$ and  $\KK_N^s$  satisfy the assumptions of the Stable Hall \cref{thm:stableHall}, namely the $\epsilon$-stable gap condition and of the $\epsilon$-size condition which were defined in \cref{SectionStableHallTheorem}.

\begin{lemma}\label{conditionsStableHall}
There exists an integer $N_0\geq 0$  such that for any $N\geq N_0$ and any integer $s$:
\begin{enumerate}
\item the Cantor sets $\KK_N$, $-\KK_N$ and  $\KK_N^s$ satisfy the $\epsilon$-stable gap condition;
\item   the pairs $\left(\KK_N, \KK_N^s\right)$ and $\left(-\KK_N, \KK_N^s\right)$  satisfy the $\epsilon$-size condition. 
\end{enumerate}
\end{lemma}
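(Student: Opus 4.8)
The plan is to deduce both conditions from the estimates on gaps of the boundary Cantor set $\BB_N$ established in \cref{SectionSizeGapsCantorBoundary}, transported to $\RR$ via the Cayley map, together with the observation that the conditions are invariant under the relevant symmetries. First I would fix $\epsilon\in(0,1)$ and let $N_0$ be given by \cref{CorollaryLengthCuspidalArcs} applied with this $\epsilon$ (possibly enlarging $N_0$ so that the two extra size estimates below also hold; since we want a statement for \emph{all} integers $s$ simultaneously, I will verify that the relevant quantities do not depend on $s$). The key point is that translating $\KK_N$ by $s\mu$ and reflecting $\KK_N\mapsto -\KK_N$ are affine isometries of $\RR$ (up to a sign, which does not affect lengths), so the $\epsilon$-stable gap condition and the $\epsilon$-size condition are preserved under both operations: the ratios $|B_K|/|K^L|$, $|B_K|/|K^R|$, $|B|/|\KK|$, $|C|/|\FF|$ appearing in \eqref{EqStableGapCondition} and \eqref{EqStableSizeCondition} are all scale-invariant ratios of lengths. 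Hence it suffices to prove that $\KK_N$ itself satisfies the $\epsilon$-stable gap condition, and that each hole of $\KK_N$ has length at most $(1-\epsilon)|\KK_N|$.

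For part (1), I would exhibit the slow subdivision of $\KK_N$ coming from the hierarchical description of gaps in \cref{SectionCombinatorialDescriptionGaps}: the $\KK_N(n)$ are obtained by removing, one at a time, the images $\varphi(B[\ab{0},\dots,\ab{n-1};\xi])$ of the level-$n$ gaps, in an order refining the level order. By construction (see the description of levels in \cref{SectionCombinatorialDescriptionGaps}), when the gap $B[\ab{0},\dots,\ab{n-1};\xi_\alpha^r]$ or $B[\ab{0},\dots,\ab{n-1};\xi_\alpha^l]$ is removed, the interval $K$ it is removed from is exactly $\varphi(K[\ab{0},\dots,\ab{n-1};\alpha])$ (for level-zero gaps $B[\xi]$, the enclosing intervals are the two level-zero intervals $\varphi(K[\alpha])$ that share the vertex $\xi$). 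So the required ratios $|B_K|/|K^L|$ and $|B_K|/|K^R|$ are precisely the ratios $\arclength{B[\cdots]}/\arclength{K[\cdots]}$ — up to the bounded comparison between Euclidean length $|\cdot|$ and arclength $\arclength{\cdot}$ on $\partial\DD$ and between these and their images under $\varphi$ on $\RR$, which is harmless because one can absorb the comparison constants by shrinking $\epsilon$ to $\epsilon'$ and enlarging $N_0$ accordingly before applying \cref{CorollaryLengthCuspidalArcs}. \cref{CorollaryLengthCuspidalArcs} then gives exactly $\arclength{B[\cdots;\xi_\alpha^{r}]}/\arclength{K[\cdots;\alpha]}\le 1-\epsilon'$ and likewise for $\xi_\alpha^l$, which yields \eqref{EqStableGapCondition}.

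For part (2), I need that every hole $B$ of $\KK_N$ (equivalently, every hole of $-\KK_N$ or $\KK_N^s$, by the symmetry remark) satisfies $|B|\le(1-\epsilon)|\KK_N|$. This follows from \cref{LemmaLengthCuspidalArcs} together with a lower bound on $M_N-m_N = |\KK_N|$ that is \emph{uniform} in $N$ for $N\ge 2$: indeed $\KK_N\supset\KK_2$, so $|\KK_N|\ge|\KK_2| =: c_0>0$, while \cref{LemmaLengthCuspidalArcs} (applied with $\delta$ small, enlarging $N_0$) makes the largest hole — which is contained in some $\varphi(B[\xi_i])$ and hence of Euclidean length $O(\delta)$ after transporting by $\varphi$ — as small as we wish; choosing $\delta$ so that this bound is $\le(1-\epsilon)c_0$ gives the claim. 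Alternatively, since $m_N\to m_\infty<0$ and $M_N\to M_\infty>0$ and the largest hole shrinks to $0$ as $N\to\infty$, the ratio tends to $0$. The main obstacle I anticipate is purely bookkeeping: keeping track of the several comparison constants (Euclidean versus arc length on $\partial\DD$ via Condition~\eqref{eq:UpperSetHoleOrderZero}; the distortion constant $C$ from \cref{LemmaDistortionHomography}; the bi-Lipschitz constant of $\varphi$ on the relevant compact piece of $\partial\DD$ away from $\xi_\eta^r$) and organizing the $\epsilon$-to-$\epsilon'$ reduction so that a single $N_0$ works for all integers $s$ at once — which is automatic once one notices, as above, that none of the ratios involved actually depends on $s$.
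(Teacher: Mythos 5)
Your proposal follows essentially the same route as the paper's proof: reduce everything to the single Cantor set $\KK_N$ by noting that translations by $s\mu$ and the reflection $x\mapsto -x$ preserve all the ratios in \eqref{EqStableGapCondition} and \eqref{EqStableSizeCondition}, then obtain the $\epsilon$-stable gap condition from \cref{CorollaryLengthCuspidalArcs} together with the fact that $\varphi$ has bounded derivative (is bi-Lipschitz) on $\partial\DD\setminus(\Arc[\eta]\cup\Arc[\overline{\eta}])$, which contains $\BB_N$, absorbing the comparison constants into the choice of $\epsilon$ and $N_0$; and obtain the size condition from the fact that $|\KK_N|$ stays bounded below (indeed tends to $\mu$) while the holes become small. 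The paper's own proof is in fact terser on both points, so your write-up is, if anything, more explicit.

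Two local statements in your argument are inaccurate and should be repaired, though neither affects the strategy. First, in the slow subdivision the interval $K$ from which the level-$n$ gap $B[\ab{0},\dots,\ab{n-1};\xi^r_\alpha]$ is removed is \emph{not} $\varphi(K[\ab{0},\dots,\ab{n-1};\alpha])$: that interval is adjacent to the gap, not containing it. The correct observation is that, if gaps are removed in order of non-decreasing level, the two remaining pieces $K^L$ and $K^R$ each \emph{contain} the level-$n$ interval adjacent to the gap on that side, so the ratios $|B_K|/|K^L|$, $|B_K|/|K^R|$ are \emph{at most} (not ``precisely'') the ratios controlled by \cref{CorollaryLengthCuspidalArcs}, and the inequality goes the right way. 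Second, it is not true that every hole of $\KK_N$ is contained in some $\varphi(B[\xi_i])$: only the level-zero holes are, while a hole of level $n\geq 1$ is the image of a level-zero hole under $g_{\ab{0}}\circ\dots\circ g_{\ab{n-1}}$ and is disjoint from the level-zero holes. To bound \emph{all} holes uniformly one can either note that a hole of level $n$ is contained (by transporting \eqref{EquationUpperSetHoleOrderZero}) in a union of two arcs of level $n+N-1\geq N-1$, whose lengths shrink to zero uniformly as $N\to\infty$, or simply apply \cref{CorollaryLengthCuspidalArcs} and observe that the adjacent interval of the same level lies inside $[\min\KK_N,\max\KK_N]$, so every hole has length at most $(1-\epsilon)|\KK_N|$ up to the same comparison constants. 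With these corrections your argument is complete and coincides with the paper's.
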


\begin{proof}
We claim that it is enough to show that there exists $N_0\in\NN$ such that for any $N\geq N_0$ the Cantor set $\KK_N$ satisfies the $\epsilon$-stable gap condition. This obviously implies that the same holds also for any of its translated image $\KK^s_N$ and also for its reflection $-\KK_N$ (since reflecting only inverts the role of left and right intervals
$K^L$ and $K^R$  in the definition $\epsilon$-size condition~\eqref{EqStableGapCondition}). It is also clear that, for $N$ large enough, the pairs $(\pm \KK_N,\KK_N^s)$ satisfy the $\epsilon$-size condition for any $s$, indeed $|\KK_N|\to {\mu}$ for $N\to\infty$, while the size of the holes shrinks to zero, and the same holds for $-\KK_N$ and the translates $\KK_N^s$. 

Let us hence prove that $\KK_N$ satisfies the $\epsilon$-stable gap condition if $N$ is sufficiently large. Recall that, by the choices made in \cref{sec:setup}, the sides $s_\eta$ and $s_{\overline{\eta}}$ share $\xi_0$, or, more precisely $\xi^l_\eta = \xi_0 = \xi^r_{\overline{\eta}}$ (see \cref{costruzioneF}) and the inverse $\phi\colon\DD\to\HH$ of the Cayley map~\eqref{eq:Cayley} is such that
\[
\phi(\xi_0),=\infty 
\qquad 
\phi(\xi^r_\eta)=\frac{\mu}{2}
\qquad
\text{and}
\qquad
\phi\big(\xi^l_{\overline{\eta}})\big)=-\frac{\mu}{2}.
\]
Hence, the arc $\Arc[\eta] \cup \Arc[\overline{\eta}]   \subset\partial\DD$ is the arc with endpoints $\xi^l_{\overline{\eta}}$ and  $\xi^r_\eta$   
which contains in its interior the point $\xi_0=\phi^{-1}(\infty)$, which is the pole of $\phi$. It follows that there is a constant $\kappa>0$, depending only on the choice of the fundamental domain $\cF$ for $\Gamma$, such that 
$
\arclength{\xi-\phi^{-1}(\infty)}\geq \kappa
$ 
for any 
$
\xi\in\partial\DD\setminus \Arc[\eta] \cup \Arc[\overline{\eta}] 
$, 
and thus the restricted map 
\[
\phi\colon\partial\DD\setminus (\Arc[\eta] \cup \Arc[\overline{\eta}])  \to\RR
\]
has bounded derivative. Since by definition $\KK_N\subset \partial\DD\setminus (\Arc[\eta] \cup \Arc[\overline{\eta}])$,  combining the control on the derivative of $\phi$ and the estimate in \cref{CorollaryLengthCuspidalArcs}, the $\epsilon$-stable gap condition for $\KK_N$ follows.  
\end{proof}

The above results together with the Stable Hall Theorem stated in the introduction (and proved in \cref{sec:proof_Stable_Hall}) allow to conclude the proof of \cref{thm:sumHallFuchsian} (and hence \cref{thm:Hall}). 

\begin{proof}[Proof of \cref{thm:sumHallFuchsian}]
Let us check that, for $N$ sufficiently large, we can apply the Stable Hall theorem to the Cantor sets $\KK_N^s$ and $-\KK_N$, in the special case in which $S=S_0$ is the sum function and $U=\mathbb{R}^2$. 
This is the case, since the Lipschitz norm condition~\eqref{EqConditionStableHallTheorem} is trivially satisfied ($S=S_0$) and the $\epsilon$-stable gap and $\epsilon$-size conditions are proved in \cref{conditionsStableHall} for $N \geq N_0$. The Stable Hall theorem then gives
\[ \begin{split}
	\KK_N^s - \KK_N &=S_0 \bigl([\min \KK_N^s ,\max \KK_N^s  ]\times [\min (- \KK_N ),\max (- \KK_N )]\bigr) \\ 
		& = \bigl[\min \KK_N^s+\min (- \KK_N ),\max \KK_N^s + \max (- \KK_N )\bigr] \\ 
		& = \bigl[ (m_N+s\mu) + (-M_N), (M_N+s\mu) + (-m_N)\bigr] , 
\end{split}\]
which is the desired expression. The form of $\KK_N^s + \KK_N$ follows analogously. Thus, $\left| \KK_N^s \pm	\KK_N\right| =2(M_N-m_N)$. Since, as $N \to \infty$, $M_N \to \mu/2$ and $m_N \to -\mu/2$ (and hence $M_N-m_N \to \mu$),  it is enough to increase $N_0$ to ensure that $M_N-m_M > \mu/2$ to have also  that $\left| \KK_N^s \pm	\KK_N\right|\geq \mu$. 
\end{proof}

\section{Penetration estimates}\label{sec:penetration}

In this section we bound the height of a geodesic knowing that the cuspidal words of a piece of its cutting sequence are not too long. Let $\gamma\colon \mathbb{R} \to \DD$ be a geodesic with cutting sequence $(\ab{n})_{n\in\ZZ}$ and let $(t_n)_{n\in\ZZ}$ be the sequence of times when $\gamma$ crosses a side of the tessellation of $\DD$ given by $\cF$, as defined in~\eqref{eq:timescuspidalacceleration}. 
For any $r \in \ZZ$, the cuspidal words $C_r$ and the integers $n(r)$ such that $C_r:=\ab{n(r)},\dots,\ab{n(r+1)-1}$ are also defined in \cref{sec:cuspidalwords}. 

\begin{lemma}
\label{LemmaEstimateExcursionCuspidalWords}
For any positive integer $N\geq1$ there exists a compact $\cK_N\subset\DD$ such that the following holds. Consider any geodesic $\gamma\colon\RR\to\DD$ with $\gamma(0)\in \cF$.
Let $(\ab{n})_{n\in\ZZ}$ be its cutting sequence and $(C_r)_{r\in\ZZ}$ the corresponding decomposition into cuspidal words. 
Then for any positive integer $r\geq1$ such that  
\[
\wordlength{C_{r-1}}\leq N, \qquad \wordlength{C_{r}}\leq N, \qquad \wordlength{C_{r+1}}\leq N,
\]
and any $t$ with $t_{n(r)}\leq t \leq t_{n(r+1)}$, we have
\[
	g_{\ab{n(r)-1}}^{-1}\circ\dots\circ g_{\ab{0}}^{-1}\big(\gamma(t)\big)\in \cK_N.
\]
\end{lemma}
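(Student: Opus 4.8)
The plan is to fix a geodesic $\gamma$ and a cuspidal word index $r$ with the three neighboring cuspidal words $C_{r-1}, C_r, C_{r+1}$ all of length at most $N$, and to renormalize by the element $g_{\ab{n(r)-1}}^{-1}\circ\dots\circ g_{\ab{0}}^{-1}$, which by \cref{gammaj} brings the cutting sequence to $(\ab{n+n(r)})_{n\in\ZZ}$ and carries the segment $\gamma[t_{n(r)},t_{n(r+1)})$ into the fundamental domain $\cF$ and its immediate neighbors along the cuspidal word $C_r$. After this renormalization the geodesic ray forward from time $t_{n(r)}$ has boundary expansion beginning with the cuspidal word $C_r$ (length $\le N$) followed by $C_{r+1}$ (length $\le N$), so its forward endpoint $y$ lies in the arc $\Arc[C_r C_{r+1}]$, an arc of level $\le 2N$; in particular, by convergence of the Bowen--Series expansion, $y$ stays in a \emph{definite compact part} of $\partial\DD$ away from the ideal vertex attached to $C_r$ — more precisely, $y$ is separated from $\xi^l_{\ab{n(r)}}$ (or $\xi^r_{\ab{n(r)}}$) by at least the minimal length of a level-$2N$ subarc at that vertex, a quantity depending only on $N$, $\Gamma$ and $\cF$. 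Symmetrically, reversing time, the backward endpoint $x$ of the renormalized geodesic has boundary expansion (after the bar involution) beginning with the reverse of $C_{r-1}$, which has length $\le N$; so $x$ is likewise confined to a compact arc bounded away from the relevant ideal vertex, with the bound depending only on $N,\Gamma,\cF$.

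First I would make precise the statement that the endpoints $x,y$ of the renormalized geodesic are each confined to a compact subarc $J_N\subset\partial\DD$ whose complement is a union of fixed neighborhoods of the ideal vertices, of size controlled by $N$. This uses only the structure of cuspidal words from \cref{sec:cuspidalwords} and \cref{subsec:parabolicwords} (a cuspidal word of length $\le N$ corresponds to an arc with an ideal vertex as endpoint, and an arc of level $\le N$ at a vertex has length bounded below by a constant $c(N)>0$) together with the no-backtracking condition, which forces $C_{r+1}$ (resp.\ the reverse of $C_{r-1}$) to peel the endpoint away from that vertex. Next I would invoke the elementary fact that the set of complete geodesics whose \emph{both} endpoints lie in such a fixed compact subarc $J_N$ of $\partial\DD$, intersected with $\cF$, is a relatively compact family: the geodesic with endpoints $x,y$ passes through a compact region of $\DD$ depending continuously on $(x,y)\in J_N\times J_N$, and the crossing segment $\gamma_{n(r)}[t_{n(r)},t_{n(r+1)})$ through $\cF$ stays in the compact set $\overline{\cF}\cap\{$geodesics with endpoints in $J_N\}$. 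Taking $\cK_N$ to be (a suitable enlargement of) this compact set, and translating back, gives exactly the claimed inclusion $g_{\ab{n(r)-1}}^{-1}\circ\dots\circ g_{\ab{0}}^{-1}(\gamma(t))\in\cK_N$ for $t_{n(r)}\le t\le t_{n(r+1)}$.

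The main obstacle is the first step: quantitatively showing that the endpoints are bounded away from the ideal vertices using only the length bounds on $C_{r-1}, C_r, C_{r+1}$. The subtlety is that a long excursion into \emph{some} cusp could in principle happen even with short cuspidal words if one did not control the neighboring words — this is exactly why the hypothesis involves $C_{r-1}, C_r, C_{r+1}$ and not just $C_r$. Concretely: the renormalized geodesic could still have its forward endpoint $y$ very close to an ideal vertex if $C_{r+1}$ were allowed to be long (a long cuspidal word after $C_r$), which is excluded by $\wordlength{C_{r+1}}\le N$; similarly $C_{r-1}$ short controls the backward side, and $C_r$ short controls the size of the translate itself. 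Making the dependence of $c(N)$ on $N$ explicit (it decays, but for fixed $N$ it is a positive constant) and checking that the resulting $\cK_N$ is genuinely compact in $\DD$ — i.e.\ does not touch $\partial\DD$ — is the technical heart; once that is in place the rest is a soft compactness/continuity argument. I would also remark that, since only finitely many combinatorial types of triples $(C_{r-1},C_r,C_{r+1})$ of bounded length occur, one can alternatively take $\cK_N$ to be the (finite) union over all such types of the corresponding crossing regions, which makes compactness manifest without any continuity estimate.
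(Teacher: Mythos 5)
Your overall strategy (renormalize, use the bounds on the neighbouring cuspidal words to keep things away from the cusps, conclude by compactness) is in the same spirit as the paper's, but there is a genuine gap in your compactness step. After renormalizing by $g_{\ab{n(r)-1}}^{-1}\circ\dots\circ g_{\ab{0}}^{-1}$, the segment corresponding to $t_{n(r)}\leq t\leq t_{n(r+1)}$ does \emph{not} run through $\cF$: it runs through the fan of up to $N$ tiles $g_{\ab{n(r)}}\circ\dots\circ g_{\ab{n(r)+k}}(\cF)$ adjacent to $\cF$ across $s_{\ab{n(r)}}$, and its exit point lies on the side $g_{\ab{n(r)}}\circ\dots\circ g_{\ab{n(r+1)-1}}(s_{\ab{n(r+1)}})$, whose ideal endpoints are endpoints of the arc $\Arc[\ab{n(r)},\dots,\ab{n(r+1)}]$; since $C_r$ is maximal these are \emph{interior} points of $\Arc[\ab{n(r)}]$, in general not vertices of $\cF$. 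So confining $x,y$ to an arc $J_N$ which only avoids neighbourhoods of the $2d$ vertices of $\cF$, and then invoking compactness of ``$\overline{\cF}\cap\{\text{geodesics with endpoints in }J_N\}$'' (plus an unspecified enlargement), controls the entry point $\gamma_r(t_{n(r)})$ but says nothing about the exit point: writing $C_{r+1}=\bb{0}\bb{1}\dots$, if $y$ were allowed to approach the common endpoint $\zeta$ of the nested arcs $\Arc[C_r,\bb{0}],\Arc[C_r,\bb{0},\bb{1}],\dots$ (an ideal endpoint of the exit side), the exit point would escape to $\partial\DD$ even though $x,y\in J_N$ and $\wordlength{C_r}\leq N$. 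Excluding exactly this is where $\wordlength{C_{r+1}}\leq N$ must be used — not, as in your sketch, to separate $y$ from the vertex $\xi^l_{\ab{n(r)}}$ attached to $C_r$ (for that, maximality of $C_r$ together with $\wordlength{C_r}\leq N$ already suffices). Your closing remark that finiteness of the combinatorial types of $(C_{r-1},C_r,C_{r+1})$ makes compactness ``manifest'' fails for the same reason: for a fixed type the forward endpoint still ranges over a set whose closure can reach $\zeta$ unless one runs the maximality/peeling argument for $C_{r+1}$ at the exit side, which is precisely the step your write-up omits; a symmetric issue occurs at the entry side with $C_{r-1}$.

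This missing half is the actual content of the paper's proof. There, one first shows the \emph{entry} point lies in the compact set $F_N$ (the closure of $\cF$ with $\delta_N$-neighbourhoods of its ideal vertices removed), using exactly your ``endpoint near a vertex forces a cuspidal word of length $N+1$'' mechanism with $C_{r-1},C_r$; then one controls the \emph{exit} point by applying the same argument to the further renormalized geodesic $\gamma_{r+1}$ (this is where $\wordlength{C_r},\wordlength{C_{r+1}}\leq N$ enter), and pushing forward by the bounded element $g_{\ab{n(r)}}\circ\dots\circ g_{\ab{n(r+1)-1}}$, so that the exit point lies in $\widetilde{\cK}_N=\bigcup g_{\bb{0}}\circ\dots\circ g_{\bb{n-1}}F_N$, the union over cuspidal words of length at most $N$; finally the whole segment is captured by taking $\cK_N$ to be the hyperbolic convex hull of $\widetilde{\cK}_N$ and using hyperbolic convexity (your observation that closeness to $\partial\DD$ along a geodesic segment is governed by its two endpoints would serve the same purpose, but only once both the entry and exit points are controlled). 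So your endpoint-confinement idea is sound and close to the paper's use of $F_N$, but as written it establishes only the entry-point half of \cref{LemmaEstimateExcursionCuspidalWords}; the exit-point half, and with it the true role of the hypothesis on $C_{r+1}$, is missing.
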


\begin{proof}
For any ideal vertex $\xi_i$, $i=0,\dots,2d-1$ of the fundamental domain $\cF$, let $\xi_i^{(-)}$ and $\xi_i^{(+)}$ be the two points in $\partial\DD$ at distance $\delta_N>0$ from $\xi_i$, where we set
\[
\delta_N:=
\min\{ \arclength{\Arc[\bb{0},\dots,\bb{N}]}, \bb{0}\dots\bb{N} \text{ cuspidal word of length $N+1$ }\},
\]
and where we recall that $\arclength{\Arc[\bb{0},\dots,\bb{N}]}$ denotes the length of the arc in $\partial\DD$.
Consider the open disc $\DD\subset\CC$ as embedded in the complex plane.
For any ideal vertex $\xi_i$, $i=0,\dots,2d-1$ of the fundamental domain $\cF$, let $B(N,\xi_i)\subset\CC$ be the open Euclidean ball whose boundary $\partial B(N,\xi_i)$ intersects $\partial\DD$ orthogonally at $\xi_i^{(-)}$ and $\xi_i^{(+)}$. Observe that $\DD\setminus B(N,\xi_i)$ is an \emph{hyperbolic half-space}, that is the region of $\DD$ delimited by its boundary and a complete geodesics. In particular 
$\DD\setminus B(N,\xi_i)$ is \emph{hyperbolic convex}, that is it contains the entire segment of hyperbolic geodesic connecting any two of its endpoints. Define a compact set $F_N\subset\DD$ by 
\[
	F_N:=\overline{\cF}\setminus\bigcup_{i=0}^{2d-1}B(N,\xi_i),
\]
that is the subset of the closure of $\cF$ which do not intersects the open balls $B(N,\xi_i)$ defined above, see \cref{fig:fn}. Since the set $F_N$ is an intersection of hyperbolic half-spaces, then it is also hyperbolic convex. Let $\widetilde{\cK}_N$ be the set defined by
\[
\widetilde{\cK}_N:=
\bigcup_{\bb{0}\dots\bb{n-1}} g_{\bb{0}}\circ\dots\circ g_{\bb{n-1}}F_N,
\]
where $\bb{0}\dots\bb{n-1}$ varies among all cuspidal words with $n\leq N$. In particular, considering the trivial word, it is evident that 
$
F_N\subset\widetilde{\cK}_N
$. 
The set $\widetilde{\cK}_N$ is compact, since it is the finite union of images of the compact $F_N$ under the continuous maps 
$
g_{\bb{0}}\circ\dots\circ g_{\bb{n-1}}
$. 
On the other hand, it is possible to see that $\widetilde{\cK}_N$ is not hyperbolic convex. Thus we define $\cK_N$ as the hyperbolic convex hull of $\widetilde{\cK}_N$, so that $\cK_N$ is hyperbolic convex by definition and it is also compact, since $\widetilde{\cK}_N$ is compact.

\begin{figure}
\centering
\def\svgscale{0.5}
\import{pictures/}{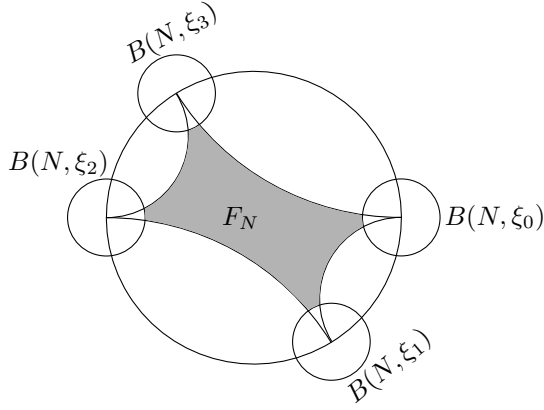}
\caption{A schematic picture of the set $F_n$, in dark grey inside $\cF$.}
\label{fig:fn}
\end{figure}

Fix an integer $r$ as in the statement.
Since $\cK_N$ is hyperbolic convex, it is enough to prove the statement for $t=t_{n(r)}$ and $t=t_{n(r+1)}$.
Consider  the normalized geodesics $\gamma_r(t)$ and $\gamma_{r+1}(t)$  in $\DD$ (which are simply the $n(r)^{\text{th}}$ and  $n(r+1)^{\text{th}}$ normalized geodesics as  defined in \cref{gammaj}) given by: 
\[
\begin{split}
	\gamma_r(t)&=g_{\ab{n(r)-1}}^{-1}\circ\dots\circ g_{\ab{0}}^{-1}(\gamma(t)),\\
	\gamma_{r+1}(t)&=g_{\ab{n(r+1)-1}}^{-1}\circ\dots\circ g_{\ab{0}}^{-1}(\gamma(t)).
\end{split}
\]
Using these geodesics, the statement is equivalent to the two conditions
\begin{align*}
\gamma_r(t_{n(r)})&\in\cK_N,\\
\gamma_{r}(t_{n(r+1)}) &= g_{\ab{n(r)}}\circ\dots\circ g_{\ab{n(r+1)-1}}(\gamma_{r+1}(t_{n(r+1)}))\in\cK_N.
\end{align*}
Recall that hyperbolic geodesics intersect in at most one point. Observe also that we have
\begin{align*}
	\gamma_r(+\infty)&=[\ab{n(r)},\ab{n(r)+1},\dots]_{\partial\DD},	&
	\gamma_{r+1}(+\infty)&=[\ab{n(r+1)},\ab{n(r+1)+1},\dots]_{\partial\DD},
	\\
	\gamma_r(-\infty)&=[\overline{\ab{n(r)-1}},\overline{\ab{n(r)-2}},\dots]_{\partial\DD},	&
	\gamma_{r+1}(-\infty)&=[\overline{\ab{n(r+1)-1}},\overline{\ab{n(r+1)-2}},\dots]_{\partial\DD}.
\end{align*}

In order to prove $\gamma_r(t_{n(r)})\in \cK_N$ we prove the stronger condition 
$
\gamma_r(t_{n(r)})\in F_N
$. 
If the latter does not hold, then there is some $i$ for which we either have $\gamma_r(+\infty)\in B(N,\xi_i)\cap\partial\DD$ or $\gamma_r(-\infty)\in B(N,\xi_i)\cap\partial\DD$.
However the first condition would imply that $\ab{n(r)}\dots\ab{n(r+1)-1}$ is a cuspidal word of length greater than $N+1$.
Similarly, the second condition would imply that $\overline{\ab{n(r)-1}}\dots\overline{\ab{n(r-1)}}$ is a cuspidal word of length $N+1$, which is equivalent to the same condition for $\ab{n(r-1)}\dots\ab{n(r)-1}$. 

In order to prove $\gamma_{r}(t_{n(r+1)})\in\cK_N$, we prove the stronger condition 
$
\gamma_{r}(t_{n(r+1)})\in\widetilde{\cK}_N
$. 
To do so, observe that the same argument as above applies to the words $\ab{n(r+1)}\dots\ab{n(r+2)-1}$ and $\ab{n(r)}\dots\ab{n(r+1)-1}$ and implies $\gamma_{r+1}(t_{n(r+1)})\in F_N$, that is 
\[
	\gamma_{r}(t_{n(r+1)})=g_{\ab{n(r)}}\circ\dots\circ g_{\ab{n(r+1)-1}}(\gamma_{r+1}(t_{n(r+1)}))
	\in
	g_{\ab{n(r)}}\circ\dots\circ g_{\ab{n(r+1)-1}} F_N \subset \widetilde{\cK}_N.\qedhere
\]
\end{proof}

\section{Stable Hall rays for proper functions}\label{sec:Hallperturbations}
In this section we prove \cref{thm:Hallperturbations}.

Let $G$ be a Fuchsian group that is a non uniform lattice and such that $\infty$ is a parabolic fixed point of $G$. 
As in \cref{sec:setup}, we also let $\Gamma<G$ be its maximal finite index normal subgroup without elliptic elements and we choose a fundamental domain $\cF$ for $\Gamma$ that satisfies the conclusions of \cref{costruzioneF} and label by $\alpha \in \cA$ its sides as in \cref{sec:background}. Every geodesic boundary expansion $(a_n)_{n \in \ZZ}$ in this section is with respect to this fundamental domain and is such that $a_n \in \cA$.

Finally, let us remark that any $h\colon \HH \to \RR$ which is $G$-invariant is in particular $\Gamma$-invariant, since $\Gamma<G$
Throughout this section, we will only use $\Gamma$-invariance.

\subsection{Naive height as a function of endpoints}\label{sec:heightendpoints} 
Let $h\colon\HH\to\RR_+$ be 
a $\Gamma$-invariant continuous function such that the induced function 
$
h\colon\Gamma\backslash\HH\to\RR_+
$ 
is proper. Given $l>0$ define the function $h_l\colon\HH\to\RR_+$ by
\[
	h_l(z)=
			\begin{cases}
				h(z), & \text{if $\Im(z)>l$,}\\
				0,	&	\text{if $\Im(z)\leq l$.}
			\end{cases}
\]

Recall that hyperbolic geodesics are uniquely determined by their endpoints on $\RR$ and that, for $x_1, x_2 \in \RR $ with $ x_1 \neq x_2$ we denote by $\gamma(x_1,x_2)$ the unique  geodesic  $\gamma(t)$ with  $\gamma(-\infty)=x_1$ and $\gamma(+\infty)=x_2$.  Denoting by 
\[
	\Delta:= \{(x_1,x_2)\in\RR^2: x_1=x_2\}
	\]
the diagonal in $\RR^2$, we  define the function $H \colon \RR^2\setminus \Delta \to\RR_+$ by 
\begin{equation}\label{def:H}
	H(x_1,x_2):=
	\sup_{t\in\RR}
	\left\{
	h_l(\gamma(t)), \text{ where } \gamma= \gamma(x_1,x_2) \right\}.
\end{equation}
In parallel, consider also the function  $H_0\colon\RR^2\setminus \Delta \to\RR_+$ given by
$$
H_0(x_1,x_2):= \frac{|x_2-x_1|}{2} =\sup_{t\in\RR}
	\left\{ 	\text{Im} (\gamma(t)), \text{ where } \gamma= \gamma(x_1,x_2) \right\}.
$$
Finally, for any $l>0$ let $U_l\subset\RR^2$ be the complement of the $2l$-neighborhood of $\Delta$ defined as the set $U_l=\{(x_1,x_2)\in\RR^2: |x_2-x_1|\geq 2l\}$.

\begin{rem}\label{correspondenceU}
Observe that, for any $(x_1,x_2)\in\RR^2$, 
we have $\gamma(x_1,x_2)\cap\cU_l\neq\emptyset$ if and only if $(x_1,x_2)\in U_l$. 
\end{rem}

The regularity of $H\colon\RR^2\to\RR_+$ depends on the regularity of $h\colon\HH\to\RR_+$ via \cref{LemmaUniformNorm} and \cref{PropositionLipschitzNorm} below. The proof of  \cref{LemmaUniformNorm} is an easy estimate and it is left to the reader; \cref{PropositionLipschitzNorm} is proved in \cref{app2}.

\begin{lemma}
\label{LemmaUniformNorm}
For any $l>0$, we have
\[
	\|(H-H_0)|_{U_l} \|_\infty
	\leq 
	\|(h-\Im)|_{\cU_l}\|_\infty.
\]
\end{lemma}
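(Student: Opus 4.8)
The plan is to reduce the inequality to a pointwise comparison along a single geodesic. Fix $l>0$ and a pair $(x_1,x_2)\in U_l$, write $\gamma=\gamma(x_1,x_2)$, and record two elementary facts. First, $H_0(x_1,x_2)=|x_2-x_1|/2\geq l$ by the definition of $U_l$. Second, by \cref{correspondenceU} the set of times with $\gamma(t)\in\cU_l$, i.e.\ with $\Im(\gamma(t))>l$, is an open interval (empty exactly when $|x_2-x_1|=2l$), and on it $h_l(\gamma(t))=h(\gamma(t))$, whereas $h_l(\gamma(t))=0$ off it. Consequently $H(x_1,x_2)=\max\bigl(0,\ \sup\{h(\gamma(t)):\Im(\gamma(t))>l\}\bigr)$, with the convention that the inner supremum is $0$ over the empty set.

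The main step is then a one-line estimate. For any $t$ with $\Im(\gamma(t))>l$ we have $\gamma(t)\in\cU_l$, hence
\[
	h(\gamma(t))=\bigl(h-\Im\bigr)(\gamma(t))+\Im(\gamma(t))\leq\|(h-\Im)|_{\cU_l}\|_\infty+\sup_{s\in\RR}\Im(\gamma(s))=\|(h-\Im)|_{\cU_l}\|_\infty+H_0(x_1,x_2).
\]
If $H(x_1,x_2)>0$, then any near-maximiser of $h_l$ along $\gamma$ must have $\Im(\gamma(t))>l$, so passing to the supremum in the display above (a routine $\epsilon$-argument covers the case where the supremum is not attained) gives $H(x_1,x_2)-H_0(x_1,x_2)\leq\|(h-\Im)|_{\cU_l}\|_\infty$.

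It remains to treat the degenerate case $H(x_1,x_2)=0$, which happens when $\gamma$ barely enters $\cU_l$ or when $h$ vanishes along the portion of $\gamma$ inside $\cU_l$. Here I would instead use $h\geq0$, so that $h-\Im\geq-\Im$ on $\cU_l$ and therefore $\|(h-\Im)|_{\cU_l}\|_\infty\geq\sup_{z\in\cU_l}\bigl(-\Im(z)\bigr)=-l\geq-H_0(x_1,x_2)$; thus $H(x_1,x_2)-H_0(x_1,x_2)=-H_0(x_1,x_2)\leq\|(h-\Im)|_{\cU_l}\|_\infty$ in this case too. Combining the two cases gives $H(x_1,x_2)-H_0(x_1,x_2)\leq\|(h-\Im)|_{\cU_l}\|_\infty$ for all $(x_1,x_2)\in U_l$, and taking the supremum over $U_l$ finishes the proof. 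I do not expect any genuine obstacle; the only point requiring attention is the bookkeeping around the truncation of $h_l$ at the level $\Im=l$, which is exactly what the degenerate case addresses.
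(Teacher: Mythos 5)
The paper itself gives no written proof here (it is explicitly left to the reader), so your argument has to stand on its own, and there is a genuine gap: you only establish the one-sided estimate $H-H_0\le\|(h-\Im)|_{\cU_l}\|_\infty$ on $U_l$, whereas the left-hand side of the lemma is the uniform norm of $H-H_0$, i.e.\ the supremum of $|H-H_0|$, and the paper later uses it with both signs (in the proof of \cref{thm:decompositionperturbations} one needs both $H\ge H_0-\|H-H_0\|_\infty$ and $H\le H_0+\|H-H_0\|_\infty$). Your proposal never bounds $H_0-H$: the ``degenerate case'' at the end only checks that the negative quantity $H-H_0=-H_0$ lies below the norm, which is vacuous once the norm is understood with absolute values, and in that very case $|H-H_0|=H_0\ge l$, which is not obviously dominated by $\|(h-\Im)|_{\cU_l}\|_\infty$. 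So, as written, the proof does not give the lemma in the form in which it is actually applied.

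The missing half is a one-line evaluation at the apex of the semicircle: for $(x_1,x_2)\in U_l$ with $|x_2-x_1|>2l$, the point $z_0=\frac{x_1+x_2}{2}+i\,\frac{|x_2-x_1|}{2}$ lies on $\gamma(x_1,x_2)$ and in $\cU_l$, so $H(x_1,x_2)\ge h_l(z_0)=h(z_0)\ge \Im(z_0)-\|(h-\Im)|_{\cU_l}\|_\infty=H_0(x_1,x_2)-\|(h-\Im)|_{\cU_l}\|_\infty$. This also disposes of your degenerate case: if $H=0$ while the apex is in $\cU_l$, then $h(z_0)=0$, hence $\|(h-\Im)|_{\cU_l}\|_\infty\ge\Im(z_0)=H_0=|H-H_0|$, a point your one-sided argument cannot see because it uses only $h\ge0$. (On the boundary $|x_2-x_1|=2l$ the apex has $\Im=l\notin\cU_l$ and the two-sided inequality can actually fail as literally stated; this is a sloppiness in the paper's conventions $\ge$ versus $>$ in $U_l$, $\cU_l$ and $h_l$, already visible in \cref{correspondenceU}, and not a defect of your argument.) Your upper bound $H\le H_0+\|(h-\Im)|_{\cU_l}\|_\infty$, via truncation at level $l$ and the pointwise bound $h\le\Im+\|(h-\Im)|_{\cU_l}\|_\infty$ on $\cU_l$, is correct and is indeed the expected half of the estimate.
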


The definition of Lipschitz constant $\lipschitz ( \cdot )$ 
  for $H\colon\mathbb{R}^2 \to \mathbb{R}$ corresponds to \cref{def:Lip_const} under the standard identification between $\CC$ and $\RR^2$. 

\begin{prop}
\label{PropositionLipschitzNorm}
For any $l>0$ we have
\[
	\lipschitz \bigl( (H-H_0)|_{U_l} \bigr)
	\leq 
	\left(\sqrt{2}+\frac{\sqrt{2}}{l}\right)\cdot\|(h-\Im)|_{\cU_l}\|_{\text{Lip}}.
\]
In particular the function $H\colon\RR^2\setminus \Delta 
\to\RR_+$ is continuous.
\end{prop}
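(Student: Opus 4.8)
The plan is to realize $H$ and $H_0$ as maxima of the same type of expression over the portion of a geodesic lying in the horodisk $\cU_l$, using a parametrization that depends Lipschitz–continuously on the endpoints, and then to compare the two maxima. Let $P=(x_1,x_2)\in U_l$; since $H$ and $H_0$ are symmetric under swapping the two coordinates, and since for a pair with increasing coordinates and one with decreasing coordinates the distance can only decrease after symmetrizing, I may assume throughout that $x_1<x_2$, and write $R:=\tfrac{x_2-x_1}{2}\ge l$, $c:=\tfrac{x_1+x_2}{2}$. For $R>l$ the arc $\{t:\Im\gamma(x_1,x_2)(t)\ge l\}$ is parametrized by $v\in[-1,1]$ via $z_P(v):=c+v\sqrt{R^2-l^2}+i\sqrt{R^2-v^2(R^2-l^2)}$, with $\Im z_P(0)=R=\max_{v}\Im z_P(v)$. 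Putting $g:=(h-\Im)|_{\cU_l}$ (extended continuously to $\overline{\cU_l}$), we have $h=\Im+g$ on $\cU_l$, and since $h\ge0$ the truncation in $h_l$ is harmless for the supremum; hence $H(x_1,x_2)=\max_{v\in[-1,1]}\bigl(\Im z_P(v)+g(z_P(v))\bigr)$ and $H_0(x_1,x_2)=R$. Writing $\psi(R,v):=\Im z_P(v)-R=\sqrt{R^2-v^2(R^2-l^2)}-R$ — a quantity depending only on $R,v,l$ — this yields the clean formula
\[
 (H-H_0)(x_1,x_2)=\max_{v\in[-1,1]}\Bigl(\psi(R,v)+g\bigl(z_P(v)\bigr)\Bigr).
\]

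Two elementary facts drive the estimate. First, since $R\ge l$, one checks $|\partial_R\psi(R,v)|=1-\dfrac{R(1-v^2)}{\sqrt{R^2-v^2(R^2-l^2)}}\le v^2\le1$ and $-v^2(R-l)\le\psi(R,v)\le0$. Second, any maximizer $v^*$ of the expression above is localized near $v=0$: the maximum is at least its value at $v=0$, namely $g(c+iR)\ge-\|g\|_\infty$, so $\psi(R,v^*)\ge-2\|g\|_\infty$; combining this with $-\psi(R,v^*)=R-\Im z_P(v^*)=\dfrac{v^{*2}(R^2-l^2)}{R+\Im z_P(v^*)}$ and $\Im z_P(v^*)\le R$ gives $v^{*2}(R^2-l^2)\le4\|g\|_\infty R$. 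Geometrically the point realizing $H$ lies within vertical distance $2\|g\|_\infty$, hence controlled horizontal distance, of the top of the semicircle.

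To prove the Lipschitz bound, take $P=(x_1,x_2)$, $P'=(x_1',x_2')$ in $U_l$ (both with increasing coordinates, by the reduction above), let $v^*$ maximize the expression for $P$, and bound
\[
 (H-H_0)(P)-(H-H_0)(P')\le\bigl(\psi(R_P,v^*)-\psi(R_{P'},v^*)\bigr)+\bigl(g(z_P(v^*))-g(z_{P'}(v^*))\bigr),
\]
with the opposite inequality obtained by exchanging $P,P'$. The first bracket is $\le v^{*2}|R_P-R_{P'}|\le v^{*2}\tfrac1{\sqrt2}|P-P'|$, by the mean value theorem for $\psi(\cdot,v^*)$ and $|R_P-R_{P'}|\le\tfrac1{\sqrt2}|P-P'|$. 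The second bracket is $\le\lipschitz(g)\,|z_P(v^*)-z_{P'}(v^*)|$, so the crux is to bound $|z_P(v^*)-z_{P'}(v^*)|$ by a multiple of $|P-P'|$: decompose $z_P(v)-z_{P'}(v)$ into the translation $c_P-c_{P'}$, the horizontal term $v\bigl(\sqrt{R_P^2-l^2}-\sqrt{R_{P'}^2-l^2}\bigr)=v\,\dfrac{(R_P+R_{P'})(R_P-R_{P'})}{\sqrt{R_P^2-l^2}+\sqrt{R_{P'}^2-l^2}}$, and the vertical term $\bigl(\psi(R_P,v)-\psi(R_{P'},v)\bigr)+(R_P-R_{P'})$; then use $|c_P-c_{P'}|,|R_P-R_{P'}|\le\tfrac1{\sqrt2}|P-P'|$ together with the localization $v^{*2}(R_P^2-l^2)\le4\|g\|_\infty R_P$ to keep the factor multiplying $R_P-R_{P'}$ in the horizontal term bounded. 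Collecting both contributions and tracking the constants produces $\lipschitz\bigl((H-H_0)|_{U_l}\bigr)\le\bigl(\sqrt2+\tfrac{\sqrt2}{l}\bigr)\|(h-\Im)|_{\cU_l}\|_{\text{Lip}}$; continuity of $H$ on $\RR^2\setminus\Delta$ then follows immediately, as $H=H_0+(H-H_0)$ with $H_0$ continuous and $H-H_0$ locally Lipschitz.

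The main obstacle is the degeneration of the parametrization $z_P(v)$, and with it of the location of the maximizer, as $|x_2-x_1|\to2l$: the endpoints of the relevant arc then move with unbounded Euclidean speed in $P$, because $\partial_R\sqrt{R^2-l^2}\to+\infty$ as $R\to l^+$. The localization estimate $v^{*2}(R^2-l^2)\le4\|g\|_\infty R$ is exactly the device that confines the maximizer to the top of the semicircle precisely in the regime where the arc is short; it is the interplay between this bound and the geometric factors near height $l$ that accounts both for the appearance of $\|g\|_\infty$ (not merely $\lipschitz(g)$) on the right and for the factor $1/l$. Carrying the constants cleanly through this last step is the technical bookkeeping deferred to \cref{app2}.
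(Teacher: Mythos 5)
Your overall architecture is genuinely different from the paper's. The paper never writes $H-H_0$ as a maximum over a fixed parameter domain: it bounds the directional Lipschitz constants of $G=H-H_0$ along the two orthogonal directions $(1,1)$ and $(-1,1)$ and then combines the two bounds by orthogonality. The point of those two directions is that the corresponding geodesics are, respectively, horizontal translates and concentric semicircles, so there is an exact correspondence between points of the two geodesics (horizontal shift by $s$, respectively radial shift by $s$) which moves every point by exactly $|s|$; the localization of the maximizer near the top of the semicircle ($\sin\theta_0>1-2\delta/l$ in the paper, the analogue of your bound $v^{*2}(R^2-l^2)\le 4\|g\|_\infty R$) is used only to show that the radial shift is almost vertical, so that its effect is matched by the change of $H_0$. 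In your scheme the comparison point on the second geodesic is instead taken at the same value of the normalized parameter $v$ for an arbitrary pair $P,P'$, and the whole difficulty is pushed into the claim $|z_P(v^*)-z_{P'}(v^*)|\le C\,|P-P'|$.

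That claim is a genuine gap, and it cannot be closed by the deferred bookkeeping. The horizontal component of $z_P(v)-z_{P'}(v)$ contains $v\bigl(\sqrt{R_P^2-l^2}-\sqrt{R_{P'}^2-l^2}\bigr)$, and $\partial_R\sqrt{R^2-l^2}=R/\sqrt{R^2-l^2}$ is unbounded as $R\downarrow l$. Your localization bound constrains $|v^*|$ only when $R-l$ is large compared with $\|g\|_\infty$: when $R-l\lesssim\|g\|_\infty$ it permits $|v^*|$ of order $1$, and the maximizer really can sit there, since the penalty $|\psi(R,v)|\le R-l$ is negligible against a variation of $g$ of size $\|g\|_\infty$ concentrated near an endpoint of the arc. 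For such a $g$ (a Lipschitz ramp in $\Re z$ of slope $\lipschitz(g)$ placed near the endpoint of the arc, which can be made $\mu$-periodic and hence admissible), the same-$v$ correspondence stretches distances by a factor of order $R/\sqrt{R^2-l^2}$, and in fact the difference quotient of $H-H_0$ itself is then of order $\lipschitz(g)\,R/\sqrt{R^2-l^2}$: enlarging $R$ by $s$ lengthens the arc inside $\cU_l$ horizontally by about $sR/\sqrt{R^2-l^2}$ at essentially constant height, so $H$ can grow by $\lipschitz(g)$ times that amount while $H_0$ grows only by $s$. Hence on the near-tangent part of $U_l$ no tracking of constants makes your chain of inequalities produce the stated bound; the argument is uniform only once $R$ is bounded away from $l$ (which is all that matters in the later applications, where the excursions are deep). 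This is also exactly the regime where the paper's own argument is most delicate: its reverse inequality in the concentric-circle step needs the radially shifted comparison point to remain in $\cU_l$, which again holds only when the maximizing point is not too close to height $l$. So the obstacle you flag at the end is not routine technical bookkeeping: either you must keep $|x_2-x_1|$ bounded away from $2l$, or you must replace the fixed-$v$ matching by a correspondence between the two geodesics, as in the paper's translation/concentric decomposition, that moves points by exactly $|s|$.
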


\begin{rem}
We observe that in general the function $H\colon\RR^2\to\RR_+$ is not differentiable even if $h\colon\HH\to\RR_+$ is. For example, for $h(x,y):=\sqrt{x^2+y^2}$ one gets 
\[
	H(x_1,x_2)=\max\{|x_1|,|x_2|\}.
\]
This is the reason why it is natural to consider the Lipschitz category rather than the category of smooth functions.
\end{rem}

\subsection{Intervals from endpoints in Cantor sets}
\label{sec:stablesum}
Let $\BB_N=\BB_N^\eta$ be the Cantor set defined in \cref{sec:cantorpreliminaries} and described combinatorially in \cref{SectionCombinatorialDescriptionGaps}. Following \cref{sec:cantorpreliminaries}, let 
$\KK_N=\KK_N^\eta:=\varphi(\BB_N)$ 
be its image in $\RR$ under the map 
$\phi\colon\partial\DD\to\partial\HH$,
where $\varphi$ is as usual the inverse of the Caley map $\mathscr{C}$ defined in~\eqref{eq:Cayley}.

\begin{prop}
\label{thm:sumofcantors}
If there exist $0<\epsilon<1$, $l>0$ and a function $H\colon\RR^2\to\RR$ such that 
\begin{equation}\label{LipH}
	\frac
	{1-2\cdot\lipschitz((H-H_0)|_{U_l})}
	{1+2\cdot\lipschitz((H-H_0)|_{U_l})}>
	1-\epsilon, 
\end{equation}
then there exist natural numbers $N_0$ and $s_0$ such that, for any $N\geq N_0$ and $s\geq s_0$, $H(\KK_N\times\KK_N^s)$ is an interval. More precisely, we have
\[
	|\KK_N|=M_N-m_N>\frac{\mu}{2} \qquad \text{and} \qquad H(\KK_N\times\KK_N^s)=H([m_N,M_N]\times[m_N+s\mu,M_N+s\mu]),
\]
where $m_N=\min\KK_N$ and $M_N=\max\KK_N$.
\end{prop}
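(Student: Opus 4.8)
The plan is to obtain \cref{thm:sumofcantors} from the abstract Stable Hall \cref{thm:stableHall}, applied — after an affine change of variables — to rescaled and reflected copies of the Cantor sets $\KK_N$ and $\KK_N^s$, whose $\epsilon$-stable gap and $\epsilon$-size conditions have already been verified in \cref{conditionsStableHall}; the key observation is that for $s$ large the product $\KK_N\times\KK_N^s$ lies in the half-space $\{x_2>x_1\}$, on which $H_0(x_1,x_2)=(x_2-x_1)/2$ is, up to an affine reparametrisation, the sum function $S_0$. Concretely, I would introduce the linear isomorphism $T\colon\RR^2\to\RR^2$, $T(y_1,y_2):=(-2y_1,2y_2)$, and set $\widetilde H:=H\circ T$ and $\widetilde H_0:=H_0\circ T$. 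Since $x_2-x_1=2(y_1+y_2)$ along $T$, we have $\widetilde H_0=|y_1+y_2|$, which agrees with $S_0(y_1,y_2)=y_1+y_2$ on the open half-plane $U':=\{(y_1,y_2):y_1+y_2>l\}$; moreover $T(U')=\{x_2-x_1>2l\}\subset U_l$ and $T(U')\cap\Delta=\emptyset$, so $\widetilde H$ is well-defined on $U'$ and, by \cref{PropositionLipschitzNorm}, continuous there. Setting $\KK:=-\tfrac12\KK_N$ and $\FF:=\tfrac12\KK_N^s$ one gets $T(\KK\times\FF)=\KK_N\times\KK_N^s$ and $T\bigl([\min\KK,\max\KK]\times[\min\FF,\max\FF]\bigr)=[m_N,M_N]\times[m_N+s\mu,M_N+s\mu]$.

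Next I would verify the hypotheses of \cref{thm:stableHall} for $\KK$, $\FF$, $U=U'$ and $S:=\widetilde H|_{U'}$. The ratios of lengths appearing in the $\epsilon$-stable gap condition \eqref{EqStableGapCondition} and the $\epsilon$-size condition \eqref{EqStableSizeCondition} are invariant under affine maps, so for $N\geq N_0$ \cref{conditionsStableHall} gives at once that $\KK$ and $\FF$ each satisfy the $\epsilon$-stable gap condition and that $(\KK,\FF)$ satisfies the $\epsilon$-size condition. The inclusions $\KK\times\FF\subset U'$ and $[\min\KK,\max\KK]\times[\min\FF,\max\FF]\subset U'$ hold once $s$ is so large that $\tfrac12\bigl(s\mu-(M_N-m_N)\bigr)>l$, i.e.\ for $s\geq s_0=s_0(l,\mu)$, since $-\tfrac{M_N}{2}+\tfrac{m_N+s\mu}{2}$ is the minimum of $y_1+y_2$ on that rectangle. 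Finally, since $\widetilde H-\widetilde H_0=(H-H_0)\circ T$ with $\lipschitz(T)=2$, we obtain $\lipschitz\bigl((\widetilde H-\widetilde H_0)|_{U'}\bigr)\leq 2\,\lipschitz\bigl((H-H_0)|_{U_l}\bigr)$, and since $\widetilde H_0=S_0$ on $U'$ this bounds $\lipschitz(S-S_0)$; because $t\mapsto(1-t)/(1+t)$ is decreasing on $[0,\infty)$, hypothesis \eqref{LipH} then forces condition \eqref{EqConditionStableHallTheorem}. It is here that the choice of $U'$ rather than $\{y_1+y_2>0\}$ matters: on the strip $0<x_2-x_1\leq 2l$ the geodesic $\gamma(x_1,x_2)$ has Euclidean radius at most $l$, hence $H$ vanishes there and $\widetilde H-\widetilde H_0=-(y_1+y_2)$, whose Lipschitz constant $\sqrt2$ would violate \eqref{EqConditionStableHallTheorem}.

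With the hypotheses in place, \cref{thm:stableHall} yields $\widetilde H(\KK\times\FF)=\widetilde H\bigl([\min\KK,\max\KK]\times[\min\FF,\max\FF]\bigr)$; composing with $T$ and using the identities recorded above, this reads $H(\KK_N\times\KK_N^s)=H\bigl([m_N,M_N]\times[m_N+s\mu,M_N+s\mu]\bigr)$, which is exactly the asserted equality. Since $H$ is continuous on $\RR^2\setminus\Delta$ and the rectangle on the right is compact, connected and disjoint from $\Delta$ (for $s\geq s_0$), its image under $H$ is a closed interval, so $H(\KK_N\times\KK_N^s)$ is an interval. Lastly, the estimate $|\KK_N|=M_N-m_N>\mu/2$ for $N$ large is obtained, exactly as in the proof of \cref{thm:sumHallFuchsian}, from the limits $m_N\to-\mu/2$ and $M_N\to\mu/2$ as $N\to\infty$, so it suffices to enlarge $N_0$ one final time.

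The step I expect to be the main obstacle is the combination of the first two paragraphs: one has to pick the affine reparametrisation so that it turns $H_0$ into the sum function on an \emph{open} set that still contains $\KK\times\FF$ together with its convex hull, while at the same time it inflates the Lipschitz norm of the perturbation $H-H_0$ only by the controlled factor $2$ — which is precisely the reason the factor $2$ appears both in \eqref{LipH} and in the estimate of \cref{PropositionLipschitzNorm}. Everything else is bookkeeping: choosing $N_0$ and $s_0$ large enough to guarantee all the required domain inclusions and the size estimate $M_N-m_N>\mu/2$ simultaneously.
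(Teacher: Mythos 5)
Your proposal is correct and follows essentially the same route as the paper's proof: the affine change of variables $\psi=T^{-1}$ turning $H_0$ into the sum function on $\{x_2>x_1\}$, the transfer of the $\epsilon$-stable gap and $\epsilon$-size conditions from \cref{conditionsStableHall} by affine invariance, the factor-$2$ Lipschitz bookkeeping matching \eqref{LipH} with \eqref{EqConditionStableHallTheorem}, and the choice of $s_0$ ensuring the product lies in (the image of) $U_l$ before applying \cref{thm:stableHall}. The only cosmetic difference is your explicit open set $U'=\{y_1+y_2>l\}$ in place of $\psi(U_l)$, which changes nothing in the argument.
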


The proof is simply a reduction of the statement to an application of the Stable Hall \cref{thm:stableHall}.
We remark that, in order for~\eqref{LipH} to be satisfied, we must have $\lipschitz((H-H_0)|_{U_l})<1/2$.

\begin{proof}
Let us apply  a change of variable that allows to reduce the function $H_0$ (restricted to the set $\{ x_2 > x_1 \}$) to the sum function $S_0$. 
Let 
\[
	(y_1,y_2) = \psi(x_1,x_2) := \biggl(-\frac{x_1}{2},\frac{x_2}{2}\biggr),
\]
so that, if $x_2> x_1$,
\[ 
	S_0(y_1,y_2)= -\frac{x_1}{2}+ \frac{x_2}{2} = \frac{|x_2-x_1|}{2}=  H_0 (x_1,x_2). 
\]
Then define the function $S\colon\RR^2\to\RR$ (which we will also use only on $\{ x_2>x_1\}$) by 
$$
S(y_1,y_2):=H(\psi^{-1}(y_1,y_2)) =  H(-2y_1, 2y_2).
$$ 

Choose $s_0\geq 1$ sufficiently large so that $\KK_N \times \KK_N^s \subset U_l$ for any $s\geq s_0$. Fix now any $s \geq s_0$ and consider the Cantor sets $\KK':=-(1/2)\KK_N$ and $\FF':=(1/2)\KK_N^s$, so that $$\KK' \times \FF' = \psi \left(\KK_N \times \KK_N^s \right) .$$ 
We showed in \cref{conditionsStableHall} that there exists $N_0 \in \NN$ such that, for any $N\geq N_0$, $\KK_N$ and $\KK_N^s$ each satisfy the  $\epsilon$-stable gap condition introduced in \cref{SectionStableHallTheorem} and jointly satisfy as a pair $(\KK_N, \KK_N^s)$ the $\epsilon$-size condition (also defined in \cref{SectionStableHallTheorem}). It is clear from the definitions that this implies that  the Cantor sets $\KK'$ and $\FF'$, which are  images by affine maps of $\KK_N$ and $\KK_N^s$ respectively,  also satisfy such conditions.

Let $U' = \psi(U_l)$ be the image of $U_l \subset  \mathbb{R}^2$ (defined in \cref{sec:heightendpoints}). Since  $\KK_N \times \KK_N^s \subset U_l$ (we fixed $s\geq s_0$),   $\KK'\times\FF'\subset U'$. Moreover, observe that we have
$
\lipschitz(S-S_0)=2\cdot\lipschitz(H-H_0)
$, 
so that 
\[
\frac
{1-\lipschitz(S-S_0)}
{1+\lipschitz(S-S_0)}
=
\frac
{1-2\cdot\lipschitz(H-H_0)}
{1+2\cdot\lipschitz(H-H_0)}.
\]
Hence, from the assumption~\eqref{LipH}   on the Lipschitz constant   of $(H-H_0)$ restricted to ${U_l}$, it follows that $S-S_0$ satisfies the Lipschitz constant assumption~\eqref{EqConditionStableHallTheorem} of \cref{thm:stableHall} on $U'$. 

Thus, we can apply \cref{thm:stableHall}. Let us now rephrase its  conclusion in terms of $H$.  Notice  that for $s_0\geq 1$, $\min \KK_N^s > \max \KK_N$, so that $( \KK_N, \KK_N^s )\subset \{ x_2>x_1\}$. Thus, on the set $\KK_N \times \KK_N^s$ we have, as seen above, that $S\circ \psi = H$.  This gives that
$$
S(\KK'\times\FF')= S \circ \psi (\KK_N\times\KK_N^s)= H (\KK_N\times\KK_N^s) ,
$$ 
and, similarly, that 
\[
S\bigl([\min\KK',\max\KK']\times[\min\FF',\max\FF']\bigr)=
H\bigl([\min\KK_N, \max\KK_N]\times[\min\KK^s_N,\max\KK_N^s]\bigr)
.\]
This in particular implies that $H (\KK_N\times\KK_N^s)$ is an interval (see \cref{rk:stable_gives_Cantorsum}) and concludes the proof. 
\end{proof}

The following Corollary gives the starting point for  the existence of a Hall ray. 
The reader should compare this Corollary (and its proof) with the simpler analogue \cref{cor:decomposition}. 
\begin{cor}\label{thm:decompositionperturbations}
If $H\colon\RR^2\to\RR$ is such that there exist $0<\epsilon<1$, $l>0$ for which \eqref{LipH} holds and $\|H-H_0\|_\infty <1/4$,   up to increasing $N_0$, for any $N\geq N_0$, any $s_1\geq s_0$ and any
\[
L\geq \inf H(\KK_N\times\KK_N^{s_1})
\]
there exist points
$
x_1=[\ab{0},\ab{1},\dots]_{\partial\HH}\in\KK_N
$ 
and 
$
x_2=[\bb{0},\bb{1},\dots]_{\partial\HH}\in\KK_N
$ 
and an integer $s\geq s_1$ such that 
\[
L=H(x_1,x_2+s\mu).
\] 
\end{cor}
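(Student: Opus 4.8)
\textbf{Proof proposal for \cref{thm:decompositionperturbations}.}
The plan is to deduce this statement from \cref{thm:sumofcantors} in essentially the same way that \cref{cor:decomposition} was deduced from \cref{thm:sumHallFuchsian}, the only extra care being that the relevant function is $H$ rather than the sum function, and that $H$ is merely continuous (and close to $H_0$), not affine. First I would invoke \cref{thm:sumofcantors}: given $\epsilon$, $l$ with \eqref{LipH}, it produces $N_0$ and $s_0$ such that for all $N\geq N_0$ and $s\geq s_0$ one has $M_N-m_N>\mu/2$ and
\[
	H(\KK_N\times\KK_N^s)=H\bigl([m_N,M_N]\times[m_N+s\mu,M_N+s\mu]\bigr),
\]
which is a genuine closed interval since $H$ is continuous (by \cref{PropositionLipschitzNorm}). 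Write $I_s:=H(\KK_N\times\KK_N^s)=[\lambda_s,\Lambda_s]$ for its endpoints.

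The key step is to show that the intervals $(I_s)_{s\geq s_0}$ overlap and march off to $+\infty$, so that their union is a half-line. For the right endpoint $\Lambda_s$, note that the top of the geodesic $\gamma(x_1,x_2+s\mu)$ with $x_1,x_2\in\KK_N\subset[-\mu/2,\mu/2]$ has Euclidean radius $H_0(x_1,x_2+s\mu)=\tfrac12|x_2+s\mu-x_1|$, which lies between $\tfrac12(s\mu-\mu)$ and $\tfrac12(s\mu+\mu)$; since $\|H-H_0\|_\infty<1/4$ we get $\Lambda_s\to+\infty$ and, more quantitatively, $|\lambda_{s+1}-\Lambda_s|$ can be made negative, i.e. $I_{s+1}\cap I_s\neq\emptyset$. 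Concretely, the length of $I_s$ is at least $|H_0|$-length minus $2\|H-H_0\|_\infty\geq 2(M_N-m_N)-\tfrac12>\mu-\tfrac12$, while the shift of the center of $I_s$ as $s\mapsto s+1$ is $\mu$ up to an error of at most $2\|H-H_0\|_\infty<\tfrac12$; hence consecutive intervals overlap. Therefore
\[
	\bigcup_{s\geq s_1} I_s=[\lambda_{s_1},+\infty),
\]
and in particular any $L\geq \lambda_{s_1}=\inf H(\KK_N\times\KK_N^{s_1})$ lies in $I_s$ for some $s\geq s_1$. Unwinding the definition of $I_s=H(\KK_N\times\KK_N^s)$, this means precisely that there are $x_1\in\KK_N$, $x_2\in\KK_N$ (so $x_2+s\mu\in\KK_N^s$) with $L=H(x_1,x_2+s\mu)$; writing $x_1=[\ab{0},\ab{1},\dots]_{\partial\HH}$ and $x_2=[\bb{0},\bb{1},\dots]_{\partial\HH}$ via their boundary expansions (they lie in $\KK_N$, hence are non-cuspidal of bounded cuspidal-word type) gives the claimed form.

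The main obstacle, and the place where one must be slightly careful compared to \cref{cor:decomposition}, is that $H$ is not the sum function, so the intervals $I_s$ are not exact rigid translates of one another: the overlap argument must be run through the uniform estimate $\|H-H_0\|_\infty<1/4$ rather than through the clean identity $\KK_N^{s+1}-\KK_N=(\KK_N^s-\KK_N)+\mu$. I would also note that one may need to increase $N_0$ once more (as the statement allows) to guarantee simultaneously $M_N-m_N>\mu/2$ and that the length lower bound $\mu-2\|H-H_0\|_\infty$ exceeds the center-shift error $\mu - $ (something), ensuring strict overlap; all of these are elementary one-line estimates once $\|H-H_0\|_\infty<1/4$ is in force. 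No genuinely new idea beyond \cref{thm:sumofcantors} is needed here.
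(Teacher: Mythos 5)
Your route is the same as the paper's: apply \cref{thm:sumofcantors}, show that the intervals $I_s=H(\KK_N\times\KK_N^s)$ and $I_{s+1}$ overlap using the uniform bound $\|H-H_0\|_\infty<1/4$, and conclude that their union is a half-line from which $L$ can be realized as $H(x_1,x_2+s\mu)$. However, the quantitative bookkeeping in your key overlap step is off by factors of $2$, and as literally written it does not prove overlap: the $H_0$-image of the rectangle $[m_N,M_N]\times[m_N+s\mu,M_N+s\mu]$ is the interval $\bigl[\tfrac{m_N+s\mu-M_N}{2},\tfrac{M_N+s\mu-m_N}{2}\bigr]$, whose length is $M_N-m_N$ (not $2(M_N-m_N)$), and its endpoints shift by $\mu/2$ (not $\mu$) when $s\mapsto s+1$; with your stated bounds (each interval of length $>\mu-\tfrac12$, centers shifting by $\mu$ up to error $<\tfrac12$) the inequality needed for overlap fails, so "hence consecutive intervals overlap" is a non sequitur. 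The correct estimate, which is what the paper does, is to compare $\sup I_s\geq H(m_N,M_N+s\mu)>\tfrac{M_N+s\mu-m_N}{2}-\|H-H_0\|_\infty$ with $\inf I_{s+1}\leq H(M_N,m_N+(s+1)\mu)<\tfrac{m_N+(s+1)\mu-M_N}{2}+\|H-H_0\|_\infty$, which gives overlap exactly when $M_N-m_N\geq \tfrac{\mu}{2}+2\|H-H_0\|_\infty$; this is strictly stronger than the bound $M_N-m_N>\mu/2$ supplied by \cref{thm:sumofcantors}, and is obtained by increasing $N_0$ (since $M_N-m_N\to\mu$) together with the normalization $m=1$, which forces $\mu\geq 1>4\|H-H_0\|_\infty$. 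You did hedge that $N_0$ may need to be increased, so the missing ingredient is precisely this overlapping condition rather than a new idea, but the estimate should be redone with the correct constants and with the role of $\mu\geq1$ made explicit.
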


\begin{proof}
Recall that we are assuming that the Margulis neighborhood starts at $m=1$, which implies that $\mu\geq 1$. Hence, since $\|H-H_0\|_\infty <1/4$
and   $	M_N-m_N \to \mu $ as $N$ tends to infinity, 
increasing $N$ if needed, we can assume that we have the following \emph{overlapping condition}
\[
	M_N-m_N=|\KK_N|\geq \frac{\mu}{2}+2\|H-H_0\|_\infty.
\]
We will now show that this condition implies that for any $s\geq s_0$ we have
\begin{equation}\label{overlap} 
H(\KK_N\times\KK_N^s)\cap H(\KK_N\times\KK_N^{s+1})\neq\emptyset.
\end{equation} 
Since by \cref{thm:sumofcantors} we know that both $H(\KK_N\times\KK_N^s)$ and $H(\KK_N\times\KK_N^{s+1})$ are intervals, this implies that they overlap and that there is no gap between them.
This is then enough to conclude, since, remarking also that $\sup H(\KK_N\times\KK_N^s) $ tends to $+\infty$ as $s$ grows, it implies that for any $s_1\geq s_0$,
\[
\bigcup_{s\geq s_1 } H(\KK_N\times\KK_N^s) \supset \left( \inf H(\KK_N\times\KK_N^{s_1}), +\infty \right) ,
\]
so  any $L\geq \inf H(\KK_N\times\KK_N^{s_1})$ belongs to $H(\KK_N\times\KK_N^s)$ for some $s\geq s_1$ and hence can be written as $H(x_1,x_2+s\mu)$ for some $x_1 \in \KK_N$ and $x_2 +s \mu \in K^s_N$. 

It remains to show~\eqref{overlap}.
On one hand we have that, for $s\geq s_0$, 
\[
\begin{split}
	\sup H(\KK_N\times\KK_N^s) 
	\geq H(m_N,M_N+s\mu) & > 
	H_0(m_N,M_N+s\mu)-\|H-H_0\|_\infty  \\ &= \frac{M_N +s\mu-m_N}{2}-\|H-H_0\|_\infty;
\end{split}
\]
while, for $s+1$, we have
\[
	\begin{split}
\inf H(\KK_N\times\KK_N^{s+1}) 
	\leq H(M_N,m_N+(s+1)\mu)& < 
	H_0(M_N, m_N+(s+1)\mu)+\|H-H_0\|_\infty \\ & = \frac{m_N+(s+1)\mu - M_N}{2}+\|H-H_0\|_\infty ,
\end{split}
\]
where to remove the absolute value in $H_0$ we used that $m_N + (s+1)\mu \geq M_N$ for any $s\geq0$.
Since one can check that the overlap condition implies that
\[
 \frac{M_N+s\mu -m_N}{2}-\|H-H_0\|_\infty  \geq \frac{m_N+(s+1)\mu - M_N}{2}+\|H-H_0\|_\infty ,\\
\]
the combination of the last three inequalities shows that $\sup H(\KK_N\times\KK_N^s) > \inf H(\KK_N\times\KK_N^{s+1}) $ and hence~\eqref{overlap} holds.  As explained above, this concludes the proof.
\end{proof}

\subsection{A Perron-like formula to produce values in the Hall ray}\label{sec:Perronperturbations}

The next result is the key step for the proof of \cref{thm:Hallperturbations} on the existence of a Hall ray for proper functions. It provides a formula which will allows us to verify that certain geodesics realize values of the Lagrange spectrum. The formula resembles the generalized Perron formula in \cref{sec:Perron} (see~\eqref{eq:heightlimsup}).
However, since we have the additional difficulty of controlling the values of the proper function $h$ in the other cusps, we can only prove it for sequences of a special form, which we will use to prove the existence of the Hall ray in \cref{subsec:proofHallperturbations}.

\smallskip
Let  $h\colon\HH\to\RR_+$ be a function satisfying the assumptions of \cref{thm:Hallperturbations} and let $H$ denote the function defined in \cref{def:H}.  Recall also that we are assuming that the $\Gamma$ contains the parabolic element $p=\left(\begin{smallmatrix} 1 & \mu \\ 0 & 1 \end{smallmatrix}\right)$ which acts on $\HH$ as $p(z)=z+\mu$ and that $p=g_\eta$ for some $\eta\in \cA$. 

\begin{prop}
\label{PropositionFormulaContinuedFraction}
For any integer $N\geq 1$ there exists an integer $M=M(l_0,h,N)\geq N$ such that the following holds. 
Let $\gamma\colon\RR\to\HH$ be a hyperbolic geodesic and let $(\ab{n})_{n\in\ZZ}$ be its cutting sequence. Assume that the cuspidal words $(C_r)_{r\in\NN}$ in the cuspidal decomposition of the positive half sequence $(\ab{n})_{n\in\NN}$ satisfy the properties below.
\begin{enumerate}
\item There exists an increasing subsequence $(r_k)_{k\in\NN}$ such that, for any $k\in\NN$,  $C_{r_k}$ 
is a cuspidal word obtained concatenating $M\geq M_0$ times the letter $\eta$  corresponding to the parabolic element $p=g_\eta$, i.e.\
\[
C_{r_k}= \eta^M= 
\underbrace{\eta \dots \eta}_{\text{M times}}, \qquad M\geq M_0.
\] 
Moreover, we eventually have $r_k-r_{k-1}>3$.
\item For any $ r \neq r_k$, $k\in\ZZ$, we have $\wordlength{C_r}\leq N$.\end{enumerate}
Then, we have
\[
	\limsup_{t\to+\infty}{h(\gamma)} = 
	\limsup_{k\to\infty}{H([\ab{n(r_k)-1},\ab{n(r_k)-2},\dots]_{\partial\HH}^-},
	[\ab{n(r_k)},\ab{n(r_k)+1},\dots]_{\partial\HH},
	\bigr)
\]
where the notation  $[\, \cdot\, ]_{\partial\HH}^-$ was defined in~\eqref{def:backwardCF}. 
\end{prop}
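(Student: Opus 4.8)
The plan is to mimic the strategy used in the proof of \cref{lemma:Perron} (Perron's formula for the essential height) but now keeping track of the more delicate proper function $h$ instead of the imaginary part, and exploiting the penetration estimate \cref{LemmaEstimateExcursionCuspidalWords} to control the contribution of the ``short'' cuspidal words. Write $\beta_k := H([\ab{n(r_k)-1},\ab{n(r_k)-2},\dots]_{\partial\HH}^-, [\ab{n(r_k)},\ab{n(r_k)+1},\dots]_{\partial\HH})$ for the right-hand side expression and $L := \limsup_{t\to+\infty} h(\gamma(t))$ for the left-hand side. I would prove the two inequalities $L \geq \limsup_k \beta_k$ and $L \leq \limsup_k \beta_k$ separately.

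For the lower bound $L \geq \limsup_k \beta_k$, first recall that by \cref{gammaj} the $n(r_k)^{\text{th}}$ normalized geodesic $\gamma_{n(r_k)}$ has endpoints exactly $x_{n(r_k)} = [\ab{n(r_k)-1},\ab{n(r_k)-2},\dots]_{\partial\HH}^-$ and $y_{n(r_k)} = [\ab{n(r_k)},\ab{n(r_k)+1},\dots]_{\partial\HH}$, and $\gamma_{n(r_k)}(t)$ lies in $\cF$ on the interval $(t_{n(r_k)}, t_{n(r_k)+1})$. The value $\beta_k = H(x_{n(r_k)}, y_{n(r_k)})$ is, by definition \cref{def:H}, the supremum of $h_{l_0}$ along this normalized geodesic, i.e.\ the supremum of $h$ over the part of $\gamma_{n(r_k)}$ that lies in $\cU_{l_0}$ (with the convention that it is $0$ if the geodesic misses $\cU_{l_0}$). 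Since $\gamma_{n(r_k)}$ is obtained from $\gamma$ by applying an element of $\Gamma$ and $h$ is $\Gamma$-invariant, $\beta_k$ equals the supremum of $h$ along the corresponding segment of $\gamma$ itself (the one passing through the deep excursion into the cusp coded by $\eta^M$), provided $M = M(l_0,h,N)$ is taken large enough that this excursion actually reaches above level $l_0$; this fixes the choice of the constant $M$. As $r_k \to \infty$ the times at which these suprema are essentially attained go to $+\infty$, so each $\beta_k$ is approximated by values $h(\gamma(t))$ with $t\to +\infty$, giving $L \geq \limsup_k \beta_k$.

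For the upper bound $L \leq \limsup_k \beta_k$, the point is that any $t \to +\infty$ with $h(\gamma(t))$ large must in fact lie in one of the ``long'' cuspidal excursions $C_{r_k} = \eta^M$. Indeed, consider the cuspidal word $C_r$ containing the crossing time nearest to $t$. If $r \neq r_k$ for all $k$, then $\wordlength{C_r} \leq N$ by hypothesis (2), and since eventually $r_k - r_{k-1} > 3$ by hypothesis (1), the three consecutive cuspidal words around $C_r$ also have length $\leq N$; \cref{LemmaEstimateExcursionCuspidalWords} then says that $g_{\ab{n(r)-1}}^{-1}\circ\cdots\circ g_{\ab{0}}^{-1}(\gamma(t))$ lies in the \emph{fixed} compact set $\cK_N \subset \DD$. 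Since $h$ is $\Gamma$-invariant and continuous, it is bounded on $\cK_N$ by a constant $C_N$ independent of $t$. So for $t$ in a short cuspidal word, $h(\gamma(t)) \leq C_N$. Hence for $t$ large with $h(\gamma(t)) > C_N$, necessarily $t$ lies inside some $C_{r_k}$; then $\gamma(t)$ lies (after normalizing by the appropriate group element) on the normalized geodesic $\gamma_{n(r_k)}$, and $h(\gamma(t)) \leq \sup_s h(\gamma_{n(r_k)}(s))$. One then needs to observe that, because the two tails feeding into $C_{r_k}$ have all cuspidal words of length $\leq N$ (except at the $r_j$'s, but those are far away by $r_k - r_{k-1} > 3$), the portion of $\gamma_{n(r_k)}$ lying above level $l_0$ is confined to the excursion coded by $\eta^M$ together with a bounded neighborhood, and any contribution to the sup from outside $\cU_{l_0}$ is bounded by $\|h\|_\infty$ on a compact set; choosing $M$ (equivalently $l_0$) appropriately, $\sup_s h(\gamma_{n(r_k)}(s)) = \beta_k$ up to the error already absorbed. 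Therefore $\limsup_{t\to+\infty} h(\gamma(t)) \leq \max(C_N, \limsup_k \beta_k)$, and since the $\beta_k$ become arbitrarily large (the excursion $\eta^M$ forces $H \geq$ something tending to $+\infty$ as the depth grows, but here $M$ is fixed — so rather one observes that when $C_{r_k}$ is just $\eta^M$ the height contribution is comparable to $M\mu$, which exceeds $C_N$ once $M$ is large enough relative to $N$), the constant $C_N$ is negligible and we conclude equality.

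The main obstacle I anticipate is the bookkeeping in the upper bound: precisely controlling $\sup_s h(\gamma_{n(r_k)}(s))$ and showing it equals (and not merely is comparable to) $H(x_{n(r_k)}, y_{n(r_k)}) = \sup_s h_{l_0}(\gamma_{n(r_k)}(s))$. The subtlety is that $H$ only records the part of the geodesic above level $l_0$, whereas $h(\gamma(t))$ could a priori be large at a point of the normalized geodesic below level $l_0$; ruling this out requires combining the properness of $h$ (so that $h$ large forces $\Im$ large, hence above $l_0$ once $l_0$ is a suitable threshold determined by $h$ and the compact core), the choice of $M$ large enough that the $\eta^M$ excursion dominates, and the fact that the neighboring short cuspidal words keep the geodesic in a compact set via \cref{LemmaEstimateExcursionCuspidalWords}. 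Getting the order of quantifiers right — first fix $N$, then choose $M = M(l_0,h,N)$, then let $k\to\infty$ — is where care is needed, but it is exactly the structure already set up in \cref{LemmaEstimateExcursionCuspidalWords} and \cref{thm:decompositionperturbations}, so the argument should go through cleanly.
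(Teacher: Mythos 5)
Your skeleton does match the paper's: intermediate (short) cuspidal words are controlled by \cref{LemmaEstimateExcursionCuspidalWords} and a constant $C_1(N,h)$, the long $\eta^M$ blocks are where the value of $H$ at the renormalized endpoints is realized, and $M$ is chosen large so that these excursions dominate. But two essential steps are missing or replaced by arguments that fail. First, your dichotomy ``either $r\neq r_k$ for all $k$, and then the three neighbouring cuspidal words all have length $\leq N$, or $t$ lies in $C_{r_k}$'' is false at $r=r_k\pm1$: there one neighbour is $C_{r_k}=\eta^M$ with $M>N$, so \cref{LemmaEstimateExcursionCuspidalWords} does not apply and $h$ is not a priori bounded by $C_N$ on those segments. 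The paper's way out is to take as ``special segment'' the whole block $C_{r_k-1}C_{r_k}C_{r_k+1}$ and to prove (its Step 2) that along this entire block the $F_{r_k}$-normalized geodesic satisfies $\Im\geq\epsilon_N$, using that $C_{r_k\pm1}$ and $C_{r_k\pm2}$ are short (this is where the hypothesis $r_k-r_{k-1}>3$ enters) and that $\Im$ along a geodesic arc is bounded below by its value at the two ends; this produces a second constant $C_2(N,h,l)=\max_{\epsilon_N\leq\Im(z)\leq l}h(z)$, and $M$ has to be chosen to dominate $\max\{l,C_1+\delta,C_2+\delta\}$, not just $C_N$. Without this lower bound on $\Im$, your key claim that ``$h$ large forces $\Im$ large'' is simply wrong on the universal cover: properness of $h$ only says a point with $h$ large is deep in \emph{some} cusp, so under the normalization $F_{r_k}$ it could lie in a translate $g(\cU_{l_0})$ with $g$ not a power of $p$, where $\Im$ is small. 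Relatedly, the inequality $h(\gamma(t))\leq\sup_s h(\gamma_{n(r_k)}(s))$ you write is vacuous, since by $\Gamma$-invariance that supremum is just $\sup_{t\in\RR}h(\gamma(t))$.

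Second, the identity you correctly single out as the main obstacle, namely that the supremum of $h$ over the special segment equals $H$ of the renormalized endpoints $=\sup_{t\in\RR}h_{l_0}\bigl(F_{r_k}(\gamma(t))\bigr)$ (and, for your lower bound, that the times where $h_{l_0}\circ F_{r_k}\circ\gamma$ is non-zero tend to $+\infty$ with $k$), requires proving that $h_{l_0}\bigl(F_{r_k}(\gamma(t))\bigr)=0$ for all $t\notin[t_{n(r_k-1)},t_{n(r_k+2)}]$. The paper proves this via the \emph{precise invariance} of the horoball: $g(\cU_l)\cap\cU_l=\emptyset$ unless $g$ is a power of $p$, so outside the special block the lifted geodesic can only meet translates of $\cU_l$ disjoint from $\cU_l$. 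Your proposal never invokes this; the remark that the other $r_j$ are ``far away by $r_k-r_{k-1}>3$'' does not address it, since the other deep excursions still occur along the same geodesic and must be shown not to enter $\cU_{l_0}$ in the $F_{r_k}$-normalization. Finally, $l_0$ is fixed by the hypothesis~\eqref{Lip_assumptionG} on $h$, so ``choosing $M$ (equivalently $l_0$) appropriately'' is not available: only $M$ is at your disposal, and it must be calibrated against $C_1$, $C_2$ and $l_0$ exactly as in Steps 2--4 of the paper's proof, which is the content your sketch is missing.
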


The latter formula, 
in the special case of $h=\Im $ and $H_0(x,y) = |x-y|/2$, should be compared to the generalization~\eqref{eq:heightlimsup} of   Perron's formula~\eqref{Perron_formula_CF}.

Before starting the proof of the Proposition, let us  explain the strategy of the proof.  

\medskip
\emph{Outline of the Proof of \cref{PropositionFormulaContinuedFraction}.}  
Let $(t_n)_{n\in\ZZ}$ be the sequence of hitting times of $\gamma$ with sides of the ideal tessellation of $\HH$ induced by $\cF$ (see~\eqref{eq:timescuspidalacceleration}) and let $\gamma([t_{n(r)},t_{n(r+1)}))$ be the segment of $\gamma$  encoded by the $r^{\text{th}}$ cuspidal word $C_r=\ab{n(r)},\dots,\ab{n(r+1)-1}$ (we refer to \cref{sec:cuspidalwords} for definitions). 

We will split the integers $r$ indexing cuspidal words into two groups and consider two different scenarios. First, consider any $r$ which is not equal to any of  $r_k$, $r_k-1$ or  $r_k+1$, for some $k\in\NN$.  We will call these indexes $r$ \emph{intermediate times}.   For these $r$, the parabolic word $C_{r}$, as well as the preceding  and following parabolic words $C_{r-1}$ and $C_{r+1}$ all have by assumption length at most $N$. 
Thus,  using \cref{LemmaEstimateExcursionCuspidalWords}, we will establish for any such $r$ a uniform bound $C_1(N,h)>0$ for the supremum of $h$ along the  $r^{\text{th}}$ segment of $\gamma$, namely we will show (in \emph{Step} 1 of the proof) that
$$
\sup_{t_{n(r)}\leq t<t_{n(r+1)}}h\big(\gamma(t)\big) \leq C_1(N,h), \qquad \forall \ r\notin\cup_{k \in \ZZ}  \{ r_k-1, r_k, r_k+1\}.
$$

Then, we will consider the parabolic words $C_{r_k}$, coupled together with the precedent and following parabolic words, respectively $C_{r_{k}-1}$ and $C_{r_{k}+1}$, and consider the segments of $\gamma$ corresponding to the triple $C_{r_{k-1}} C_{r_k} C_{r_{k+1}}$, that we will denote $\gamma^{(k)}$, in other words we set
$$
\gamma^{(k)}:= \{ \gamma(t), \ 
 t_{n(r_{k}-1)} \leq t \leq  t_{n(r_{k}+2)} \}, \qquad k \in \NN.
$$
We show (in \emph{Step} 2 and \emph{Step} 3 of the proof) that the supremum of $h$ along $\gamma^{(k)}$  is bigger than $C_1(N,h)$. 
In order to do this, since the function $h$ is \emph{proper} on $X=\Gamma\backslash\HH$, so that $h(z)$ diverges as $z$ get closer to $\partial\HH=\RRbar$, we first need to establish a lower bound on $\Im(z)$ for  $z \in \gamma^{(k)}$ (this is done in \emph{Step} 2). This then allows to guarantee that, for sufficiently large excursions into the cusp at infinity (i.e.\ when $M$ is sufficiently large), the supremum of $h$ along  $\gamma^{(k)}$ is taken for $z$ in the central part of the segment.

Finally, in \emph{Step} 3, we will show that, when we \emph{normalize} the geodesic segment $\gamma^{(k)}$ to bring it back to the fundamental domain (so that it crosses the center of the disk  in the time interval $[t_{n(r_{k}-1)} , t_{n(r_{k}+2)}]$), then the maximum of $h$  is taken inside the region $\cU_l$. The Proposition then follows because this last property enables to express the supremum of $h$ along the central segment as the value of $H(\cdot,\cdot)$ at the two endpoints of the renormalized geodesic, which leads to the desired formula. 

\begin{proof}[Proof of \cref{PropositionFormulaContinuedFraction}]
 We begin by introducing some auxiliary notation for the proof. Recall that $\gamma(0)\in\phi(\cF)$. For any $r\geq1$ it is convenient to introduce the group element associated to the $r^{\text{th}}$ word $C_r$ in the parabolic decomposition, i.e.\
$$
G_r:=g_{\ab{n(r)}} \circ g_{\ab{n(r)-1}} \circ \cdots \circ g_{\ab{n(r+1)-1}}\in\Gamma.
$$
We also define the $r^{\text{th}}$ \emph{normalizing element} $F_r$ to be the product:
$$
F_r:= G_{r-1}^{-1} \cdot G_{r-2}^{-1} \dots \circ G_0^{-1} = 
g_{\ab{n(r)-1}}^{-1}\circ\dots\circ g_{\ab{0}}^{-1}\in\Gamma.
$$
This is the element of $\Gamma$ that \emph{renormalizes} the geodesic at time $t_{n(r)}$, in the sense that the geodesic $F_r(\gamma(t))$ passes through the fundamental domain $\phi(\cF)$ for some portion of the time $[t_{n(r)},t_{n(r+1)})$.

We remark that, for any  $r\geq 1$, we have 
\begin{equation}\label{F_recursion}
F_r= G_{r-1}^{-1} F_{r-1} =g_{\ab{n(r)-1}}^{-1}\circ\dots g_{\ab{n(r-1)}}^{-1}\circ F_{r-1}. 
\end{equation}

\smallskip
\emph{Step 0. Reformulation of the $\limsup$.}
 
Let us first  express the $\limsup$ of $h$ along $\gamma$ as the $\limsup$ over $r$ of the supremum of $h$ along the $r^{\text{th}}$ segment of $\gamma$, and use the invariance of $h$ under $F_r$, to get
\[
\begin{split}
	\limsup_{t\to+\infty}h\bigl(\gamma(t)\bigr) &= 
	\limsup_{r\to+\infty} \sup_{t_{n(r)}\leq t<t_{n(r+1)}} h\bigl(\gamma(t)\bigr) \\
	&=\limsup_{r\to+\infty} \sup_{t_{n(r)}\leq t<t_{n(r+1)}} h\bigl(F_r\circ\gamma(t)\bigr).
\end{split}
\]

\emph{Step 1. Upper bound on intermediate times.} 

Fix $r\in\NN$. We first establish a uniform bound for the value of $h$ along any segment 
$
\gamma\big([t_{n(r)},t_{n(r+1)}]\big)
$ when $r$ is not equal to $r_k, r_{k}-1$ or $r_k+1$ for any $k\in\ZZ$. 
 Fix an integer $N\geq1$, let $\cK_N\subset\DD$ be the compact set provided by \cref{LemmaEstimateExcursionCuspidalWords} and consider its image $\phi(\cK_N)\subset\HH$ in the upper half plane, which is compact too. Since $h$ is continuous, set
\[
C_1=C_1(N,h):=\max_{z\in\phi(\cK_N)}h(z)<+\infty.
\]
By assumption $\wordlength{C_r}\leq N$ for any $r\neq r_k$ for any $k\in\ZZ$.  Therefore, for all but at most finitely many intermediate $r$, i.e.\ any $r$ different than any of  $r_k$, $r_k+1$ and $r_k-1$, $C_r$ is preceded and followed by cuspidal words with length less or equal to $N$ and hence  \cref{LemmaEstimateExcursionCuspidalWords} implies that, for any such $r$ and any $t$ with $t_{n(r)}\leq t \leq t_{n(r+1)}$, we have
\[
F_r(\gamma(t))\in \phi(\cK_N).
\]
Thus, for any intermediate $r$ we have
\begin{equation}\label{Equation1PropositionFormulaContinuedFraction}
	\sup_{t_{n(r)}\leq t<t_{n(r+1)}}h\big(F_r(\gamma(t))\big)\leq C_1(N,h).
\end{equation}

\smallskip
\emph{Step 2. Lower bound for the imaginary part along special segments.} 

Now we consider one of the special segments $\gamma^{(k)}$ which corresponds the block $C_{r_k-1} C_{r_k} C_{r_k+1}$ and we establish a lower bound on $\Im(z)$ for  $z \in \gamma^{(k)}$, in order to guarantee that the supremum of $h$ along  $\gamma^{(k)}$ is taken for $z$ in the central part of the segment.

Observe first that for any hyperbolic geodesic $t\mapsto\gamma'(t)$ in $\HH$, for any $a,b$ in $\RR$ with $a<b$ and for any $t\in[a,b]$ one has 
$
\Im\big(\gamma'(t)\big)\geq 
\min
\left\{
\Im\big(\gamma'(a)\big),\Im\big(\gamma'(b)\big)
\right\}.
$ 
Thus, it is enough to control the imaginary part at the endpoints of the geodesic segment $\gamma^{(k)}$, which correspond to $t_{n(r_k-1)}$ and $t_{n(r_k+2)}$ respectively. 

To this end,  since for all $k$'s large enough, $r_k-r_{k-1}\geq 4$, $C_{r_k-2}$ and $C_{r_k+2}$ are both preceded and followed by cuspidal words of length less than $N$, using again \cref{LemmaEstimateExcursionCuspidalWords} with $r=r_k-2$ and $t=t_{n(r_k-1)}$ and, respectively, $r=r_k+2$ and $t=t_{n(r_k+2)}$ we have
\[
	F_{r_k-2}\left(
	\gamma \left(t_{n(r_k-1)}\right)\right)
	\in\phi(\cK_N)
	\quad
	\text{ and } 
	\quad
	F_{r_k+2}\left(
	\gamma \left(t_{n(r_k+2)}\right)\right)
	\in\phi(\cK_N).
\]
Therefore, recalling \cref{F_recursion}, we have 
	\begin{align*}
	F_{r_k}\left(
	\gamma \left(t_{n(r_k-1)}\right)\right)
	&= G_{r_k-1}^{-1} G_{r_k-2}^{-1}  F_{r_k-2}\left(
	\gamma\left(t_{n(r_k-1)}\right)\right)
	\\ & \in
G_{r_k-1}^{-1} G_{r_k-2}^{-1}
\left(	\phi\left(\cK_N\right)\right),
\end{align*}
and, similarly,
\begin{align*}
	F_{r_k}\left( \gamma \left(t_{n(r_k+2)}\right)\right) 
&= G_{r_k} G_{r_{k+1}}
 F_{r_k+2} \left(\gamma\left(t_{n(r_k+2)}\right)\right)\\ 
	& \in G_{r_k} G_{r_{k+1}} 
	 \left(
	\phi\left(\cK_N\right) \right)
	    =  G_{r_{k+1}} 
		\left(		\phi(\cK_N)\right)
		+M\mu,\\
	\end{align*}
where the last line follows observing that that, by assumption (1) $C_{r_k}=\eta^M$,  $G_{r_k}(z)=z+M\mu$. 

Now, since by assumptions, the cuspidal words $C_{r_k-2}$, $C_{r_k-1}$ and $C_{r_k+1}$ all have length at most $N$, they correspond to elements of $\Gamma$ whose norm is uniformly bounded. Moreover the image of the compact set $\phi(\cK_N)$ under such elements of $\Gamma$ is contained in a bigger compact subset of $\HH$, whose size depends only on $N$.
It follows that there exists $\epsilon_N>0$, depending only on $N$, such that for any $k\geq1$ we have
\[
	\Im \left(F_{r_k}\left(\gamma(t_{n(r_k-1)})\right)\right)\geq\epsilon_N
	\quad
	\text{and }
	\quad
	\Im \left(F_{r_k}\left(\gamma(t_{n(r_k+2)})\right)\right)\geq\epsilon_N. 
\]
By applying the observation at the beginning of this step to
$
\gamma':=F_{r_k}\circ\gamma
$ 
and $a=t_{n(r_k-1)}$ and $b=t_{n(r_k+2)}$, this shows that
\[
	\Im \bigl(F_{r_k}(\gamma(t))\bigr)\geq\epsilon_N
	\quad
	\text{ for any }
	\quad
	t_{n(r_k-1)}\leq t\leq t_{n(r_k+2)}.
\]
Recalling that $h$ is $\Gamma$-periodic, and thus in particular periodic under the translation $z\mapsto z+\mu$, set 
\[
	C_2=C_2(N,h,l):=\max_{\epsilon_N\leq \Im(z)\leq l}h(z)<+\infty.
\]

\smallskip
\emph{Step 3. Lower bound on the supremum on special segments.} 

Let us recall that the block $C_{r_k-1} C_{r_k} C_{r_k+1}$ codes the geodesic $\gamma$ from time $t=t_{n(r_k-1)}$ up to time $t=t_{n(r_k+2)}$.
Moreover the central word $C_{r_k}$ corresponds to $M$ iterations of the parabolic transformation $p(z)= z+\mu$.
Hence, as in the proof of \cref{thm:Hall}, we see that the renormalized geodesic $F_{r_k}\circ\gamma$ crosses exactly $M$ vertical lines of the form $\cV_j:=\{z\in\HH: \Re(z)=j\frac{\mu}{2}, j\in\ZZ\}$ in the upper half plane, see \cref{fig:estimatesHall}.
It follows that 
\[
	\sup_{t_{n(r_k-1)}\leq t\leq t_{n(r_{k}+2)}} \Im\bigl(F_{r_k}(\gamma(t))\bigr) = \sup_{t\in\RR}\Im\bigl(F_{r_k}(\gamma(t))\bigr) \geq  (M-1)\frac{\mu}{2}.
\]
We now choose $M$ such that
\[
	(M-1)\frac{\mu}{2} \geq\max\left\{l,C_1(N,h)+\delta,C_2(N,h,l)+\delta\right\},
\]
with $\delta=\delta_G$, defined in~\cref{thm:Hallperturbations}.
It follows that there exists some $t^{(k)}$ with $t_{n(r_k-1)}\leq t^{(k)}\leq t_{n(r_{k}+2)}$ such that $F_{r_k}(\gamma(t^{(k)}))\in \cU_l$ and, moreover, $\Im\bigl(F_{r_k}(\gamma(t^{(k)}))\bigr)\geq (M-1)\mu/2$. 
Since $|h(z)-\Im(z)|<\delta$ for any $z\in\cU_l$, then for such $t^{(k)}$ we have
\begin{equation}
\label{Equation2PropositionFormulaContinuedFraction}
	h\bigl(F_{r_k}(\gamma(t^{(k)}))\bigr)\geq
	\Im\bigl(F_{r_k}(\gamma(t^{(k)}))\bigr)-\delta\geq
	(M-1)\frac{\mu}{2}-\delta\geq
	\max
	\left\{C_1(N,h),C_2(N,h,l)\right\}.
\end{equation}

\smallskip
\emph{Step 4. Final arguments.} 
We can now conclude the proof.  From Equation~\eqref{Equation1PropositionFormulaContinuedFraction} and~\eqref{Equation2PropositionFormulaContinuedFraction} it follows that
\[
		\limsup_{t\to+\infty}h\bigl(\gamma(t)\bigr)
			=\limsup_{r\to+\infty} \sup_{t_{n(r)}\leq t\leq t_{n(r+1)}} h\bigl(F_r(\gamma(t))\bigr).
\]
Now, Equation~\eqref{Equation2PropositionFormulaContinuedFraction} also implies that the large values of $h\bigl(F_{r_k}(\gamma(t))\bigr)$ are always taken when $F_{r_k}(\gamma(t))\in \cU_l$. Moreover, we claim that $h_l\bigl(F_{r_k}(\gamma(t))\bigr)=0$ for $t\notin[t_{n(r_k-1)},t_{n(r_{k}+2)}]$. 
In fact, we recall that the fundamental horodisk $\cU_l$ is precisely invariant, meaning that for each $g\in G$ we either have $g(\cU_l)=\cU_l$ or $\cU_l\cap g(\cU_l)=\emptyset$, and the former happens only if $g$ is a power of $p$.
Hence, for any $t\notin[t_{n(r_k-1)},t_{n(r_{k}+2)}]$, as we can assume without loss of generality that $l\geq 1$, there exists some non parabolic $g\in\Gamma$ with $F_{r_k}(\gamma(t))\in g(\cU_l)$ and $\cU_l\cap g(\cU_l)=\emptyset$.
Thus, we get that
\[
		\sup_{t_{n(r)}\leq t\leq t_{n(r+1)}} h\bigl(F_r(\gamma(t))\bigr)
			= \sup_{t_{n(r_k-1)}\leq t\leq t_{n(r_{k}+2)}} h_l \, \bigl(F_{r_k}(\gamma(t))\bigr)
			= \sup_{t\in\RR} h_l\bigl(F_{r_k}(\gamma(t))\bigr).
\]
Finally, recalling the definition of the function $H(\cdot,\cdot)$ and the expression of the endpoints of the geodesic $F_{r_k}\circ\gamma$, we get
\[
	\begin{split} 	 \sup_{t\in\RR} h_l\bigl(F_{r_k}(\gamma(t))\bigr)
			&= H\left(F_{r_k}(\gamma(-\infty)),F_{r_k}(\gamma(+\infty))\right)\\
			&= H\left(\left[\ab{n(r_k)-1},\ab{n(r_k)-2},\dots\right]_{\partial\HH}^-,
	\left[\ab{n(r_k)},\ab{n(r_k)+1},\dots\right]_{\partial\HH}
	\right).
\end{split}
\]
Combining all the last series of equalities together, the proof is hence concluded.
\end{proof}

\subsection{Proof of \cref{thm:Hallperturbations}}\label{subsec:proofHallperturbations}
We have all the ingredients in order to give the proof of \cref{thm:Hallperturbations} following the same scheme of the proof of \cref{thm:Hall}. 

\begin{proof}[Proof of \cref{thm:Hallperturbations}]
Let us first assume that $m=1$ and prove the result under the Lipschitz condition~\eqref{Lip_assumption} in \cref{m1}. We will then show at the end how to deduce the result for other values of $m$ from this special case. 
Let us first verify that we can apply \cref{thm:sumofcantors} and \cref{thm:decompositionperturbations}. Assume without loss of generality that  $l_0$ in the assumptions of \cref{thm:Hallperturbations} is greater than $1$.  Thus, from the assumption~\eqref{Lip_assumption} on the Lipschitz control of the perturbation on $\cU_{l_0}$, by \cref{PropositionLipschitzNorm},
\[
	\lipschitz \bigl( (H-H_0)|_{U_{l_0}} \bigr)
	\leq 
	\left(\sqrt{2}+\frac{\sqrt{2}}{l_0}\right)\cdot\|(h-\Im)|_{\cU_{l_0}}\|_{\text{Lip}} \leq \frac{2 \sqrt{2}}{4\sqrt{2}}=\frac{1}{2}. 
\]
Moreover, since by \cref{LemmaUniformNorm}  $\|(H-H_0)|_{U_{l_0}}\|_\infty \leq \|(h-\Im)|_{\cU_{l_0}}\|_\infty \leq  \|(h-\Im)|_{\cU_{l_0}}\|_{\text{Lip}}< 1/4$.  
Thus, there exists  $0<\epsilon<1$ such that the assumptions of \cref{thm:sumofcantors} and \cref{thm:decompositionperturbations}  hold for the function $H$ corresponding to $h$ and $l=l_0$.  

\smallskip
Let $s_0$ be given by \cref{thm:sumofcantors} and let  $N_0$ be as  in \cref{thm:decompositionperturbations}. Let also $M_0:=M(l_0,h,N_0)$ be given by \cref{PropositionFormulaContinuedFraction} in correspondence to $N_0$. We will show that
\[
[L_0,+\infty) \subset \cL(X,h), \quad \text{ where } L_0:= \max \{ \inf H(\KK_N\times\KK_N^{s_0}),\inf H(\KK_N\times\KK_N^{M_0})\}.
\]
Take any $L\geq L_0$. By \cref{thm:decompositionperturbations} (with $s_1= \max\{M_0, s_0\}$),   there exist points
$
x_2=[a_0,a_1,\dots]_{\partial\HH}\in\KK_N
$ 
and 
$
x_1=[b_0,b_1,\dots]_{\partial\HH}\in\KK_N
$ 
and an integer $s$ such that 
\begin{equation}\label{LwithH}
L=H(x_1,x_2+s\mu), \qquad s\geq s_1.
\end{equation}
We  now construct a geodesic $\gamma$ such that $L(\gamma,h)=L$, by prescribing its symbolic coding $(\cc{n})_{n \in \ZZ}$.  The construction is the same that the one in the proof of  \cref{thm:Hall}, so we only sketch it to avoid unnecessary repetitions.  We construct the word  $(\cc{n})_{n \in \ZZ}$ by concatenating blocks of the form $W_{j} = {\overline{\bb{|j|}}} \dots {\overline{\bb{0}}} \eta^{s} \ab{0} \dots \ab{|j|} $, $j \in \ZZ$,  interpolated via letters $\delta_j$, $\delta'_j$ chosen exactly as in the proof of \cref{thm:Hall}, so that in particular  $(\cc{n})_{n \in \ZZ}$ satisfies the non-backtracking condition~\eqref{eqnobacktrack} and hence is the cutting sequence of a geodesic $\gamma$. Recall also that the central block $\eta^{s}$ in the word $W_k$ is, by construction, a single parabolic word.

The assumptions of \cref{PropositionFormulaContinuedFraction} apply by construction to the geodesic $\gamma$, by letting $(r_k)_{k\in  \ZZ}$ the sequence such that the parabolic word $C_{r_k}=\eta^{s}$ is the central block of $W_k$.
In fact, Condition (1) is obvious since $s\geq M_0$ by construction, and the distance between $r_k$ and $r_{k-1}$ grows linearly; Condition (2) follows since   $(a_n)_{n\in \NN}$ and $(b_n)_{n\in \NN}$ code points in $\KK_N$ and by choice of the interpolating letters, we refer to the proof of \cref{thm:Hall} for details.

Thus,  \cref{PropositionFormulaContinuedFraction}, the form of the word $(\cc{n})_{n \in \ZZ}$ and~\eqref{LwithH}, give that $L(h,\gamma)=L$.  
This concludes the proof under the assumptiont that $m=1$.

\smallskip
Finally, let us deal with the case when $m\neq 1$.
We can conjugate the group $G$ with an element $g\in\PSL(2,\RR)$ that fixes infinity and that normalizes $m$ to $1$.
In particular we can choose $g(z)=z/m$. Denote by
$$h':= h \circ g^{-1}, \qquad \Im' =\Im \circ g^{-1}=m\cdot\Im.$$
If we set $G':=gGg^{-1}$, there is a one to one correspondence between geodesics $\gamma$ on $X=G\backslash\HH$ and geodesics  $\gamma'$ on $X'=G'\backslash\HH$,  given by $\gamma'(t)=g(\gamma(t))$. 
Using this observation and recalling the definition~\eqref{def:limsupdef} of Lagrange values, we have that
\begin{equation}\label{eq:twospectra}
	L_{G'}(h',\gamma')=\limsup_{t\to\infty} h'(\gamma'(t)) = \limsup_{t\to\infty} h\circ g^{-1}(g(\gamma(t))= \limsup_{t\to\infty} h(\gamma(t)) = L_G(h,\gamma).
\end{equation}
The formula implies that the two corresponding spectra coincide, that is, $\cL(X,h)=\cL(X',h')$.
Thus, it is now enough to show that the group $G'$ and the function $h'$ satisfy the assumptions of \cref{thm:Hallperturbations} with $m=1$.
We begin by observing that a point $z\in\cU_l$ if and only if $g^{-1}(z)\in\cU_{lm}$.
This implies that, for any $l>0$,
\[
\|(h'-\Im')|_{\cU_l}\|_\infty 
		= \sup_{x\in\cU_l}|((h-\Im)\circ g^{-1})(x)|
		= \sup_{y\in\cU_{lm}}|((h-\Im))(y)|
		= \|(h-\Im)|_{\cU_{lm}}\|_\infty.
\]
Similarly we have that
\[
\begin{split}
	\lipschitz((h'-\Im')|_{\cU_l}) 
	&= \sup_{x,y\in\cU_l} \frac{|(h'-\Im')(x)-(h'-\Im')(y)|}{|x-y|}\\
	&= \sup_{x,y\in\cU_l} \frac{|((h-\Im)\circ g^{-1})(x)-((h-\Im)\circ g^{-1})(y)|}{|x-y|}\\
	&= \sup_{x',y'\in\cU_{lm}} \frac{|(h-\Im)(x')-(h-\Im)(y')|}{|g(x')-g(y')|}\\
	&= \sup_{x',y'\in\cU_{lm}} \frac{|(h-\Im)(x')-(h-\Im)(y')|}{\frac{1}{m}|x'-y'|} =m\cdot\lipschitz((h-\Im)|_{\cU_{lm}}).
\end{split}
\]
Thus, the computations above show that 
\[
	\|(h'-\Im')|_{\cU_l}\|_{\text{Lip}} = \|(h-\Im)|_{\cU_{lm}}\|_\infty + m \lipschitz((h-\Im)|_{\cU_{lm}})\leq \max\{1,m\}\cdot \|(h-\Im)|_{\cU_{lm}}\|_{\text{Lip}}.
\]
Let us now assume that, for some $l_0\geq m >0$, we have
\[
	\|(h-\Im)|_{\cU_{l_0}}\|_{\text{Lip}} <\delta_G:=\min \biggl\{ \frac{1}{ 4 m \sqrt{2}}, \frac{1}{ 4  \sqrt{2}}\biggr\},
\]
which yields
\[
	\|(h'-\Im')|_{\cU_{\frac{l_0}{m}}}\|_{\text{Lip}} \leq \max\{1,m\}\cdot \|(h-\Im)|_{\cU_{l_0}}\|_{\text{Lip}}< \max\{1,m\} \cdot \delta_G = \frac{1}{4\sqrt{2}},
\]
where the last equality follows considering separately the cases $m\leq 1$ and $m\geq 1$. 
Thus, the first part of the proof implies that $\cL(X',h')$ contains a Hall ray.
Thanks to~\eqref{eq:twospectra}, this shows also that $\cL(X,h)$ contains a Hall ray and hence concludes the proof in the general case.
\end{proof}

\section{Proof of the Stable Hall theorem}\label{sec:proof_Stable_Hall}

In this section we prove Theorem \cref{thm:stableHall} stated in the introduction (see \cref{SectionStableHallTheorem}), which generalizes Hall's theorem on the sum of Cantor sets to Lipschitz perturbations of the sum. Throughout the section we use the notation introduced in \cref{SectionStableHallTheorem}.

\smallskip
Consider any Cantor set $\KK$ and let $(\KK(n))_{n\in\NN}$ be a slow subdivision for $\KK$. By \cref{rem:orderedholes}, the collection of holes of $\KK$ inherits from the subdivision an ordering. We will denote by $B_n$ the $n^{\text{th}}$ hole, so that $(B_n)_{n\in\NN}$ is the collection of holes of $\KK$ ordered according to $(\KK(n))_{n\in\NN}$.  We say that $(\KK(n))_{n\in\NN}$ is a \emph{monotone slow subdivision} for $\KK$  if the ordered sequence of holes $(B_n)_{n\in\NN}$ satisfies
\[
	|B_{n+1}|\leq |B_n|
	\quad
	\text{ for any }
	\quad
	n\in\NN.
\]
It is clear that  monotone slow subdivisions always exist.

Let us now state two preliminary Lemmas which will be used in the proof of
 \cref{thm:stableHall}

\begin{lemma}\label{LemGapConditionForGeometricSubdivision}
Let $(\KK(n))_{n\in\NN}$ be a monotone slow subdivision for the Cantor set $\KK$. 
If $\KK$ admits another slow subdivision $(\widetilde{\KK}(n))_{n\in\NN}$ which satisfies the $\epsilon$-stable gap condition~\eqref{EqStableGapCondition}, then the same is true for the monotone slow subdivision $(\KK(n))_{n\in\NN}$. 
\end{lemma}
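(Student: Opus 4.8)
The statement compares two slow subdivisions of the same Cantor set $\KK$: a monotone one $(\KK(n))_{n\in\NN}$, whose holes are enumerated in non-increasing order of length, and a given one $(\widetilde\KK(n))_{n\in\NN}$ satisfying the $\epsilon$-stable gap condition. The key observation is that \emph{a slow subdivision of $\KK$ is essentially unique}: once the Cantor set $\KK$ is fixed, its set of holes is intrinsically determined, and the only freedom in a slow subdivision is the \emph{order} in which the holes are removed. Indeed, at stage $n$ the set $\KK(n)$ consists of $n+1$ closed intervals, and the $n^{\text{th}}$ hole $B_n$ is one particular hole of $\KK$ which lies inside one of those $n+1$ intervals; the two closed intervals $K^L, K^R$ produced at the next stage are determined by $\KK$ and $B_n$ alone. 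So the pair $(K^L,K^R)$ associated to a hole $B$ by \emph{any} slow subdivision of $\KK$ is always the same: namely $K^L$ (resp. $K^R$) is the closed convex hull, intersected with $\KK$ — more precisely the smallest closed interval of the relevant $\KK(n)$ — lying immediately to the left (resp. right) of $B$ and bounded on its far side by the first hole of $\KK$ of length $\geq |B|$ encountered, or by an endpoint of $\KK$. The plan is to make this precise and then deduce that the $\epsilon$-stable gap condition is a property of $\KK$ (together with a compatible ordering of holes by decreasing size), not of the particular subdivision.

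\textbf{Key steps.} First I would fix notation: let $(B_n)_{n\in\NN}$ be the holes of $\KK$ ordered by the monotone subdivision, so $|B_0|\geq |B_1|\geq\cdots$, and let $(\widetilde B_m)_{m\in\NN}$ be the holes ordered by $\widetilde\KK$. Both sequences enumerate the \emph{same} countable set of holes of $\KK$. Step one: show that for each hole $B$, the left and right intervals $K^L(B), K^R(B)$ obtained at the stage where $B$ is removed depend only on $B$ and on the set $\{B' : |B'|\geq |B|\}$ (with ties broken consistently) — this is where the monotonicity of $(\KK(n))$ matters, since in a monotone subdivision, when $B_n$ is removed all strictly larger holes have already been removed, so $K^L(B_n)$ is exactly the maximal interval of $\KK(n)$ to the left of $B_n$, bounded by the nearest already-removed hole. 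Step two: show that the $\epsilon$-stable gap condition~\eqref{EqStableGapCondition} for $B$ — namely $|B|/|K^L(B)|<1-\epsilon$ and $|B|/|K^R(B)|<1-\epsilon$ — only gets \emph{easier} to satisfy when more holes have been removed, because removing additional holes only \emph{shrinks} $K^L(B)$ and $K^R(B)$; wait — that is the wrong direction, shrinking the denominator makes the ratio larger. So the correct argument is the reverse: in the monotone subdivision $K^L(B_n), K^R(B_n)$ are \emph{as large as possible} among all slow subdivisions, because at the moment $B_n$ is removed the monotone subdivision has removed the fewest possible holes (only those of length $\geq|B_n|$, which every slow subdivision must have removed before removing $B_n$ — no: a general slow subdivision need not). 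Let me restate: in \emph{any} slow subdivision, before $B$ is removed, at least all holes inside the current component of $B$ that are larger — hmm. The cleanest route: show $K^L_{\text{mon}}(B) \supseteq K^L_{\widetilde{}}(B)$ whenever $\widetilde\KK$ removes $B$ no earlier relative to larger holes. Concretely: in the monotone subdivision, the interval $K^L(B_n)$ extends leftward from $B_n$ until the first hole of length $\geq|B_n|$; in the given subdivision $\widetilde\KK$, the interval $\widetilde K^L(\widetilde B_m)=K^L(B)$ extends leftward until the first hole that was removed \emph{before} $B$ in $\widetilde\KK$, and these removed-earlier holes need not include all the larger ones. So I must argue that $\{$holes removed before $B$ in $\widetilde\KK\}\supseteq\{$holes of length $\geq|B|$ adjacent in the relevant sense$\}$. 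This is false in general, so the lemma's content is subtler: it says that the monotone subdivision inherits the $\epsilon$-gap condition from \emph{some} subdivision having it. I would instead argue directly that the monotone $K^L(B_n), K^R(B_n)$ are the \emph{largest} such intervals, hence give the \emph{smallest} ratios, hence the $\epsilon$-stable gap condition for the monotone subdivision follows from it holding for any subdivision at all — because in the monotone case the relevant intervals are maximal: $K^L(B_n)$ = the connected component of $\RR\setminus\bigcup_{j<n} B_j$ containing the left endpoint of $B_n$'s closure, and since $\{B_j: j<n\}=\{$holes strictly larger than $B_n\} $ (up to ties), this component is contained in the analogous component for \emph{any} other slow subdivision at the stage $B_n=B$ is removed there, because every slow subdivision must remove all holes strictly larger than $B$ that lie in the same component before it can isolate the component on which $B$ sits as a single interval — no, again this requires the component to be an interval of the subdivision, which forces those larger holes out. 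That last implication is the heart: in a slow subdivision, $\KK(n)$ has exactly $n+1$ intervals, and if a large hole $B'$ with $|B'|>|B|$ were still "inside" the interval containing $B$, then... it just means $B'$ hasn't been removed; there's no contradiction. So the monotone subdivision genuinely can have \emph{larger} $K^L,K^R$ than $\widetilde\KK$, giving smaller ratios — \emph{that is exactly what we want}, since smaller ratios $<1-\epsilon$ follow from the larger ratios being $<1-\epsilon$. Thus: $|B_n|/|K^L_{\text{mon}}(B_n)| \leq |B|/|\widetilde K^L(B)| < 1-\epsilon$, because $K^L_{\text{mon}}(B_n)\supseteq \widetilde K^L(B)$; and symmetrically on the right.

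\textbf{Main obstacle.} The crux is establishing the inclusion $K^L_{\text{mon}}(B)\supseteq \widetilde K^L(B)$ (and the right analogue) for every hole $B$. The idea is: the far-left endpoint of $\widetilde K^L(B)$ is an endpoint of some hole $\widetilde B_{m'}$ with $m'<m$ (removed earlier in $\widetilde\KK$) that is adjacent to $\widetilde K^L(B)$, or it is $\min\KK$; and the far-left endpoint of $K^L_{\text{mon}}(B)$ is an endpoint of some hole of length $\geq|B|$, or is $\min\KK$. I must show the monotone one reaches at least as far left. Suppose not: then there is a hole $B''$ strictly between (in position, to the left of $B$, closer to $B$ than $\widetilde B_{m'}$'s gap) with $|B''|\geq|B|$ — but then in $\widetilde\KK$, $B''$ has length $\geq|B|$, and for the single-hole-per-stage structure of $\widetilde\KK$ to isolate the interval $\widetilde K^L(B)$ with $B''$ strictly inside its interior at the moment $B$ is removed is impossible: $\widetilde K^L(B)$ is by definition a closed interval of $\widetilde\KK(m+1)$, so it contains no hole of $\KK$ in its interior at all, in particular not $B''$. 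This is the contradiction I need: $\widetilde K^L(B)$, being a genuine interval of the subdivision, contains \emph{no} hole of $\KK$, so in particular reaching leftward from $B$, the monotone subdivision's $K^L(B_n)$—which stops only at holes of length $\geq|B|$—also cannot have hit any hole inside $\widetilde K^L(B)$ (there are none), hence $K^L_{\text{mon}}(B)$ extends at least as far as $\widetilde K^L(B)$. I expect writing this adjacency/position bookkeeping rigorously to be the main technical burden; the rest is immediate once the inclusion and the consequent ratio inequality $|B_n|/|K^L_{\text{mon}}(B_n)|\leq |B|/|\widetilde K^L(B)|<1-\epsilon$ are in hand, which then directly verifies~\eqref{EqStableGapCondition} for the monotone subdivision, completing the proof.
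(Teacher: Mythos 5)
Your argument has a genuine gap at its central step. The inclusion you rely on, $K^L_{\text{mon}}(B)\supseteq \widetilde K^L(B)$, is false in general, and the justification you give for it rests on a misreading of what the sets $\widetilde\KK(n)$ are: a closed interval of $\widetilde\KK(m+1)$ is one of the $m+2$ intervals whose union is $\widetilde\KK(m+1)\supseteq\KK$, so far from containing ``no hole of $\KK$ in its interior'', every nondegenerate such interval contains infinitely many holes of $\KK$ (all those not yet removed), because $\KK$ is nowhere dense. Concretely, suppose a large hole $A$ is the nearest hole of length $\geq |B|$ to the left of a small hole $B$, and the subdivision $\widetilde\KK$ happens to remove $B$ at stage $0$: then $\widetilde K^L(B)$ runs from $\min\KK$ to the left endpoint of $B$ and strictly contains $K^L_{\text{mon}}(B)$, which stops at $A$. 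So the monotone ratio $|B|/|K^L_{\text{mon}}(B)|$ is \emph{larger} than the ratio controlled by the hypothesis on $\widetilde\KK$ at the stage $B$ is removed, and your chain of inequalities breaks down. The point of the lemma is precisely that the hypothesis applied to $B$ itself need not suffice.

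The repair is to apply the $\epsilon$-stable gap condition of $\widetilde\KK$ to a \emph{different} hole. Show first that the condition for $\widetilde\KK$ implies the intrinsic pairwise property: for any two holes $B'$ (on the left) and $B$ (on the right), with $I$ the closed interval between them, one has $\min(|B'|,|B|)<(1-\epsilon)|I|$, and similarly $|B|<(1-\epsilon)\,d(\min\KK,B)$ and $|B|<(1-\epsilon)\,d(B,\max\KK)$. Indeed, consider whichever of $B',B$ is removed \emph{later} in the $\widetilde\KK$-order, say it is $B$ (the other case is symmetric): at that stage $B'$ has already been removed, so $\widetilde K^L(B)$ is \emph{contained} in $I$, and the hypothesis gives $|B|<(1-\epsilon)|\widetilde K^L(B)|\leq(1-\epsilon)|I|$; if instead $B'$ is removed later, the same argument with $\widetilde K^R(B')\subseteq I$ gives $|B'|<(1-\epsilon)|I|$, hence the bound for the minimum. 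This intrinsic property immediately yields the $\epsilon$-stable gap condition for the monotone subdivision, since there the interval $K^L_{\text{mon}}(B)$ (resp.\ $K^R_{\text{mon}}(B)$) is delimited either by an endpoint of $\KK$ or by a previously removed hole $B'$ with $|B'|\geq|B|$, so $\min(|B'|,|B|)=|B|$. Note that the correct containment goes the opposite way from the one you asserted: one exploits that the relevant side interval of the \emph{later-removed} hole in $\widetilde\KK$ is small (contained in the gap), rather than claiming the monotone intervals are large.
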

This Lemma was proved as Lemma A.1 in the Appendix of~\cite{AMU} (see also~\cite{CusickFlahive}). 

\begin{rem}\label{rem:intervals}
Observe that if $K$ and $F$ are closed intervals then $K\times F$ is connected and since $S\colon U\to \RR$ is continuous then the image $S(K\times F)$ is connected too, that is it is an interval. Moreover, for the same reason, if $K$ and $F$ are closed intervals, then $K\times F$ is compact and thus its image $S(K\times F)$ is compact, and thus closed.
\end{rem}

The next Lemma provides the key step to prove  the Stable Hall theorem. 
\begin{lemma}\label{LemSumOfIntervals}
Let $S\colon U\to\RR$ be a function satisfying Condition~\eqref{EqConditionStableHallTheorem}.
Let $K$ and $F$ be two compact intervals with $K\times F\subset U$.
Let $B$ be an open interval contained in $K$ such that $|B|<(1-\epsilon)|F|$.
Then we have
\[
	S(K\times F)=S(K^L\times F)\cup S(K^R\times F).
\]
Similarly, if $C$ is an open interval contained in $F$ with $|C|\leq (1-\epsilon)|K|$ then we have
\[
	S(K\times F)=S(K\times F^L)\cup S(K\times F^R).
\]
\end{lemma}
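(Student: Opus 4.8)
The plan is to prove \cref{LemSumOfIntervals} by a direct covering argument using only continuity of $S$ and the quantitative control on its Lipschitz deviation from $S_0(x_1,x_2)=x_1+x_2$. By symmetry it suffices to treat the first assertion, the one with a hole $B\subset K$ of length $|B|<(1-\epsilon)|F|$; the second follows by interchanging the roles of the two factors.

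First I would note the trivial inclusion $S(K^L\times F)\cup S(K^R\times F)\subset S(K\times F)$, so the whole content is the reverse inclusion. Fix a point $(x,y)\in K\times F$ with $x\in B$ (otherwise $(x,y)$ already lies in one of the two product sets and there is nothing to prove). The idea is to find $(x',y')\in (K^L\cup K^R)\times F$ with $S(x',y')=S(x,y)$. Write $\ell:=\lipschitz(S-S_0)$; by hypothesis $\ell<1$ and $\frac{1-\ell}{1+\ell}>1-\epsilon$. Consider moving $x$ to the left endpoint $b^-$ of $B$ (the right endpoint of $K^L$) and simultaneously adjusting $y$ within $F$ to keep the value of $S$ constant. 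The key computation: if $(x_1,y_1)$ and $(x_2,y_2)$ are two points of $U$ with $S(x_1,y_1)=S(x_2,y_2)$, then since $S=S_0+(S-S_0)$ and $S_0$ changes by exactly $(x_2-x_1)+(y_2-y_1)$ while $S-S_0$ changes by at most $\ell(|x_2-x_1|+|y_2-y_1|)$ (using the $\ell^1$ metric on $\RR^2$, which only costs harmless constants — I would just fix the $\ell^1$ metric throughout to make this clean), one gets
\[
|(x_2-x_1)+(y_2-y_1)|\le \ell\bigl(|x_2-x_1|+|y_2-y_1|\bigr).
\]
Hence if we decrease $x$ by an amount $t>0$ (moving left toward $b^-$) and compensate by changing $y$ by $s(t)$ so that $S$ stays constant, then $|{-t}+s(t)|\le \ell(t+|s(t)|)$, which forces $|s(t)|\le \frac{1+\ell}{1-\ell}\,t$. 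Thus to traverse the hole $B$ (of length $|B|$) from its interior to the left endpoint $b^-$, the $y$-coordinate need only move by at most $\frac{1+\ell}{1-\ell}|B|<\frac{1}{1-\epsilon}\cdot(1-\epsilon)|F|=|F|$, so it stays inside $F$. This is precisely where the hypotheses $|B|<(1-\epsilon)|F|$ and $\frac{1-\ell}{1+\ell}>1-\epsilon$ are used, and they combine to give exactly the room needed.

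To make the above rigorous I would argue by a connectedness/continuity (intermediate value) argument rather than by solving an ODE. Let $c:=S(x,y)$. The level set $\{S=c\}\cap U$ is closed in $U$; I want to show it meets $(K^L\cup K^R)\times F$. Consider the set $A$ of $u\in[\,\text{left endpoint of }B,\ x\,]$ such that there exists $v\in F$ with $(u,v)\in K\times F$, $S(u,v)=c$, and moreover $v$ can be reached from $y$ within $F$ by a path along the level set staying over $[u,x]$. The estimate above shows that as long as we remain over $B$, the required displacement of $v$ is $<|F|$, so $v$ never exits $F$; continuity of $S$ and compactness then let one push $u$ all the way to the left endpoint of $B$, i.e. into $K^L$ (or to the right endpoint, into $K^R$), producing the desired point. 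Concretely: on the compact interval $K^L\times F$ the function $x_1\mapsto$ (value of the level-$c$ fibre, once we show each vertical fibre $\{b^-\}\times F$ is hit) — actually the cleanest formulation is: the continuous function $\phi(u):=$ "the $S$-value obtained by starting at $(u, y)$ is wrong in general", so instead I would directly invoke: $S(\{b^-\}\times F)$ is an interval (\cref{rem:intervals}) of length $\ge |F|-\ell|F|=(1-\ell)|F|$ in fact of length at least $(1-\ell)|F|$, hence $\ge (1-\ell)|F|$; and $c=S(x,y)$ with $x\in B$ satisfies $|c - S(b^-,y)|\le (1+\ell)|x-b^-|\le (1+\ell)|B| < (1+\ell)(1-\epsilon)|F|\le (1-\ell)|F|$ by the hypothesis, hence $c$ lies within distance $(1-\ell)|F|$ of $S(b^-,y)\in S(\{b^-\}\times F)$, so $c\in S(\{b^-\}\times F)\subset S(K^L\times F)$. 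This last chain is really the whole proof and avoids any path-tracking; the main obstacle is just getting the constants in the right order, i.e. verifying $(1+\ell)(1-\epsilon)\le(1-\ell)$, which is exactly a rearrangement of $\frac{1-\ell}{1+\ell}\ge 1-\epsilon$.

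I would then close by remarking that the argument used only $x\in B$ and that the same reasoning with $b^-$ replaced by the right endpoint $b^+$ of $B$ gives $c\in S(K^R\times F)$ as well (so the covering is genuine), and that the second statement of the lemma is literally the first applied to the transposed function $(y_1,y_2)\mapsto S(y_2,y_1)$, whose Lipschitz deviation from $S_0$ is unchanged. The only technical care needed is the passage between the Euclidean and $\ell^1$ metrics on $\RR^2$ in the definition of $\lipschitz(S-S_0)$; since the paper's convention (via \cref{def:Lip_const}) uses the Euclidean norm, I would either redo the estimate keeping Euclidean norms — in which case $|{-t}+s(t)|\le \ell\sqrt{t^2+s(t)^2}$ still yields $|s(t)|\le\frac{1+\ell}{1-\ell}t$ after squaring, with no loss — or simply observe that on the relevant displacements (one coordinate moves at a time, or the level set is monotone) the two norms agree. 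Thus no constants are lost and the stated hypothesis is exactly what is needed.
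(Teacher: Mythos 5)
Your quantitative ingredient is the right one --- the observation that traversing the hole costs at most $\frac{1+\lambda}{1-\lambda}|B|<|F|$ of displacement in the second coordinate, where $\lambda:=\lipschitz(S-S_0)$, is precisely where the hypotheses $|B|<(1-\epsilon)|F|$ and \eqref{EqConditionStableHallTheorem} must enter --- but the ``clean chain'' you finally settle on does not prove the lemma. The step ``hence $c$ lies within distance $(1-\lambda)|F|$ of $S(b^-,y)\in S(\{b^-\}\times F)$, so $c\in S(\{b^-\}\times F)$'' is a non sequitur: being within distance equal to the length of an interval from one of its points does not put you inside the interval, because the needed room may lie on the wrong side of $S(b^-,y)$. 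This actually fails when $y$ is close to $\max F$: take $S=S_0$, $K=[0,10]$, $B=(4,6)$, $F=[0,10]$, $(x,y)=(5,10)$, so $c=15$; then $S(K^L\times F)=[0,14]$ does not contain $c$, even though $c$ is within $|F|=10$ of $S(4,10)=14$. The same example refutes your closing assertion that the argument with $b^+$ in place of $b^-$ gives $c\in S(K^R\times F)$ ``as well'': in general a value $c=S(x,y)$ with $x\in B$ lies in only one of the two sets, and which one depends on where $y$ sits in $F$. Your earlier level-set/path-tracking sketch has the same issue hidden inside it (the level curve through $(5,10)$ at value $15$ leaves $K\times F$ if pushed to the left edge of the hole; it must be pushed to the right edge, and deciding the side needs an argument beyond the displacement bound), and in addition it presupposes that a compensating $s(t)$ exists, which itself requires the interval-structure considerations you are trying to avoid.

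What is missing is a mechanism for choosing the correct side, and that is exactly what the paper's proof supplies. There one first shows, with the same one-coordinate Lipschitz estimates, that $\inf S(K\times F)=\inf S(K^L\times F)$ and $\sup S(K\times F)=\sup S(K^R\times F)$, and then that the two intervals $S(K^L\times F)$ and $S(K^R\times F)$ (intervals by \cref{rem:intervals}) overlap, because $S(b^-,\max F)>S(b^+,\min F)$; this single inequality is equivalent to your bound $\frac{1+\lambda}{1-\lambda}|B|<|F|$, and together with the interval structure it yields the covering. Your pointwise scheme can be repaired along the same lines, but only with a dichotomy: if $c\le S(b^-,\max F)$ then $c\in S(K^L\times F)$, since that interval contains both $S(b^-,y)<c$ and $S(b^-,\max F)\ge c$; if instead $c>S(b^-,\max F)$, then $c>S(b^+,\min F)$ by the key inequality, while $c<S(b^+,y)$, so $c\in S(K^R\times F)$. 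As written, your proof omits both the case split and the inequality $S(b^-,\max F)>S(b^+,\min F)$ that drives it, so there is a genuine gap.
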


\begin{proof}
We only prove the first statement, the argument for the second being the same.
Set
\[
	G:=S-S_0.
\]
Let $K=[a,b]$, $F=[c,d]$ and $B=(e,f)\subset K$ for real numbers 
$$a<e<f<b,  \qquad c<d.$$ 
Let us first show that
\begin{equation}\label{inf:ineq}
	\inf S(K\times F)=\inf S(K^L\times F).
\end{equation}
Since the inequality $\leq$ is obvious, it is enough to prove the inequality $\geq$. Moreover, since $K=K^L \cup [e,b]$, it is enough to show that 
\begin{equation}\label{inf:ineq2}
	\inf S([e,b]\times F)\geq \inf S(K^L\times F).
\end{equation}
To prove this, consider any  $(x_1,x_2)$ with $e\leq x_1\leq b$ and $x_2 \in F$, i.e.\ $c\leq x_2\leq d$ and $(a,c)$, which belongs to $K^L\times F$.
Recalling the definition of Lipschitz constant~\eqref{def:Lip_const} and using that $
\lipschitz(G)<1$ and $x_1\geq a$, $x_2\geq c$, we have that 
\[
\begin{split}
	S(x_1,x_2)-S(a,c)	&=S_0(x_1,x_2)-S_0(a,c)+G(x_1,x_2)-G(a,c)\\
						&=(x_1+x_2)-(a+c)+G(x_1,x_2)-G(a,x_2)+G(a,x_2)-G(a,c)\\
							&=|x_1-a|+|x_2-c|+\frac{G(x_1,x_2)-G(a,x_2)}{x_1-a}(|x_1-a|)+\frac{G(a,x_2)-G(a,c)}{x_2-c}(|x_2-c|)\\
						&\geq |x_1-a|\bigl(1-\lipschitz(G)\bigr)+|x_2-c|\bigl(1-\lipschitz(G)\bigr)>0.
\end{split}
\]
This proves \cref{inf:ineq2} and hence concludes the proof of \cref{inf:ineq}. With a similar argument, we get also that 
\[
	\sup S(K\times F)=\sup S(K^R\times F).
\]
Since $S(K\times F)$, $S(K^L\times F)$ and $S(K^R\times F)$ are three intervals (see \cref{rem:intervals}),  it is hence enough to prove that
$$
\sup S(K^L\times F) \geq \inf S(K^R\times F).
$$
To show this, we will show that  $S(e,d)>S(f,c)$ (remark that $(e,d) \in K^L\times F$ and $(f,c)\in K^R\times F$). Reasoning as before, we have
\[
\begin{split}
	S(e,d)-S(f,c)	&=S_0(e,d)-S_0(f,c)+G(e,d)-G(f,c)\\
					&=(d-c)-(f-e)+G(e,d)-G(e,c)+G(e,c)-G(f,c)\\
					&\geq|d-c|\bigl(1-\lipschitz(G)\bigr)-|f-e|\bigl(1+\lipschitz(G)\bigr)
\end{split}
\]
so that $S(e,d)>S(f,c)$ is implied by
\[
	|B|=|f-e|<
	\frac{1-\lipschitz(G)}{1+\lipschitz(G)}\cdot|d-c|=
	\frac{1-\lipschitz(G)}{1+\lipschitz(G)}\cdot|F|,
\]
which is satisfied according to Condition~\eqref{EqConditionStableHallTheorem}, because 
$
|B|<(1-\epsilon)|F|
$ 
by assumption.
\end{proof}

We can now give the Proof of \cref{thm:stableHall}.
\begin{proof}[Proof of \cref{thm:stableHall}]
Let $\bigl(\KK(n)\bigr)_{n\in\NN}$ and $\bigl(\FF(n)\bigr)_{n\in\NN}$ be slow monotone subdivisions respectively for $\KK$ and $\FF$.
Since by assumption  $\KK$ and $\FF$ admit a slow subdivision which satisfy Condition~\eqref{EqStableGapCondition}, then by \cref{LemGapConditionForGeometricSubdivision} the same is true for the subdivisions $\bigl(\KK(n)\bigr)_{n\in\NN}$ and 
$\bigl(\FF(n)\bigr)_{n\in\NN}$. 
Set $K_0:=[\min\KK,\max\KK]$ and $F_0:=[\min\FF,\max\FF]$ and fix any point $x\in S(K_0\times F_0)$.

The Theorem follows if we show that we can construct two sequences $(n_i)_{i \in \NN}$ and  $(m_i)_{i \in \NN}$ such that $m_{i+1}\geq m_i$ and $n_{i+1}\geq n_i$ for any $i\in\NN$ and $n_i \to \infty$, $m_i \to \infty$, and two sequences of nested closed intervals $(K_i)_{i\in\NN}$ and $(F_i)_{i\in\NN}$, where $K_i$ is an interval of the level $\KK(n_i)$ and $F_i$ is an interval of the level $\FF(m_i)$, such that for any $i \in \NN$ we have
\[
	x\in S(K_i\times F_i).
\]
Indeed setting $k:=\bigcap_{i\in\NN}K_i$ and $f:=\bigcap_{j\in\NN}F_j$ continuity of $S$ implies $x=S(k,f)$, where $k\in\KK$ and $f\in\FF$.
Observe that we require $n_i \to \infty$, but steps $i$ for which $n_{i+1}=n_i$ are allowed, and similarly for the integers $m_i$.

We will construct the sequences $(n_i)_{i \in \NN}$ and  $(m_i)_{i \in \NN}$ and the  two families of nested intervals by induction on  $i$  in $\NN$.
Fix $i\in\NN$ and assume that respectively the first $i+1$ nested intervals $K_0\supset K_1\supset\dots\supset K_i$ and the first $i+1$ nested intervals 
$F_0\supset F_1\supset\dots\supset F_i$ are defined.
Let $n(K_i)$ be the minimum $n\in\NN$ such that $K_i\cap\KK(n)\neq K_i $ and let $B_i$ be the hole in $K_i$, i.e.\ the  open subinterval $B_i\subset K_i$ such that $\KK(n(K_i))\cap K_i=K_i\setminus B_i$. 
Similarly, let $m(F_i)$ be the minimum $m\in\NN$ such that $F_i\cap\FF(m)\neq F_i$ and let $C_i$ be the hole in $F_i$, i.e.\ the  open subinterval $C_i\subset F_i$ such that $\FF(m(F_i))\cap F_i=F_i\setminus C_i$.

During the inductive construction, we will also prove that for every $i$ the intervals $(K_i, F_i)$ and the holes $B_i \subset K_i$, $C_i \subset F_i$ in our construction satisfy the following \emph{balanced gap} condition:
\begin{equation}\label{EqBalanceCondition}
	|B_i|<(1-\epsilon)|F_i| 
	\quad
	\text{ and }
	\quad
	|C_i|<(1-\epsilon)|K_i|.
\end{equation}
Observe that for $i=0$ the condition is true according to the $\epsilon$-size condition~\eqref{EqStableSizeCondition}.
Assume that balanced gap condition~\eqref{EqBalanceCondition} is satisfied for $i\geq0$.
To define the intervals at level $i+1$, we subdivide the interval having the bigger hole.
Assume that $|B_i|\geq|C_i|$, the other case being the same.
Since $|B_i|<(1-\epsilon)|F_i|$ then \cref{LemSumOfIntervals} implies 
\[
	S(K_i\times F_i)=S(K_i^L\times F_i)\cup S(K_i^R\times F_i).
\]
If $x\in S(K_i^L\times F_i)$ (respectively $x\in S(K_i^R\times F_i)$), set $K_{i+1}:=K_i^L$ (respectively  $K_{i+1}:=K_i^R$) and $n_{i+1}=n(K_i)$, so that $K_{i+1}\in\KK(n_{i+1})$. Set also $F_{i+1}=F_i$ and $m_{i+1}=m_i$, so that $F_{i+1}\in\FF(m_{i+1})$ holds trivially.
By the property of a monotone slow subdivision, the hole $B_{i+1}\subset K_{i+1}$ satisfies $|B_{i+1}|\leq|B_i|$ and therefore by inductive assumption we get 
\[
	|B_{i+1}|\leq|B_i|<(1-\epsilon)|F_i|=(1-\epsilon)|F_{i+1}|.
\]
On the other hand Condition~\eqref{EqStableGapCondition} implies $|B_i|<(1-\epsilon)|K_i^L|=(1-\epsilon)|K_{i+1}|$ and therefore, since $C_i$ is by choice the smaller of the two holes, we get 
\[
	|C_i|\leq|B_i|<(1-\epsilon)|K_{i+1}|.
\]
Thus, the pair of intervals $(K_{i+1},F_{i+1})$, with holes $B_{i+1}$ and $C_{i+1}=C_i$ satisfies balanced gap condition~\eqref{EqBalanceCondition}, 
and moreover we have $x\in S(K_{i+1}\times F_{i+1})$. The inductive step is complete. 

Finally, since there are only finitely many holes which are longer than a given positive constant, it is clear that both $(n_i)_{i \in \NN}$ and $(m_i)_{i \in \NN}$ satisfy $n_i\to\infty$ and $m_i\to\infty$. \cref{thm:stableHall} is proved. 
\end{proof}

\appendix

\section{Proofs of Lemmas on parabolic words}\label{app1}
We present here the simple proofs of \cref{LemmaCombinatorialPropertiesCuspidal} and \cref{Parabolicwordslemma} on parabolic words in \cref{subsec:parabolicwords}. With a slightly different notation, the proofs were essentially  contained in~\cite{AMU}.  

\begin{proof}[Proof of \cref{LemmaCombinatorialPropertiesCuspidal}]
In order to prove point (1), observe that for any $k=0,\dots,n-1$ we have
\[
\begin{split}
	\Arc[\ab{0},\dots,\ab{k}] &=g_{\ab{0}}\circ\dots\circ g_{\alpha_{k-1}}\Arc[\ab{k}],\\
	\Arc[\ab{0},\dots,\ab{k},\ab{k+1}] &= g_{\ab{0}}\circ\dots\circ g_{\alpha_{k}}\Arc[\ab{k+1}].
\end{split}
\]
Applying $\bigl(g_{\ab{0}}\circ\dots\circ g_{\ab{k-1}}\bigr)^{-1}$ and recalling that elements of $\Gamma$ preserve the orientation on $\partial\DD$, we see that $\Arc[\ab{0},\dots,\ab{k}]$ and $\Arc[\ab{0},\dots,\ab{k},\ab{k+1}]$ 
share the same left endpoint if and only if 
\[
	\xi^l_{\ab{k}}= \inf \Arc[\ab{k}]= g_{\ab{k}}\bigl(\inf \Arc[\ab{k+1}]\bigr)=
	g_{\ab{k}}(\xi^l_{\ab{k+1}}).
\]
Point (2) for right endpoints follows with the same argument.
In order to prove point (3), recall that, for any letter $\alpha$, $\xi^l_{\alpha}=g_{\alpha}(\xi^r_{\overline{\alpha}})$ and $g_\alpha^{-1}=g_{\overline{\alpha}}$.
Therefore, according to first two points, $\ab{0}\dots\ab{n}$ is left cuspidal if and only if for any $k=0,\dots,n-1$ we have 
\[
	g_{\ab{k}}(\xi^l_{\ab{k+1}})=\xi^l_{\ab{k}}\iff
	g_{\ab{k}}g_{\ab{k+1}}(\xi^r_{\overline{\ab{k+1}}})=g_{\ab{k}}(\xi^r_{\overline{\ab{k}}})\iff
	\xi^r_{\overline{\ab{k+1}}}=g_{\ab{k+1}}^{-1}(\xi^r_{\overline{\ab{k}}})=
	g_{\overline{\ab{k+1}}}(\xi^r_{\overline{\ab{k}}}),
\]
that is $\overline{\ab{n}}\dots\overline{\ab{0}}$ is right cuspidal.
\end{proof}

\begin{proof}[Proof of \cref{Parabolicwordslemma}]
Point (1) of \cref{LemmaCombinatorialPropertiesCuspidal} implies $\xi^l_{\ab{k}}=g_{\ab{k}}(\xi^l_{\ab{k+1}})$ 
for any $k=0,\dots,n-1$, thus
\[
	\xi^l_{\ab{0}}=g_{\ab{0}}\circ\dots\circ g_{\ab{k}}(\xi^l_{\ab{k+1}}).
\]
If $\ab{0}\dots\ab{n}\ab{0}$ is also cuspidal then the condition above holds with $k=n$ 
and therefore 
\[
	g(\xi^l_{\ab{0}})=\xi^l_{\ab{0}}.
\]
Since $\ab{0}\dots\ab{n}\ab{0}$ is left cuspidal, then also $\ab{n}\ab{0}\dots\ab{n}\ab{0}$ 
is so, and finally $\ab{n}\ab{0}\dots\ab{n}$ is left cuspidal too.
Hence, we have
\[
	\xi^l_{\ab{0}}=g_{\ab{n}}^{-1}(\xi^l_{\ab{n}})=g_{\ab{n}}^{-1}\circ g_{\ab{n}}(\xi^r_{\overline{\ab{n}}})=
\xi^r_{\overline{\ab{n}}}.
\]

According to points (2) and (3) of \cref{LemmaCombinatorialPropertiesCuspidal}, the word $\overline{\ab{n}}\dots\overline{\ab{0}}\overline{\ab{n}}$ is right cuspidal and reasoning as above we have that
\[
	g^{-1}(\xi^r_{\overline{\ab{n}}})= g_{\overline{\ab{n}}}\circ\dots\circ g_{\overline{\ab{0}}}(\xi^r_{\overline{\ab{n}}})= \xi^r_{\overline{\ab{n}}}.
\]

Observe also that 
\[
\begin{split}
	g\Arc[\ab{0}]&=g_{\ab{0}}\circ\dots g_{\ab{n}}\Arc[\ab{0}]= \Arc[\ab{0},\dots,\ab{n},\ab{0}]
\subset \Arc[\ab{0}],\\
g^{-1}\Arc[\overline{\ab{n}}]&=g_{\overline{\ab{n}}}\circ\dots g_{\overline{\ab{0}}}\Arc[\overline{\ab{n}}]=
\Arc[\overline{\ab{n}},\dots,\overline{\ab{0}},\overline{\ab{n}}]\subset \Arc[\overline{\ab{n}}].
\end{split}
\]
Thus $\xi^l_{\ab{0}}$ is a fixed point of $g$ and $\Arc[\ab{0}]$ is a right neighborhood of it where $g$ is contracting.
On the other hand $\Arc[\overline{\ab{n}}]$ is a left neighborhood of $\xi^l_{\ab{0}}$ where $g^{-1}$ acts contracting, thus $g$ is expanding.
It follows that $\xi^l_{\ab{0}}$ is not hyperbolic and is thus the unique fixed point of $g$.
Thus $g$ is a parabolic element of $\Gamma$. 
\end{proof}

\section{Lipschitz norm estimates}\label{app2}
In this Appendix, we present the proof of \cref{PropositionLipschitzNorm}. 

We use in this Appendix the notation introduced in the beginning of \cref{sec:Hallperturbations}. We recall, in particular that, for $l>0$, $\cU_l\subset\HH$ is a horocyclic neighborhood   and $U_l\subset\RR^2$ a the diagonal neighborhood. In order to simplify the notation, we write simply $H$ and $H_0$ instead of $H|_{U_l}$ and $H_0|_{U_l}$ respectively and simply $h$ and $\Im$ instead of $h|_{\cU_l}$ and $\Im|_{\cU_l}$ respectively. For $x_1,x_2$ in $\RR$ let $\gamma(x_1,x_2,\cdot)\colon\RR\to\HH$, $t\mapsto\gamma(x_1,x_2,t)$ be the geodesic parametrization of the hyperbolic geodesic $\gamma(x_1,x_2)$ in $\HH$ with endpoints $x_1,x_2$, such that $\gamma(x_1,x_2,0)$  is its highest point, i.e.\
\begin{equation}\label{parametrization}
\lim_{t\to-\infty}\gamma(x_1,x_2,t)=x_1,
\quad \quad
\lim_{t\to+\infty}\gamma(x_1,x_2,t)=x_2,
\quad \quad
\gamma(x_1,x_2,0)=\frac{x_1+x_2}{2}+i\frac{x_2-x_1}{2}.
\end{equation}
Finally set 
$
\delta:=\|h-\Im\|_{\text{Lip}}
$ 
and recall that we have
\[
\sup_{z\in\cU_l}|h(z)-\Im(z)|\leq\delta
\quad
\textrm{ and }
\quad
\sup_{z,z'\in\cU_l}
\frac
{\bigl|\bigl(h(z)-\Im(z)\bigr)-\bigl(h(z')-\Im(z')\bigr)|\bigr|}
{|z-z'|}
\leq\delta.
\]
We recall that the Lipschitz constant of any $G\colon U\subset \mathbb{R}^2 \to R$ is given by
\[
\sup_{(x_1,x_2),(x_1',x_2') \in U } \frac{	|G(x_1,x_2)-G(x'_1,x'_2)| }{	|(x_1,x_2)-(x'_1,x'_2)|}, 
\]
where $	|(y_1,y_2)| $ denotes the Euclidean norm of $	|(y_1,y_2)| \in \RR^2$ (and hence corresponds to the absolute value $|y_1+iy_2|$ in $\CC$).
\smallskip

The idea of the proof is to first estimate the Lipschitz constant of $H$ in two directions which are geometrically meaningful and hence easier to control. We remark indeed that if we consider a point $(x_1',x_2')$ of the form $ (x_1,x_2)+s (1,1) = (x_1+s,x_2+s)$, where $s \in \RR$, the geodesic $\gamma(x_1',x_2')$ is obtaining by \emph{sliding}  horizontally the  endpoints of $\gamma(x_1,x_2)$; in particular, the geodesics are rigidly translated. On the other hand, if we consider a point $(x_1',x_2')$ of the form $ (x_1,x_2)+s (-1,1) = (x_1-s,x_2+s)$, for small values of $s \in \RR$, the geodesic $\gamma(x_1',x_2')$, as a Euclidean semi-circle, is \emph{concentric} to  $\gamma(x_1,x_2)$, while the radii differ by $s$, see \cref{fig:lipschitz}.

\begin{figure}
\centering
\def\svgscale{0.8}
\import{pictures/}{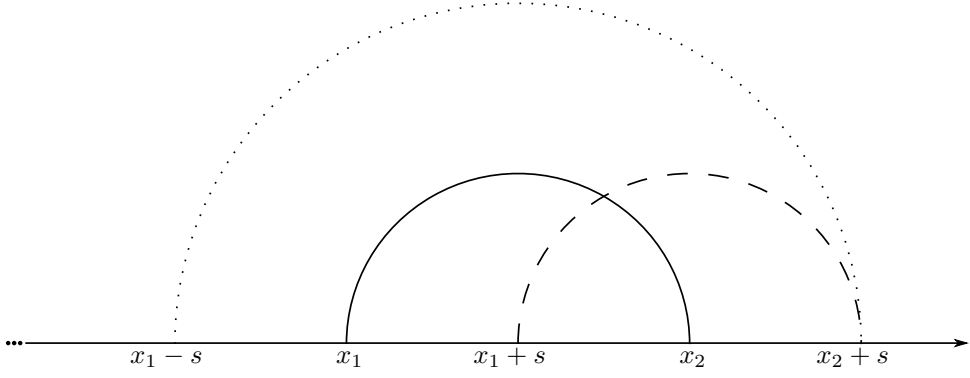}
\caption{The geometric meaning of the vectors $(1,1)$ and $(-1,1)$ considered in the proof of \cref{PropositionLipschitzNorm}.}
\label{fig:lipschitz}
\end{figure} 

\begin{proof}[Proof of \cref{PropositionLipschitzNorm}]
Set $G:=H-H_0$.  Consider the vectors $v:=(1,1)$ and $w:=(-1,1)$ (for the  motivation explained before the proof). 
We will prove that, for any line  $\cV$  parallel to $v$ and  any line $\cW$ parallel to $w$, we have
\[
	\lipschitz(G|_\cV)\leq \frac{\delta}{\sqrt{2}}
	\quad
	\text{ and }
	\quad
    \lipschitz(G|_\cW)\leq \left(\frac{1}{\sqrt{2}}+\frac{\sqrt{2}}{r}\right)\delta,
\]
where $G|_\cV$ and $G|_\cW$ denote respectively the restriction of $G$ to the lines $\cV$ and $\cW$. We claim that this is enough to conclude, since for any $(x_1,x_2)$ and $(x_1',x_2')$ in $\RR^2$, there exists $\lambda,\mu$ in $\RR$ and $(x_1^\ast,x_2^\ast)\in\RR^2$ such that 
\[
	(x_1,x_2)-(x_1^\ast,x_2^\ast)=\lambda v
	\quad
	\text{ and }
	\quad
	(x_1^\ast,x_2^\ast)-(x'_1,x'_2)=\mu w,
\]
so that, remarking that $v$ and $w$ are orthogonal and hence one can use Phytagora theorem,
\[
	\begin{split}
	|G(x_1,x_2)-G(x'_1,x'_2)| 	&\leq |G(x_1,x_2)-G(x_1^\ast,x_2^\ast)| + |G(x_1^\ast,x_2^\ast)-G(x'_1,x'_2)|\\
								&\leq \frac{\delta}{\sqrt{2}}\cdot|(x_1,x_2)-(x_1^\ast,x_2^\ast)| +
									\left(\frac{1}{\sqrt{2}}+\frac{\sqrt{2}}{r}\right)\delta\cdot|(x_1^\ast,x_2^\ast)-(x'_1,x'_2)|\\
								&\leq \left(\sqrt{2}+
									\frac{\sqrt{2}}{r}\right)\delta\cdot|(x_1,x_2)-(x'_1,x'_2)|.
\end{split}
\]

We will consider separately the estimate for $G|_\cV$ and the one for $G|_\cW$. 
Before doing that, for any pair of points $(x_1,x_2)$ and $(x'_1,x'_2)$ in $\RR^2$,  let 
$$\gamma= \{ \gamma(t):=\gamma(x_1,x_2,t),\ t\in\RR\}, \qquad \gamma':=\{\gamma'(t):=\gamma(x'_1,x'_2,t), \ t\in\RR\},$$
 be the time parametrizations of the geodesics with respective endpoints $x_1,x_2$ and $x_1',x_2'$  described in~\eqref{parametrization}.
Let also $t_0,t'_0$ in $\RR$ be such that 
\[	H(x_1,x_2)=h\left(\gamma(t_0)\right)
	\quad
	\text{ and }
	\quad
	H(x'_1,x'_2)=h\left(\gamma'(t'_0)\right).
\]

\smallskip

\noindent \emph{Estimate for $G|_\cV$.} In order to prove the estimate for $G|_\cV$, where $\cV$ is any line parallel to $v$, consider any $(x_1,x_2)$ and $(x'_1,x'_2)$ in such line $\cV$ and let   
\[
s:=x'_2-x_2=x'_1-x_1, \qquad \text{so} \quad (x'_1,x'_2)= (x_1,x_2)+s (1,1).
\]
As we remarked before the proof, the geodesic  $\gamma'$ is hence obtained by sliding horizontally $\gamma$ by $s$, i.e.\ for every $t\in \RR$ we have $\gamma'(t)=\gamma(t)+s$. In particular, $\gamma(t_0)+s\in\gamma'$. 
Moreover, remark that since the function $z\mapsto \Im (z)$ is constant along horizontal lines, we have that  
$
\lipschitz \left( h|_\cV \right)=\lipschitz \left( \left(h-\Im \right) |_\cV \right) \leq \delta.
$
It follows from these two remarks that
\[
	H(x'_1,x'_2)	
	=h\bigl(\gamma'(t'_0)\bigr)
					\geq h\bigl(\gamma(t_0)+s\bigr)
					\geq h\bigl(\gamma(t_0)\bigr)-\delta\cdot|s|
					= H(x_1,x_2)-\delta\cdot|s|.
\]
Similarly, using this time that $\gamma'(t'_0)-s\in \gamma$,   
\[
	H(x_1,x_2)		
	= h\bigl(\gamma(t_0)\bigr)
					\geq h\bigl(\gamma'(t'_0)-s\bigr)
					\geq h\bigl(\gamma'(t'_0)\bigr)-\delta\cdot|s|
					=H(x'_1,x'_2)-\delta\cdot|s|.
\]
Observing that $H_0(x'_1,x'_2)=H_0(x_1,x_2)$, it follows that 
\[
	|G(x'_1,x'_2)-G(x_1,x_2)|= |H(x'_1,x'_2)-H(x_1,x_2)|\leq \delta\cdot|s|= \frac{\|(x'_1-x_1,x'_2-x_2)\|}{\sqrt{2}}\cdot\delta.
\]

\smallskip 
\noindent \emph{Estimate for $G|_\cW$.} In order to prove the estimate for $G|_\cW$, where $\cW$ is any line parallel to $w$, consider any $(x_1,x_2)$ and $(x'_1,x'_2)$ in such line $\cW$ and set 
\[
	s:=x'_2-x_2=-(x'_1-x_1), \qquad \text{so} \quad (x'_1,x'_2)= (x_1,x_2)+ s (-1,1).
\]
Observe that for any $z',z\in\HH$ we have
\[
\bigl|
\bigl(h(z')-\Im(z')\bigr)-\bigl(h(z)-\Im(z)\bigr)
\bigr|
\leq
\lipschitz(h-\Im)\cdot|z'-z|
\leq
\delta\cdot|z'-z|
\]
and thus
$
h(z')-\Im(z')
\geq 
h(z)-\Im(z)-\delta\cdot|z'-z|
$, 
which implies 
\begin{equation}\label{hlower}
h(z')
\geq 
h(z)+\Im(z'-z)-\delta\cdot|z'-z|.
\end{equation}
Consider now a  parametrization in polar coordinates of the semicircles described by the geodesics $\gamma$ and $\gamma'$, i.e.\ for any $t\in\RR$ let $\theta(t)\in [0,\pi]$ be the angle such that 
\[
\begin{split}
	\gamma(t)	&=\frac{x_1+x_2}{2}+\frac{x_2-x_1}{2}e^{i\theta(t)},\\
	\gamma'(t)	&=\frac{x'_1+x'_2}{2}+\frac{x'_2-x'_1}{2}e^{i\theta(t)}.
\end{split}
\]
Since, as remarked before the proof, $\gamma'$ has the same center of $\gamma$ but  radius increased by $s$, if we set $\theta_0:=\theta(t_0)$ and $\theta'_0:=\theta(t'_0)$, we have that 
$$\gamma'(t'_0)-s\cdot e^{i\theta'_0}\in \gamma, \quad \text{and} \quad \gamma(t_0)+s\cdot e^{i\theta_0}\in\gamma'.$$
We claim that we must have
\begin{equation}\label{sinineq}
	\sin\theta_0>1-\frac{2\delta}{l}
	\quad
	\text{ and }
	\quad
	\sin\theta'_0>1-\frac{2\delta}{l}.
\end{equation}
Indeed, recalling that $\|h-\Im\|_\infty\leq\delta$, we have
\[
\begin{split}
	H(x_1,x_2)	&=\max_{0<\theta<\pi} h\left(\frac{x_1+x_2}{2}+\frac{x_2-x_1}{2}e^{i\theta}\right)\\
				&\geq \max_{0<\theta<\pi} \Im\left(\frac{x_1+x_2}{2}+\frac{x_2-x_1}{2}e^{i\theta}\right)-\delta =\frac{x_2-x_1}{2}-\delta,
\end{split}
\]
but, if the first half of~\eqref{sinineq} fails, since $(x_1,x_2) \in\cU_l$ and hence $x_2-x_1>l$, we have
\[	\sin\theta_0\leq 1-\frac{2\delta}{l}\leq 1-\frac{4\delta}{x_2-x_1}, 
\] 
so that 
\[
\begin{split}
	H(x_1,x_2)	&= h\left(\frac{x_1+x_2}{2}+\frac{x_2-x_1}{2}e^{i\theta_0}\right)
				\leq \Im\left(\frac{x_1+x_2}{2}+\frac{x_2-x_1}{2}e^{i\theta_0}\right)+\delta\\
				&=\frac{x_2-x_1}{2}\sin\theta_0+\delta \leq\frac{x_2-x_1}{2}-\delta,
\end{split}
\]
which is absurd.  The same argument holds for $\theta'_0$ and proves the second part of~\eqref{sinineq}. 

Combining~\eqref{hlower} and~\eqref{sinineq} we get
\[
\begin{split}
	H(x'_1,x'_2) 	&=h\left(\gamma'(t'_0)\right)
					\geq h\left(\gamma(t_0)+s\cdot e^{i\theta_0}\right)\\
					&\geq h\left(\gamma(t_0)\right)+\Im(s e^{i\theta_0})-\delta\cdot|s|
					\\
					&\geq h\left(\gamma(x_1,x_2,t_0)\right)+s\cdot\left(1-\frac{2\delta}{l}\right)
					-\delta\cdot|s|\\
					&\geq H(x_1,x_2)+s-\left(1+\frac{2}{l}\right)\delta\cdot|s|.
\end{split}
\]
Similarly, one can also get 
\[
H(x_1,x_2)	\geq H(x'_1,x'_2)-s-\left(1+\frac{2}{l}\right)\delta\cdot|s|.
\]

Therefore, observing that $H_0(x'_1,x'_2)-H_0(x_1,x_2)=s$ it follows that 
\[
\begin{split}
	|G(x'_1,x'_2)-G(x_1,x_2)|&=\bigl|\bigl(H(x'_1,x'_2)-H(x_1,x_2)\bigr)-\bigl(H_0(x'_1,x'_2)-H_0(x_1,x_2)\bigr)\bigr|\\
								&=\bigl|H(x'_1,x'_2)-\bigl(H(x_1,x_2)+s\bigr)\bigr|\\
								&\leq \left(1+\frac{2}{l}\right)\delta\cdot|s|
							=\left(\frac{1}{\sqrt{2}}+\frac{\sqrt{2}}{l}\right)\delta\cdot |(x'_1-x_1,x'_2-x_2)|.
\end{split}
\]
This concludes the proof.
\end{proof}

\subsection*{Acknowledgements}
This collaboration was made possible by the support of the ERC Starting Grant ChaParDyn. 
M. A. is supported by Unicredit Bank.
C. U. is also supported by the Leverhulme Trust through a Leverhulme Prize. The research leading
to these results has received funding from the European Research Council under the European Union Seventh
Framework Program (FP/2007-2013) / ERC Grant Agreement n.~335989. L. M. is grateful to French CNRS and De Giorgi Center for financial and logistic support during his stay in Pisa.



\end{document}